%This file is submitted in arxiv on April 05, 2022.

\documentclass[10pt, a4paper]{amsart}

%%%% Packages that are used in this file %%%%

\usepackage{amsmath,amssymb,amsthm}
\usepackage{xcolor,textcomp}
\usepackage{graphicx, epstopdf}
\usepackage{braket,amsfonts}
\usepackage{dsfont}
\usepackage{mathtools}
\usepackage{mwe}
\usepackage[top=2.7cm,bottom=2.5cm,left=2.5cm,right=2.7cm]{geometry}
\usepackage{hyperref}
\hypersetup{colorlinks=true,linkcolor=red,citecolor=red,filecolor=magenta,urlcolor=blue}
\usepackage[capitalise]{cleveref}
\usepackage{comment}
\usepackage{tikz}
\usepackage{pgfplots}
\usepackage{stmaryrd}
\usepackage[mathscr]{euscript}
\addtocontents{toc}{\protect\setcounter{tocdepth}{1}}

\usepackage[sort,nocompress]{cite}

%\usepackage[pagewise]{lineno}\linenumbers

%\allowdisplaybreaks

%%%% Commands that are used in this file %%%%
\renewcommand{\mod}[1]{\left\lvert#1\right\rvert}

\newcommand{\norm}[1]{\left\|#1\right\|}
\newcommand{\mb}[1]{\mathbb{#1}}
\newcommand{\mc}[1]{\mathcal{#1}}
\newcommand{\ip}[2]{\left\langle#1,#2\right\rangle}
 
\renewcommand{\vector}[2]{\begin{pmatrix}#1\\[4pt]#2\end{pmatrix}}
\newcommand{\B}{\mathcal B}
\newcommand{\E}{\mathcal E}
\DeclareMathOperator{\sgn}{sgn}

\DeclareMathOperator{\Span}{span}

%%%% Environments that are used in this file %%%%
\newtheorem{theorem}{Theorem}[section]
\newtheorem{lemma}[theorem]{Lemma}
\newtheorem{remark}[theorem]{Remark}
\newtheorem{corollary}[theorem]{Corollary}
\newtheorem{definition}[theorem]{Definition}
\newtheorem{proposition}[theorem]{Proposition}

\newtheorem{hypo}[theorem]{Hypothesis} 

%%%% Frontmatter %%%%

\title[Boundary null-controllability of 1d Navier-Stokes system]{Boundary null-controllability of 1d linearized compressible Navier-Stokes System by one control force}
\author[K. Bhandari, S. Chowdhury, R. Dutta, J. Kumbhakar]{Kuntal Bhandari$^*$ \and Shirshendu Chowdhury$^{\dagger,1}$ \and Rajib Dutta$^{\dagger,2}$ \and Jiten Kumbhakar$^{\dagger,3}$}
\thanks{$^*$Laboratoire de Math\'ematiques Blaise Pascal, UMR 6620, Universit\'e Clermont Auvergne, CNRS,  63178 Aubi\`ere, France;   kuntal.bhandari@uca.fr.}
\thanks{$^\dagger$Indian Institute of Science Education and Research Kolkata, Campus road, Mohanpur, West Bengal 741246, India;
$^1$shirshendu@iiserkol.ac.in,  $^2$rajib.dutta@iiserkol.ac.in,  
$^3$jk17ip021@iiserkol.ac.in.}

%%%% Main body %%%%
\begin{document}
%\linenumbers
\begin{abstract}
In this article, we study the boundary null-controllability properties of the one-dimensional linearized (around $(Q_0,V_0)$ with constants $Q_0>0, V_0>0$) compressible Navier–Stokes equations in the interval $(0,1)$ when a control function is acting either on the density or velocity component at one end of the interval. We first prove that the linearized system, with a Dirichlet boundary control on the density component and homogeneous Dirichlet boundary conditions on the  velocity component, is null-controllable in ${H}^s_{per}(0,1)\times L^2(0,1)$ for any $s>\frac{1}{2}$ provided the time $T>1$, where ${H}_{per}^s(0,1)$ denotes the Sobolev space of periodic functions. The proof is based on solving  a mixed parabolic-hyperbolic  moments problem and to do so, we perform a spectral analysis for the associated adjoint operator which is the main involved part of this work. As a corollary, we have also shown that the  system is approximately controllable in $L^2(0,1)\times L^2(0,1)$ when $T>1$.

On the other hand, assuming that the density is equal on the two boundary points w.r.t. time,  when a control is applied on the velocity part through a Dirichlet condition, we can only able to prove that the system is  null-controllable in a strict subspace of finite codimension $\mathcal{H}\subset {H}_{per}^s(0,1)\times L^2(0,1)$ for $s>\frac{1}{2}$ when $T>1$.
More precisely, in this case we are able to prove that all the eigenfunctions of the associated adjoint operator are observable for high frequencies  whereas for the lower frequencies we are unable to conclude anything.  A parabolic-hyperbolic joint Ingham-type inequality which we prove in this article, leads to an observability inequality in the space $\mathcal{H}^*$ and the controllability result follows. 
The significant point is that the moments method  does not yield a better space for the null-controllability when a control acts on the velocity part.

\end{abstract}

\keywords{Linearized compressible Navier-Stokes system, boundary null-controllability, spectral analysis, moments method, Ingham-type inequality.}

\subjclass[2010]{76N06, 93B05,  93B60, 93C20.}

\maketitle

\tableofcontents

\section{Introduction and main results}

\smallskip 
\subsection{The system under study}
The Navier-Stokes system  for a viscous compressible isentropic fluid in $(0,L)$ is
\begin{align*}
\begin{dcases} 
\rho_t  +(\rho u)_x=0   \ &\text{in } (0,+\infty)\times (0,L)  ,  \\
\rho \big(u_t +u u_x \big) + (p(\rho))_x-\nu u_{xx} =0 \ & \text{in } (0,+\infty)\times (0,L) ,
\end{dcases} 
\end{align*}
where $L>0$ denotes the finite length of the interval, $\rho$ is the fluid density and $u$ is the velocity. The viscosity of the fluid is denoted by  $\nu>0$   and we assume the pressure $p$ satisfies the constitutive law $p(\rho)=a\rho^{\gamma}$ for $a>0$ and $\gamma\geq1$. The Navier-Stokes equations linearized around some constant steady state $(Q_0,V_0)$ (with $Q_0>0,V_0>0$) are
\begin{align}\label{lcnse1}
\begin{dcases}
\rho_t  + V_0\rho_x + Q_0u_x =0   \ &\text{in } (0,+\infty)\times (0,L) , \\
u_t - \frac{\nu}{Q_0}u_{xx} + V_0 u_x +a\gamma Q_0^{\gamma-2}\rho_x =0 \ &\text{in } (0,+\infty)\times (0,L) .
\end{dcases}
\end{align}
Now, if we consider the change of variables 
\begin{align*}
\rho(t,x) \to  \alpha \rho(\beta t, \delta x) , \quad u(t,x) \to   u(\beta t, \delta x), \ \ \forall (t,x) \in (0,+\infty) \times (0,L), 
\end{align*}
with the following choices of $\alpha, \beta, \delta >0$, 
\begin{align*}
\alpha: = \left( a\gamma Q^{\gamma-3}_0  \right)^{-1/2}, \ \ \ \beta:  = \frac{Q_0 V^2_0}{\nu}, \ \ \ \delta := \frac{Q_0 V_0}{\nu}, 
\end{align*}
the system \eqref{lcnse1} simplifies to 
\begin{align}\label{lcnse2}
\begin{dcases} 
\rho_t + \rho_x + bu_x = 0  &\text{in } (0,+\infty) \times  (0,\delta L), \\
u_t - u_{xx} + u_x + b\rho_x = 0  \  &\text{in } (0,+\infty) \times  (0,\delta L),
\end{dcases}
\end{align}
with $b = \frac{Q_0}{V_0}\left(a\gamma Q_0^{\gamma-3} \right)^{1/2}$. 

\medskip 

In this article, we study boundary null-controllability properties of the above system \eqref{lcnse2} using the spectral methods. This is the reason we consider  $b=1$ which makes our spectral computations a bit easier and more clear.
 
Let $T>0$  be a finite time (larger than $1$)  and we will work in the domain $(0,T)\times (0,1)$. We work with the space domain $(0,1)$, again for performing  slightly comfortable spectral analysis for the associated adjoint operator.  The same can be done in the interval $(0,\delta L)$. 
   
   \smallskip 
   
\paragraph{\bf Control on density}  We now  write the first  problem below.
\begin{align}\label{main-control-prbm}
\begin{dcases}
\rho_t+\rho_x +u_{x} =0   &\text{in } (0,T)\times (0,1),\\
u_{t} -u_{xx} +u_x +\rho_x =0 &\text{in } (0,T)\times (0,1),\\
\rho(t,0)= h(t)    &\text{for } t\in (0,T) , \\
u(t,0)=0, \  u(t,1)=0  &\text{for } t\in (0,T) ,\\
\rho(0,x)=\rho_0(x),\  u(0,x)=u_0(x)  &\text{for } x\in(0,1),
\end{dcases}
\end{align}
where  $h$ is a control acting on the boundary point $x=0$ only on the density part through a Dirichlet condition, $(\rho_0,u_0)$ is given initial data from some suitable Hilbert space. 
    
The main goal is to study the null-controllability property of the system \eqref{main-control-prbm} at given time $T>0$ which is sufficiently large. 

\begin{definition}  Let us prescribe the notions of null- and approximate controllability for the system \eqref{main-control-prbm}.
	\begin{itemize}
 \item The system \eqref{main-control-prbm}  is  said to be boundary null-controllable  at a finite time $T > 0$, if for any given initial state $(\rho_0, u_0)$ in some suitable Hilbert space, there exists a control $h$  such that the solution $(\rho,u)$ to \eqref{main-control-prbm} can be driven to $0$ at the time $T$, that is, 
$$(\rho(T, x),u(T,x))=(0,0), \quad \text{ for all } x\in (0,1).$$

\item  The system \eqref{main-control-prbm}  is  said to be boundary approximately controllable  at a finite time $T > 0$, if for any given initial state $(\rho_0, u_0)$ and final state $(\rho_T, u_T)$ in some suitable Hilbert space,  and $\epsilon>0$, there exists a control $h \in L^2(0,T)$  such that the solution $(\rho,u)$ to \eqref{main-control-prbm} satisfies 
$$ \|(\rho(T), u(T)) - (\rho_T, u_T)\| \leq \epsilon.$$
\end{itemize}
\end{definition}

To study the main control problem \eqref{main-control-prbm}, we shall  first consider the following auxiliary control system 
\begin{align}\label{lcnse3}
\begin{dcases}
\rho_t+\rho_x +u_{x} =0   &\text{in } (0,T)\times (0,1),\\
u_{t} -u_{xx} +u_x +\rho_x =0 &\text{in } (0,T)\times (0,1),\\
\rho(t,0)=\rho(t,1)+ p(t)      &\text{for } t\in (0,T) , \\
u(t,0)=0, \ u(t,1)=0  &\text{for } t\in (0,T) ,\\
\rho(0,x)=\rho_0(x),\  u(0,x)=u_0(x)  &\text{for } x\in(0,1).
\end{dcases}
\end{align}
where $p$ is a control exerted through the condition $\rho(t,0)=\rho(t,1)+p(t)$. 

We shall prove that there is a control $p\in L^2(0,T)$ which drives the solution $(\rho,u)$ to our new system \eqref{lcnse3} to $(0,0)$. Then, by showing $\rho(t,1)\in L^2(0,T)$ we shall consider $h(t)=\rho(t,1)+p(t)$ and this $h\in L^2(0,T)$ will act as a boundary control for the system \eqref{main-control-prbm}. This idea has already been used in several  works, see for instance  \cite{Cerpa-Crepeau}, \cite{Coron16}. Therefore, our goal is to prove  the null-controllability of the new system \eqref{lcnse3}. The reason behind introducing such an auxiliary system is to obtain  good spectral properties of the associated adjoint operator. The spectral behavior of the operator with purely Dirichlet conditions are really obscure and thus we introduce different boundary conditions as in \eqref{lcnse3} which is helpful to understand the eigenvalues-eigenfunctions of the operator  in more concrete way.

 \smallskip 
 
 \paragraph{\bf Control on velocity} We also consider the following system 
 \begin{align}\label{lcnse4}
 	\begin{cases}
 		\rho_t+\rho_x +u_{x} =0   &\text{in } (0,T)\times (0,1),\\
 		u_{t} -u_{xx} +u_x +\rho_x =0 &\text{in } (0,T)\times (0,1),\\
 		\rho(t,0)=\rho(t,1)  &\text{for } t\in (0,T) , \\
 		u(t,0)=0, \ \ u(t,1)=q(t)  &\text{for } t\in (0,T) ,\\
 		\rho(0,x)=\rho_0(x),\ \ u(0,x)=u_0(x)  &\text{for } x\in(0,1).
 	\end{cases}
 \end{align}
 with a control $q$ acting on one boundary point through the velocity component $u$. Here  we assume  that  the density of the fluid is equal on the boundary points $x=0$ and $x=1$ w.r.t. time.   In this case, the notion of null-controllability can be written as:
 \begin{definition}
 	The system \eqref{lcnse4}  is  said to be boundary null-controllable at a finite time $T > 0$, if for any given initial state $(\rho_0, u_0)$ from some suitable Hilbert space, there exists a control $q$ such that the solution $(\rho,u)$ to \eqref{lcnse4} can be driven to $0$ at the time $T$, that is, $(\rho(T, x),u(T,x))=(0,0)$ for all  $x\in (0,1)$. 
 \end{definition}

Before going to present the main controllability  results associated to the problem \eqref{main-control-prbm} or \eqref{lcnse4}, let us describe the functional framework for our systems.   

\subsection{Functional spaces}
Let $I\subset\mathbb{R}$ be any non-empty open interval and  $L^2(I)$ be the space of all measurable functions which are square integrable on $I$.
For any $s>0$, we also introduce the following Sobolev space
\begin{equation*}
H^s_{{per}}(I) = \left\{\phi \in L^2(I): \phi = \sum_{m \in \mathbb{Z}} c_me^{\frac{2\pi imx}{\mod{I}}} \text{ and }   \sum_{m\in\mathbb{Z}}\left(1+\frac{4\pi^2m^2}{\mod{I}^2}\right)^s\mod{c_m}^2< +\infty \right\},
\end{equation*}
where $\mod{I}$ denotes the length of the interval $I$, with the norm defined on it by 
\begin{equation}\label{norm-H_s}
\norm{\phi}_{H^s_{{per}}(I)} = \left(\sum_{m\in\mathbb{Z}}\left(1+\frac{4\pi^2m^{2}}{\mod{I}^2}\right)^s\mod{c_m}^2\right)^\frac{1}{2}.
\end{equation}
Each  coefficient $c_m$ is given by
\begin{equation*}
c_m= \int_{I} e^{\frac{2\pi i m x}{|I|}}\phi(x)dx,\ \ \forall m \in\mathbb{Z}.
\end{equation*}
Next,  by  ${H}^{-s}_{{per}}(I)$, we  denote  the dual of ${H}^s_{{per}}(I)$ (with respect to the pivot space $L^2(I)$) and the norm of any element $\psi\in {H}^{-s}_{{per}}(I)$ is   defined by 
\begin{equation}\label{norm-H_(-s)}
\norm{\psi}_{{H}^{-s}_{{per}}(I)} = \left(\sum_{m\in\mathbb Z}\left(1+\frac{4\pi^2m^2}{\mod{I}^2}\right)^{-s}\mod{c_m}^2\right)^{\frac{1}{2}},
\end{equation}
where 
\begin{align*}
c_m= \left \langle \psi(\cdot), e^{\frac{2\pi i m \cdot}{|I|}} \right\rangle_{H^{-s}_{per}(I), H^s_{per}(I)} , \ \  \ \forall m \in\mathbb{Z}.
\end{align*}

\smallskip 

Let us write the underlying operator associated with the control system \eqref{lcnse3}, given by
\begin{equation}\label{op_A}
A=\begin{pmatrix}-\partial_x & -\partial_x\\[4pt]-\partial_x & \partial_{xx}-\partial_x\end{pmatrix},
\end{equation}
with its domain 
\begin{align}\label{domain_A}
D(A)= \Big\{\Phi=(\xi, \eta) \in {H}^1(0,1)\times H^2(0,1) : \xi(0)=\xi(1)  , \ \eta(0)=\eta(1)=0  \Big\}.
\end{align}
The adjoint of the operator $A$ has the following formal expression
\begin{equation}\label{op_A*}
A^*=\begin{pmatrix}\partial_x&\partial_x\\[4pt]\partial_x&\partial_{xx}+\partial_x\end{pmatrix},
\end{equation}
yet with the same  domain $D(A^*)=D(A)$, given by \eqref{domain_A}.

\subsection{Notations} Throughout the paper, $C>0$ denotes the generic constant that may vary from line to line and may depend on $T$. For the Hilbert space $H^s(0,1)$ with $s\geq 0$, $\dot H^s(0,1)$ contains the elements with mean zero, i.e., $\phi \in \dot H^s(0,1)$ means $\phi \in H^s(0,1)$ with $\int_0^1 \phi =0$. 

\subsection{Main results}

This section is devoted to  announce the main results of our present work.

\begin{theorem}[Dirichlet control on density]\label{Thm-control-dens-Diri}
Let be any $s> \frac{1}{2}$. Then,  for any time $T>1$ and any given  initial state $(\rho_0, u_0)\in {{H}}_{\text{per}}^{s}(0,1)\times L^2(0,1)$, there exists a boundary control $h \in L^2(0,T)$ acting through the density component such that the system \eqref{main-control-prbm} is null-controllable at time $T$.  
\end{theorem}

As a corollary of the null-controllability result, we shall prove an approximate controllability of our system when a Dirichlet control is acting on the density part. More precisely, we have the following.
\begin{corollary}\label{Coro-approx}
	The system \eqref{main-control-prbm} is approximately controllable at time $T>1$ in the space $L^2(0,1)\times L^2(0,1)$.
\end{corollary}

As indicated earlier, to prove \Cref{Thm-control-dens-Diri}, we first prove the null-controllability of the system \eqref{lcnse3} with a modified boundary condition.  The associated result is given below.
\begin{theorem}[Controllability of the auxiliary system]\label{Thm-control-dens}
Let be any $s> \frac{1}{2}$. Then,  for any time $T>1$ and any given  initial state $(\rho_0, u_0)\in {{H}}_{\text{per}}^{s}(0,1)\times L^2(0,1)$, there exists a boundary control $p \in L^2(0,T)$ acting on the density component such that the system \eqref{lcnse3} is null-controllable at time $T$.  
\end{theorem}

To prove the above theorem, we shall use the so-called moments technique and the heart of which is the detailed spectral analysis of the adjoint operator associated with the system \eqref{lcnse3}.  We split this spectral study into two sections for lesser difficulty to read.   Sections \ref{Sec-Spectral-short} and \ref{Spectral-Details} are devoted for the  spectral analysis of the concerned operator.  The significant point  is that there are three sets of eigenvalues: a parabolic part that contains the eigenvalues $\lambda$ such that asymptotically  $\Re(\lambda)$ behaves like $-k^2\pi^2$ ($k\in \mathbb N$) while $\Im(\lambda)$ is bounded; a hyperbolic part that contains eigenvalues  $\lambda$ such that asymptotically $\Im(\lambda)$ behaves like $2k\pi$ ($k\in \mb Z$) while $\Re(\lambda)$ is a convergent sequence;  and a finite set of lower frequencies.

With this spectrum  of $A^*$,  we solve  a parabolic-hyperbolic mixed moments problem adapting a result of \cite{Hansen94} and this will prove the null-controllability of the system \eqref{lcnse3} when a control is acting on density.

Indeed, thanks to the mentioned properties of the spectrum of the adjoint operator, we prove a parabolic-hyperbolic combined Ingham-type inequality in our paper, which has its  independent interest. As a consequence, we shall prove a null-controllability result (partially)  when a control function is being applied on the velocity part. 

We have the following result.   

\begin{proposition}[A combined Ingham-type inequality]\label{prop_ingham}
	Let $\{\lambda_k\}_{k\in \mathbb N^*}$ and $\{\gamma_k\}_{k\in \mathbb Z}$ be two sequences in $\mathbb C$ with the following properties: there is $N\in \mathbb N^*$, such that 
	\begin{itemize}
		\item[(i)] for all $k, j\in \mathbb Z$,  $\gamma_k\neq\gamma_j$ unless $j=k$ ;
		\item[(ii)] $\gamma_k = \beta + 2k\pi i + \nu_k$  for all   $|k|\geq N$; 
	\end{itemize}
	where $\beta \in \mathbb C$ and $\{\nu_k\}_{|k|\geq N}\in\ell^2$.  
	
	Also, there exists  constants  $A_0\geq 0$, $B_0\geq \delta$ with $\delta>0$ and some $\epsilon>0$ for which $\{\lambda_k\}_{k\in \mathbb N^*}$ satisfies
	\begin{itemize}
		\item[(i)] for all $k, j\in \mb N^*$, $\lambda_k\neq\lambda_j$ unless $j=k$;
		\item[(ii)] $\frac{-\Re(\lambda_k)}{|\Im(\lambda_k)|}\geq \widehat c$ for some $\widehat c>0$ and $k\geq N$;	 
		\item[(iii)] there exists some $r>1$ such that $\mod{\lambda_k- \lambda_j}\geq\delta\mod{k^r-j^r}$ for all $k\neq j$ with $k,j\geq N$ and
		\item[(iv)] $\epsilon(A_0+B_0 k^r) \leq \mod{\lambda_k}\leq A_0 + B_0 k^r$ for all  $k\geq N$.
	\end{itemize}
	We also assume that	the families are disjoint, i.e.,  $$ \left\{\gamma_k, k\in \mathbb{Z}\right\} \cap \left\{\lambda_k, k\in \mathbb{N}^*\right\}=\emptyset. $$ 
	
	Then,	for any time $T>1$, there exists a positive constant $C$ depending only on $T$ such that
	\begin{align}\label{Ingham-ineq}
		\int_0^T \left|\sum_{k\in \mathbb N^*}  a_k e^{\lambda_k t} + \sum_{k\in \mathbb Z} b_k e^{\gamma_k t} \right|^2 dt \geq C\left(\sum_{k\in \mathbb N^*}| a_k|^2 e^{2\Re(\lambda_k) T} + \sum_{k\in \mathbb Z}| b_k|^2 \right),
	\end{align}
	for all sequences $\{a_k\}_{k\in \mathbb N^*}$ and $\{b_k\}_{k\in \mathbb Z}$ in $\ell^2$.
\end{proposition}

\begin{remark}
	The Ingham-type inequality \eqref{Ingham-ineq} is similar to the one in  \cite[Lemma 4.1]{Zhang-Zuazua-1} or \cite[Lemma 4.1]{Zhang-Zuazua-2} and \cite[Theorem 1.1]{Komornik-Ingham} but for different sequences $\lambda_k, \gamma_k$. In our case, we assume some general assumptions on the sequences and  as far as we know,  no direct proof of the inequality \eqref{Ingham-ineq} is available  in the literature.
\end{remark}

Although the above Ingham-type inequality \eqref{Ingham-ineq} is not really helpful to determine the controllability of the system when a control exerts on the density part but it gives some partial result when a control input is acting on the velocity part. 

The main result concerning the control on velocity is the following.
\begin{theorem}[Dirichlet Control on velocity]\label{thm_control_veloct}
	Let be any $s> \frac{1}{2}$. Then, there exists a subspace $\mathcal H \subset H^s_{per}(0,1)\times L^2(0,1)$ of finite codimension such that for any time $T>1$ and any given  initial state $(\rho_0, u_0)\in \mathcal H$, there exists a boundary null-control $q \in L^2(0,T)$ acting on the velocity component  such that the system \eqref{lcnse4} is null-controllable at time $T$.  
\end{theorem} 
The space $\mathcal H$ is defined later in \eqref{Space-V}.  
To prove the above result, we use the Ingham-type inequality \eqref{Ingham-ineq}. In this case, the moments technique does not give a sharper result in the sense that we need larger values of $s$ to prove the subspace controllability.  

The reason behind such restriction to the subspace $\mathcal H$ is the following. In this case, we can only able to show that for higher frequencies of eigenvalues, the eigenfunctions are observable and the observation terms have the required lower bounds. But for lower frequencies, it is hard to conclude anything. Thus, we have to omit the span of those finite number of eigenmodes from the main Hilbert space. However, this difficulty does not occur when a control is acting on the density part and thus, in that case we obtain the required  controllability result in the full space.

\subsection{Literature on the controllability results related to the  compressible Navier-Stokes equations}\label{Section-Literature}

In the past few years, the controllability  of the  compressible and incompressible fluids have  become a very significant topic to the control community.    Fern\'andez-Cara et al. \cite{Fernandez04} proved the local exact distributed controllability  of the Navier-Stokes system when a control is supported in a small open set; see also the references therein.  A local null-controllability result of $3$D Navier-Stokes system with distributed control having two vanishing components has been addressed in \cite{Coron14} by J.-M. Coron and P. Lissy. Badra et al. \cite{Badra16} proved the local exact controllability to the trajectories  for non-homogeneous (variable density) incompressible $2$D Navier-Stokes equations using boundary controls for both density and velocity.

In the case of compressible Navier-Stokes equations, we first mention the work  by  E. V. Amosova \cite{Amosova11} where she considered a compressible viscous fluid in $1$D w.r.t. the  Lagrangian coordinates with zero boundary condition on the velocity and an interior control acting on the velocity equation. She proved a local exact controllability result when the initial density is already on the targeted trajectory. 

Ervedoza et al.  \cite{Ervedoza12} proved a local exact controllability result for the $1$D compressible Navier-Stokes system  in a bounded domain $(0,L)$  for  regular initial data in $H^3(0,L)\times H^3(0,L)$ with two boundary controls, when time is large enough. This result has been improved in \cite{Ervedoza18} by choosing the initial data from $H^1(0,L)\times H^1(0,L)$; see also a generalized result \cite{Ervedo16} for   dimensions $2$ and $3$. 

We also refer that Chowdhury, Ramaswamy and Raymond \cite{Chowdhury12} established a null-controllability and stabilizability result  of a linearized (around a constant steady-state $(Q_0,0)$, $Q_0>0$) $1$D compressible Navier-Stokes equations. The authors proved that their system is  null-controllable for regular initial data by a distributed control acting everywhere in the velocity equation. Their result is proved to be sharp in the following sense:  the null-controllability cannot be achieved by a localized interior control (or by a boundary control) acting on the velocity part. 

Martin, Rosier and Rouchon in \cite{Martin13} considered the  wave equation with structural damping in $1$D; using the spectral analysis and method of moments, they obtained that their equation is null-controllable with a moving distributed control for regular initial conditions in $H^{s+2}\times H^s$ for $s>15/2$ at sufficiently large time. See also \cite{Chaves14} for the higher dimensional case. 

The $1$D compressible Navier–Stokes equations linearized around a constant steady state  with periodic boundary conditions is closely related to the structurally damped wave equation studied in \cite{Martin13}. Chowdhury and Mitra \cite{Chowdhury15b} studied the interior null-controllability of the linearized (around constant steady state $(Q_0, V_0)$, $Q_0 > 0, V_0 > 0$) $1$D compressible Navier–Stokes system   with periodic boundary conditions. Following the approach of  \cite{Martin13}, the authors  \cite{Chowdhury15b} established that their system is null-controllable by a localized interior control when the time is large enough, and for regular initial data in $\dot{H}^{s+1}_{per}\times H^{s}_{per}$ with $s > 13/2$.   They also achieved that the system is approximately controllable in $\dot{L}^2\times L^2$ using a localized interior control and  is null-controllable using periodic boundary control with regular initial data $\dot H^{s+1}_{per}\times H^s_{per}$ for $s> 9/2$.

In \cite{Chowdhury14}, Chowdhury, Mitra, Ramaswamy and Renardy   considered the one-dimensional compressible Navier–Stokes equations linearized around a constant steady state $(Q_0 , V_0 ), Q_0 > 0, V_0 > 0,$ with homogeneous periodic boundary conditions in the interval $(0, 2\pi)$.  They proved that the linearized system with homogeneous periodic boundary conditions is null controllable in  $\dot{H}_{per}^1 \times L^2$ by a localized interior control when the time $T>\frac{2\pi}{V_0}$.  Moreover, in their work the distributed null-controllability result in $\dot{H}_{per}^1 \times L^2$ is sharp in the sense that the controllability fails in $\dot{H}_{per}^s \times L^2$ for any  $0\leq s<1$. As usual, the large time for controllability is needed due to the presence of transport part and indeed, the  null-controllability fails for small time, see \cite{Debayan15}.  

Chowdhury \cite{Chowdhury15} considered same linearized Navier–Stokes system around $(Q_0, V_0)$ with $Q_0 >0, V_0 > 0$ in $(0, L)$ with homogeneous Dirichlet boundary conditions and an interior control acting only on the velocity equation on a open subset $(0,l)\subset (0,L)$.  He proved  the  approximate controllability of the linearized system in $L^2(0,L)\times L^2(0,L)$ with a localized control in $L^2(0,T;L^2(0,l))$ when $T>\frac{L-l}{V_0}$. 

In the context of the controllability of  coupled transport-parabolic system (which is the main feature of linearized compressible Navier-Stokes equations), we must mention the work \cite{Lebeau98} by Lebeau and Zuazua where the distributed null-controllability of Thermoelasticity system has been  studied. More recently, Beauchard, Koenig and  Le Balc'h \cite{Beauchard20} considered the linear parabolic-transport system with constant coefficients and coupling of order zero and one with locally distributed controls  posed on the one-dimensional torus $\mathbb T$. Following the approach of Lebeau and Zuazua \cite{Lebeau98}, they proved the null-controllability at sufficiently large time when there are as many controls as equations. On the other hand, when the control acts only on the transport (resp. parabolic) component, they obtained  an algebraic necessary and sufficient condition on the coupling term for the null-controllability, and  their controllability studies  based on a detailed spectral analysis. According to the more general result established in \cite{Beauchard20}, we can say that for a $2\times 2$ coupled parabolic-transport system (with periodic boundary conditions), the  null-controllability with one localized interior control  holds true  in $L^2(\mathbb T)\times \dot L^2(\mathbb T)$  (resp. in $\dot H^2(\mb T)\times H^2(\mb T)$) when the control acts only on the transport (resp. parabolic) component. 

For the one-dimensional compressible Navier-Stokes system of non-barotropic fluids, linearized around a constant steady state with null velocity, the lack of null controllability by localized interior or boundary control is proved in \cite{Debayan15}. The same occurs in a short time for the steady states with non-zero velocity. One may find few stabilization results for linearized compressible Navier-Stokes system available in \cite{Arfaoui11}, \cite{Chowdhury12, Chowdhury15a, Chowdhury21}, \cite{Mitra15, Mitra17}.

\subsection{Our approach and achievement of the present work}

As mentioned earlier, in compressible Navier-Stokes system, the interesting feature is the coupling between first order transport equation and second order momentum equation of parabolic type. Thermoelastic systems, viscoelastic fluid models are also examples of coupled system involving both transport and parabolic effects. Note that the transport equation is of first order (lack of regularity occurs) and it has Dirichlet condition only a part of the boundary in the system \eqref{main-control-prbm}. This makes our system  more complicated to handle compare to the case of coupled  heat-wave equations. Because of that we need to consider the auxiliary control problem \eqref{lcnse3} which allows us to obtain the spectral analysis of the associated adjoint operator in a concrete  way, though it is intricate to study. 

It was shown in \cite{Chowdhury14, Chowdhury15b} that the linearized compressible Navier-Stokes system with Periodic-Periodic boundary conditions, there is a sequence of generalize eigenfunctions of the associated adjoint operator that forms a Riesz Basis for the state Hilbert space. The success in obtaining this result lies in the simplicity of the corresponding characteristic equations as well as the explicit structure of all eigenfunctions in terms of Fourier basis. 

However, for our operator $(A^*, D(A^*))$ (defined in \eqref{op_A*}), the characteristic equation is a third order ODE and the eigenvalue equation is a non-standard transcendental equation which are difficult to analyze. In fact,  the method  (invariant subspace idea) used in \cite{Chowdhury14, Chowdhury15b} is not practically applicable to this case. But, we manage to compute the eigenfunctions and eigenvalues at least asymptotically.  Then,  by using  an abstract result of {\em perturbation of Riesz basis} due to B.-Z. Guo \cite{Bao1}, we are able to develop the Riesz Basis property of our set of eigenfunctions.

To study the boundary controllability, we mention that the usual extension method  is not really convenient for the Navier-Stokes system. This is because when we put one interior control in the system upon extending the domain, then restricting the solution on the boundary will give rise to two boundary controls for the system.  In this context, we refer some  earlier null-controllability results \cite{Mitra17}, \cite{Ervedoza12}, \cite{Ervedoza18} with  one interior control or two boundary controls both for density and velocity.

The main interest in the present work is that we directly handle the boundary controllability with only one control acting on the density or velocity part. Moreover, using the moments method, we achieve the null-controllability for our system \eqref{lcnse3} (consequently, \eqref{main-control-prbm})  starting with the initial space $H^s_{per}(0,1)\times L^2(0,1)$ for $s>1/2$ which is a good gain of this work compare to the existing works of similar setting as mentioned in Section \ref{Section-Literature}. Although, when a control acts on the velocity part, our limitation is that we can achieve the null-controllability on a strict subspace (of finite codimension) $\mathcal H \subset H^s_{per}(0,1)\times L^2(0,1)$ for $s>1/2$. The   joint parabolic-hyperbolic Ingham-type inequality in \Cref{prop_ingham} will help us to deduce this result. 
In any case, to the best of our knowledge, the null-controllability with a boundary control in density/ velocity through Dirichlet condition has been handled for the first time in this work. 

\subsection{Paper organization}

The paper is organized as follows. 
\begin{enumerate}
\item[--] In \Cref{Section-well-posedness}, we discuss the well-posedness results of the main systems and some associated  results have been proved in  the Appendix.

\item[--] We split the spectral analysis for the associated adjoint operator into two sections for the ease of reading.  \Cref{Sec-Spectral-short} contains a short description of the spectral properties whereas  the detailed  analysis with the main proofs are prescribed in \Cref{Spectral-Details}.	

\item[--] In \Cref{Approx-con}, we obtain the lower bounds for the observation terms which are crucial to determine the null-controllability for the system \eqref{lcnse3} or \eqref{lcnse4}. 
 
\item[--] Then, \Cref{Section-Moments} is devoted to prove the null-controllability of the system \eqref{lcnse3}, that is \Cref{Thm-control-dens} (and consequently of \eqref{main-control-prbm}, i.e., \Cref{Thm-control-dens-Diri}) using the method of moments. To be precise, we solve a set of  mixed parabolic-hyperbolic moments problem to obtain the required result. Then, we prove the approximate controllability result, namely \Cref{Coro-approx} in Section \ref{sec-aprox}.

\item[--] In Section \ref{sec_ingham}, we prove the joint Ingham-type inequality given by \Cref{prop_ingham} and as a consequence, we have shown the partial null-controllability result for the system \eqref{lcnse4} when a control acts on the velocity, namely \Cref{thm_control_veloct}.

\item[--] Finally, the appendix contains several useful results related to our work. In \Cref{Appendix-well-posed}, we give the proof of the well-posedness results for our control problem. In \Cref{Appendix-A3}, we prove that the resolvent to the adjoint operator $A^*$ (or, the operator $A$) associated to our control system is compact. To this end, in \Cref{Appendix-Han}, we recall some important results from the work \cite{Hansen94} regarding the solvability of mixed parabolic-hyperbolic moments problem.  \Cref{Appendix-D} contains some hidden regularity result for the system \eqref{lcnse3} (namely, showing that $\rho(t,1)\in L^2(0,T)$) which allows us to show the controllability of the main system \eqref{main-control-prbm} once we have the controllability for the system \eqref{lcnse3}. 
\end{enumerate}

\section{Well-posedness of the system}\label{Section-well-posedness}
\medskip 
Let us first recall the operator $A^*$ defined by \eqref{op_A*}. Then, we write the adjoint system associated to the control problem \eqref{lcnse3}: let $(\sigma, v)$ be the adjoint state and the system reads as
\begin{equation}\label{lcnse_adjoint}
\begin{cases}
-\sigma_t - \sigma_x - v_{x}=f   &\text{in } (0,T)\times (0,1),\\
-v_{t} - v_{xx} - v_x - \sigma_x = g &\text{in } (0,T)\times (0,1),\\
\sigma(t,0)=\sigma(t,1)     &\text{for } t\in(0,T), \\
v(t,0)=v(t,1)=0            &\text{for } t\in(0,T), \\
\sigma(T,x)= \sigma_T(x),\ \ v(T,x)= v_T(x)  &\text{for } x\in(0,1).
\end{cases}
\end{equation}
Shortly, one may express it by
\begin{align}\label{adj_lcnse}
-V^\prime(t) =A^* V(t) + F(t) , \ \ \forall t\in (0,T), \quad V(T)=V_T,
\end{align}
where the state is $V:=(\sigma, v)$,  given final data is  $V_T:=(\sigma_T,v_T)$ and source term is $F:=(f,g)$. 

To show the well-posedness of the solution to \eqref{lcnse3}, let us first write the following lemma.
\begin{lemma}\label{lemma_wellposedness}
The operator $A$ (resp. $A^*$) is maximal dissipative in $L^2(0,1)\times L^2(0,1)$, that is, $(A,{D}(A))$ (resp. $(A^*, D(A^*))$) generates a strongly continuous semigroup of contractions in $L^2(0,1)\times L^2(0,1)$. 
\end{lemma}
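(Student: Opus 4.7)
The plan is to apply the Lumer--Phillips theorem in its symmetric (Phillips) form: if $A$ is closed, densely defined, and both $A$ and $A^*$ are dissipative, then $A$ is maximal dissipative and generates a $C_0$ semigroup of contractions. This bypasses a direct resolvent calculation for the coupled third-order system and simultaneously delivers the result for $A^*$.

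First I would verify density and closedness. The domain $D(A)$ contains every pair $(\xi,\eta)$ with $\xi\in C^\infty([0,1])$ satisfying $\xi(0)=\xi(1)$ and having zero mean, together with $\eta\in C_c^\infty(0,1)$; such pairs are dense in the natural state space $\dot L^2(0,1)\times L^2(0,1)$ (and in $L^2(0,1)\times L^2(0,1)$ if one includes the constants). For closedness, given a sequence $\Phi_n=(\xi_n,\eta_n)\in D(A)$ converging in $L^2\times L^2$ with $A\Phi_n$ also converging, the relation $\xi_n^\prime=-\eta_n^\prime-(A\Phi_n)_1$ combined with $\eta_n^{\prime\prime}=(A\Phi_n)_2+\xi_n^\prime+\eta_n^\prime$ yields uniform $H^1$ control on $\xi_n$ and $H^2$ control on $\eta_n$; the limit therefore lies in $D(A)$ with the predicted image, by continuity of the trace.

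Next I would compute dissipativity. For $\Phi=(\xi,\eta)\in D(A)$, integration by parts gives
\begin{equation*}
\langle A\Phi,\Phi\rangle=-\int_0^1\xi^\prime\bar\xi\,dx-\int_0^1\eta^\prime\bar\xi\,dx-\int_0^1\xi^\prime\bar\eta\,dx+\int_0^1\eta^{\prime\prime}\bar\eta\,dx-\int_0^1\eta^\prime\bar\eta\,dx.
\end{equation*}
Under the boundary conditions $\xi(0)=\xi(1)$ and $\eta(0)=\eta(1)=0$, the real parts of $-\int\xi^\prime\bar\xi$ and $-\int\eta^\prime\bar\eta$ reduce to vanishing boundary contributions $-\tfrac12(|\xi(1)|^2-|\xi(0)|^2)$ and $-\tfrac12(|\eta(1)|^2-|\eta(0)|^2)$; the two cross terms $-\int\eta^\prime\bar\xi$ and $-\int\xi^\prime\bar\eta$ are complex conjugates of each other modulo further boundary contributions that again vanish, so their sum is purely imaginary; and $\int\eta^{\prime\prime}\bar\eta=-\int_0^1|\eta^\prime|^2\,dx$. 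Consequently $\mathrm{Re}\,\langle A\Phi,\Phi\rangle=-\norm{\eta_x}_{L^2}^2\le 0$. An identical calculation for $A^*$ (whose transport coefficients carry the opposite sign, which again cancels in real part) produces $\mathrm{Re}\,\langle A^*\Psi,\Psi\rangle=-\norm{\psi_x}_{L^2}^2\le 0$ for $\Psi=(\phi,\psi)\in D(A^*)=D(A)$.

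The principal technical subtlety is that the precise choice of boundary conditions baked into $D(A)$---periodic matching on the density component paired with homogeneous Dirichlet on the velocity component---is exactly what is needed to make the transport operators antisymmetric at the boundary; any other combination would leave a residual surface term destroying dissipativity. Once the above verifications are in place, Phillips' theorem furnishes the maximal dissipativity of both $A$ and $A^*$, hence their generation of strongly continuous contraction semigroups, as claimed.
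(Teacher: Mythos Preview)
Your dissipativity computation is correct and coincides with the paper's Part~1. The genuine difference is in how maximality is obtained. The paper does not invoke Phillips' theorem; instead it solves the resolvent equation $(\lambda I-A)(\rho,u)=(f,g)$ directly by an elliptic regularisation: it adds a term $-\epsilon\rho_{xx}$ (with an extra periodic condition on $\rho_x$) to the density equation, solves the resulting coercive system by Lax--Milgram on the space $V=\{(\rho,u)\in H^1\times H^1:\rho(0)=\rho(1),\,u(0)=u(1)=0\}$, derives $\epsilon$-uniform bounds, and passes to the weak limit $\epsilon\to 0$ to recover a solution in $D(A)$. Your route via Phillips is shorter and more symmetric---it delivers the result for $A$ and $A^*$ simultaneously and avoids the regularisation/limit procedure entirely---whereas the paper's approach is more constructive, producing the resolvent explicitly. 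One caveat: Phillips requires dissipativity of the true Hilbert-space adjoint, so your argument implicitly relies on the paper's (unproved) assertion that $D(A^*)=D(A)$ with the stated formal expression; the paper's direct resolvent construction does not need this identification.
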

The proof of Lemma \ref{lemma_wellposedness} can be done in a standard fashion. For the sake of completeness, we give the proof  in Appendix  \ref{Appendix-A1}.  

To prove  the existence and uniqueness of solution to the control system \eqref{lcnse3}, it is enough to consider $V_T=(0,0)$ in \eqref{adj_lcnse}. With this, we first write the following result concerning the existence of unique solution to the adjoint system \eqref{adj_lcnse}.
\begin{proposition}\label{Prop-Adjoint}
For any given  $F:=(f,g)\in L^2(0,T; L^2(0,1))\times L^2(0,T;L^2(0,1))$, there exists a unique weak solution $V:=(\sigma, v)$ to the system  \eqref{adj_lcnse}  in the space $L^2(0,T;L^2(0,1))\times L^2(0,T;H^1_0(0,1))$. 

Moreover, the map $F\mapsto V$ is linear and continuous from $L^2(0,T; L^2(0,1))\times L^2(0,T;L^2(0,1))$ to \\  $\mathcal{C}([0,T];L^2(0,1))\times [\mathcal{C}([0,T];L^2(0,1))\cap L^2(0,T; H^1_0(0,1))]$.
\end{proposition}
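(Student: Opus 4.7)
The plan is to combine a time-reversal with the semigroup theory of \Cref{lemma_wellposedness} and then improve the regularity of the velocity component through an energy estimate. Setting $W(t) := V(T-t)$ and $\widetilde F(t) := F(T-t)$, and taking $V_T = 0$ (which is enough for the well-posedness of the control system \eqref{lcnse3}), the backward problem \eqref{adj_lcnse} becomes the forward inhomogeneous Cauchy problem
\[
W'(t) = A^* W(t) + \widetilde F(t), \qquad W(0) = 0.
\]
By \Cref{lemma_wellposedness}, $(A^*, D(A^*))$ generates a $C_0$-semigroup of contractions $\{S^*(t)\}_{t \geq 0}$ on $L^2(0,1) \times L^2(0,1)$. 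The classical theory of inhomogeneous linear Cauchy problems then provides, for any $\widetilde F \in L^1(0,T; L^2(0,1)^2)$, a unique mild solution
\[
W(t) = \int_0^t S^*(t-s)\, \widetilde F(s)\, ds \; \in \; \mathcal{C}\bigl([0,T]; L^2(0,1) \times L^2(0,1)\bigr),
\]
whose $\mathcal{C}([0,T]; L^2)$-norm is controlled linearly by $\|\widetilde F\|_{L^1(0,T;L^2)}$, hence by $\sqrt{T}\,\|\widetilde F\|_{L^2(0,T;L^2)}$. Reversing time gives the existence, uniqueness and $L^2$-continuous dependence of $V \in \mathcal{C}([0,T]; L^2(0,1) \times L^2(0,1))$.

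To upgrade $v$ to $L^2(0,T; H^1_0(0,1))$, I would first assume $F \in \mathcal{C}^1([0,T]; L^2(0,1)^2)$, so that $V$ is a strong solution and the following manipulations are rigorous. Multiplying the first equation of \eqref{lcnse_adjoint} by $\sigma$, the second by $v$, integrating over $x \in (0,1)$ and summing, the contributions $[\sigma^2]_0^1$ from $\int \sigma_x \sigma$ vanish by the periodicity condition $\sigma(t,0)=\sigma(t,1)$, the contributions $[v_x v]_0^1$ and $[v^2]_0^1$ vanish by the Dirichlet condition on $v$, and the two coupling terms $-\int \sigma_x v$ and $-\int v_x \sigma$ cancel after one integration by parts (again using $v(t,0)=v(t,1)=0$). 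What remains is the identity
\[
-\frac{1}{2} \frac{d}{dt} \bigl(\|\sigma\|_{L^2}^2 + \|v\|_{L^2}^2\bigr) + \|v_x\|_{L^2}^2 = \int_0^1 (f\sigma + g v)\, dx.
\]
Integrating from $t$ to $T$ (with $V_T=0$) and applying the Cauchy-Schwarz and Gronwall inequalities then yields
\[
\|\sigma(t)\|_{L^2}^2 + \|v(t)\|_{L^2}^2 + \int_t^T \|v_x(s)\|_{L^2}^2\, ds \;\leq\; C(T)\, \|F\|_{L^2(0,T; L^2(0,1)^2)}^2, \qquad \forall\, t \in [0,T].
\]
A standard density argument then transfers this estimate, together with the claimed regularity $v \in L^2(0,T; H^1_0(0,1))$ and linear-continuous dependence on $F$, from $\mathcal{C}^1([0,T]; L^2)$ to the full space $L^2(0,T; L^2(0,1)^2)$.

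The main technical point I expect to have to watch is the exact handling of the first-order transport and coupling terms in the energy identity under the mixed boundary conditions (periodic on $\sigma$, Dirichlet on $v$): without precisely this combination of conditions, one or more boundary contributions would fail to cancel and the clean $L^2$ energy estimate above would be lost. This cancellation is the same mechanism underlying the dissipativity of $A^*$ already established in \Cref{lemma_wellposedness}, so I expect the detailed verification to be essentially a repetition of that argument and would defer it to \Cref{Appendix-well-posed}.
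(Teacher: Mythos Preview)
Your approach is correct and is precisely the standard route the paper has in mind: in the text, the authors do not actually prove this proposition but simply note that, once the semigroup generated by $(A^*,D(A^*))$ is available from \Cref{lemma_wellposedness}, the argument can be adapted from \cite[Chap.~IV, Sec.~4.3]{Girinon08} and \cite{Chowdhury13}, and they omit the details. Your time-reversal to a forward Cauchy problem, mild-solution construction, energy identity (with the boundary cancellations you describe, which indeed reproduce the dissipativity computation of \Cref{lemma_wellposedness}), and density argument are exactly the ingredients of that omitted proof.
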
 
Once we have the existence of semigroup defined by $(A^*, D(A^*))$ (as per Lemma \ref{lemma_wellposedness}), then the proof of the above proposition can be adapted from the work \cite[Chap IV, Sec. 4.3]{Girinon08}. We omit the details here.

Now, we can define the notion of solutions to the control systems \eqref{lcnse3} and \eqref{lcnse4} in the sense of transposition (see for instance \cite{Coron07})  where a non-trivial boundary source term is appearing.   
\begin{definition}\label{Defi-solu-1}
	\begin{itemize} 
		\item 	For given initial state $U_0:=(\rho_0, u_0)\in L^2(0,1)\times L^2(0,1)$ and boundary data $p\in L^2(0,T)$, a function ${U}:= (\rho, u)\in L^2(0,T; L^2(0,1))\times L^2(0,T;L^2(0,1))$ is a solution to the system \eqref{lcnse3} if for  any given ${F}:=(f,g) \in L^2(0,T; L^2(0,1))\times L^2(0,T;L^2(0,1))$, the following identity holds true: 
		\begin{align*} 
			\int_{0}^{T}\int_{0}^{1}\rho(t,x)f(t,x)dxdt+\int_{0}^{T}\int_{0}^{1}u(t,x)g(t,x) dx dt =\ip{U_0(\cdot)}{{V}(0,\cdot)}_{L^2\times L^2} + \int_{0}^{T}\sigma(t,1) p(t) dt,
		\end{align*}
		where $V:=(\sigma,v)$ is the unique weak solution to the adjoint system \eqref{adj_lcnse} with $V_T=(0,0)$.	
		
		\smallskip 
		
		\item 		For given initial state $U_0:=(\rho_0, u_0)$ and boundary data $q\in L^2(0,T)$,	a function ${U}:= (\rho, u)\in L^2(0,T;(H^1(0,1))^{\prime})\times L^2(0,T;L^2(0,1))$ is a solution to the system \eqref{lcnse4} if for  any given ${F}:=(f,g) \in L^2(0,T; H^1(0,1))\times L^2(0,T;L^2(0,1))$, the following identity holds true: 
	\begin{multline*} 
		\int_{0}^{T}\ip{\rho(t,\cdot)}{f(t,\cdot)}_{(H^1)^{\prime},H^1} dt+\int_{0}^{T}\int_{0}^{1}u(t,x)g(t,x) dx dt\\ =\ip{U_0(\cdot)}{{V}(0,\cdot)}_{L^2\times L^2} + \int_{0}^{T}\big[\sigma(t,1)+v_x(t,1)\big] q(t) dt,
	\end{multline*}
	where $V:=(\sigma,v)$ is the unique weak solution to the adjoint system \eqref{adj_lcnse} with $V_T=(0,0)$. 
	\end{itemize} 
\end{definition}

Let us state the following theorems that concern the existence and uniqueness of solution to the control problems \eqref{lcnse3} and \eqref{lcnse4}.  
\begin{theorem}\label{Thm-existnce-control-sol}
For every $p\in L^2(0,T)$ and every ${U}_0:=(\rho_0, u_0)\in L^2(0,1)\times L^2(0,1)$, the system \eqref{lcnse3} has a unique weak solution ${U}:=(\rho, u)$ belonging to the space $L^2(0,T;L^2(0,1))\times L^2(0,T;L^2(0,1))$ in the sense of transposition and the operator defined by
\begin{equation*}
({U}_0,p) \mapsto {U}({U}_0,p),
\end{equation*}
is linear and continuous from $(L^2(0,1)\times L^2(0,1))\times L^2(0,T)$ into $L^2(0,T;L^2(0,1))\times L^2(0,T;L^2(0,1))$.
	
Moreover, the solution satisfies the following regularity result,
\begin{align}\label{regularity} 
(\rho, u) \in \mathcal C^0 ([0,T]; L^2(0,1)) \times  [ \mathcal C^0([0,T];L^2(0,1)) \cap L^2(0,T;H^1_0(0,1)) ]
\end{align}  
with the  estimate
\begin{multline}\label{continuity_estimate}
	\|\rho\|_{\mathcal C^0([0,T]; L^2(0,1))} + \|u\|_{\mathcal C^0([0,T]; L^2(0,1)) \cap L^2(0,T; H^1_0(0,1))} \\
\leq C\left(\norm{\rho_0}_{L^2(0,1)}+\norm{u_0}_{L^2(0,1)}+\norm{p}_{L^2(0,T)}\right),
\end{multline}
for some constant $C>0$. 
\end{theorem}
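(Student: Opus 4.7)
The plan is to construct the solution $U=(\rho,u)$ by the transposition method, leveraging \Cref{Prop-Adjoint} which already provides a continuous solution map for the adjoint system \eqref{adj_lcnse}. For any source $F=(f,g)\in L^2(0,T;L^2(0,1))^2$, let $V=(\sigma,v)$ be the unique weak solution of \eqref{adj_lcnse} with terminal condition $V_T=(0,0)$. I would then introduce the linear form
\begin{equation*}
\Lambda(F) := \ip{U_0}{V(0,\cdot)}_{L^2\times L^2} + \int_0^T \sigma(t,1)\,p(t)\,dt
\end{equation*}
on $L^2(0,T;L^2(0,1))^2$. If I can show that $\Lambda$ is continuous on this space, the Riesz representation theorem produces a unique $U\in L^2(0,T;L^2(0,1))^2$ satisfying $\Lambda(F)=\ip{U}{F}$, which is by construction the transposition solution in the sense of \Cref{Defi-solu-1}. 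Linearity and continuity of $(U_0,p)\mapsto U$ are then immediate from the linearity of $\Lambda$ in its ingredients, and uniqueness follows because any two transposition solutions would give the same value to all test data $F$.

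The main obstacle, and the reason I would spend the bulk of the effort on it, is the hidden trace regularity
\begin{equation*}
\|\sigma(\cdot,1)\|_{L^2(0,T)} \leq C\,\|F\|_{L^2(0,T;L^2(0,1))^2},
\end{equation*}
without which $\Lambda$ does not even make sense as written. To prove it, I would use the multiplier method on the first equation of \eqref{adj_lcnse}: multiply by $x\sigma$ (or, if needed, by $\psi(x)\sigma$ with $\psi\in\mathcal{C}^1([0,1])$, $\psi(0)=0$, $\psi(1)=1$) and integrate over $(0,1)\times(0,T)$. Integration by parts on the transport term $\sigma_x$ generates a boundary contribution $\tfrac12\int_0^T\sigma(t,1)^2\,dt$ at $x=1$, while the trace at $x=0$ is killed by the weight; this is crucial since $\sigma$ only satisfies the periodic-type condition $\sigma(t,0)=\sigma(t,1)$, not a Dirichlet condition. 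The cross term in $v_x$ is absorbed by Cauchy--Schwarz using $v\in L^2(0,T;H^1_0(0,1))$, and the temporal term produces $\|\sigma(0,\cdot)\|_{L^2}^2$ which is controlled by the energy estimate of \Cref{Prop-Adjoint}. All terms not involving the sought trace are then dominated by $\|F\|_{L^2(0,T;L^2)^2}$, yielding the estimate; together with the $L^2$-continuity of $V(0,\cdot)$ in $F$, this delivers continuity of $\Lambda$ with
\begin{equation*}
|\Lambda(F)| \leq C\bigl(\|\rho_0\|_{L^2}+\|u_0\|_{L^2}+\|p\|_{L^2(0,T)}\bigr)\,\|F\|_{L^2(0,T;L^2)^2}.
\end{equation*}

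It remains to upgrade from $L^2(0,T;L^2)^2$ to the regularity class \eqref{regularity} with the estimate \eqref{continuity_estimate}. For this I would argue by density: approximate $(\rho_0,u_0,p)$ by smooth data $(\rho_0^n,u_0^n,p^n)$ for which a strict solution exists thanks to the $C_0$-semigroup generated by $(A,D(A))$ from \Cref{lemma_wellposedness} combined with a standard lifting of the boundary datum $p^n$ (choose $\widetilde\rho^n$ smooth with $\widetilde\rho^n(t,0)-\widetilde\rho^n(t,1)=p^n(t)$ and solve the resulting inhomogeneous Cauchy problem). Standard energy estimates on \eqref{lcnse3} for these smooth data then give $\rho^n\in\mathcal{C}^0([0,T];L^2)$ and $u^n\in\mathcal{C}^0([0,T];L^2)\cap L^2(0,T;H^1_0)$, the $H^1_0$ part coming from testing the momentum equation against $u^n$ and using the viscous term $-u^n_{xx}$ together with the homogeneous Dirichlet conditions on $u^n$; the coupling $\rho_x u$ is controlled via integration by parts. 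Passing to the limit using the already-established $L^2$ continuity of the solution map and the uniqueness in the transposition sense identifies the limit with $U$ and transfers both \eqref{regularity} and \eqref{continuity_estimate} to it.
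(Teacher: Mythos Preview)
Your proposal is correct and follows essentially the same route as the paper: both obtain the solution by transposition/duality against the adjoint system of \Cref{Prop-Adjoint}, with the key analytical ingredient being the hidden trace regularity $\sigma(\cdot,1)\in L^2(0,T)$ proved via the multiplier $x\sigma$ (the paper does this in \Cref{Appendix-D}, \Cref{lemma-hidden} and \Cref{hidden_reg_adj}), and both upgrade to \eqref{regularity}--\eqref{continuity_estimate} by a density argument. The only cosmetic difference is that the paper splits the construction into two sub-problems (zero initial data with boundary input $p$, and nonzero initial data with $p=0$), taking the Hilbert adjoint of each trace map separately and summing, whereas you build a single linear form $\Lambda$ combining both contributions and invoke Riesz once; the two are equivalent.
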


\begin{theorem}\label{Thm-existnce-control-sol-vel}
	For every $q\in L^2(0,T)$ and every ${U}_0:=(\rho_0, u_0)\in L^2(0,1)\times L^2(0,1)$, the system \eqref{lcnse4} has a unique weak solution ${U}:=(\rho, u)$ belonging to the space $L^2(0,T;(H^1(0,1))^{\prime})\times L^2(0,T;L^2(0,1))$ in the sense of transposition and
	 the operator defined by
	\begin{equation*}
		({U}_0,q) \mapsto {U}({U}_0,q),
	\end{equation*}
	is linear and continuous from $(L^2(0,1)\times L^2(0,1))\times L^2(0,T)$ into $L^2(0,T;(H^1(0,1))^{\prime})\times L^2(0,T;L^2(0,1))$.
\end{theorem}

We give a sketch of the proof for  \Cref{Thm-existnce-control-sol} in  \Cref{Appendix_A2}. The proof for  \Cref{Thm-existnce-control-sol-vel} will be followed from  \cite[Section 3]{Chowdhury13}.

\smallskip 

\section{A short description of the spectral properties  of the adjoint operator}\label{Sec-Spectral-short}

\medskip 

In this section, we briefly describe the  spectral properties of the  adjoint operator $A^*$ associated to our control system \eqref{lcnse3}.  The main results concerning the spectral analysis will be written here without giving all the details. A detailed study will be posed at the end of this paper, namely in Section \ref{Spectral-Details}.

\subsection{The eigenvalue problem}
Let us  denote ${\Phi}:=(\xi,\eta)$ and  consider the following eigenvalue problem:
\begin{equation*}
A^*{\Phi}=\lambda{\Phi}, \quad \text{for } \lambda \in \mb C,
\end{equation*}
which is more explicitly given by
\begin{equation}\label{eigenfunction}
\begin{aligned}
\xi^{\prime}(x)+\eta^{\prime}(x)&=\lambda\xi(x), \quad x\in (0,1)  ,\\
\eta^{\prime\prime}(x)+\eta^{\prime}(x)+\xi^{\prime}(x)&=\lambda\eta(x), \quad x\in (0,1),\\
\xi(0)&=\xi(1),\\
\eta(0)=0,\ \ \eta(1)&=0.
\end{aligned}
\end{equation}

We now state the following proposition. 

\begin{proposition}\label{Prop_spectral}
For the operator $A^*$, we have the following results.
\begin{itemize}
	\item[(i)] We have $\ker A^* = \Span \{ (1,0)  \} $.

\item[(ii)] The resolvent operator associated with $A^*$ is compact and that the spectrum of $A^*$ is discrete.  

\item[(iii)] All non-zero eigenvalues have negative real parts. 
	
\item[(iv)] All eigenvalues $\lambda$  are  geometrically simple.
\end{itemize}
\end{proposition}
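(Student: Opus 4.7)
I would verify the three items separately.

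\emph{Part (i) -- Compact resolvent and discreteness.} The domain $D(A^*)\subset\dot H^1(0,1)\times H^2(0,1)$ embeds compactly into $\dot L^2(0,1)\times L^2(0,1)$ by Rellich--Kondrachov. Since $A^*$ is closed and maximal dissipative (Lemma~\ref{lemma_wellposedness}), every $\mu$ with $\Re(\mu)>0$ lies in $\rho(A^*)$, and $(\mu I-A^*)^{-1}$ maps the pivot space boundedly into $D(A^*)$, hence is compact on the pivot space. Standard spectral theory then gives $\sigma(A^*)=\sigma_p(A^*)$, consisting of isolated eigenvalues of finite algebraic multiplicity whose only possible accumulation point is infinity. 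The complete verification is deferred to Appendix~\ref{Appendix-A3}.

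\emph{Part (ii) -- Strict negativity of real parts.} The cornerstone is the energy identity
\[
\Re\ip{A^*\Phi}{\Phi}_{L^2\times L^2}=-\|\eta'\|_{L^2(0,1)}^2,\qquad \forall\,\Phi=(\xi,\eta)\in D(A^*),
\]
obtained by expanding $\ip{A^*\Phi}{\Phi}$ into the five terms $\int\xi'\bar\xi+\int\eta'\bar\xi+\int\xi'\bar\eta+\int\eta''\bar\eta+\int\eta'\bar\eta$ and integrating by parts. Using $\xi(0)=\xi(1)$ one sees that $\int\xi'\bar\xi$ is purely imaginary; using $\eta(0)=\eta(1)=0$ the term $\int\eta'\bar\eta$ is purely imaginary and $\int\eta''\bar\eta=-\|\eta'\|^2$ is real; and a direct check (the quantity equals minus its own complex conjugate) shows that $\int(\eta'\bar\xi+\xi'\bar\eta)$ is purely imaginary. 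For an eigenpair $(\lambda,\Phi)$ the identity reads $\Re(\lambda)\|\Phi\|^2=-\|\eta'\|^2\le 0$. If $\Re(\lambda)=0$, then $\eta'\equiv 0$ combined with $\eta(0)=0$ forces $\eta\equiv 0$; the second eigenvalue equation then gives $\xi'\equiv 0$, and the mean-zero constraint $\xi\in\dot H^1(0,1)$ forces $\xi\equiv 0$, contradicting $\Phi\ne 0$. Hence every eigenvalue satisfies $\Re(\lambda)<0$. The strict supremum bound $\sup\Re\sigma(A^*)<0$ will be obtained by combining discreteness from (i) with the asymptotic description of Section~\ref{Spectral-Details}: the ``hyperbolic'' eigenvalues accumulate on a curve with $\Re$ bounded away from $0$, while the ``parabolic'' ones satisfy $\Re(\lambda)\to-\infty$.

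\emph{Part (iii) -- Geometric simplicity.} Using $\lambda\ne 0$, subtracting the two equations in \eqref{eigenfunction} gives $\eta''=\lambda(\eta-\xi)$, i.e.\ $\xi=\eta-\eta''/\lambda$, and reinserting this into the first equation yields the scalar third-order ODE
\[
\eta'''-\lambda\eta''-2\lambda\eta'+\lambda^2\eta=0\quad\text{on }(0,1).
\]
The boundary conditions $\eta(0)=\eta(1)=0$ and $\xi(0)=\xi(1)$ translate, in view of $\eta(0)=\eta(1)=0$, into the three scalar constraints $\eta(0)=0$, $\eta(1)=0$ and $\eta''(0)=\eta''(1)$. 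The $\lambda$-eigenspace of $A^*$ is thus isomorphic to the kernel of the linear map sending a solution $\eta$ with $\eta(0)=0$ (a two-dimensional space) to $(\eta(1),\eta''(1)-\eta''(0))\in\mathbb{C}^2$. Geometric simplicity then amounts to ruling out that this map has rank zero, i.e.\ that \emph{every} ODE-solution vanishing at $0$ should vanish at $1$ and have matching second derivatives at both endpoints. This final algebraic check is carried out in Section~\ref{Spectral-Details} through the explicit characteristic roots of $r^3-\lambda r^2-2\lambda r+\lambda^2=0$.

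\emph{Main obstacle.} The subtle step is the strict bound in (ii): the energy identity only yields strict negativity pointwise, so a priori one could fear a sequence $\Re(\lambda_n)\to 0^-$ with $|\Im(\lambda_n)|\to\infty$; ruling this out genuinely requires the asymptotic localisation of the hyperbolic spectral branch performed later. The algebraic verification in (iii) is less delicate but likewise leans on having the characteristic roots of the cubic at hand.
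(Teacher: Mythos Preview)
Your treatment of parts (i) and (ii) matches the paper's: the compact-resolvent argument via the embedding of $D(A^*)$ and the energy identity $\Re\ip{A^*\Phi}{\Phi}=-\|\eta'\|^2$ with the exclusion of $\eta'\equiv 0$ are exactly what the paper does. Your additional remark that the \emph{uniform} bound $\sup\Re\sigma(A^*)<0$ really needs the asymptotic localisation of the hyperbolic branch is in fact more careful than the paper, which simply asserts the conclusion after the pointwise inequality.

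For part (iii) your route diverges from the paper's. You correctly reduce to the third-order ODE and frame geometric simplicity as ruling out that the boundary map has rank zero on the $2$-dimensional space $\{\eta:\eta(0)=0\}$; you then defer the verification to Section~\ref{Spectral-Details} ``through the explicit characteristic roots''. That deferral is misplaced: the paper does \emph{not} carry out a characteristic-root computation for this purpose. Instead, it takes two hypothetical eigenfunctions $\Phi_1,\Phi_2$ for the same $\lambda$, forms a nontrivial combination $\Phi=\theta_1\Phi_1+\theta_2\Phi_2$ with $\xi(0)=0$ (hence $\xi(1)=0$ by periodicity), and invokes the Paley--Wiener/Fourier-extension argument of \Cref{Prop_Approximate_Controllability} to conclude $\Phi=0$. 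Your characteristic-root approach can also be made to work --- rank zero forces $e^{m_1}=e^{m_2}=e^{m_3}$, and then Vieta's relations together with $\Re(\lambda)<0$ give a contradiction --- but you would have to supply that algebra yourself (including the repeated-root case), rather than pointing to Section~\ref{Spectral-Details}. The paper's approach has the advantage of recycling the nontrivial unique-continuation step already needed for the observability estimate, so no separate verification is required.
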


A quick observation tells that: when $\lambda=0$, then  $(1,0)$ is an eigenfunction of the operator $A^*$, which is the part (i) of the above proposition.

The proof of points (iii) and (iv)  are given   in  Section \ref{Spectral-Details};   point (ii) is proved in Appendix \ref{Appendix-A3}.

\subsection{The set of eigenvalues}
Let us declare the properties of the eigenvalues of the operator $A^*$. More precisely, we have the following lemma.
\begin{lemma}\label{lemma_eigenvalue_eigenfunc}
Let be the operator $(A^*, D(A^*))$ given by \eqref{op_A*}. Then, there exists some numbers $k_0, n_0 \in \mathbb N^*$ such that  $A^*$ has three sets of eigenvalues: the parabolic part $\{\lambda^p_k\}_{k\geq k_0}$, the hyperbolic part $\{\lambda^h_k\}_{|k|\geq k_0}$ and a finite family $\{0\}\cup\{\widehat \lambda_n\}_{n=1}^{n_0}$ of lower frequencies. Moreover, the parabolic and hyperbolic branches satisfies the following asymptotic  properties:  
\begin{subequations} 
\begin{align}\label{eigenvalues-lambda}
\lambda^p_k &= -k^2\pi^2 -2c_k k\pi -2id_k  k\pi + O(1), \quad \text{for all } k\geq k_0 \text{ large}, \\\label{eigenvalues-gamma}
\lambda^h_k &= -1-\alpha_{1,k} -i(2k\pi +\alpha_{2,k}),  \quad \text{for all } |k|\geq k_0 \text{ large}, 
\end{align}
\end{subequations}
where $\{c_k\}_{k\geq k_0}$, $\{d_k\}_{k\geq k_0}$, $\{\alpha_{1,k}\}_{\mod{k}\geq k_0}$, $\{\alpha_{2,k}\}_{\mod{k}\geq k_0}$ are real bounded sequences with their absolute value less or equal to $\pi/2$ such that 
\begin{align*}
c_k \to 0, \ \ d_k \to 0 , \ \ \text{as } k\to +\infty ,\\ \alpha_{1,k} \to 0, \ \ \alpha_{2,k} \to 0, \ \ \text{as } |k|\to +\infty,
\end{align*}
and, with in addition,  
\begin{align}\label{asymp_sin}
c_k = O(k^{-1}), \ \ d_k = O(k^{-1}), \quad \alpha_{1,k}=O(\mod{k}^{-1}),\ \ \alpha_{2,k}= O(|k|^{-1}), \ \ \text{for large modulus of } k. 
\end{align}
\end{lemma}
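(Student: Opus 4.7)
The plan is to reduce the coupled eigenvalue system \eqref{eigenfunction} to a single scalar third-order ODE for $\eta$, carry out an asymptotic root analysis of the resulting characteristic polynomial, and then localize the eigenvalues by a Rouch\'e/implicit-function argument applied to the boundary-condition determinant. Subtracting the first equation of \eqref{eigenfunction} from the second yields $\eta''=\lambda(\eta-\xi)$, so (recalling $\lambda\neq 0$) one can eliminate $\xi=\eta-\eta''/\lambda$ and obtain
\[
\eta'''-\lambda\eta''-2\lambda\eta'+\lambda^{2}\eta=0,
\]
with characteristic polynomial $P(r;\lambda)=r^{3}-\lambda r^{2}-2\lambda r+\lambda^{2}$. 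A dominant-balance analysis for $|\lambda|\to\infty$ gives one transport-type root and two diffusion-type roots,
\[
r_{0}(\lambda)=\lambda+1+O(\lambda^{-1}),\qquad r_{\pm}(\lambda)=\pm\sqrt{\lambda}-\tfrac{1}{2}+O(|\lambda|^{-1/2}),
\]
which I would obtain by setting $r=\lambda+\alpha$ and $r=\pm\sqrt{\lambda}+\beta$ and solving the resulting polynomial equations for $\alpha,\beta$ by a fixed-point argument.

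Writing $\eta(x)=Ae^{r_{+}x}+Be^{r_{-}x}+Ce^{r_{0}x}$ and using $\xi=\eta-\eta''/\lambda$, the three boundary conditions $\eta(0)=\eta(1)=0$ and $\xi(0)=\xi(1)$ give a $3\times 3$ homogeneous linear system whose determinant $\Delta(\lambda)$ must vanish. A cofactor expansion along the last row shows
\[
\Delta(\lambda)=m_{+}(1-e^{r_{+}})(e^{r_{0}}-e^{r_{-}})-m_{-}(1-e^{r_{-}})(e^{r_{0}}-e^{r_{+}})+m_{0}(1-e^{r_{0}})(e^{r_{-}}-e^{r_{+}}),
\]
where $m_{j}:=1-r_{j}^{2}/\lambda$. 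The root asymptotics above yield $m_{\pm}=\pm\lambda^{-1/2}+O(\lambda^{-1})$ and $m_{0}=-\lambda+O(1)$, so after dividing by $m_{0}$ the equation $\Delta(\lambda)=0$ takes, at leading order, the factorized form
\[
(1-e^{r_{0}})(e^{r_{+}}-e^{r_{-}})=O(\lambda^{-3/2}).
\]

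This naturally splits the spectrum into two asymptotic families: a \emph{parabolic} branch along which $e^{r_{+}}\approx e^{r_{-}}$, forcing $2\sqrt{\lambda}\in 2\pi i\mathbb{Z}$ and hence $\lambda\approx -k^{2}\pi^{2}$; and a \emph{hyperbolic} branch along which $e^{r_{0}}\approx 1$, forcing $r_{0}\in 2\pi i\mathbb{Z}$ and hence $\lambda\approx -1-2ik\pi$. To upgrade these heuristics, I would apply Rouch\'e's theorem on small disks of fixed radius around each candidate $-k^{2}\pi^{2}$ (for $k\geq k_{0}$) and around each candidate $-1-2ik\pi$ (for $|k|\geq k_{0}$), comparing $\Delta(\lambda)$ with its leading factor. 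This yields exactly one eigenvalue in each such disk; a second-order Taylor expansion of $\sqrt{\lambda}$ and of the exponentials about the leading values then produces the precise correction terms $c_{k},d_{k},\alpha_{1,k},\alpha_{2,k}$ with the decay rate \eqref{asymp_sin}. The finitely many eigenvalues lying outside all these disks form $\{\widehat{\lambda}_{n}\}_{n=1}^{n_{0}}$, their finiteness following from the discreteness of $\sigma(A^{*})$ stated in \Cref{Prop_spectral}(i).

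The main obstacle is the careful error bookkeeping along the hyperbolic branch, where $e^{r_{+}}$ blows up like $\exp(\sqrt{|\mathrm{Im}\,\lambda|})$ while $e^{r_{-}}$ is exponentially small, so one must first divide $\Delta$ by $e^{r_{+}}$ and track the resulting remainders carefully before Rouch\'e can be applied. One also has to verify that the Rouch\'e disks are disjoint for distinct indices $k$, and that the uniform bound $|c_{k}|,|d_{k}|,|\alpha_{i,k}|\leq \pi/2$ holds for \emph{every} $|k|\geq k_{0}$ (not only asymptotically), which requires explicit control of the constants in the expansions of $r_{j}$ and $m_{j}$ and a judicious choice of the threshold $k_{0}$.
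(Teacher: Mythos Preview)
Your approach is essentially the same as the paper's: reduce to the third-order ODE for $\eta$, expand the characteristic roots asymptotically, form the boundary determinant, isolate the leading factor $(1-e^{r_0})(e^{r_+}-e^{r_-})$ (the paper writes this as $-2\sin(\mu^{1/2})(e^{-\mu+1}-1)$ with $\mu=-\lambda$), and apply Rouch\'e on neighbourhoods of the zeros of each factor; the paper even makes the same remark about the hyperbolic branch, handling the blow-up of $e^{r_+}$ by estimating the ratios $|I_j/G|$ on $\partial\mathcal S_k$.

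One point to tighten: discreteness of $\sigma(A^*)$ alone does \emph{not} give finiteness of the leftover set $\{\widehat\lambda_n\}$; you also need that every sufficiently large eigenvalue lands in one of your Rouch\'e regions, which your argument as stated does not establish (it only produces one eigenvalue per region). The paper does not extract this from the Rouch\'e step either---it obtains both the existence of the finite low-frequency family and the exhaustion of the spectrum from Guo's Riesz-basis perturbation theorem (Theorem~\ref{Thm_Bao}), applied after showing that the high-frequency eigenfunctions are quadratically close to an explicit Riesz basis of $L^2\times L^2$.
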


The proof of the above lemma is one of the crucial part of our work and it is heavy; the details have been provided  in  Sections \ref{Section-eigenvalues} and \ref{Proof-lemma-3.2}. 

\smallskip 

For simplicity, we set $\lambda_0=0$ and the associated eigenfunction by $\Phi_{\lambda_0}=(1,0)$. We further   denote the set of eigenvalues by $\sigma(A^*)$, where
\begin{align}\label{spectrum}
\sigma(A^*) : =  \Big\{\lambda^p_{k} , \, k \geq k_0; \ \ \lambda^h_k, \,  |k|\geq k_0\Big\} \cup \Big\{\lambda_0\Big\} \cup \Big\{\widehat\lambda_n, \, 1\leq n \leq n_0 \Big\}.
\end{align}

\smallskip 

\subsection{The set of eigenfunctions}
We start by writing the  following proposition. 
\begin{proposition}\label{Prop-eigenfunctions}
Let be $k_0, n_0$ as given by Lemma \ref{lemma_eigenvalue_eigenfunc}.   Then, the operator $A^*$ has three sets of  eigenfunctions: the parabolic part $\{\Phi_{\lambda^p_k}\}_{k\geq k_0}$, the hyperbolic part $\{\Phi_{\lambda^h_k}\}_{|k|\geq k_0}$  and a finite family $\{\Phi_{\lambda_0}\}\cup\{\Phi_{\widehat \lambda_n}\}_{n=1}^{n_0}$ corresponding to the lower frequencies. 

Furthermore,  we have the following:
\begin{enumerate} 
\item[1.]  The parabolic and hyperbolic  parts of the eigenfunctions have asymptotic expressions for large modulus of $k$, given by \eqref{eigen-1-xi}--\eqref{eigen-1-eta} and \eqref{eigen-2-xi}--\eqref{eigen-2-eta} respectively. 
\item[2.] The 	eigenfamily, denoted by 
\begin{align}\label{Eigenfamily}
\E(A^*) : = \Big\{\Phi_{\lambda^p_k}, \,  k\geq k_0; \ \ \Phi_{\lambda^h_k}, \,  |k|\geq k_0 \Big\} \cup \Big\{ \Phi_{\lambda_0}\Big\} \cup \Big\{ \Phi_{\widehat\lambda_n}, \, 1\leq n\leq n_0  \Big\}, 
\end{align}
forms a Riesz basis in $L^2(0,1)\times L^2(0,1)$, and as a consequence, it is  a dense family in $H^{-s}_{\text{per}}(0,1)\times L^2(0,1)$ for any $s>0$.
\end{enumerate}
\end{proposition}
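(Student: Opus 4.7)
My plan is to reduce the eigenvalue system \eqref{eigenfunction} to a single constant-coefficient third-order ODE, locate its characteristic roots asymptotically using the spectral data of \Cref{lemma_eigenvalue_eigenfunc}, and then read off leading-order expressions for the eigenfunctions. The Riesz-basis statement will then be established by a ``quadratic closeness'' comparison with a natural orthonormal basis of $\dot L^2(0,1)\times L^2(0,1)$ and the perturbation theorem of B.-Z. Guo \cite{Bao1}; density in $\dot H^{-s}_{\text{per}}(0,1)\times L^2(0,1)$ for $s>0$ will be an immediate consequence.

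Since $\lambda\neq 0$ on the spectrum, the first equation of \eqref{eigenfunction} gives $\xi=\eta-\eta''/\lambda$. Substituting this into the second equation produces
\begin{equation*}
\eta'''-\lambda\eta''-2\lambda\eta'+\lambda^2\eta=0,\qquad x\in(0,1),
\end{equation*}
with characteristic polynomial $P_\lambda(r)=r^3-\lambda r^2-2\lambda r+\lambda^2$; the boundary conditions become $\eta(0)=\eta(1)=0$ together with $\eta''(0)=\eta''(1)$ (which is the translation of $\xi(0)=\xi(1)$). A direct application of Vieta's relations shows that one root of $P_\lambda$ is asymptotic to $\lambda+1$ while the remaining two solve a quadratic whose coefficients are linear in $\lambda$; inserting the eigenvalue asymptotics \eqref{eigenvalues-lambda}--\eqref{eigenvalues-gamma} and the bounds \eqref{asymp_sin} yields explicit leading-order expressions for all three roots. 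Writing $\eta(x)=c_1 e^{r_1 x}+c_2 e^{r_2 x}+c_3 e^{r_3 x}$, imposing the three boundary conditions, normalising, and recovering $\xi$ from $\xi=\eta-\eta''/\lambda$ produces the announced formulas \eqref{eigen-1-xi}--\eqref{eigen-2-eta}. In the parabolic regime $\lambda_k^p\sim -k^2\pi^2$, the mode $e^{r_1 x}$ is exponentially small away from $x=0$ and the other two roots combine into $\eta\sim\sin(k\pi x)$ with a density component of vanishing norm; in the hyperbolic regime $\lambda_k^h\sim-1-2ik\pi$, the root near $\lambda+1\sim -2ik\pi$ drives the density component $\xi\sim e^{\pm 2i k\pi x}$ and forces the velocity component to decay. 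This delivers Part~(1).

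For Part~(2), take as reference the orthonormal basis
\begin{equation*}
\B=\left\{(e^{2i k\pi x},0):k\in\mb Z\setminus\{0\}\right\}\cup\left\{(0,\sqrt 2\,\sin(j\pi x)):j\geq 1\right\}
\end{equation*}
of $\dot L^2(0,1)\times L^2(0,1)$. Pairing each $\Phi_{\lambda_k^h}$ with $(e^{2ik\pi x},0)$ and each $\Phi_{\lambda_k^p}$ with $(0,\sqrt 2\,\sin(k\pi x))$, the asymptotic expansions of Part~(1) together with the $O(|k|^{-1})$ decay of $c_k,d_k,\alpha_{1,k},\alpha_{2,k}$ give
\begin{equation*}
\sum_{k\geq k_0}\norm{\Phi_{\lambda_k^p}-(0,\sqrt 2\,\sin(k\pi\cdot))}_{L^2\times L^2}^{2}+\sum_{|k|\geq k_0}\norm{\Phi_{\lambda_k^h}-(e^{2ik\pi\cdot},0)}_{L^2\times L^2}^{2}<\infty.
\end{equation*}
The finite low-frequency family $\{\Phi_{\widehat\lambda_n}\}_{n=1}^{n_0}$ is handled by removing a corresponding finite subset of $\B$, so the perturbation estimate applies on the complement. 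Completeness of $\E(A^*)$ comes from the compactness of the resolvent (\Cref{Prop_spectral}(i)) and the geometric simplicity of the eigenvalues (\Cref{Prop_spectral}(iii)). Guo's theorem \cite{Bao1} then upgrades quadratic closeness plus completeness to the Riesz-basis conclusion in $L^2\times L^2$. Density in $\dot H^{-s}_{\text{per}}(0,1)\times L^2(0,1)$ for $s>0$ follows at once from the dense continuous embedding $\dot L^2(0,1)\hookrightarrow\dot H^{-s}_{\text{per}}(0,1)$.

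The main obstacle I anticipate is the asymptotic analysis carried out in Part~(1): the quadratic solved by two of the three roots of $P_\lambda$ has discriminant $\approx 4\lambda-3$, which is negative real in the parabolic regime and essentially pure imaginary in the hyperbolic regime, so the three roots undergo a qualitative transition between the two branches and are never uniformly separated. Controlling the coefficients of the boundary-condition linear system — in particular the small denominators arising when two characteristic roots become close — with enough precision to secure the summability estimate above is where most of the technical work will lie and is the reason the detailed proof is deferred to \Cref{Spectral-Details}.
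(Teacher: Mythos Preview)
Your overall strategy --- third-order ODE reduction, asymptotic root location, quadratic closeness to a reference Riesz basis, and Guo's theorem --- is exactly the paper's. But there is a concrete gap in your Part~(2).

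In the parabolic regime $\lambda_k^p\sim -k^2\pi^2$ the characteristic roots of $P_\lambda$ are (cf.\ \eqref{charac_roots_asymp})
\[
m_1\sim -k^2\pi^2,\qquad m_{2,3}\sim -\tfrac12\mp ik\pi,
\]
so the real part $-\tfrac12$ of the two oscillating roots injects an $x$-dependent envelope $e^{-x/2}$ into the velocity component: one actually gets $\eta_{\lambda_k^p}(x)\sim \text{const}\cdot e^{-(1+x)/2}\sin\bigl(k\pi(1-x)\bigr)$, not $\sin(k\pi x)$. No scalar renormalisation can remove this: for any constant $a_k$,
\[
\bigl\|\eta_{\lambda_k^p}-a_k\sqrt{2}\,\sin(k\pi\,\cdot)\bigr\|_{L^2}^2
\;\gtrsim\;\int_0^1\bigl|e^{-x/2}-\text{const}\bigr|^2\sin^2(k\pi x)\,dx
\;\longrightarrow\;\tfrac12\int_0^1\bigl|e^{-x/2}-\text{const}\bigr|^2dx\;>\;0,
\]
and your quadratic-closeness sum diverges. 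The paper's fix (\Cref{Lemma_quadratic_close}) is to take the reference family with the envelope built in,
\[
\Psi_k(x)=\begin{pmatrix}0\\[2pt]2ie^{-\frac12(1+x)}\sin\bigl(k\pi(1-x)\bigr)\end{pmatrix},\qquad
\widetilde\Psi_k(x)=\begin{pmatrix}2i\,\sgn(k)\,e^{-\frac12-i\,\sgn(k)\sqrt{|k\pi|}}\,e^{-2ik\pi x}\\[2pt]0\end{pmatrix},
\]
which is still a Riesz basis (bounded invertible multiplication by $e^{-(1+x)/2}$ on the velocity side, unimodular scalars on the hyperbolic side) and for which the term-by-term differences are genuinely $O(|k|^{-1})$ in $L^2\times L^2$.

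A smaller point: Guo's theorem as quoted in \Cref{Thm_Bao} takes only the compact-resolvent hypothesis and the quadratic-closeness estimate as input; it then \emph{produces} the missing low-frequency generalized eigenvectors $\{\Phi_{\widehat\lambda_n}\}_{n=1}^{n_0}$ and the Riesz-basis conclusion in one stroke. You do not need --- and cannot easily obtain --- completeness of $\E(A^*)$ as a separate ingredient beforehand; geometric simplicity alone does not give it.
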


The existence of parabolic and hyperbolic parts of the family of eigenfunctions are proved in Section \ref{subsection-eigenfunc}. Then, using a result from \cite{Bao1}, we shall prove the existence of lower frequencies of  eigenvalues $\{\widehat \lambda_n\}_{n=1}^{n_0}$ and the associated eigenfunctions $\{\Phi_{\widehat \lambda_n}\}_{n=1}^{n_0}$.  Moreover, we conclude that the  set of eigenfunctions $\E(A^*)$ forms a Riesz basis for  $L^2(0,1)\times L^2(0,1)$. 

We hereby introduce the components of the eigenfunctions as given below:
\begin{align}\label{eigen-para}
\Phi_{\lambda^p_k}:=(\xi_{\lambda^p_k}, \eta_{\lambda^p_k}) \ \ \text{ associated with } \lambda^p_k , \ \ \forall k \geq k_0,
\end{align}
\begin{align}\label{eigen-hyper}
\Phi_{\lambda^h_k}: = (\xi_{\lambda^h_k}, \eta_{\lambda^h_k})  \ \ \text{ associated with } \lambda^h_k , \ \ \forall |k|\geq k_0.
\end{align}
The asymptotic expressions of those components are given below and are obtained in Section \ref{subsection-eigenfunc}. For large $k\geq k_0$, the two components of the eigenfunction $\Phi_{\lambda^p_k}$  are   
\begin{multline}\label{eigen-1-xi}
{\xi_{\lambda^p_k}(x)} =  e^{x\left(-k^2\pi^2 -2c_k k\pi - 2 i d_k k\pi  + O(1) \right)} \times O\big(\frac 1k \big)  \\ 
+\left(\frac{i}{k\pi} +O\big(\frac{1}{k^2}\big)\right) \left( e^{i(k\pi +c_k)+O(k^{-1})-\frac{1}{2}-d_k} -  e^{-k^2\pi^2 -2c_kk\pi -2id_k k\pi + O(1)} \right) e^{x \left(-i(k\pi +c_k)-\frac{1}{2}+d_k +O(k^{-1}) \right)} \\ 
+\left(-\frac{i}{k\pi} +O\big(\frac{1}{k^2}\big)\right)  \left( e^{-k^2\pi^2 -2c_k k\pi-2id_k k\pi  + O(1)}- e^{ - i(k\pi +c_k)-\frac{1}{2}+d_k+O(k^{-1})}\right) e^{x\left(i(k\pi +c_k)-\frac{1}{2}-d_k +O(k^{-1}) \right)},
\end{multline}
\begin{align}\label{eigen-1-eta}
{\eta_{\lambda^p_k}(x)}&= e^{x\left(-k^2\pi^2 -2c_k k\pi -2id_k k\pi  + O(1) \right)} \times O\big(\frac{1}{k^3}  \big) \\ \notag
&\quad+ \left( e^{i(k\pi +c_k)+O(k^{-1})-\frac{1}{2}-d_k} -  e^{-k^2\pi^2 -2c_kk\pi -2id_k k\pi + O(1)} \right) e^{x \left(-i(k\pi +c_k)-\frac{1}{2}+d_k +O(k^{-1}) \right)} \\\notag
&\quad+ \left( e^{-k^2\pi^2 -2c_k k\pi-2id_k k\pi  + O(1)}- e^{ - i(k\pi +c_k)-\frac{1}{2}+d_k+O(k^{-1})}\right) e^{x\left(i(k\pi +c_k)-\frac{1}{2}-d_k +O(k^{-1}) \right)},\notag
\end{align}
for all $x\in (0,1)$. The above functions are obtained later in \eqref{eigen-xi_lambda} and \eqref{component_eta_lambda} combined with Remark \ref{remark-asymptic-coeffi}. 

\smallskip 

Next, we prescribe the explicit forms of the two components of   $\Phi_{\lambda^h_k}$ for  $|k|\geq k_0$ large. Here and in the sequel, we  introduce the sign function given by 
\begin{align} \label{sign-func}
\sgn(k) = \begin{dcases}  1 \quad \text{when } k \geq 0 , \\
-1 \quad \text{when } k < 0 ,
\end{dcases}
\end{align}
and we have for all $|k|\geq k_0>0$. We have 
\begin{multline}\label{eigen-2-xi}
{\xi_{\lambda^h_k}(x)} =  \left(e^{\sgn(k) \sqrt{|k\pi|} - \frac{1}{2} -i\sqrt{|k\pi|} + O(|k|^{-\frac{1}{2}})}  - e^{-\sgn(k) \sqrt{|k\pi|} - \frac{1}{2} +i\sqrt{|k\pi|} + O(|k|^{-\frac{1}{2}})} \right) \\
\times \frac{(-\alpha_{1,k}+2ik\pi + O(1))}{k\pi e^{ \sqrt{|k\pi|} + \frac{1}{\sqrt{|k|}} } } \times e^{-x\left(\alpha_{1,k} + i(2k\pi +\alpha_{2,k}) +O(|k|^{-1})\right)} \\
+ \bigg(e^{-\sgn(k)\sqrt{|k\pi|} - \frac{1}{2} +i\sqrt{|k\pi|} + O(|k|^{-\frac{1}{2}})}  - e^{-\alpha_{1,k} - i(2k\pi +\alpha_{2,k}) + O(|k|^{-1})}   \bigg) \\
\times \frac{1}{k\pi e^{ \sqrt{|k\pi|} + \frac{1}{\sqrt{|k|}} } }\left(\sgn(k) \frac{1}{2\sqrt{|k\pi|}} + \frac{i}{2\sqrt{|k\pi|}} + O\Big(\frac{1}{|k|}\Big)\right)	\times e^{x \left(\sgn(k) \sqrt{|k\pi|} - \frac{1}{2} -i\sqrt{|k\pi|} + O(|k|^{-\frac{1}{2}})\right)} \\
+\bigg(e^{-\alpha_{1,k} -i(2k\pi +\alpha_{2,k}) + O(|k|^{-1})}  - e^{\sgn(k) \sqrt{|k\pi|} - \frac{1}{2} -i\sqrt{|k\pi|} + O(|k|^{-\frac{1}{2}})} \bigg) \\
\times \frac{1}{k\pi e^{ \sqrt{|k\pi|} + \frac{1}{\sqrt{|k|}} } } \left( -\sgn(k)\frac{1}{2\sqrt{|k\pi|}} - \frac{i}{2\sqrt{|k\pi|}} + O\Big(\frac{1}{|k|}\Big) \right) \times e^{x\left(-\sgn(k) \sqrt{|k\pi|} - \frac{1}{2} +i\sqrt{|k\pi|} + O(|k|^{-\frac{1}{2}}) \right)},
\end{multline}
\begin{multline}\label{eigen-2-eta}
{\eta_{\lambda^h_k}(x)}=\frac{1}{k\pi e^{ \sqrt{|k\pi|} + \frac{1}{\sqrt{|k|}} } } \left(e^{\sgn(k) \sqrt{|k\pi|} - \frac{1}{2} -i\sqrt{|k\pi|}+O(|k|^{-\frac{1}{2}})}  - e^{-\sgn(k)\sqrt{|k\pi|} - \frac{1}{2} +i\sqrt{|k\pi|} + O(|k|^{-\frac{1}{2}})} \right)\\
\times  e^{-x\left(\alpha_{1,k} + i(2k\pi +\alpha_{2,k}) +O(|k|^{-1})\right)} \\
+\frac{1}{k\pi e^{ \sqrt{|k\pi|} + \frac{1}{\sqrt{|k|}} } }\left(e^{-\sgn(k)\sqrt{|k\pi|} - \frac{1}{2} +i\sqrt{|k\pi|} + O(|k|^{-\frac{1}{2}})} - e^{-\alpha_{1,k} - i(2k\pi +\alpha_{2,k}) + O(|k|^{-1})}   \right)\\
\times e^{x \left(\sgn(k)\sqrt{|k\pi|} - \frac{1}{2} -i\sqrt{|k\pi|} + O(|k|^{-\frac{1}{2}}) \right)}  \\
+\frac{1}{k\pi e^{ \sqrt{|k\pi|} + \frac{1}{\sqrt{|k|}} } } \left(e^{-\alpha_{1,k} - i(2k\pi +\alpha_{2,k}) + O(|k|^{-1})}  - e^{\sgn(k)\sqrt{|k\pi|} - \frac{1}{2} -i\sqrt{|k\pi|} + O(|k|^{-\frac{1}{2}})}  \right)\\
\times  e^{x\left(-\sgn(k)\sqrt{|k\pi|} - \frac{1}{2} +i\sqrt{|k\pi|} + O(|k|^{-\frac{1}{2}})  \right)},
\end{multline}
for all $x\in (0,1)$. Those components are obtained later in \eqref{eigen-xi-gamma} and \eqref{eigen-eta-gamma} respectively.

We now write  the  upper bounds of the norms of our eigenfunctions $\Phi_{\lambda^p_k}$, $k\geq k_0$ and $\Phi_{\lambda^h_k}$, $|k|\geq k_0$. A short proof is given in Section \ref{Spectral-Details}.

\begin{lemma}[{Bounds of the eigenfunctions}]\label{lemma_bound_eigenfunc}
Recall the eigenfunctions $\Phi_{\lambda^p_k}=(\xi_{\lambda^p_k}, \eta_{\lambda^p_k})$, $\forall k\geq k_0$ and  $\Phi_{\lambda^h_k}=(\xi_{\lambda^h_k}, \eta_{\lambda^h_k})$,   $\forall |k| \geq k_0$ given by \eqref{eigen-1-xi}--\eqref{eigen-1-eta} and \eqref{eigen-2-xi}--\eqref{eigen-2-eta} respectively.   There exists a constant $C>0$ independent in $k$, such that we have the following.
\begin{enumerate}
\item[1.] For any $k\geq k_0$, we have  
\begin{align}\label{bound_xi_lambda}
\begin{dcases}
\|\xi_{\lambda^p_k}\|_{L^2(0,1)}\leq Ck^{-1},\\
\|\xi_{\lambda^p_k}\|_{H^{-s}_{per}(0,1)}\leq Ck^{-s-1}, \ \ \text{for } 0<s<1, \\
\|\xi_{\lambda^p_k}\|_{H^{-s}_{per}(0,1)}\leq Ck^{-2}, \ \ \text{for } s\geq 1, \\
\|\eta_{\lambda^p_k}\|_{L^2(0,1)}\leq C.
\end{dcases}
\end{align}

\item[2.] On the other hand, for any  $|k|\geq k_0$, we have 
\begin{align}\label{bound_xi_gamma}
\begin{cases}
\|\xi_{\lambda^h_k}\|_{L^2(0,1)}\leq C, \\
\|\xi_{\lambda^h_k}\|_{ H^{-s}_{per}(0,1)}\leq C|k|^{-s}, \ \ \text{for } 0<s<1 , \\
\|\xi_{\lambda^h_k}\|_{H^{-s}_{per}(0,1)}\leq C|k|^{-1}, \ \ \text{for } s\geq 1, \\
\|\eta_{\lambda^h_k}\|_{L^2(0,1)}\leq C|k|^{-1}.
\end{cases}
\end{align}
\end{enumerate}
\end{lemma}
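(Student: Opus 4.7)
The plan is to derive each estimate directly from the explicit asymptotic expressions \eqref{eigen-1-xi}--\eqref{eigen-2-eta}, which present every component of the eigenfunctions as a sum of three complex exponentials of the form $\sum_{j=1}^3 A_{k,j}\, e^{z_{k,j} x}$. By the triangle inequality it suffices to bound each exponential mode in terms of its amplitude $A_{k,j}$ and its frequency $z_{k,j}$; the stated bounds then follow by collecting powers of $k$.

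For the $L^2$ bounds, I would use the elementary identity $\|A\,e^{zx}\|_{L^2(0,1)}^2 = |A|^2(e^{2\Re z}-1)/(2\Re z)$, so a mode with bounded $\Re z$ contributes at size $|A|$ and a boundary-layer mode with $\Re z \sim -k^2\pi^2$ contributes at size $|A|/k$. Reading off the amplitudes: in $\xi_{\lambda^p_k}$ all three amplitudes in \eqref{eigen-1-xi} are $O(k^{-1})$, giving $\|\xi_{\lambda^p_k}\|_{L^2}\le Ck^{-1}$; in $\eta_{\lambda^p_k}$ the two oscillating amplitudes are $O(1)$, giving $\|\eta_{\lambda^p_k}\|_{L^2}\le C$. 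For the hyperbolic branch, the two modes carrying the factor $e^{\pm\sqrt{|k\pi|}\,x}$ have $L^\infty$ norm of order $|k|^{-3/2}$ after cancellation against the $e^{-\sqrt{|k\pi|}}$ in the prefactor, so the dominant contribution is the pure transport mode $e^{-x(\alpha_{1,k}+i(2k\pi+\alpha_{2,k}))}$. Its effective amplitude is $O(1)$ for $\xi_{\lambda^h_k}$ but only $O(|k|^{-1})$ for $\eta_{\lambda^h_k}$, since in \eqref{eigen-2-eta} the prefactor lacks the $(-\alpha_{1,k}+2ik\pi+O(1))$ numerator that appears in \eqref{eigen-2-xi}; this yields $\|\xi_{\lambda^h_k}\|_{L^2}\le C$ and $\|\eta_{\lambda^h_k}\|_{L^2}\le C|k|^{-1}$.

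For the $\dot H^{-s}_{per}$ bounds with $0<s<1$, I would use the Fourier representation on $(0,1)$: the $m$-th Fourier coefficient of $e^{zx}$ is $(e^z-1)/(z-2\pi im)$, so
\[
\|e^{zx}\|_{\dot H^{-s}_{per}(0,1)}^2 = |e^z-1|^2 \sum_{m\in\mathbb Z}\frac{(1+4\pi^2m^2)^{-s}}{|z-2\pi im|^2}.
\]
For the parabolic oscillating modes one has $\Im z \approx \pm k\pi$, so the denominator $|z-2\pi im|$ attains its minimum near $m=\mp k/2$ with value bounded below by a positive constant; splitting the sum according to distance from this minimising index gives a bound $Ck^{-2s}$, and multiplying by $|A|^2=O(k^{-2})$ produces $\|\xi_{\lambda^p_k}\|_{\dot H^{-s}_{per}}\le Ck^{-s-1}$ (the boundary-layer contribution is $O(k^{-3})$ and is absorbed). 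For the hyperbolic branch the dominant mode has frequency $\bar z_k = \alpha_{1,k}+i(2k\pi+\alpha_{2,k})+O(|k|^{-1})$; since $e^{-2\pi ik}=1$, the identity $1 - e^{-\bar z_k} = (\alpha_{1,k}+i\alpha_{2,k}) + O(|k|^{-2})$ shows that the numerator is of order $O(|k|^{-1})$, which balances the smallest value of the denominator $|\bar z_k - 2\pi i(-k)|=|\alpha_{1,k}+i\alpha_{2,k}+O(|k|^{-1})|$ attained at $m=-k$. The resulting ratio is $O(1)$, so $|c_{-k}|^2 \le C$ and all other $|c_m|^2$ are of order $|k|^{-2}/(m+k)^2$; summing against the weight $(1+4\pi^2m^2)^{-s}$, the contribution of $m=-k$ dominates and yields $\|\xi_{\lambda^h_k}\|_{\dot H^{-s}_{per}}\le C|k|^{-s}$. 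Finally, the bounds for $s\ge 1$ follow immediately from the monotonicity $(1+4\pi^2m^2)^{-s}\le (1+4\pi^2m^2)^{-1}$, which gives $\|\,\cdot\,\|_{\dot H^{-s}_{per}}\le\|\,\cdot\,\|_{\dot H^{-1}_{per}}$, applied to the bounds of the previous step at $s=1$.

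The main obstacle I expect is the Fourier-coefficient analysis for the hyperbolic branch, where both the numerator $|1-e^{-\bar z_k}|$ and the smallest value of the denominator are of the same order $O(|k|^{-1})$: one has to track precisely the leading-order constants $\alpha_{1,k},\alpha_{2,k}$ and the $O(|k|^{-1})$ remainder in $\bar z_k$ to confirm that their ratio remains bounded rather than blowing up. By contrast, the parabolic estimates are comparatively straightforward, since for the oscillating modes the denominator $|z_{2,3}-2\pi im|$ is bounded below away from zero uniformly in $k$.
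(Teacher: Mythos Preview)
Your approach is essentially the same as the paper's: both compute the Fourier coefficients of each exponential mode explicitly via $c_m=A(e^{z}-1)/(z+2\pi i m)$ and then sum against the weight $(1+4\pi^2 m^2)^{-s}$. The paper carries out the details only for the parabolic branch (splitting the sum at $|2m|=k$, exactly as you propose) and simply asserts that the hyperbolic branch ``follows by a similar approach''; you actually spell out the hyperbolic resonance, which is a useful addition rather than a divergence from the paper.

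Two small remarks. First, with the paper's convention $c_m=\int_0^1 \xi(x)\,e^{2\pi i m x}\,dx$, the resonant Fourier index for the transport mode $e^{-\bar z_k x}$ is $m=k$, not $m=-k$, since $-\bar z_k+2\pi i k=-\alpha_{1,k}-i\alpha_{2,k}+O(|k|^{-1})$; this is only a sign slip and changes nothing in the estimate. Second, the ``main obstacle'' you flag is not actually an obstacle: writing $r=\alpha_{1,k}+i\alpha_{2,k}+O(|k|^{-1})$, the resonant coefficient is precisely
\[
c_k=\frac{e^{-r}-1}{-r},
\]
and the entire function $z\mapsto (e^{-z}-1)/(-z)$ (with value $1$ at $z=0$) is uniformly bounded on any disc $|z|\le R$. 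Hence $|c_k|\le C$ follows automatically from $|r|\le 1$, with no need to track the leading constants of $\alpha_{1,k}$, $\alpha_{2,k}$ or the $O(|k|^{-1})$ remainder separately.
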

 
\medskip

\paragraph{\bf Riesz basis properties of the eigenfunctions}\label{para-Riesz}
As mentioned earlier, we have the existence of eigenfunctions for large frequencies of $|k|$ with their asymptotic formulations.   In this paragraph, using some result from \cite{Bao1}, we shall confirm the existence of eigenfunctions for lower frequencies and moreover, we show that the family of eigenfunctions forms a Riesz basis for $L^2(0,1)\times L^2(0,1)$. 

In this regard, let us first recall the following result. 
\begin{theorem}[B.-Z. GUO \cite{Bao1}]\label{Thm_Bao}
Let $\mathcal A$ be a densely defined discrete operator (i.e., the  resolvent of $\mathcal A$ is compact) in a Hilbert space $H$. Let $\{\phi_n \}_{1}^{\infty}$ be a Riesz basis of $H$.	If there are an integer $N \geq 0$ and a sequence of generalized eigenvectors  $\{\psi_n \}_{N+1}^{\infty}$ of $\mathcal A$ such that $$ \sum_{N+1}^{\infty} \|\phi_n-\psi_n\|^2<+\infty ,$$ then the following results hold.
\begin{itemize}
\item[(i)] There are a constant $M>N$  and generalized eigenvectors $\{\psi_{n0} \}_{1}^{M}$ of $\mathcal A$ such that $\{\psi_{n0} \}_{1}^{M}\cup \{\psi_{n} \}_{M+1}^{\infty}$ forms a Riesz Basis for $H$. 
\item[(ii)]  Let $\{\psi_{n0} \}_{1}^{M}\cup \{\psi_{n} \}_{M+1}^{\infty}$  correspond to the eigenvalues $\{\lambda_n\}_{1}^{\infty}$ of $\mathcal A$.  Then the spectrum $\sigma(\mathcal A) = \{\lambda_n\}_{1}^{\infty}$, where $\lambda_n$ is counted according to its algebraic multiplicity.
\item[(iii)] If there is an $M_0>0$ such that $\lambda_n \neq \lambda_m$ for all $m, n > M_0$, then there is an $N_0>M_0$ such that all $\lambda_n$ are algebraically simple if $n>N_0$.
\end{itemize}
\end{theorem}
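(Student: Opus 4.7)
The plan is to work directly from the explicit asymptotic formulas \eqref{eigen-1-xi}--\eqref{eigen-2-eta}. Each component $\xi_{\lambda_k}$, $\eta_{\lambda_k}$ (parabolic or hyperbolic) is a sum of three exponential building blocks $a_j(k)\,e^{\mu_j(k)\,x}$, and every norm we need reduces, after reading off the asymptotic orders of the prefactors $a_j(k)$ and exponents $\mu_j(k)$ from \eqref{asymp_sin}, to the two elementary identities
\begin{equation*}
\|e^{\mu x}\|_{L^2(0,1)}^2 = \frac{e^{2\Re\mu}-1}{2\Re\mu}, \qquad
\int_0^1 e^{-2\pi i m x}\,e^{\mu x}\,dx = \frac{e^{\mu - 2\pi i m} - 1}{\mu - 2\pi i m}.
\end{equation*}

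For the $L^2$ bounds I would treat the three exponential pieces separately. In the parabolic case, the exponent $\mu_1 \sim -k^2\pi^2$ with prefactor $O(k^{-1})$ in $\xi_{\lambda^p_k}$ produces a boundary-layer contribution of order $k^{-2}$, while the two oscillating pieces with $\mu_{2,3} \approx \mp i k\pi - \tfrac{1}{2}$ have bounded $L^2$-norm and are multiplied by $O(k^{-1})$ prefactors, yielding $\|\xi_{\lambda^p_k}\|_{L^2} = O(k^{-1})$. For $\eta_{\lambda^p_k}$ the oscillatory prefactors are only $O(1)$, giving $\|\eta_{\lambda^p_k}\|_{L^2} = O(1)$. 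In the hyperbolic case the essential point is the cancellation between the $e^{\sqrt{|k\pi|}}$ factors hidden inside the $a_j$'s and the normalising denominator $k\pi e^{\sqrt{|k\pi|}+1/\sqrt{|k|}}$; combined with the fact that the growing and decaying branches have $L^2$-norms $O(e^{\sqrt{|k\pi|}}|k|^{-1/4})$ and $O(|k|^{-1/4})$ respectively, this produces $O(1)$ contributions for $\xi_{\lambda^h_k}$ and $O(|k|^{-1})$ contributions for $\eta_{\lambda^h_k}$.

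For the $\dot H^{-s}_{per}$ bounds, I would compute the Fourier coefficients $c_m$ of each exponential via the second identity above and estimate
\[
\|\cdot\|_{\dot H^{-s}_{per}}^2 = \sum_{m\neq 0} (1+4\pi^2 m^2)^{-s}|c_m|^2,
\]
by splitting the $m$-sum according to where the denominator $\mu_j - 2\pi i m$ is small. For $\xi_{\lambda^p_k}$ the oscillatory exponents give $|c_m|^2 \lesssim k^{-2}/(1+\pi^2(2m+k)^2)$; the near-resonance zone $m\sim -k/2$ contributes $k^{-2s}$ to the weighted sum, while the low-frequency zone $|m|=O(1)$ contributes $k^{-2}$. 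Combining with the $k^{-2}$ prefactor delivers the claimed $k^{-s-1}$ for $0<s<1$ and the saturation $k^{-2}$ for $s\geq 1$; the boundary-layer term is easily shown to contribute at most $O(k^{-3})$ and is harmless. For $\xi_{\lambda^h_k}$, the dominant oscillatory exponent $\mu \approx -2ik\pi$ concentrates the Fourier mass near $m=-k$, producing $|k|^{-s}$ for $0<s<1$; for $s\geq 1$ the low-frequency contribution $|k|^{-1}$ takes over, exactly as stated.

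The main obstacle is not any individual estimate but the bookkeeping: each eigenfunction is a sum of three exponential pieces whose $k$-dependent coefficients are themselves differences of several exponentials, and careful tracking of cancellations (especially in the hyperbolic case, where the $O(|k|^{-1/2})$ corrections inside the exponents must be verified to contribute only multiplicative factors $1+O(|k|^{-1/2})$) is essential for the stated uniformity in $k$. A secondary technical point is to check that the Fourier-zone splitting correctly captures the sharp transition of the exponent at $s=1$, which arises from the competition between the high-frequency resonance ($m\sim -k/2$, respectively $m\sim -k$) and the low-frequency region $|m|=O(1)$; once this decomposition is set up, everything else is routine.
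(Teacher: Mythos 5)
Your proposal does not address the stated theorem at all. The statement to be proved is Guo's abstract result on perturbation of Riesz bases (\Cref{Thm_Bao}): given a discrete operator $\mathcal A$ on a Hilbert space $H$ with a known Riesz basis $\{\phi_n\}$ and a sequence of generalized eigenvectors $\{\psi_n\}_{n>N}$ quadratically close to $\{\phi_n\}_{n>N}$, one can complete $\{\psi_n\}_{n>N}$ by finitely many more generalized eigenvectors of $\mathcal A$ to form a Riesz basis of $H$, and draw conclusions about $\sigma(\mathcal A)$ and eventual algebraic simplicity of the eigenvalues. This is a functional-analytic theorem about operators and bases in the abstract; its proof has nothing to do with the explicit exponentials, Fourier coefficients, or asymptotics in \eqref{eigen-1-xi}--\eqref{eigen-2-eta}.

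What you have written is instead a (reasonable) sketch of how one would establish \Cref{lemma_bound_eigenfunc}, the $L^2$ and $\dot H^{-s}_{per}$ bounds on the eigenfunctions of $A^*$: you decompose each eigenfunction into three exponential pieces, compute $L^2$ norms and Fourier coefficients of $e^{\mu x}$ on $(0,1)$, and track resonances in the Fourier sum. None of that is relevant to \Cref{Thm_Bao}. The paper itself does not prove \Cref{Thm_Bao} --- it is quoted from \cite{Bao1} and used as a black box. To produce a proof of the actual statement you would need a genuinely different argument: typically one invokes Bari's theorem (a sequence quadratically close to a Riesz basis and $\omega$-independent is itself a Riesz basis), shows that the span of $\{\psi_n\}_{n>N}$ has finite codimension, uses the spectral projections of the discrete operator $\mathcal A$ to produce the finitely many additional generalized eigenvectors needed to fill that codimension, and then argues via the resolvent that no other spectrum can exist. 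That structure is entirely absent from your proposal, so there is a complete mismatch rather than a gap in an otherwise correct argument.
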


The first assumption of Theorem \ref{Thm_Bao} holds true in our case since we know that the resolvent operator of $A^*$ is compact,  thanks to the Proposition \ref{Prop_spectral}--part (i). So, the next duty is to find a family	 $\{\Psi_{k}, \ k\in \mathbb N^*; \  \widetilde \Psi_{k}, \ k\in \mathbb Z\}$ that defines  a Riesz  basis for $L^2(0,1)\times  L^2(0,1)$ and  that is quadratically close to $\mathcal E(A^*)$. But it is enough to show that this property holds for large frequencies. Precisely, our goal is to show the following:
\begin{equation*}
\sum_{k\geq k_0}\norm{\Phi_{\lambda^p_k}-\Psi_{k}}_{L^2\times L^2}^2+\sum_{\mod{k}\geq k_0}\norm{\Phi_{\lambda^h_k}-\widetilde \Psi_{k}}_{L^2\times L^2}^2<+\infty.
\end{equation*}

Let us  consider  the following functions:
\begin{subequations}
\begin{align} \label{Functions_Psi_k}
\Psi_{k}(x)& := \begin{pmatrix} \phi_k \\  \psi_k \end{pmatrix} =\begin{pmatrix}0\\[4pt]2ie^{-\frac{1}{2}(1+x)}\sin(k\pi(1-x))\end{pmatrix}, \quad \forall k \in \mb N^*, \\ \label{Functions_Psi_tilde_k}
\widetilde \Psi_{k}(x) & := \begin{pmatrix} \widetilde\phi_k \\ \widetilde  \psi_k \end{pmatrix} = 	\begin{pmatrix}  2i \sgn(k) e^{-\frac{1}{2} -i \sgn(k) \sqrt{|k\pi|}} e^{-2ik\pi x}\\[4pt] 0 \end{pmatrix}, \quad \forall k\in \mb Z.
\end{align} 
\end{subequations}
It can be shown that the family $\{\Psi_{k}, \ k\in \mathbb N^*; \  \widetilde \Psi_{k}, \ k\in \mathbb Z\}$ of above functions forms a Riesz basis for $L^2(0,1)\times L^2(0,1)$. 

Also,  we have the following result.
\begin{lemma}\label{Lemma_quadratic_close}
The family  $\{\Psi_{k}, \, k\in \mathbb N^*; \  \widetilde \Psi_{k}, \, k\in \mathbb Z\}$ given by \eqref{Functions_Psi_k}--\eqref{Functions_Psi_tilde_k} is quadratically close to the family of eigenfunctions $\mathcal E(A^*)$.
\end{lemma}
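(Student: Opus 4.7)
The plan is to split the sum into a parabolic piece $\sum_{k\geq k_0}\|\Phi_{\lambda^p_k}-\Psi_k\|_{L^2\times L^2}^2$ and a hyperbolic piece $\sum_{|k|\geq k_0}\|\Phi_{\lambda^h_k}-\widetilde\Psi_k\|_{L^2\times L^2}^2$, and to show each sum is finite by producing an $O(|k|^{-1})$ bound on each norm. Throughout, the estimate $|e^{a}-e^{b}|\le e^{\max(\Re a,\Re b)}|a-b|$, combined with the decay rates $c_k,d_k,\alpha_{1,k},\alpha_{2,k}=O(|k|^{-1})$ from \eqref{asymp_sin}, will allow us to replace each perturbed exponent by its leading part with an additive error of size $O(|k|^{-1})$, uniformly in $x\in[0,1]$.

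For the parabolic branch, $\Psi_k$ has zero first component, so we must bound $\|\xi_{\lambda^p_k}\|_{L^2}$ and $\|\eta_{\lambda^p_k}-\psi_k\|_{L^2}$. The first is immediate from Lemma \ref{lemma_bound_eigenfunc} which gives $\|\xi_{\lambda^p_k}\|_{L^2}\le C/k$, so $\sum_k\|\xi_{\lambda^p_k}\|_{L^2}^2<+\infty$. For the second, I would inspect the three terms in \eqref{eigen-1-eta}: the first term carries the factor $e^{-k^2\pi^2 x}\cdot O(k^{-3})$ whose $L^2$-norm is easily $O(k^{-4})$; the remaining two terms produce $e^{-x/2}\bigl(e^{ik\pi(1-x)}-e^{-ik\pi(1-x)}\bigr)e^{-1/2}$ plus corrections. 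Writing $\psi_k(x)=e^{-(1+x)/2}(e^{ik\pi(1-x)}-e^{-ik\pi(1-x)})$ exactly and comparing with these two terms, the differences reduce to factors of the form $e^{\pm i c_k\pm d_k+O(k^{-1})}-1$ which are $O(k^{-1})$ uniformly on $[0,1]$, together with exponentially tiny contributions from the $e^{-k^2\pi^2}$ exponentials. Thus $\|\eta_{\lambda^p_k}-\psi_k\|_{L^2}=O(k^{-1})$ and the parabolic sum converges.

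For the hyperbolic branch, $\widetilde\Psi_k$ has zero second component, so the task is to bound $\|\xi_{\lambda^h_k}-\widetilde\phi_k\|_{L^2}$ and $\|\eta_{\lambda^h_k}\|_{L^2}$. The latter is again immediate from Lemma \ref{lemma_bound_eigenfunc}, giving $\|\eta_{\lambda^h_k}\|_{L^2}\le C|k|^{-1}$ and hence summable squares. For the former, consider the three terms in \eqref{eigen-2-xi}. In the first term, the prefactor $(-\alpha_{1,k}+2ik\pi+O(1))/(k\pi e^{\sqrt{|k\pi|}+|k|^{-1/2}})$ simplifies to $2i\,e^{-\sqrt{|k\pi|}}(1+O(|k|^{-1/2}))$, while the bracket $e^{\sgn(k)\sqrt{|k\pi|}-1/2-i\sqrt{|k\pi|}+O(|k|^{-1/2})}-e^{-\sgn(k)\sqrt{|k\pi|}-1/2+i\sqrt{|k\pi|}+O(|k|^{-1/2})}$ is dominated by the term with positive real part in the exponent, giving $\sgn(k)\cdot e^{\sgn(k)\sqrt{|k\pi|}-1/2-i\sgn(k)\sqrt{|k\pi|}}$ up to an additive error that decays like $e^{-\sqrt{|k\pi|}}$. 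Multiplied together, the leading contribution is $2i\,\sgn(k)\,e^{-1/2-i\sgn(k)\sqrt{|k\pi|}}e^{-2ik\pi x}=\widetilde\phi_k(x)$, with an $L^2$ remainder of size $O(|k|^{-1/2})$ after also absorbing the small exponent perturbations in $e^{-x(\alpha_{1,k}+i(2k\pi+\alpha_{2,k})+O(|k|^{-1}))}$. For the second and third terms of \eqref{eigen-2-xi}, the prefactor $1/(k\pi e^{\sqrt{|k\pi|}})$ times $O(|k|^{-1/2})$ is balanced by the $L^2$-norm of $e^{\pm x\sqrt{|k\pi|}}$, which is $O(|k|^{-1/4}e^{\sqrt{|k\pi|}})$ in the growing case and smaller otherwise, yielding an overall bound $O(|k|^{-5/4})$ for these terms. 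Hence $\|\xi_{\lambda^h_k}-\widetilde\phi_k\|_{L^2}=O(|k|^{-1/2})$, and while this only gives $O(|k|^{-1})$ for the squared norms, that is exactly summable over $|k|\ge k_0$.

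The bookkeeping in the hyperbolic step is the main obstacle: several exponentials have exponents whose real parts grow like $\sqrt{|k\pi|}$, and one must track which combinations cancel exactly against the denominator $e^{\sqrt{|k\pi|}+|k|^{-1/2}}$ and which decay. Once this is handled carefully, the two convergent estimates together establish the quadratic closeness asserted in Lemma \ref{Lemma_quadratic_close}.
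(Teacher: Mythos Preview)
Your overall strategy matches the paper's: split into parabolic and hyperbolic pieces, use Lemma~\ref{lemma_bound_eigenfunc} for the ``easy'' components ($\xi_{\lambda^p_k}$ and $\eta_{\lambda^h_k}$), and compare the remaining components termwise against the explicit functions $\psi_k$, $\widetilde\phi_k$. The parabolic half of your argument is essentially the paper's and is fine.

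There is, however, a genuine gap in the hyperbolic half. You conclude that $\|\xi_{\lambda^h_k}-\widetilde\phi_k\|_{L^2}=O(|k|^{-1/2})$ and then assert that the squared norms $O(|k|^{-1})$ are ``exactly summable over $|k|\ge k_0$''. This is false: $\sum_{|k|\ge k_0}|k|^{-1}$ is the (two-sided) harmonic series and diverges. An $O(|k|^{-1/2})$ bound on the norm is therefore \emph{not} sufficient for quadratic closeness; you need $O(|k|^{-\alpha})$ with $\alpha>1/2$.

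The paper obtains the sharper bound $\|\xi_{\lambda^h_k}-\widetilde\phi_k\|_{L^2}\le C|k|^{-1}$. The point you are missing is that the specific $O(|k|^{-1/2})$ perturbations appearing in the exponents of \eqref{eigen-2-xi} are not generic: the normalization by $k\pi\,e^{\sqrt{|k\pi|}+|k|^{-1/2}}$ is chosen precisely so that, after cancellation with $e^{\sgn(k)\sqrt{|k\pi|}+O(|k|^{-1/2})}$, the residual multiplicative factor on the leading term is $1+O(|k|^{-1})$ rather than $1+O(|k|^{-1/2})$. Concretely, the paper isolates the first term of \eqref{eigen-2-xi} minus $\widetilde\phi_k$ and reduces it to
\[
2i\,e^{-\frac12-i\sqrt{|k\pi|}}\,e^{-2ik\pi x}\bigl(e^{-x(\alpha_{1,k}+i\alpha_{2,k}+O(|k|^{-1}))}-1\bigr)+O(|k|^{-1}),
\]
and then uses $\alpha_{1,k},\alpha_{2,k}=O(|k|^{-1})$ from \eqref{asymp_sin} to get $O(|k|^{-1})$ overall. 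To repair your argument you must track these cancellations rather than absorbing everything into a generic $O(|k|^{-1/2})$.
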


\begin{proof}  We prove the lemma into two steps. As we mentioned earlier, it is enough to show the quadratically closeness property for large modulus of eigenvalues, that is to 
	$\{\Phi_{\lambda^p_k}, \,  k\geq k_0; \   \Phi_{\lambda^h_k}, \, |k|\geq k_0\}$. 
	
{1. \em $\Psi_{k}$ is quadratically close to $\Phi_{\lambda^p_k}$.} Having the first component $\phi_k=0$ of $\Psi_k$, we see 
\begin{equation}\label{norm_difference_phi_k}
\norm{\xi_{\lambda^p_k}-\phi_{k}}^2_{L^2}=\norm{\xi_{\lambda^p_k}}^2_{L^2}\leq Ck^{-2},\ \  \forall  k\geq k_0 \text{ large}.
\end{equation}

Next, we focus on the second component of the eigenfunction $\Phi_{\lambda^p_k}$ (see \eqref{eigen-1-eta}), we can rewrite it as 
\begin{align*}
\eta_{\lambda^p_k}(x) &= e^{x\left(-k^2\pi^2 -2c_k k\pi -2id_k k\pi  + O(1) \right)} \times O\big(\frac{1}{k^3} \big) \\
&\quad+ e^{-k^2\pi^2 -2c_k k\pi -2id_k k\pi  + O(1)}e^{-\frac{1}{2}(1+x)}\left(e^{ix(k\pi+c_k)}e^{-x(d_k+O(k^{-1}))}-e^{-ix(k\pi+c_k)}e^{x(d_k+O(k^{-1}))}\right)\\
&\quad+e^{-\frac{1}{2}(1+x)}\left(e^{i(1-x)(k\pi+c_k+id_k)+O(k^{-1})}-e^{-i(1-x)(k\pi+c_k+id_k)+O(k^{-1})}\right).
\end{align*}
The last term in above can be expressed as
\begin{align*}
&e^{i(1-x)(k\pi+c_k+id_k)+O(k^{-1})}-e^{-i(1-x)(k\pi+c_k+id_k)+O(k^{-1})}\\
&=2i \sin((1-x)(k\pi+c_k+id_k)) + O(k^{-1})\\
&\sim_{+\infty}  2i \sin ( k\pi(1-x)) +  O(k^{-1}),
\end{align*}
because $c_k,d_k=O(k^{-1})$, thanks to the Lemma \ref{lemma_eigenvalue_eigenfunc}. Thus for large $k\in \mathbb N^*$, one has  
\begin{align*}
\mod{\eta_{\lambda^p_k}(x)-\psi_{k}(x)}&\leq C\left( e^{x(-k^2\pi^2-2c_kk\pi)} + O(k^{-1}) \right),
\end{align*}
and therefore
\begin{equation*}
\norm{\eta_{\lambda^p_k}-\psi_{k}}^2_{L^2}\leq  \frac{C}{k^2}, \quad  \forall k\geq k_0 \text{ large enough}.
\end{equation*}
Hence, for sufficiently large $k_0 \in \mathbb N^*$, we have
\begin{equation}
\sum_{k\geq k_0}\norm{\Phi_{\lambda^p_k}-\Psi_{k}}_{L^2\times L^2}^2\leq C\left(\sum_{k\geq k_0}k^{-2}+\sum_{k\geq k_0} k^{-2} \right)<+ \infty.
\end{equation}	

\smallskip 

{2. \em $\widetilde \Psi_{k}$ is quadratically close to $\Phi_{\lambda^h_k}$.} 

Let us compute following quantity $\xi_{\lambda^h_k} - \widetilde \phi_{k}$, where $\xi_{\lambda^h_k}$ is defined by \eqref{eigen-2-xi}.  We have $\forall x\in (0,1)$ that
\begin{multline}\label{difference_xi_tildephi}
\xi_{\lambda^h_k}(x) - \widetilde \phi_k(x) =  \frac{2i}{e^{\sqrt{|k\pi|} + \frac{1}{\sqrt{|k|}}  } } \left(e^{\sgn(k) \sqrt{|k\pi|} - \frac{1}{2} -i\sqrt{|k\pi|} + O(|k|^{-\frac{1}{2}})}  - e^{-\sgn(k) \sqrt{|k\pi|} - \frac{1}{2} +i\sqrt{|k\pi|} + O(|k|^{-\frac{1}{2}})} \right) \\
\times  e^{-x\left(\alpha_{1,k} + i(2k\pi +\alpha_{2,k}) +O(|k|^{-1})\right)} - 2i\sgn(k) e^{-\frac{1}{2}-i\sgn(k)\sqrt{|k\pi|} } e^{-2ik\pi x}\\
+ \frac{(-\alpha_{1,k} + O(1))}{k\pi e^{ \sqrt{|k\pi|}+\frac{1}{\sqrt{|k|}} }} \left(e^{\sgn(k) \sqrt{|k\pi|} - \frac{1}{2} -i\sqrt{|k\pi|} + O(|k|^{-\frac{1}{2}})}  - e^{-\sgn(k) \sqrt{|k\pi|} - \frac{1}{2} +i\sqrt{|k\pi|} + O(|k|^{-\frac{1}{2}})} \right)\\
\times  e^{-x\left(\alpha_{1,k} + i(2k\pi +\alpha_{2,k}) +O(|k|^{-1})\right)} \\
+ \bigg(e^{-\sgn(k)\sqrt{|k\pi|} - \frac{1}{2} +i\sqrt{|k\pi|} + O(|k|^{-\frac{1}{2}})}  - e^{-\alpha_{1,k} - i(2k\pi +\alpha_{2,k}) + O(|k|^{-1})}   \bigg) \\
\times \frac{1}{k\pi e^{\sqrt{|k\pi|} +\frac{1}{\sqrt{|k|}} }}\left(\sgn(k) \frac{1}{2\sqrt{|k\pi|}} + \frac{i}{2\sqrt{|k\pi|}} + O\Big(\frac{1}{|k|}\Big)\right)	\times e^{x \left(\sgn(k) \sqrt{|k\pi|} - \frac{1}{2} -i\sqrt{|k\pi|} + O(|k|^{-\frac{1}{2}})\right)} \\
+\bigg(e^{-\alpha_{1,k} -i(2k\pi +\alpha_{2,k}) + O(|k|^{-1})} 	- e^{\sgn(k) \sqrt{|k\pi|} - \frac{1}{2} -i\sqrt{|k\pi|} + O(|k|^{-\frac{1}{2}})} \bigg) \\
\times \frac{1}{k\pi e^{\sqrt{|k\pi|} +\frac{1}{\sqrt{|k|}} }}\left( -\sgn(k)\frac{1}{2\sqrt{|k\pi|}} - \frac{i}{2\sqrt{|k\pi|}} + O\Big(\frac{1}{|k|}\Big) \right) \times e^{x\left(-\sgn(k) \sqrt{|k\pi|} - \frac{1}{2} +i\sqrt{|k\pi|} + O(|k|^{-\frac{1}{2}}) \right)},
\end{multline}
We just calculate the first term of the difference  \eqref{difference_xi_tildephi}: for positive $k\geq k_0$ large, we observe that
\begin{align*}
&\frac{2i}{e^{\sqrt{|k\pi|} + \frac{1}{\sqrt{|k|}}  } } \left(e^{\sgn(k) \sqrt{|k\pi|} - \frac{1}{2} -i\sqrt{|k\pi|} + O(|k|^{-\frac{1}{2}})}  - e^{-\sgn(k) \sqrt{|k\pi|} - \frac{1}{2} +i\sqrt{|k\pi|} + O(|k|^{-\frac{1}{2}})} \right) \\
& \ \ \times  e^{-x\left(\alpha_{1,k} + i(2k\pi +\alpha_{2,k}) +O(|k|^{-1})\right)} - 2i\sgn(k) e^{-\frac{1}{2}-i\sgn(k)\sqrt{|k\pi|} } e^{-2ik\pi x} \\
& \sim_{+\infty} 2i e^{-\frac{1}{2} - i \sqrt{|k\pi|}} e^{-2ik\pi x} \left( e^{-x(\alpha_{1,k} +i\alpha_{2,k} + O(|k|^{-1})  )  }-1 \right)+O(\mod{k}^{-1})\\
& \sim_{+\infty}   2i e^{-\frac{1}{2} - i \sqrt{|k\pi|}} e^{-2ik\pi x} O(|k|^{-1})+O(\mod{k}^{-1}).
\end{align*}
The last inclusion holds due to the fact that  $\alpha_{1,k}\sim_{+\infty}O(|k|^{-1})$ and $\alpha_{2,k}\sim_{+\infty}O(|k|^{-1})$, thanks to the Lemma \ref{lemma_eigenvalue_eigenfunc}. 

Rest of the terms in the difference \eqref{difference_xi_tildephi} are always behaving like $O(|k|^{-1})$. Thus, we have
\begin{align*}
\norm{\xi_{\lambda^h_k} - \widetilde \phi_{k} }_{L^2}\leq C |k|^{-1} , \quad \forall |k|\geq k_0 \text{ large}.
\end{align*}
On the other hand, it is easy to deduce that
\begin{align*}
\norm{\eta_{\lambda^h_k}-\tilde{\psi}_k}_{L^2}=\norm{\eta_{\lambda^h_k}}_{L^2}\leq C|k|^{-1},
\end{align*}
thanks to Lemma \ref{lemma_bound_eigenfunc}. Hence,  we have 
\begin{align*}
\sum_{\mod{k}\geq k_0}\norm{\Phi_{\lambda^h_k}-\widetilde \Psi_{k}  }^2_{L^2\times L^2} \leq C \sum_{|k|\geq k_0}   |k|^{-2} <+\infty. 
\end{align*}
This ends the proof.
\end{proof}

\begin{proof}[\bf Proof of Proposition \ref{Prop-eigenfunctions}]
First, recall that the countable number of eigenfunctions $\{\Phi_{\lambda^p_k}\}_{k\geq k_0}$ and $\{\Phi_{\lambda^h_k}\}_{|k|\geq k_0}$, with their asymptotic expressions are already given by \eqref{eigen-1-xi}--\eqref{eigen-1-eta}, \eqref{eigen-2-xi}--\eqref{eigen-2-eta}, and obtained  in Section \ref{subsection-eigenfunc}. Also, recall the particular case when $\lambda_0=0$ is an eigenvalue with eigenfunction $\Phi_{\lambda_0}=(1,0)$. 
		 
Now, thanks to Lemma \ref{Lemma_quadratic_close}, we can apply the point (i) of Theorem \ref{Thm_Bao} to ensure the existence of  eigenfunctions for lower frequencies. More precisely, there exist an $n_0\in \mb N^*$ and the eigenfunctions $\{\Phi_{\widehat \lambda_n}\}_{1}^{n_0}$  associated to the eigenvalues $\{\widehat \lambda_n\}_{1}^{n_0}$ of the operator $A^*$, where 
\begin{align*}
\Phi_{\widehat \lambda_n} : = ( \xi_{\widehat \lambda_n} , \eta_{\widehat \lambda_n} ), \quad \text{for } \ 1 \leq n \leq n_0.
\end{align*}
Moreover, we can guarantee that the family 
\begin{align*}
\E(A^*) : = \Big\{\Phi_{\lambda^p_k}, \,  k\geq k_0; \ \ \Phi_{\lambda^h_k}, \,  |k|\geq k_0 \Big\} \cup \Big\{ \Phi_{\lambda_0}\Big\} \cup \Big\{ \Phi_{\widehat\lambda_n}, \, 1\leq n\leq n_0  \Big\}, 
\end{align*}
forms a Riesz basis in $L^2(0,1)\times L^2(0,1)$.  
 
As a consequence, we have that the set  of eigenfunctions $\E(A^*)$ forms a complete family in $H^{-s}_{{per}}(0,1)\times L^2(0,1)$ for any $s>0$.

The proof ends. 
\end{proof}

\begin{remark}\label{Remark-eigen-A}
In the same way, one can prove that the set of eigenvalues and eigenfunctions of $A$ (denoted by $\sigma(A)$ and $\mathcal E(A)$ respectively) have similar properties as of the eigenpairs of $A^*$.

In this case, we can find some $\widetilde k_0\in \mathbb N^*$ (large enough) such that $A$ has the eigenvalues of parabolic and hyperbolic nature for $|k|\geq \widetilde k_0$. For later use, we denote the eigenfunctions of $A$, respectively by $\widetilde \varPhi^p_{k}$, $k\geq \widetilde k_0$ and $\widetilde \varPhi^h_k$, $|k|\geq \widetilde k_0$ corresponding to the parabolic and hyperbolic branches of eigenvalues.   

Moreover,   using the result of \Cref{Thm_Bao}, we can show that the set $\mathcal E(A)$  forms a Riesz basis for the space $L^2(0,1)\times L^2(0,1)$ and indeed, $\mathcal E(A)$  generates the space $H^s_{per}(0,1)\times L^2(0,1)$ for $s>0$. 
\end{remark}

\smallskip 

\section{Estimations of the observation terms}\label{Approx-con}
\smallskip

In this section, we are going to find some lower bounds of the observation terms associated to our control systems. In this regard, we use the notations $\B^*_\rho$ and $\B^*_u$  which represent the observation operators for the density and velocity case respectively, and their formal expressions are given below.  
\begin{itemize}
	\item 
	The observation operator corresponding to  \eqref{lcnse3} (control in density) is defined by 
	\begin{align}\label{Form-observation}
		\B^*_\rho  =   \begin{pmatrix} 1 \\ 0 \end{pmatrix} \mathds{1}_{\{x=1\}} & : D(A^*)   \to \mb R ,
	\end{align}
	such that 
	\begin{align}\label{op_observation}
		\B^*_\rho \Phi=  \xi(1), \quad \forall  \Phi=(\xi, \eta) \in D(A^*).
	\end{align}
	\item The observation operator corresponding to  \eqref{lcnse4} (control in velocity) is  defined by   
	\begin{align} \label{Form-observation-vel}
		\B^*_u  =  \mathds{1}_{\{x=1\}} \begin{pmatrix} 1 \\ 0 \end{pmatrix}  + \mathds{1}_{\{x=1\}} \begin{pmatrix}  0 \\ 1 \end{pmatrix}\frac{\partial}{\partial x}   & : D(A^*)   \to \mb R ,
	\end{align}
	such that
	\begin{align}\label{op_observation-vel}
		\B^*_u \Phi=  \xi(1)+\eta^\prime(1), \quad \forall  \Phi=(\xi, \eta) \in D(A^*).
	\end{align}
	
\end{itemize}

\subsection{Observation estimates when a control acts on density}

\begin{lemma}\label{Prop_Approximate_Controllability}
Recall the set of eigenfunctions $\E(A^*)$ given by \eqref{Eigenfamily} of the operator $A^*$.  Then,  we have the following result:
\begin{align}\label{non-zer-obs}
\B^*_\rho \Phi \neq 0 , \quad \forall \, \Phi \in \E(A^*),
\end{align}
where $\B^*_\rho$ is the observation operator associated to the system \eqref{lcnse3}, defined by \eqref{Form-observation}--\eqref{op_observation}.
\end{lemma}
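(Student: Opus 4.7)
I plan to argue by contradiction. Suppose some $\Phi = (\xi, \eta) \in \mathcal{E}(A^*)$ associated to an eigenvalue $\lambda \ne 0$ (the case $\lambda = 0$ is already excluded in the paragraph preceding \eqref{eigenfunction}) satisfies $\xi(1) = 0$. Since $\xi(0) = \xi(1)$ is part of the domain, $\xi(0) = 0$ as well, so the eigenfunction has all four Dirichlet traces equal to zero.

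The next step is to rewrite the eigenvalue system \eqref{eigenfunction} as a first-order ODE $W' = M(\lambda) W$ in the augmented state $W = (\xi, \eta, \eta')^{\top}$, with
\[
M(\lambda) = \begin{pmatrix} \lambda & 0 & -1 \\ 0 & 0 & 1 \\ -\lambda & \lambda & 0 \end{pmatrix}.
\]
Under the vanishing traces, $W(0) = \eta'(0)\, e_3$ and $W(1) = \eta'(1)\, e_3$ are both parallel to $e_3 = (0,0,1)^{\top}$. If $\eta'(0) = 0$ then uniqueness for the linear Cauchy problem yields $W \equiv 0$, hence $\Phi \equiv 0$, contradicting $\Phi$ being an eigenfunction. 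So $\eta'(0) \ne 0$, and the identity $W(1) = e^{M(\lambda)} W(0)$ forces $e_3$ to be an eigenvector of $e^{M(\lambda)}$.

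The heart of the argument, and the main obstacle, is to show that $e_3$ cannot be an eigenvector of $e^{M(\lambda)}$ for any $\lambda \in \sigma(A^*)\setminus\{0\}$. A direct check gives $M(\lambda) e_3 = (-1, 1, 0)^{\top}$, so $e_3$ is never an eigenvector of $M(\lambda)$ itself; it can be an eigenvector of the exponential only if several eigenvalues of $M(\lambda)$ collapse modulo $2\pi i$. Let $z_1, z_2, z_3$ denote the roots of the characteristic polynomial $p(z) = z^3 - \lambda z^2 - 2\lambda z + \lambda^2$ and $v_i = (z_i,\, \lambda - z_i,\, z_i(\lambda - z_i))^{\top}$ the associated eigenvectors, and split according to the multiplicity structure. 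When the $z_i$ are distinct, expanding $e_3 = \sum_i \alpha_i v_i$ and using $\lambda \ne 0$ one checks (via Vandermonde) that every $\alpha_i$ is non-zero, so the eigenvector condition forces $e^{z_1} = e^{z_2} = e^{z_3}$, i.e.\ $z_2 = z_1 + 2\pi i m$ and $z_3 = z_1 + 2\pi i n$ for distinct non-zero integers $m, n$. Substituting into Vieta's relations $\sum z_i = \lambda$, $\sum_{i<j} z_i z_j = -2\lambda$, $\prod z_i = -\lambda^2$ and eliminating $z_1$ produces, first, the quadratic $\lambda^2 + 6\lambda + 4\pi^2(m^2 - mn + n^2) = 0$ (so $\mathrm{Re}(\lambda) = -3$), and, second, an explicit expression for $\lambda$ through the product relation; taking real parts in the latter yields $m^2 - mn + n^2 = 7/(2\pi^2)$, which is impossible since the left-hand side is an integer at least $3$ while the right-hand side is irrational.

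The remaining degenerate cases are shorter. A triple root of $p$ forces $\lambda = 0$ by matching the coefficients of $(z - \lambda/3)^3$, hence is excluded. For a double root $z_1 = z_2 \ne z_3$ I would work in a Jordan basis $(v_1, \tilde v, v_3)$ of $M(\lambda)$, expand $e_3 = \alpha v_1 + \beta \tilde v + \gamma v_3$, and compute $\beta = -1/(z_1 - z_3) \ne 0$ explicitly; the matrix $e^J$ then acts as $(\alpha, \beta, \gamma)^{\top} \mapsto (e^{z_1}(\alpha + \beta),\, e^{z_1}\beta,\, e^{z_3}\gamma)^{\top}$, and comparing with $c(\alpha, \beta, \gamma)^{\top}$ in the first two coordinates forces $e^{z_1}\beta = 0$, a contradiction. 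Combining the three sub-cases completes the proof and yields $\mathcal{B}^*_\rho \Phi = \xi(1) \ne 0$. I expect the irrationality step in the distinct-roots case to absorb the bulk of the work, because the integrality of $m^2 - mn + n^2$ clashing with a $\pi^{-2}$ constant is what ultimately excludes every would-be ``resonant'' eigenvalue.
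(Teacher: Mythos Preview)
Your argument is correct and reaches the same number-theoretic obstruction as the paper, but via a genuinely different route. The paper reduces to the third-order ODE for $\eta$, extends $\eta$ by zero to $\mathbb{R}$, and invokes the Paley--Wiener theorem (following Rosier) to force the roots $p,q,r$ of the characteristic cubic to lie among the simple zeros of $e^{z}=\beta/\alpha$; this automatically makes the roots distinct and differing by integer multiples of $2\pi i$, so no multiplicity case-split is needed. You instead stay with the first-order system $W'=M(\lambda)W$ and translate the vanishing traces into the linear-algebraic statement that $e_3$ is an eigenvector of $e^{M(\lambda)}$; this is more elementary (no Fourier analysis) but forces you to treat the distinct, double, and triple root cases separately --- which you do correctly. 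In the distinct case both arguments land on Vieta's relations for the shifted roots and the quadratic $\lambda^{2}+6\lambda+4\pi^{2}(m^{2}-mn+n^{2})=0$; your value $m^{2}-mn+n^{2}=7/(2\pi^{2})$ from the product relation is in fact the right one (the paper's displayed value $-1/\pi^{2}$ comes from an arithmetic slip in simplifying $\Re(\lambda^{3})+27\,\Re(\lambda^{2})-36\pi^{2}Q$, but either value contradicts integrality, so the conclusion is unaffected). Your lower bound $m^{2}-mn+n^{2}\ge 3$ for distinct non-zero integers is also correct and slightly sharpens the contradiction.
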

\begin{proof}
	For the particular case when $\lambda_0=0$, the eigenfunction
	 $$\B^*_\rho \Phi_{\lambda_0} = 1 \neq 0.$$
		
Let us pick any $\Phi:=(\xi, \eta) \in \E(A^*)$ corresponding to some  eigenvalue $\lambda\neq 0$ and recall the eigenvalue  problem \eqref{eigenfunction}.   Substituting  the first equation of \eqref{eigenfunction} in the second one, we get 
\begin{align}\label{xi-eta-1}
\eta^{\prime \prime }(x) + \lambda \xi (x) - \lambda \eta(x)=0, \ \ \forall x \in (0,1).
\end{align} 
Performing a differentiation, we have 
\begin{align*}
\eta^{\prime \prime \prime }(x) + \lambda \xi^\prime (x) - \lambda \eta^\prime(x)=0, \ \ \forall x \in (0,1).
\end{align*} 
Then,	by substituting $\xi^\prime=\lambda\eta -\eta^{\prime \prime}-\eta^\prime$ in above, we get the differential equation satisfied only by $\eta$ as given by
\begin{subequations}
\begin{align}\label{etaeq}	&\eta^{\prime\prime\prime}(x)-\lambda\eta^{\prime\prime}(x)-2\lambda\eta^{\prime}(x)+\lambda^2\eta(x)=0  , \ \ \forall x \in (0,1),\\\label{etaboun}	
&\eta(0)=0,\ \ \eta(1)=0,\ \ \eta^{\prime\prime}(0)=\eta^{\prime\prime}(1).
\end{align}
\end{subequations}
To prove the proposition, we assume  in contrary that there exists some $\Phi =(\xi, \eta)\in \E(A^*)$ such that $\B^* \Phi= \xi(1)=0$ and  this gives from the relation \eqref{xi-eta-1} that
\begin{align*}
\eta^{\prime \prime }(0)= \eta^{\prime \prime}(1)=0. 
\end{align*}
Now, our claim is to prove that $\eta =0$ in $(0,1)$ which will imply $\xi=0$  and that is enough for  proving the proposition, since  all the eigenvalues and eigenfunctions are non-trivial.

\medskip 

Define an extension map $\vartheta:\mathbb{R}\to\mathbb{R}$ by
\begin{equation}\label{extension-map}
\vartheta(x)=\begin{cases}\eta(x),&\ \ x\in(0,1),\\0,&\ \ x\in\mathbb{R}\setminus(0,1).\end{cases}
\end{equation}
Then the transformed equation for  \eqref{etaeq} is
\begin{multline}\label{eq-u-prev} 
\vartheta^{\prime\prime\prime}(x)-\lambda \vartheta^{\prime\prime}(x)-2\lambda \vartheta^{\prime}(x)+\lambda^2 \vartheta(x) \\
=-\eta^{\prime\prime}(1)\delta_{x=1}+\eta^{\prime\prime}(0)\delta_{x=0}-\eta^{\prime}(1)(\delta_{x=1}^{\prime}-\lambda\delta_{x=1})+\eta^{\prime}(0)(\delta_{x=0}^{\prime}-\lambda\delta_{x=0}) , \quad \forall x \in \mb R.
\end{multline}
We mention here that the  idea of introducing this  extension map (as \eqref{extension-map}) to study the non-vanishing property of the observation terms has been addressed in \cite{Rosier97}. 

Let us use the conditions  $\eta^{\prime\prime}(0)=\eta^{\prime\prime}(1)=0$ in  \eqref{eq-u-prev}, which gives 
\begin{equation}\label{eq-u}
\vartheta^{\prime\prime\prime}(x)-\lambda \vartheta^{\prime\prime}(x)-2\lambda \vartheta^{\prime}(x)+\lambda^2 \vartheta(x)=-\eta^{\prime}(1)(\delta_{x=1}^{\prime}-\lambda\delta_{x=1})+\eta^{\prime}(0)(\delta_{x=0}^{\prime}-\lambda\delta_{x=0}) , \ \ \forall x \in \mb R.
\end{equation}
Thus, the existence of an $\eta$ satisfying \eqref{etaeq}--\eqref{etaboun} is equivalent to the existence of $\alpha,\beta,\lambda$ with $(\alpha,\beta)\neq(0,0)$ and $\lambda \in \sigma(A^*)$,   such that
\begin{equation}\label{eq-u-1}
\vartheta^{\prime\prime\prime}(x)-\lambda \vartheta^{\prime\prime}(x)-2\lambda \vartheta^{\prime}(x)+\lambda^2 \vartheta(x)=-\alpha(\delta_{x=1}^{\prime}-\lambda\delta_{x=1})+\beta(\delta_{x=0}^{\prime}-\lambda\delta_{x=0}), \ \ \forall x \in \mb R.
\end{equation}
Without loss of generality, we can assume $\alpha\neq0$. Indeed, $\alpha=\eta^{\prime}(1)=0$ implies $\eta=0$ from the eigen equation \eqref{etaeq}-\eqref{etaboun} and the assumption $\eta^{\prime\prime}(1)=0$. 

Taking Fourier transform on both sides of \eqref{eq-u-1}, we obtain
\begin{equation*}
\left((iz)^3-\lambda(iz)^2-2\lambda(iz)+\lambda^2\right)\hat{\vartheta}(z)=-\alpha(ize^{-iz}-\lambda e^{-iz})+\beta(iz-\lambda), \ \ \text{for } z \in \mb C.
\end{equation*}
Therefore
\begin{equation*}
\hat{\vartheta}(z)  =\frac{-\alpha e^{-iz}(iz-\lambda)+\beta(iz-\lambda)}{(iz)^3-\lambda(iz)^2-2\lambda(iz)+\lambda^2}=\frac{(-\alpha e^{-iz}+\beta)(iz-\lambda)}{(iz)^3-\lambda(iz)^2-2\lambda(iz)+\lambda^2}, \ \ \text{for } z \in \mb C.
\end{equation*}
Since $\hat{\vartheta}$ is the Fourier transform of a function $\eta\in H^1_0(0,1)$, by the Paley-Wiener theorem, the function
\begin{equation}
\hat{\vartheta}(z) =\frac{(-\alpha e^{-iz}+\beta)(iz-\lambda)}{(iz)^3-\lambda(iz)^2-2\lambda(iz)+\lambda^2} , \ \ \text{for } z \in \mb C,
\end{equation} 
is entire. Thus, the roots of $(iz)^3-\lambda(iz)^2-2\lambda(iz)+\lambda^2$ are also the roots of $(-\alpha e^{-iz}+\beta)(iz-\lambda)$ with the same multiplicity. So, the main work is to calculate the roots of 
\begin{align}\label{equ-numerator}
(-\alpha e^{-iz}+\beta)(iz-\lambda)=0, \quad \text{for } z \in \mb C.
\end{align}
Without loss of generality, we assume the following function  in  $iz \in \mb C$, 
\begin{equation}\label{eq-u-2}
\hat{\vartheta}(iz)=\frac{(-\alpha e^{z}+\beta)(-z-\lambda)}{-z^3-\lambda z^2+2\lambda z+\lambda^2}, \ \ \text{for } z \in \mb C.
\end{equation}
In \eqref{eq-u-2}, the roots of $(-\alpha e^{z}+\beta)(-z-\lambda)$ are $z=-\lambda$ and the zeros of $e^z=\frac{\beta}{\alpha}$ (as we have $\alpha \neq 0$). We also note that $-\lambda$ is not a root of the  polynomial equation
\begin{align}\label{cubic_poly}
-z^3-\lambda z^2+2\lambda z+\lambda^2 =0 .
\end{align} 
Let   $r_1, r_2, r_3$ be the roots of the equation \eqref{cubic_poly}. Then one must have 
\begin{equation*}
e^{r_1}=e^{r_2}=e^{r_3}=\frac{\beta}{\alpha},
\end{equation*}
that is,
\begin{equation*}
r_2=r_1+2il\pi,\ \ r_3=r_1+2im\pi,\ \ \forall  l,m \in \mathbb{Z}.
\end{equation*}
Since these are also the roots of the polynomial equation \eqref{cubic_poly}, we have 
\begin{subequations} 
\begin{align}
\label{rel-1}	r_1 + r_2 + r_3  &=-\lambda, \\
\label{rel-2}	 r_1 r_2 + r_2 r_3 + r_1 r_3 &= -2\lambda, \\ 
\label{rel-3}	  r_1 r_2 r_3 &=\lambda^2. 
\end{align}
\end{subequations}
Thus, we readily have from \eqref{rel-1}, 
\begin{equation}\label{p} 
3r_1+2il\pi+2im\pi=-\lambda, \ \ \text{ i.e., } \ r_1=\frac{1}{3}(-\lambda-2il\pi-2im\pi), 
\end{equation}
and therefore, 
\begin{equation}\label{q,r}
r_2=\frac{1}{3}(-\lambda+4il\pi-2im\pi), \ \ \  r_3=\frac{1}{3}(-\lambda-2il\pi+4im\pi).
\end{equation}
From the relation \eqref{rel-2}, we have
\begin{equation*}
\lambda^2+6\lambda+4(l^2-lm+m^2)\pi^2=0.
\end{equation*}
Solving, we get some particular values of eigenvalues $\lambda$, which are
\begin{equation*}
\lambda=\frac{-6\pm\sqrt{36-16\pi^2(l^2-lm+m^2)}}{2}=-3\pm\sqrt{9-4\pi^2(l^2-lm+m^2)}
\end{equation*}
Since $l,m\in\mathbb{Z}$, therefore $l^2-lm+m^2\geq0$\footnote{For $lm=0$, $l^2-lm+m^2=l^2+m^2\geq0$, for $lm<0, l^2-lm+m^2>0$ and for $lm>0, l^2-lm+m^2=(l-m)^2+lm>0$.} and $l^2-lm+m^2=0$ if and only if $l=m=0$\footnote{If $l^2-lm+m^2=0$ and $m\neq0$ then $(\frac{l}{m})^2-(\frac{l}{m})+1=0$ has no real solutions. Therefore $m=0$ and hence $l=0$.}. Thus for $l\neq0$ and $m\neq0$
\begin{equation}\label{value-lambda}
\lambda=-3\pm i\sqrt{4\pi^2(l^2-lm+m^2)-9}.
\end{equation}
Finally, from the relation \eqref{rel-3}, we get
\begin{align*}
-\lambda^3-27\lambda^2+(-12l^2\pi^2+12lm\pi^2-12m^2\pi^2)\lambda-16il^3\pi^3+24il^2m\pi^3+24ilm^2\pi^3-16im^3\pi^3=0,
\end{align*}
and the real part of which gives 
\begin{equation*}
\Re(\lambda^3)+27\Re(\lambda^2)+12\pi^2(l^2-lm+m^2)\Re(\lambda)=0.
\end{equation*}
Now, using the particular values of $\lambda$ as obtained in \eqref{value-lambda}, we get
\begin{equation*}
-108\pi^2 (l^2-lm+m^2)-108=0
\end{equation*}
that is
\begin{equation*}
l^2-lm+m^2=-1/\pi^2 <0, 
\end{equation*}
which is a contradiction. Therefore,  the only possibility is $l=m=0$, which gives from \eqref{value-lambda} that $\lambda = -6$ (since $\lambda \neq 0$). 

On the other hand, from \eqref{p} and \eqref{q,r}, it  implies that the cubic polynomial \eqref{cubic_poly} has root $-\frac{\lambda}{3}=2$ of multiplicity $3$ which is again a contradiction. Hence,  we conclude that $\alpha \neq 0$ cannot be possible. 
 
Therefore, the only possibility is  $\alpha=\beta=0$, which gives (comparing \eqref{eq-u} and \eqref{eq-u-1}) that $\eta^\prime(0)=\eta^\prime(1)=0$. But, we have by assumptions that $\eta(0)=\eta(1)=0$ and $\eta^{\prime \prime}(0)=\eta^{\prime \prime}(1)=0$, i.e.,  $\eta=0$ in $(0,1)$, and as a consequence, $\xi=0$ in $(0,1)$.

Hence, the proof of lemma follows. 
\end{proof}

\medskip 

The next lemma shows that the observation terms satisfy some lower bounds which are not exponentially small. In fact, these lower bounds are crucial to conclude the null-controllability of the concerned system  \eqref{lcnse3}. 
We have the following lemma.
\begin{lemma}[Observation estimates: control on density] \label{observation_estimate-density}
Recall the set of eigenfunctions $\E(A^*)$, given by \eqref{Eigenfamily}.  Then, there exists a constant $C>0$, independent in $k$, such that we have the following observation estimates for the parabolic and hyperbolic parts: 
\begin{subequations}
\begin{align}\label{obs_den-1}
|\B^*_\rho \Phi_{\lambda^p_k}| &\geq \frac{C}{k\pi}  , \quad \forall k \geq k_0, \\ \label{obs_den-2}
|\B^*_\rho \Phi_{\lambda^h_k} | &\geq C , \quad \forall |k| \geq k_0,
\end{align}
\end{subequations}
where the number $k_0$ introduced by Lemma \ref{lemma_eigenvalue_eigenfunc}.
\end{lemma}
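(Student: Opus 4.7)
The plan is to substitute $x=1$ into the explicit asymptotic expansions \eqref{eigen-1-xi} and \eqref{eigen-2-xi}, identify the leading contribution, and absorb the remainder. Throughout, I use that the real sequences $c_k, d_k, \alpha_{1,k}, \alpha_{2,k}$ are $O(|k|^{-1})$ by \eqref{asymp_sin}; consequently every $O(\cdot)$ sitting inside an exponent becomes, upon exponentiation, a multiplicative factor $1+O(|k|^{-1/2})$ (or better), which does not affect the leading behaviour. At the end, the finitely many indices between $k_0$ and the threshold needed for the asymptotic estimates are handled by enlarging $k_0$ and invoking the non-vanishing statement of \Cref{Prop_Approximate_Controllability}.

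\textbf{Parabolic part.} I would write $\xi_{\lambda^p_k}(1)=T_1+T_2+T_3$ according to the three lines of \eqref{eigen-1-xi}. The first summand $T_1$ is immediately of order $k^{-1}e^{-k^2\pi^2+O(1)}$ and hence exponentially small. For $T_2$, the key observation is that the product of $e^{i(k\pi+c_k)+O(k^{-1})-1/2-d_k}$ (sitting inside the bracket) with the outside factor $e^{-i(k\pi+c_k)-1/2+d_k+O(k^{-1})}$ evaluated at $x=1$ collapses to $e^{-1+O(k^{-1})}$ via the perfect cancellation $e^{i(k\pi+c_k)}\cdot e^{-i(k\pi+c_k)}=1$, while the other piece of the bracket contributes at most $O(e^{-k^2\pi^2+O(1)})$. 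Hence
\begin{equation*}
T_2=\frac{ie^{-1}}{k\pi}\bigl(1+O(k^{-1})\bigr)+O\bigl(k^{-1}e^{-k^2\pi^2/2}\bigr),
\end{equation*}
and an entirely parallel computation gives the same leading expression for $T_3$. Summing, $\xi_{\lambda^p_k}(1)=\frac{2ie^{-1}}{k\pi}(1+O(k^{-1}))+O(k^{-1}e^{-k^2\pi^2/2})$, from which \eqref{obs_den-1} follows for all $k$ large enough.

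\textbf{Hyperbolic part.} Setting $\mu_k:=\sqrt{|k\pi|}$ and $\xi_{\lambda^h_k}(1)=S_1+S_2+S_3$ according to \eqref{eigen-2-xi}, I would analyse $S_1$ first. Its prefactor $\frac{-\alpha_{1,k}+2ik\pi+O(1)}{k\pi}$ simplifies to $2i+O(|k|^{-1})$, and the trailing factor $e^{-\alpha_{1,k}-i(2k\pi+\alpha_{2,k})+O(|k|^{-1})}$ equals $1+O(|k|^{-1})$ since $e^{-2ik\pi}=1$ and $\alpha_{1,k},\alpha_{2,k}=O(|k|^{-1})$. The middle bracket $e^{\sgn(k)\mu_k-1/2-i\mu_k+O(|k|^{-1/2})}-e^{-\sgn(k)\mu_k-1/2+i\mu_k+O(|k|^{-1/2})}$ is dominated (whichever the sign of $k$) by the exponential carrying $+\mu_k$ in its real part; dividing by $e^{\mu_k+1/\sqrt{|k|}}$ the two $\mu_k$'s cancel and one gets $|S_1|=2e^{-1/2}(1+o(1))$. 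For $S_2$ and $S_3$ the extra prefactor $\frac{1}{2k\pi\mu_k}$ arising from the residues $\pm 1/(2\sqrt{|k\pi|})$ brings the order down to $O(|k|^{-3/2})$, because the candidate leading exponential is either killed by an $e^{-\mu_k}$ factor or cancelled by the bracket's dominant term. Consequently $|\xi_{\lambda^h_k}(1)|\geq e^{-1/2}$ for $|k|$ large, yielding \eqref{obs_den-2}.

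\textbf{Main obstacle.} The only delicate point is the bookkeeping of the two competing exponentials inside each bracket and verifying that the apparently tight cancellation between the prefactor $\pm\frac{i}{k\pi}$ (resp.\ $\pm\frac{1}{2k\pi\mu_k}$) and the product of the two flanking exponentials genuinely produces the stated leading behaviour rather than vanishing. Once one recognises that the flanking phases $e^{\pm i k\pi}$ and $e^{\pm i\mu_k}$ combine in pairs into honest constants, everything else is routine asymptotic manipulation.
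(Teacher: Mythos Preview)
Your proposal is correct and follows essentially the same approach as the paper: substitute $x=1$ into the asymptotic formulae \eqref{eigen-1-xi}, \eqref{eigen-2-xi}, isolate the leading term, and use \Cref{Prop_Approximate_Controllability} to handle the finitely many indices below the asymptotic threshold. Your computation is in fact more explicit than the paper's, which simply declares the parabolic bound ``easy to deduce'' and, for the hyperbolic part, only writes out the leading term $J$ before concluding; your identification of the constants $2ie^{-1}/(k\pi)$ and $2e^{-1/2}$ is accurate and your treatment of the subleading terms $S_2,S_3$ is sound (indeed $S_3$ is even exponentially small for $k>0$, so your $O(|k|^{-3/2})$ is a harmless overestimate).
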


\begin{proof}  
Using  the definition of $\B^*_\rho$ introduced by \eqref{op_observation}, we have
\begin{align*}
&\mathcal B^*_\rho \Phi_{\lambda^p_k} = \xi_{\lambda^p_k}(1), \quad \forall k \geq k_0, \\
&\mathcal B^*_\rho \Phi_{\lambda^h_k} = \xi_{\lambda^h_k}(1) , \quad \forall |k|\geq k_0.
\end{align*}
We recall Lemma \ref{Prop_Approximate_Controllability} which ensures that $\B^*_\rho \Phi \neq 0$ for all $\Phi \in \mathcal E(A^*)$.  In particular   $\xi_{\lambda^p_k}(1) \neq 0$ for all $k \geq k_0$ and $\xi_{\lambda^h_k}(1)\neq 0$ for all $|k| \geq k_0$. Thus, it is enough to obtain the lower bounds for large modulus of $k$. 

\medskip 

(i)	Let us recall the expressions of $\xi_{\lambda^p_k}$ from \eqref{eigen-1-xi}, so that we have
\begin{multline*}
{\xi_{\lambda^p_k}(1)} = e^{\left(-k^2\pi^2 -2c_k k\pi - 2 i d_k k\pi  + O(1) \right)} \times O\big(\frac 1k \big)  \\ 
+\left(\frac{i}{k\pi} +O\big(\frac{1}{k^2}\big)\right) \left( e^{i(k\pi +c_k)+O(k^{-1})-\frac{1}{2}-d_k} -  e^{-k^2\pi^2 -2c_kk\pi -2id_k k\pi + O(1)} \right) e^{\left(-i(k\pi +c_k)-\frac{1}{2}+d_k +O(k^{-1}) \right)} \\ 
+\left(-\frac{i}{k\pi} +O\big(\frac{1}{k^2}\big)\right)  \left( e^{-k^2\pi^2 -2c_k k\pi-2id_k k\pi  + O(1)}- e^{ - i(k\pi +c_k)-\frac{1}{2}+d_k+O(k^{-1})}\right) e^{\left(i(k\pi +c_k)-\frac{1}{2}-d_k +O(k^{-1}) \right)}.
\end{multline*}
From the above expression, it is easy to deduce that there exists some constant $C>0$, independent in $k$, such that 
\begin{align*} 
\mod{\xi_{\lambda^p_k}(1)}	 \geq \frac{C}{k\pi},  \quad \text{for all $k\geq k_0$ large}. 
\end{align*}
		
\medskip 
		
(ii)	On the other hand, from the expression of $\xi_{\lambda^h_k}$ given by \eqref{eigen-2-xi}, we have
\begin{multline}\label{xi-gamma-k-1}
{\xi_{\lambda^h_k}(1)} =  \left(e^{\sgn(k) \sqrt{|k\pi|} - \frac{1}{2} -i\sqrt{|k\pi|} + O(|k|^{-\frac{1}{2}})}  - e^{-\sgn(k) \sqrt{|k\pi|} - \frac{1}{2} +i\sqrt{|k\pi|} + O(|k|^{-\frac{1}{2}})} \right) \\
\times \frac{(-\alpha_{1,k}+2ik\pi + O(1))}{k\pi e^{ \sqrt{|k\pi|} + \frac{1}{\sqrt{|k|}} } } \times e^{-\alpha_{1,k} - i(2k\pi +\alpha_{2,k}) +O(|k|^{-1})} \\
+ \bigg(e^{-\sgn(k)\sqrt{|k\pi|} - \frac{1}{2} +i\sqrt{|k\pi|} + O(|k|^{-\frac{1}{2}})}  - e^{-\alpha_{1,k} - i(2k\pi +\alpha_{2,k}) + O(|k|^{-1})}   \bigg) \\
\times \frac{1}{k\pi e^{ \sqrt{|k\pi|} + \frac{1}{\sqrt{|k|}} } }\left(\sgn(k) \frac{1}{2\sqrt{|k\pi|}} + \frac{i}{2\sqrt{|k\pi|}} + O\Big(\frac{1}{|k|}\Big)\right)	\times e^{\left(\sgn(k) \sqrt{|k\pi|} - \frac{1}{2} -i\sqrt{|k\pi|} + O(|k|^{-\frac{1}{2}})\right)} \\
+\bigg(e^{-\alpha_{1,k} -i(2k\pi +\alpha_{2,k}) + O(|k|^{-1})}  - e^{\sgn(k) \sqrt{|k\pi|} - \frac{1}{2} -i\sqrt{|k\pi|} + O(|k|^{-\frac{1}{2}})} \bigg) \\
\times \frac{1}{k\pi e^{ \sqrt{|k\pi|} + \frac{1}{\sqrt{|k|}} } } \left( -\sgn(k)\frac{1}{2\sqrt{|k\pi|}} - \frac{i}{2\sqrt{|k\pi|}} + O\Big(\frac{1}{|k|}\Big) \right) \times e^{\left(-\sgn(k) \sqrt{|k\pi|} - \frac{1}{2} +i\sqrt{|k\pi|} + O(|k|^{-\frac{1}{2}}) \right)}.
\end{multline}
We deduce the result for positive $k\geq k_0$, the same will be true for $k\leq -k_0$. 

The leading term in \eqref{xi-gamma-k-1} for $k\geq k_0$ is 
\begin{align*}
J:= \frac{1}{k\pi e^{\frac{1}{\sqrt{k}}}}\big(-\alpha_{1,k} + 2i{k\pi} + O(1)\big)	e^{-\alpha_{1,k}-i(2k\pi  +\alpha_{2,k})+O(k^{-\frac{1}{2}})} \times  e^{-\frac{1}{2}-i\sqrt{k\pi}+O(k^{-\frac{1}{2}})},
\end{align*}
and thus, it is clear to understand that there exists some $C>0$, independent in $k$, such that
\begin{align*} 
|\xi_{\lambda^h_k}(1)|\geq C, \quad \text{for all $k\geq k_0$ large}. 
\end{align*}
This completes the proof. 
\end{proof}

\subsection{Observation estimates when a control acts on  velocity}

 In this case, we can only able to prove the appropriate lower bounds of the observation terms $\B^*_u\Phi$ for large frequencies. For lower frequencies, it is difficult to conclude anything. In fact, 
the idea applied to prove \Cref{Prop_Approximate_Controllability} for the density case is not working when the position of the control is changed to the velocity component.

Let us write the following lemma that gives the observation estimates for higher frequencies.
\begin{lemma}[Observation estimates: control in velocity]\label{observation_estimate-vel}
	Recall the set of eigenfunctions $\E(A^*)$ given by \eqref{Eigenfamily} and the number $k_0$ introduced in Lemma \ref{lemma_eigenvalue_eigenfunc}.  Then, there exists some natural number $\widehat k_0\geq k_0$  and a constant $C>0$ such that we have the following observation estimates: 
	\begin{subequations}
		\begin{align}
			\label{obs_est-1}
			|\B^*_u \Phi_{\lambda^p_k}| &\geq C k\pi , \quad \forall k \geq \widehat k_0, \\ \label{obs_est-2}
			|\B^*_u\Phi_{\lambda^h_k}| &\geq \frac{C}{\sqrt{|k\pi|}} \quad \forall |k|\geq \widehat k_0,
		\end{align}
	\end{subequations}
	where $C$ does not depend on $k$.
\end{lemma}
\begin{proof}
	Using the definition of $\B^*_u$, given by \eqref{Form-observation-vel}--\eqref{op_observation-vel}, we have 
	\begin{align*}
		&\mathcal B^*_u \Phi_{\lambda^p_k} = \xi_{\lambda^p_k}(1) + \eta^\prime_{\lambda^p_k}(1), \quad \forall k \geq k_0, \\
		&\mathcal B^*_u \Phi_{\lambda^h_k} = \xi_{\lambda^h_k}(1) + \eta^\prime_{\lambda^h_k}(1), \quad \forall |k|\geq k_0. 
	\end{align*} 
	\begin{itemize}
		
		\item[(i)] Recall the expressions of $\xi_{\lambda^p_k}$ and  $\eta_{\lambda^p_k}$, given by  \eqref{eigen-1-xi} and \eqref{eigen-1-eta} respectively, so that we have
		\begin{multline*}
			\xi_{\lambda^p_k}(1) + \eta^\prime_{\lambda^p_k}(1)=
			e^{-k^2\pi^2 -2c_k k\pi -2id_k k\pi+ O(1)} \times O\big(\frac 1k\big)  \\
			+ \big(-ik\pi +O(1) \big) \left( e^{i(k\pi +c_k)+O(k^{-1})-\frac{1}{2}-d_k} -  e^{-k^2\pi^2 -2c_kk\pi -2id_k k\pi + O(1)} \right) e^{-i(k\pi +c_k)-\frac{1}{2}+d_k +O(k^{-1})} \\
			+	\big(ik\pi +O(1) \big)  \left( e^{-k^2\pi^2 -2c_k k\pi-2id_k k\pi  + O(1)}- e^{ - i(k\pi +c_k)-\frac{1}{2}+d_k+O(k^{-1})}\right) e^{i(k\pi +c_k)-\frac{1}{2}-d_k +O(k^{-1}) } .
		\end{multline*}
		The leading term in the above expression is 
		\begin{align*}
			L_k:=-\big[2ik\pi +O(1)\big] e^{-1 +O(k^{-1})} ,
		\end{align*}
		and that there exists some $\widehat k_0\in \mb N^*$ which is larger or equal to $k_0$ such that, one has   
		\begin{align*}
			|L_k| \geq C k\pi, \quad \text{for all  } k\geq  \widehat k_0.
		\end{align*}
		Other lower order terms can be  bounded by $Ck\pi e^{-k^2\pi^2}$ and thus,
		\begin{align*}
			\mod{\xi_{\lambda^p_k}(1)+\eta^\prime_{\lambda^p_k}(1)}\geq Ck\pi(1-e^{-k^2\pi^2}), \quad \text{for all } k\geq \widehat k_0,
		\end{align*}
		for some $C>0$ that does not depend on $k$. So, the estimate \eqref{obs_est-1} follows.

		\medskip 
		
		\item[(ii)] For the set of eigenfunctions \eqref{eigen-2-xi}--\eqref{eigen-2-eta} associated to $\lambda^h_k$, the observation terms are 
		\begin{align*}
			&{\xi_{\lambda^h_k}(1)+\eta_{\lambda^h_k}^{\prime}(1)}= \left(e^{\sgn(k)\sqrt{\mod{k\pi}} - \frac{1}{2} -i\sqrt{\mod{k\pi}} + O(\mod{k}^{-\frac{1}{2}}) } - e^{-\sgn(k)\sqrt{\mod{k\pi}} - \frac{1}{2} +i\sqrt{\mod{k\pi}} + O(\mod{k}^{-\frac{1}{2}})} \right)\\
			&\quad\quad\times \frac{O(1)}{k\pi e^{\sqrt{\mod{k\pi}}+\frac{1}{\sqrt{\mod{k}}}}}\times e^{-\alpha_{1,k} - i(2k\pi +\alpha_{2,k}) +O(\mod{k}^{-1})} \\
			&\quad+\left(e^{-\sgn(k)\sqrt{\mod{k\pi}} -\frac{1}{2} +i\sqrt{\mod{k\pi}} + O(\mod{k}^{-\frac{1}{2}})} - e^{-\alpha_{1,k} - i(2k\pi +\alpha_{2,k}) + O(\mod{k}^{-1})}   \right)\\
			&\quad\quad\times\frac{\Big(\sgn(k)\sqrt{\mod{k\pi}}-i\sqrt{\mod{k\pi}}+O(1)\Big)}{k\pi e^{\sqrt{\mod{k\pi}}+\frac{1}{\sqrt{\mod{k}}}}}\times e^{\big(\sgn(k)\sqrt{\mod{k\pi}} - \frac{1}{2} -i\sqrt{\mod{k\pi}} + O(\mod{k}^{-\frac{1}{2}}) \big)} \\
			&\quad+ \left(e^{-\alpha_{1,k} - i(2k\pi +\alpha_{2,k}) + O(\mod{k}^{-1})}  - e^{\sgn(k)\sqrt{\mod{k\pi}} - \frac{1}{2} -i\sqrt{\mod{k\pi}} + O(\mod{k}^{-\frac{1}{2}}) } \right)\\
			&\quad\quad\times\frac{\Big(-\sgn(k)\sqrt{\mod{k\pi}}+i\sqrt{\mod{k\pi}}+O(1)\Big)}{k\pi e^{\sqrt{\mod{k\pi}}+\frac{1}{\sqrt{\mod{k}}}}}\times e^{\big(-\sgn(k)\sqrt{\mod{k\pi}} - \frac{1}{2} +i\sqrt{\mod{k\pi}} + O(\mod{k}^{-\frac{1}{2}})  \big)}.
		\end{align*}
		
		For $k\geq k_0>0$,
		the leading term in the above expression is 
		\begin{align*}
			J_k:= -\frac{1}{k\pi}(\sqrt{\mod{k\pi}}-i\sqrt{\mod{k\pi}}+O(1))	e^{-\alpha_{1,k}-i(2k\pi + \alpha_{2,k})+O(\mod{k}^{-1})} e^{-\frac{1}{2}-\frac{1}{\sqrt{|k|}}+i\sqrt{\mod{k\pi}}+O(\mod{k}^{-\frac{1}{2}})}, 
		\end{align*}
		Then,	it is clear to understand that there exists some $C>0$ and  $\widehat k_0$ (this  $\widehat k_0$ could be larger than previous  and in that case we shall choose the bigger one for both)  such that
		\begin{align*}
			|J_k| \geq \frac{C}{\sqrt{\mod{k \pi}}}, \quad \text{for all } k\geq \widehat k_0  .
		\end{align*}	
		The lower order terms can be bounded by $C \left( 1/\mod{k\pi} +  (1/\sqrt{\mod{k\pi}}) e^{-\sqrt{\mod{k\pi}}}\right)$ for large $k$ and thus we have 
		\begin{align*}
			|\xi_{\lambda^h_k}(1) + \eta_{\lambda^h_k}^{\prime}(1)|\geq \frac{C}{\sqrt{\mod{k\pi}}}  \quad \forall k\geq \widehat k_0.
		\end{align*}
		
		A similar estimate can be deduced for $k\leq -k_0<0$. We skip the details. 
		
		The proof is complete.
	\end{itemize}
\end{proof}

\section{Null-controllability using the method of moments: control on density}\label{Section-Moments}

\smallskip 

In this section, we will first prove the null-controllability of the system \eqref{lcnse3}, more precisely  Theorem \ref{Thm-control-dens}. To prove the existence of a null-control $p\in L^2(0,T)$ for the system \eqref{lcnse3}, we use the so-called moments technique. As a consequence, we prove the null-controllability of the system \eqref{main-control-prbm}, that is \Cref{Thm-control-dens-Diri}.
Using the above null-controllability results, we shall also prove that the system \eqref{main-control-prbm} is approximately controllable in $L^2(0,1)\times L^2(0,1)$, that is \Cref{Coro-approx}. 

Let $(\rho,u)$ be the solution to the system \eqref{lcnse3} with a boundary control $p$. Then, the following lemma gives an equivalent criterion for the null-controllability.
\begin{lemma}\label{Lemma-control-form}
The system \eqref{lcnse3} is null-controllable at time $T>0$ if and only if there exists a control $p\in  L^2(0,T)$ such that
\begin{equation}\label{formulation_control}
\ip{\begin{pmatrix}\sigma(0) \\ v(0)\end{pmatrix}}{\begin{pmatrix}\rho_0\\u_0\end{pmatrix}}_{ H^{-s}_{per}\times L^2, H^s_{per}\times L^2} = -\int_{0}^{T}\overline{\sigma(t,1)}p(t)dt,
\end{equation}
where  $(\sigma,v)$ is the solution to  the adjoint system \eqref{lcnse_adjoint} with $(f,g)=(0,0)$  and any given final  data $(\sigma_T,v_T)\in  H^{-s}_{per}(0,1)\times L^2(0,1)$ for $s>0$.
\end{lemma}

\smallskip 
\subsection{Formulation of the mixed parabolic-hyperbolic moments problem}
To prove the null-controllability of system \eqref{lcnse3}, we shall formulate and solve a set of moments problem using the strategy developed in \cite{Hansen94}. For sake of completeness, we recall the main results from \cite{Hansen94} in Appendix \ref{Appendix-Han} and in this section, we reformulate the problem with respect to our setting. 

\smallskip 

Let us  recall  that the set of eigenvalues $\sigma(A^*)$, given by \eqref{spectrum}, satisfies the following.

\smallskip 

The sequence $\{\lambda^h_k\}_{|k|\geq k_0}$ satisfies
\begin{itemize}
\item[(i)] For all $|k|, |j|\geq k_0$,  $\lambda^h_k\neq\lambda^h_j$ unless $j=k$,
\item[(ii)] $\lambda^h_k = \beta - c k\pi i + \nu_k$  for all   $|k|\geq k_0$, with $\beta=-1$, $c=2$ and $\nu_k = -\alpha_{1,k}-i\alpha_{2,k}$, 
\end{itemize}
where it is clear that $\{\nu_k\}_{|k|\geq k_0}\in\ell^2$, from the properties of $\alpha_{1,k}$ and $\alpha_{2,k}$ (see Lemma \ref{lemma_eigenvalue_eigenfunc}).   

\smallskip 

Also, there exists positive constants $A_0, B_0 , \delta, \epsilon$ and $0\leq\theta<\pi/2$ for which $\{\lambda^p_k\}_{k\geq k_0}$ satisfies
\begin{itemize}
\item[(i)] $\mod{\arg(-\lambda^p_k)}\leq\theta$ for all $k\geq k_0$,
\item[(ii)] $\mod{\lambda^p_k- \lambda^p_j}\geq\delta\mod{k^2-j^2}$ for all $k\neq j$, $k,j\geq k_0$,
\item[(iii)] $\epsilon(A_0 + B_0 k^2) \leq \mod{\lambda^p_k} \leq A_0 + B_0 k^2$ for all  $k \geq k_0$. 
\end{itemize}  
Moreover, the  sets of eigenvalues are mutually disjoint. 
\begin{align*}
\{\lambda^h_k \}_{|k|\geq k_0}   \cap  \{ \lambda^p_k\}_{k\geq k_0} =\emptyset, \ \  
\{\lambda^h_k \}_{|k|\geq k_0} \cap \big( \{\lambda_0\} \cup \{ \widehat \lambda_n\}_{n=1}^{n_0}\big) = \emptyset, \ \
\{\lambda^p_k \}_{k\geq k_0} \cap \big(\{\lambda_0\}\cup\{ \widehat \lambda_n\}_{n=1}^{n_0}\big)=\emptyset.
\end{align*}
Thus, the set of spectrum $\sigma(A^*)$  satisfies the Hypothesis \ref{assump-han} in \Cref{Appendix-Han}  except for the finite set $\{\lambda_0\}\cup\{\widehat \lambda_n\}_{n=1}^{n_0}$. But this will not lead any problem to construct and solve the associated moments equations. Let us go to the detail. 

\smallskip 

\paragraph{\bf General setting}
We  first  recall Theorem \ref{thm-han} and Lemma \ref{lem-han} from \Cref{Appendix-Han}. As per those results,  our  goal is to find uniformly separated spaces $\mathcal W_{[0,T]}$ and $\mathscr E_{[0,T]}$ in $L^2(0,T)$ for $T> t_c=1$ (where $t_c=2/c$ as introduced in Appendix \ref{Appendix-Han} and in our case $c=2$).

We start with $T>1$. Then,  we pick a subset $\{\widehat \lambda_{n_l}\}_{l=1}^{l_0}$ (where $l_0\leq n_0$)  from the finite set  of eigenvalues $\{\widehat \lambda_n\}_{n=1}^{n_0}$ in such a way that the set
\begin{align}\label{set-riesz-basis-T}
\{1\}\cup\{e^{\lambda^h_k  t}\}_{|k|\geq k_0} \cup  \{e^{\widehat \lambda_{n_l}  t}\}_{l=1}^{l_0} 
\end{align} 
forms a {\em Riesz basis} for the space defined by 
\begin{align}\label{W-0-T}
\mathcal W_{[a,a+T]} = \text{closed span } \big(\{1\}\cup\{e^{\lambda^h_k  t}\}_{|k|\geq k_0} \cup  \{e^{\widehat \lambda_{n_l}  t}\}_{l=1}^{l_0}\big) \ \text{ in } L^2(a,a+T), \ \text{for any } a\in \mathbb R ,
\end{align}
and this is certainly possible in the light of   \Cref{lem-han} since  we consider $T>1$. 
In above, the singleton $\{1\}$ is arising due to the  eigenvalue $\lambda_0=0$ as $e^{\lambda_0 t}=1$.

\smallskip 

Next, we denote the rest of the elements of $\{\widehat \lambda_{n}\}_{n=1}^{n_0}$ by $\{\widehat \lambda_{n_j}\}_{j=1}^{j_0}$ so that  $$ \{\widehat \lambda_{n_l}\}_{l=1}^{l_0}\cup \{\widehat \lambda_{n_j}\}_{j=1}^{j_0} = \{\widehat \lambda_{n}\}_{n=1}^{n_0}.$$

Finally, we consider the space 
\begin{align}\label{E-0-T}
\mathscr 	E_{[0,T]} = \text{closed span } \big(\{e^{-\lambda^p_k  t}\}_{k\geq k_0} \cup  \{e^{-\widehat \lambda_{n_j}  t}\}_{j=1}^{j_0}\big) \ \text{ in } L^2(0,T).
\end{align}
\begin{lemma}\label{Lemma-uni-sepa}
The spaces $\mathcal W_{[0,T]}$ and $\mathscr E_{[0,T]}$ defined by \eqref{W-0-T} and \eqref{E-0-T} respectively, are uniformly separated in $L^2(0,T)$ for $T>1$. This does not hold for $T\leq 1$.
\end{lemma}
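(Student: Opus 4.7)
The plan is to reduce the claim to the abstract framework of \cite{Hansen94} recalled in \Cref{Appendix-Han}, whose hypotheses are precisely tailored to the spectral data we have assembled. The critical time for the hyperbolic family is $t_c = 2/c = 1$, which is exactly the threshold appearing in the statement; the rest of the argument exploits the qualitative difference between the two branches of $\sigma(A^*)$: the hyperbolic eigenvalues have bounded real part and imaginary part going to infinity like $2k\pi$, while the parabolic ones have bounded imaginary part and real part going to $-\infty$ like $-k^2\pi^2$.

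For the positive direction $T>1$, I would proceed in three steps. First, by construction the integer $l_0$ and the subset $\{\widehat\lambda_{n_l}\}_{l=1}^{l_0}$ have been chosen precisely so that the augmented hyperbolic family in \eqref{set-riesz-basis-T} becomes a Riesz basis of $\mathcal W_{[a,a+T]}$, for any $a\in\mathbb R$; this is the content of \Cref{lem-han} applied to the hyperbolic part of $\sigma(A^*)$, whose perturbations $\nu_k=-\alpha_{1,k}-i\alpha_{2,k}$ lie in $\ell^2$ by \eqref{asymp_sin}. Second, the parabolic side $\{\lambda^p_k\}_{k\geq k_0}\cup\{\widehat\lambda_{n_j}\}_{j=1}^{j_0}$ satisfies the Fattorini--Russell type gap and growth conditions $|\lambda^p_k-\lambda^p_j|\geq\delta|k^2-j^2|$ and $|\lambda^p_k|\asymp k^2$, so a biorthogonal family to $\{e^{-\lambda^p_k t}\}\cup\{e^{-\widehat\lambda_{n_j}t}\}$ exists in $\mathscr E_{[0,T]}$ for every $T>0$. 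Third, the uniform separation of $\mathcal W_{[0,T]}$ and $\mathscr E_{[0,T]}$ follows from the abstract statement in \cite{Hansen94}: the disjointness of the two spectra, together with the Riesz basis property on $\mathcal W_{[0,T]}$ and the biorthogonal structure on $\mathscr E_{[0,T]}$, forces the sum $\mathcal W_{[0,T]}+\mathscr E_{[0,T]}$ to be a topological direct sum with a bounded projection, which is exactly the uniform separation statement in the sense of \Cref{Appendix-Han}.

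For the negative direction $T\leq 1$, I would argue that $\mathcal W_{[0,T]}=L^2(0,T)$, so that $\mathscr E_{[0,T]}\subset\mathcal W_{[0,T]}$ and no uniform separation is possible. The argument rests on two observations: asymptotically $e^{\lambda^h_k t}=e^{-t}e^{-2ik\pi t}\bigl(1+o(1)\bigr)$ since $\alpha_{1,k},\alpha_{2,k}\to 0$, and $\{e^{-2ik\pi t}\}_{k\in\mathbb Z}$ is an orthonormal basis of $L^2(0,1)$. For $T=1$ a Kadec-type stability under the $\ell^2$-small perturbation $\{\nu_k\}$ preserves completeness, while for $T<1$ the truncation of a complete family on $L^2(0,1)$ to $L^2(0,T)$ is even overcomplete; in both cases the closed span of the hyperbolic exponentials (augmented with any finite subset of $\{\widehat\lambda_n\}$) exhausts $L^2(0,T)$. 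Since $\mathscr E_{[0,T]}\neq\{0\}$ and is contained in $\mathcal W_{[0,T]}$, the required separation inequality fails.

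The main obstacle will be the third step of the $T>1$ case, namely turning the qualitative Riesz-basis/biorthogonality data into a quantitative separation constant. This is precisely what \cite{Hansen94} provides once one checks that the hyperbolic perturbations $\nu_k$ lie in the admissible class (in $\ell^2$) and that the parabolic eigenvalues satisfy the prescribed sectorial and gap conditions, both of which we have already verified using \Cref{lemma_eigenvalue_eigenfunc}; thus the verification is mainly bookkeeping and the bulk of the mathematical content is the input from \Cref{lem-han} together with the spectral asymptotics of the previous section.
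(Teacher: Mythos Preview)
Your proposal is correct and follows essentially the same approach as the paper: the paper's own justification is simply that once the augmented hyperbolic family \eqref{set-riesz-basis-T} is a Riesz basis of $\mathcal W_{[0,T]}$ for $T>1$, one may reproduce verbatim the proof of \cite[Theorem~4.2]{Hansen94} (recalled here as \Cref{thm-han}), and your three-step outline for $T>1$ together with the completeness argument for $T\leq 1$ is precisely an unpacking of that reference. The only place to be slightly more careful than you were is the borderline case $T=1$ in the negative direction: removing the finitely many indices $|k|<k_0$ from a Riesz basis of $L^2(0,1)$ leaves a proper closed subspace, so you should rather invoke \Cref{lem-han} directly (which gives $W_{[0,1]}=L^2(0,1)$ for the \emph{full} Hansen family and then argue that the difference is absorbed by the finite augmentation/finitely many omitted indices), instead of a pure Kadec-stability argument.
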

Since  the family \eqref{set-riesz-basis-T}  forms a Riesz basis  for the space $\mathcal W_{[0,T]}$  for $T>1$, then following the idea of proving Theorem \ref{thm-han} given by \cite[Theorem 4.2]{Hansen94}, we can prove Lemma \ref{Lemma-uni-sepa}.  
	
\medskip 

\paragraph{\bf The set of moments problem}
Recall that the set of eigenfunctions $\E(A^*)$ of $A^*$ (given by \eqref{Eigenfamily}) defines a complete family in $ H^{-s}_{per}(0,1)\times L^2(0,1)$ for $s>0$, thanks to Proposition \ref{Prop-eigenfunctions}. Thus, it is enough to check the control problem \eqref{formulation_control} for the eigenfunctions of $A^*$.
In what follows,  the problem \eqref{lcnse3} is null-controllable at given time $T>1$ if and only if there exists some $p\in L^2(0,T)$ such that we have the following: 
\begin{align}\label{Moments-parabolic}
\begin{dcases}
-\int_{0}^{T} \overline{e^{\lambda^p_k (T-t)}} p(t)\,dt &= 	m_{1,k}, \quad \forall k\geq k_0, \\
-\int_{0}^{T} \overline{e^{\widehat \lambda_{n_j}(T-t)}} p(t)\,dt&= 	m_{1,j}, \quad \forall \, 1\leq j\leq j_0,
\end{dcases}
\end{align}
\begin{align}\label{Moments-hyperbolic}
\begin{dcases}
	-\int_0^T e^{\lambda_0(T-t)} p(t) \, dt &= m_{2,0}, \ \ \text{i.e., } \ -\int_0^T p(t)\, dt = m_{2,0}, \\
-\int_{0}^{T} \overline{e^{\lambda^h_k (T-t)}} p(t)\,dt &= 	m_{2,k}, \quad \forall |k|\geq k_0, \\
-\int_{0}^{T} \overline{e^{\widehat \lambda_{n_l}(T-t)}} p(t)\,dt&= 	m_{1,l}, \quad \forall \, 1\leq l\leq l_0,
\end{dcases}
\end{align}
where 
\begin{align}\label{m-1-k}
\begin{dcases} 
m_{1,k} = \frac{\overline{e^{\lambda^p_k T}} \ip{\vector{\xi_{\lambda^p_k}}{\eta_{\lambda^p_k}}}{\vector{\rho_0}{u_0}}_{{{H}}_{per}^{-s}\times L^2, H_{per}^{s}\times L^2}}{\overline{\xi_{\lambda^p_k}(1)}}, \quad \forall k\geq k_0, \\
m_{1,j} = \frac{	\overline{e^{\widehat \lambda_{n_j} T}}
\ip{\vector{\xi_{\widehat \lambda_{n_j}}}{\eta_{\widehat \lambda_{n_j}}}}{\vector{\rho_0}{u_0}}_{{{H}}_{per}^{-s}\times L^2, H_{per}^{s}\times L^2}}{\overline{\xi_{\widehat \lambda_{n_j}}(1)} } , \quad \forall \, 1\leq j\leq j_0,
\end{dcases}
\end{align}
and 
\begin{align}\label{m-2-k}
\begin{dcases} 
	m_{2,0} = \left\langle \vector{1}{0}, \vector{\rho_0}{u_0}   \right\rangle_{H^{-s}_{per}\times L^2, H^s_{per}\times L^2} = \int_0^1 \rho_0(x) \, dx, \\
m_{2,k} = \frac{\overline{e^{\lambda^h_k T}} \ip{\vector{\xi_{\lambda^h_k}}{\eta_{\lambda^h_k}}}{\vector{\rho_0}{u_0}}_{{{H}}_{per}^{-s}\times L^2, H_{per}^{s}\times L^2}}{\overline{\xi_{\lambda^h_k}(1)}}, \quad \forall |k|\geq k_0, \\
m_{2,l} = \frac{	\overline{e^{\widehat \lambda_{n_l} T}}\ip{\vector{\xi_{\widehat \lambda_{n_j}}}{\eta_{\widehat \lambda_{n_j}}}}{\vector{\rho_0}{u_0}}_{{{H}}_{per}^{-s}\times L^2,H_{per}^{s}\times L^2}}{\overline{\xi_{\widehat \lambda_{n_l}}(1)} }, \quad \forall \, 1\leq l\leq l_0.
\end{dcases}
\end{align}

The above set of equations \eqref{Moments-parabolic}--\eqref{Moments-hyperbolic} are the so-called moments problem which are well-defined since $\B^*_{\rho}\Phi=\xi(1)\neq 0$ for any eigenfunction $\Phi\in \E(A^*)$ as proved in Lemma \ref{Prop_Approximate_Controllability}. Let us now study   the solvability of those equations.

\subsection{Proof of the null-controllability result}
We are now ready to prove our controllability result.
\begin{proof}[\bf Proof of Theorem \ref{Thm-control-dens}] Let any parameter $s>1/2$, initial data $(\rho_0, u_0)\in  H^{s}_{per}(0,1)\times L^2(0,1)$ and time $T>1$ be given. 

Then, our goal is to apply the result of Theorem \ref{Thm-main_Han} in \Cref{Appendix-Han} to solve the set of moments problem \eqref{Moments-parabolic}--\eqref{Moments-hyperbolic}.
To do this it is enough to show the following facts:  for any $r>0$
\begin{align}\label{Claim-1} 
|m_{1,k}| e^{r k}  \to 0 \  \text{ as } \ k \to +\infty,
\end{align}
and 
\begin{align}\label{Claim-2}
\sum_{|k|\geq k_0} |m_{2,k}|^2 <+\infty .
\end{align}
On the other hand, the finite sequences $\{m_{1,j}\}_{j=1}^{j_0}$ and $\{m_{2,0}\}\cup\{m_{2,l}\}_{l=1}^{l_0}$ are always bounded (since all the observation terms are non-zero due to \Cref{Prop_Approximate_Controllability}) and thus there is no trouble for the  lower frequencies. 

\smallskip 

\begin{itemize} 
\item[--]  We mainly consider the case when $1/2< s< 1$. The case when $s\geq 1$ can be deduced in a similar manner using the precise bounds of the eigenfunctions given by \Cref{lemma_bound_eigenfunc}. 

Recall the expression of $m_{1,k}$ for $k\geq k_0$ from \eqref{m-1-k}, we have
\begin{align}\label{estimate-m-1-k}
|m_{1,k}| &\leq C  \|(\rho_0, u_0) \|_{H^{s}_{per}\times L^2} \,  e^{\Re(\lambda^p_k) T} \frac{\|\xi_{\lambda^p_k}\|_{H^{-s}_{per}} +  \|\eta_{\lambda^p_k}\|_{L^2} }{|\overline{\xi_{\lambda^p_k}(1)}|}  \\ \notag 
& \leq C  \|(\rho_0, u_0)\|_{H^{s}_{per}\times L^2} \, e^{-k^2\pi^2 T} k \pi \big( k^{-s-1} +1 \big), 
\end{align}
thanks to the bounds of the eigenfunctions \eqref{bound_xi_lambda} and  observation estimate \eqref{obs_den-1}. Indeed,  the bound  \eqref{estimate-m-1-k} directly implies the Claim \eqref{Claim-1}  due to the presence of $e^{-k^2\pi^2T}$ in the right hand side of \eqref{estimate-m-1-k}.   

Thus,  in view of  \Cref{Prop-sol-moment-para} in \Cref{Appendix-Han}, there exists a function $p_1\in \mathscr E:=\mathscr E_{[0,T]}$ that solves the set of equations \eqref{Moments-parabolic}. 

\smallskip 	
	
\item[--] As previous, we start with $1/2<s<1$.	Here, we show that $\{m_{2,k}\}_{|k|\geq k_0} \in \ell^2$.  In this regard, we recall the bounds of the eigenfunctions given by \eqref{bound_xi_gamma}  and the  observation estimate \eqref{obs_den-2}, which yields 
\begin{align*}
\sum_{|k| \geq k_0}|m_{2,k}|^2 &\leq C\|(\rho_0,u_0)\|^2_{{{H}}_{per}^{s}\times L^2}\sum_{\mod{k}\geq k_0}\frac{\|\xi_{\lambda^h_k}\|^2_{H^{-s}_{per}} +  \|\eta_{\lambda^h_k}\|^2_{L^2} }{|\overline{\xi_{\lambda^h_k}(1)}|^2}  \\
&\leq C\|(\rho_0,u_0)\|^2_{{{H}}_{per}^{s}\times L^2}  \sum_{\mod{k}\geq k_0} \left(\mod{k}^{-2s}+|k|^{-2}\right)\\
&\leq C\|(\rho_0,u_0)\|^2_{{{H}}_{per}^{s}\times L^2} .
\end{align*}
The above series converges due to the sharp choice $s>1/2$ and indeed, it is clear that for $s\leq 1/2$, the series $\displaystyle \sum_{\mod{k}\geq k_0}\frac{1}{\mod{k\pi}^{2s}}$ diverges. For $s\geq 1$, we use the bound $\|\xi_{\lambda^h_k}\|_{H^{-s}_{per}}\leq C |k|^{-1}$ and then accordingly the result follows. 
	
Therefore,  in view of \Cref{Prop-sol-moment-hyper} in \Cref{Appendix-Han}, there exists a function $p_2\in \mathcal W:=\mathcal W_{[0,T]}$ that solves the set of equations \eqref{Moments-hyperbolic}. 
\end{itemize}

Now, our goal is to apply \Cref{Thm-main_Han} from \Cref{Appendix-Han}.  We have, as consequence of \Cref{Lemma-uni-sepa}, the space 
\begin{align}\label{space-V}
\mathcal V:= \mathscr E + \mathcal W 
\end{align}
is closed and thus a Hilbert space with $\|\cdot\|_{\mathcal V}:=\|\cdot\|_{L^2(0,T)}$, so $\mathcal V= \mathscr E\oplus \mathcal W$. Likewise, we have $\mathcal V:=\mathscr E^{\perp}\oplus \mathcal W^{\perp}$. Therefore, the restrictions $P_{\mathscr E}|_{\mathcal W^\perp}$ and $P_{\mathcal W}|_{\mathscr E^\perp}$ are isomorphisms, where $P_{\mathscr E}$ and $P_{\mathcal W}$ denote the orthogonal projections from $\mathcal V$ onto $\mathscr E$ and $\mathcal W$ respectively.

Now, we consider 
\begin{align}
p:= (P_{\mathscr E}|_{\mathcal W^{\perp}})^{-1} p_1 + (P_{\mathcal W}|_{\mathscr E^{\perp}})^{-1} p_2 ,
\end{align}
which certainly belongs to the space $L^2(0,T)$ and  simultaneously solves the set of moments problem \eqref{Moments-parabolic}--\eqref{Moments-hyperbolic} for $T>1$ and  any $\rho_0 \in {{H}}_{per}^{s}(0,1)$ for $s>1/2$ and $u_0\in L^2(0,1)$.
	
This concludes  the proof of Theorem \ref{Thm-control-dens}. 
\end{proof}

\begin{remark}[Compatibility condition]
Integrating the first equation of \eqref{lcnse3} in $(0,1)$ and then $(0,T)$, we get
\begin{equation*}
	\int_{0}^{1}\rho(T,x)dx-\int_{0}^{1}\rho_0(x)dx-\int_{0}^{T}p(t)dt=0.
\end{equation*}
So, for the null-controllability we need the following compatibility condition
\begin{equation*}
	\int_{0}^{1}\rho_0(x)dx=-\int_{0}^{T}p(t)dt.
\end{equation*}
But this always holds true since $p$ satisfies the moments problem introduced before  and in particular the first  equation of   \eqref{Moments-hyperbolic}.  
\end{remark}

\begin{proof}[\bf Proof of Theorem \ref{Thm-control-dens-Diri}]
We have already shown the existence of a control $p\in L^2(0,T)$ for the system \eqref{lcnse3}. Now, to prove the existence of a control $h\in L^2(0,T)$ for the main control problem \eqref{main-control-prbm}, all we need to show that $\rho(t,1)\in L^2(0,T)$ where $\rho$ is the solution component of the system \eqref{lcnse3} associated with the control function $p\in  L^2(0,T)$. But the proof for $\rho(t,1)\in L^2(0,T)$ is followed from an auxiliary result given in \Cref{Appendix-D} (\Cref{lemma-hidden}). 

Hence, we define $h(t)=\rho(t,1)+p(t)$ for all $t\in (0,T)$, which  plays the role of a Dirichlet control function for the main system \eqref{main-control-prbm}. The proof is complete. 
\end{proof}

\subsection{Approximate controllability result with $L^2\times L^2$ initial data}\label{sec-aprox}
As a corollary of the above  null-controllability results, in this section, we shall prove the approximate controllability of the  system \eqref{main-control-prbm}  in the space $L^2(0,1)\times L^2(0,1)$.

\begin{proof}[\bf Proof of \Cref{Coro-approx}] The proof will be made in two steps.
	
	\begin{itemize} 
\item {\bf Step 1:} {\em Approximate controllability of the auxiliary system \eqref{lcnse3}.}	
Recall that we have proved the null-controllability of the auxiliary system \eqref{lcnse3} at time $T>1$ in the space ${H}^s_{{per}}(0,1) \times L^2(0,1)$ for $s>\frac{1}{2}$. 
Let $U_0=(\rho_0,u_0)\in L^2(0,1)\times L^2(0,1)$ be any given initial state and $\epsilon>0$ be arbitrary. Then there exists $\widetilde{U}_0=(\widetilde{\rho}_0,\widetilde{u}_0)\in {H}^s_{{per}}(0,1)\times L^2(0,1)$ for $s>\frac{1}{2}$ such that
	\begin{equation}
		\norm{U_0-\widetilde{U}_0}_{L^2\times L^2}<\epsilon.
	\end{equation}
	
	Let us denote the reachable set $R(T;U_0)$ starting from initial data $U_0\in L^2(0,1)\times L^2(0,1)$ for time $T>1$,  
	\begin{align*}
		R(T; U_0) = \Big\{ U(T)=(\rho(T), u(T)) : U=(\rho,u) \text{ is the solution to \eqref{lcnse3}} \\
		\text{with control  } p\in L^2(0,T) \Big\}. 
	\end{align*}

	Let any $\overline U_0 \in {H}^s_{per}(0,1)\times L^2(0,1)$ be given. Since  the system \eqref{lcnse3} is null-controllable at time $T>1$ in ${H}^s_{\text{per}}(0,1)\times L^2(0,1)$ for $s>\frac{1}{2}$, one has  $0\in R(T;\widetilde{U}_0-\overline{U}_0)=R(T;\widetilde{U}_0)-S(T)\overline{U}_0$ for any $T>1$, where $\{S(t)\}_{t\geq 0}$ is the strongly continuous semigroup defined by $(A, D(A))$ in $L^2(0,1)\times L^2(0,1)$. Hence, $S(T)\overline{U}_0\in R(T;\widetilde{U}_0)$ for any $T>1$. 
	
	Now, since the set of eigenfunctions of $A$ generates the space $H^s_{per}(0,1)\times L^2(0,1)$ (see \Cref{Remark-eigen-A}), we note that 
	$S(T)\big({H}^s_{{per}}(0,1)\times L^2(0,1)\big)$ is dense in $L^2(0,1)\times L^2(0,1)$. 
	
	On the other hand, we have the following relation for any $T>1$ and $s>1/2$,
	\begin{equation}
		S(T)\big({H}^s_{{per}}(0,1)\times L^2(0,1)\big) \subseteq R(T;\widetilde{U}_0)\subseteq L^2(0,1)\times L^2(0,1).
	\end{equation}
	But,  $S(T)\big({H}^s_{{per}}(0,1)\times L^2(0,1)\big)$ is dense in $L^2(0,1)\times L^2(0,1)$ and thus $R(T;\widetilde{U}_0)$ is dense in $L^2(0,1)\times L^2(0,1)$ for any $T>1$ and $s>\frac{1}{2}$. More precisely, for any $U_T\in L^2(0,1)\times L^2(0,1)$, there exists a control $\widetilde{p}\in L^2(0,T)$ such that the solution $U_{\widetilde{U}_0,\widetilde{p}}:=(\widetilde \rho, \widetilde u)$ to the system 
	\begin{align}\label{approx-sys-1}
		\begin{dcases}
			\widetilde\rho_t+	\widetilde\rho_x +\widetilde u_{x} =0   &\text{in } (0,T)\times (0,1),\\
				\widetilde u_{t} - 	\widetilde u_{xx} +	\widetilde u_x +	\widetilde\rho_x =0 &\text{in } (0,T)\times (0,1),\\
				\widetilde \rho(t,0)=	\widetilde \rho(t,1)+ 	\widetilde p(t)      &\text{for } t\in (0,T) , \\
				\widetilde u(t,0)=0, \ 	\widetilde u(t,1)=0  &\text{for } t\in (0,T) ,\\
				\widetilde \rho(0,x)=	\widetilde \rho_0(x),\  	\widetilde u(0,x)=	\widetilde u_0(x)  &\text{for } x\in(0,1),
		\end{dcases}
	\end{align}
	 satisfies (where $\widetilde U_0=(\widetilde \rho_0, \widetilde u_0)\in H^s(0,1)\times L^2(0,1)$ for $s>1/2$, as  introduced before)
	\begin{equation}\label{eq-eps-1}
		\norm{U_{\widetilde{U}_0,\widetilde{p}}(T,\cdot)-U_T(\cdot)}_{L^2\times L^2}<\epsilon,
	\end{equation}
	for any $T>1$. 
	
	Now, consider the following system 
	\begin{align}\label{approx-sys-2}
		\begin{dcases}
			\rho_t+	\rho_x +u_{x} =0   &\text{in } (0,T)\times (0,1),\\
			 u_{t} - u_{xx} + u_x +	\rho_x =0 &\text{in } (0,T)\times (0,1),\\
		 \rho(t,0)=	 \rho(t,1)+ 	\widetilde p(t)      &\text{for } t\in (0,T) , \\
			u(t,0)=0, \  u(t,1)=0  &\text{for } t\in (0,T) ,\\
			\rho(0,x)=	\rho_0(x),\  u(0,x)= u_0(x)  &\text{for } x\in(0,1),
		\end{dcases}
	\end{align}
	with the same function $\widetilde p$ as in \eqref{approx-sys-2} and the initial data $U_0=(\rho_0,u_0)\in L^2(0,1)\times L^2(0,1)$ as chosen earlier. 
	
Denote the solution to \eqref{approx-sys-2} by $U_{U_0, \widetilde p}:=(\rho, u)$. 	Then, using the continuity estimate \eqref{continuity_estimate} from \Cref{Thm-existnce-control-sol}, we deduce that 
	\begin{equation}\label{eq-eps-2}
		\norm{U_{U_0,\widetilde{p}}(T,\cdot)-U_{\widetilde{U}_0,\widetilde{p}}(T,\cdot)}_{L^2\times L^2}\leq C\norm{U_0-\widetilde{U}_0}_{L^2\times L^2}<\epsilon.
	\end{equation}
	Finally, using \eqref{eq-eps-1}--\eqref{eq-eps-2} and  the triangle inequality, we conclude
	\begin{align}\label{approx-3}
		&\norm{U_{U_0,\widetilde{p}}(T,\cdot)-U_T(\cdot)}_{L^2\times L^2}\\  \notag 
		&\leq\norm{U_{U_0,\widetilde{p}}(T,\cdot)-U_{\widetilde{U}_0,\widetilde{p}}(T,\cdot)}_{L^2\times L^2}+\norm{U_{\widetilde{U}_0,\widetilde{p}}(T,\cdot)-U_T(\cdot)}_{L^2\times L^2} \\  \notag 
		&<2\epsilon.
	\end{align}
	This proves the proof for the approximate controllability of the system \eqref{lcnse3} in the space $L^2(0,1)\times L^2(0,1)$ when $T>1$.

\smallskip 

\item {\bf Step 2:} {\em Approximate controllability of the  system \eqref{main-control-prbm}.}  This is a consequence of the previous step. We set 
\begin{align*}
	\widehat h(t) = \rho(t,1) + \widetilde p(t), \quad \forall t\in (0,T),
\end{align*}
in the equation \eqref{approx-sys-2} (we can do that since $\rho(t,1)\in L^2(0,T)$ due to \Cref{lemma-hidden} in \Cref{Appendix-D}), which works as an approximate control for the main system \eqref{main-control-prbm} since  one has  $U_{U_0, \widehat h}:= U_{U_0, \widetilde p}$ and thus from the estimate \eqref{approx-3}, we have 
 \begin{align*}
 	\norm{U_{U_0,\widehat{h}}(T,\cdot)-U_T(\cdot)}_{L^2\times L^2} <2\epsilon.
 	\end{align*}
This completes the proof of the approximate controllability of \eqref{main-control-prbm} in the space $L^2(0,1)\times L^2(0,1)$ provided $T>1$.

\end{itemize}
\end{proof}

\section{A partial null-controllability result for the velocity case: using an Ingham-type inequality}\label{sec_ingham}

%\subsection{Ingham's Inequality and Observability}

\smallskip 

In this section, we prove the partial null-controllability (in a subspace of $H^s_{per}(0,1)\times L^2(0,1)$) of the system \eqref{lcnse4} (that is, Theorem \ref{thm_control_veloct})   by showing a proper  observability inequality. 
A parabolic-hyperbolic joint Ingham-type inequality is the main ingredient to conclude this result.

Let $(\rho,u)$ be the solution to the system \eqref{lcnse4} with a boundary control $q$ acting on velocity. Then, the following lemma gives an equivalent criterion for the null-controllability.
\begin{lemma}\label{Lemma-control-form-vel}
	The system \eqref{lcnse4} is null-controllable at time $T>0$ if and only if there exists a control $q\in  L^2(0,T)$ such that
	\begin{equation}\label{formulation_control-vel}
		\ip{\begin{pmatrix}\sigma(0) \\ v(0)\end{pmatrix}}{\begin{pmatrix}\rho_0\\u_0\end{pmatrix}}_{ H^{-s}_{per}\times L^2,  H^s_{per}\times L^2} = \int_{0}^{T}\left(\overline{\sigma(t,1)} + \overline{v_x(t,1)}\right) q(t)dt,
	\end{equation}
	where $(\sigma,v)$ is the solution to the adjoint system \eqref{lcnse_adjoint} with $(f,g)=(0,0)$ and any given final  data $(\sigma_T,v_T)\in H^{-s}_{per}(0,1)\times L^2(0,1)$.
\end{lemma}

%\subsection{A joint Ingham's inequality}

%\smallskip 

\subsection{A combined parabolic-hyperbolic  Ingham-type inequality}
This subsection is devoted to  prove a Ingham-type inequality stated in \Cref{prop_ingham}. The proof will be based on following the idea of \cite[Theorem 4.2]{Chowdhury14}. 

\begin{proof}[\bf Proof of \Cref{prop_ingham}]
	Let us denote $\tilde{\lambda}_k=\lambda_k-\beta$,  $\forall k\in\mathbb{N}^*$ and $\tilde{\gamma}_k=\gamma_k-\beta$,  $\forall {k}\in\mathbb{Z}$. Let $N\in\mathbb{N}^*$ be as given in the hypothesis. Then, we have the following known parabolic and hyperbolic Ingham inequalities
	\begin{align}
		&\int_{0}^{T}\mod{\sum_{k\geq N}a_ke^{\tilde{\lambda}_k(T-t)}}^2dt\geq C\sum_{k\geq N}\mod{a_k}^2e^{2\Re(\tilde{\lambda}_k)T}\ \ \text{for any } T>0,\label{parabolic_ingham}\\
		&C_1\sum_{\mod{k}\geq N}\mod{b_k}^2\leq\int_{0}^{T}\mod{\sum_{\mod{k}\geq N}b_ke^{\tilde{\gamma}_k(T-t)}}^2dt\leq C_2\sum_{\mod{k}\geq N}\mod{b_k}^2\ \ \text{for any }T>1
		\label{hyperbolic_ingham},
	\end{align}
	see \cite{Montes99}, \cite{Edward06}, \cite{Ingham} (and references therein) for more details.
	
	\smallskip 
	
	Let us denote
	\begin{equation}\label{U-p-U-h}
		U^p(t)=\sum_{k\geq N}a_ke^{\tilde{\lambda}_k(T-t)},\ \ U^h(t)=\sum_{\mod{k}\geq N}b_ke^{\tilde{\gamma}_k(T-t)},\ \ t\geq0,
	\end{equation}
	and
	\begin{equation}
		U(t)=U^p(t)+U^h(t),\ \ t\geq0.
	\end{equation}
	We also define for $t>1$
	\begin{subequations}
		\begin{align}
			\widetilde{U}^p(t)=U^p(t)-U^p(t-1)=\sum_{k\geq N}a_k\left(1-e^{\tilde{\lambda}_k}\right)e^{\tilde{\lambda}_k(T-t)},  \label{U-tilde-p}\\
			\widetilde{U}^h(t)=U^h(t)-U^h(t-1)=\sum_{\mod{k}\geq N}b_k\left(1-e^{\tilde{\gamma}_k}\right)e^{\tilde{\gamma}_k(T-t)}, \label{U-tilde-h}
		\end{align}
	\end{subequations}
	and 
	\begin{equation}\label{def-U-tilde} 
		\widetilde{U}(t)=\widetilde{U}^p(t)+\widetilde{U}^h(t)=U(t)-U(t-1).
	\end{equation}
	Then, we have
	\begin{align*}
		\int_{1}^{T}\mod{\widetilde{U}(t)}^2 dt & \leq\int_{1}^{T}\mod{U(t)}^2dt+\int_{1}^{T}\mod{U(t-1)}^2dt \\
		& \leq C\int_{0}^{T}\mod{U(t)}^2dt  .
	\end{align*}
	We now compute the $L^2$-norms of the functions $\widetilde{U}^p$ and $\widetilde{U}^h$ separately. Applying the hyperbolic Ingham inequality \eqref{hyperbolic_ingham}, we get
	\begin{align*}
		\int_{1}^{T}\mod{\widetilde{U}^h(t)}^2 dt \leq C\sum_{\mod{k}\geq N}\mod{b_k}^2\mod{1-e^{\tilde{\gamma}_k}}^2 . 
	\end{align*}
	Since $1-e^{\tilde{\gamma}_k}=1-e^{\nu_k}$ and $\{\nu_k\}_{|k|\geq N} \in \ell^2$, we can choose $N$ large enough such that $\mod{1-e^{\tilde{\gamma}_k}}^2<\epsilon$ for all $\mod{k}\geq N$. It follows that,
	\begin{equation}\label{esti_U_h}
		\int_{1}^{T}\mod{\widetilde{U}^h(t)}^2 dt \leq C\epsilon \sum_{\mod{k}\geq N}\mod{b_k}^2.
	\end{equation}
	
	Now, recall \eqref{def-U-tilde} so that one has $\displaystyle \widetilde U^p(t) = \widetilde U(t) -\widetilde U^h(t)$. Using the triangle inequality, we get
	\begin{align}\label{esti-U-tilde-p}
		\int_{1}^{T}\mod{\widetilde{U}^p(t)}^2dt&\leq C\int_{1}^{T}\mod{\widetilde{U}(t)}^2dt+C\int_{1}^{T}\mod{\widetilde{U}^h(t)}^2dt  \\  \notag 
		&\leq C\int_{0}^{T}\mod{U(t)}^2dt+C\epsilon\sum_{\mod{k}\geq N}\mod{b_k}^2 . 
	\end{align}
	
	Let be  $0<\tau<T$. Applying the parabolic Ingham inequality \eqref{parabolic_ingham} to the quantity $\widetilde U^p(t)$ (given by \eqref{U-tilde-p}),  we obtain
	\begin{align*}
		\int_{T-\tau}^{T}\mod{\widetilde{U}^p(t)}^2dt=\int_{0}^{\tau}\mod{\widetilde{U}^p(T-t)}^2dt & \geq C\sum_{k\geq N}\mod{a_k}^2 |1-e^{\tilde{\lambda}_k}|^2 e^{2\Re(\tilde{\lambda}_k)\tau}  \\
		&\geq C\sum_{k\geq N}\mod{a_k}^2e^{2\Re(\tilde{\lambda}_k)\tau},
	\end{align*}
	thanks to the properties of $\tilde \lambda_k$.  
	Note that the above constant $C$ depends on $\tau$. Let us now choose $\tau>0$ small enough such that $T-\tau>1$. Thus, we get
	\begin{align}\label{13-2}
		\int_{1}^{T}\mod{\widetilde{U}^p(t)}^2 dt
		\geq\int_{T-\tau}^{T}\mod{\widetilde U^p(t)}^2dt\geq C\sum_{k\geq N}\mod{a_k}^2e^{2\Re(\tilde{\lambda}_k)\tau}  . 
	\end{align}
	
	Recall the function $U^p(t)$ given by \eqref{U-p-U-h},  we deduce that
	\begin{align}\label{13-1} 
		\int_{0}^{T-\tau}\mod{U^p(t)}^2dt &
		\leq\sum_{k\geq N}\mod{a_k}^2 \int_{0}^{T-\tau}e^{2\Re(\tilde{\lambda}_k)(T-t)}dt \\
		&\leq \sum_{k\geq N}\mod{a_k}^2\bigg|\frac{e^{\Re(\tilde{\lambda}_k)\tau}-e^{2\Re(\tilde{\lambda}_k)T}}{2\Re(\tilde{\lambda}_k)} \bigg| \notag \\
		&\leq C\sum_{k\geq N}\mod{a_k}^2e^{2\Re(\tilde{\lambda}_k)\tau}, \notag
	\end{align}
	thanks to fact that $\mod{\Re(\tilde \lambda_k)}^2\geq C$ for  $k\geq N$ large enough (combining the hypothesis (ii) and (iv) in \Cref{prop_ingham} satisfied by $\{\lambda_k\}_{k\in \mb N^*}$). 
	
	Now, using the facts \eqref{13-2} and \eqref{esti-U-tilde-p} in \eqref{13-1}, we have 
	\begin{equation}\label{13-3}
		\int_{0}^{T-\tau}\mod{U^p(t)}^2dt\leq C\left(\int_{0}^{T}\mod{U(t)}^2dt+\epsilon\sum_{\mod{k}\geq N}\mod{b_k}^2\right).
	\end{equation}
	
	\smallskip 
	
	Since $T-\tau>1$, applying the hyperbolic Ingham inequality \eqref{hyperbolic_ingham} to $U^h(t)$ and then following a  triangle inequality, we have 
	\begin{align*}
		\sum_{\mod{k}\geq N}\mod{b_k}^2\leq C\int_{0}^{T-\tau}\mod{U^h(t)}^2dt & \leq C\left(\int_{0}^{T-\tau}\mod{U(t)}^2dt + \int_{0}^{T-\tau}\mod{U^p(t)}^2dt \right)\\
		&\leq C\left(\int_{0}^{T}\mod{U(t)}^2dt+\epsilon\sum_{\mod{k}\geq N}\mod{b_k}^2\right),
	\end{align*}
	thanks to the estimate \eqref{13-3}. 
	
	Now, fix  $\epsilon>0$ small enough such that $1-C\epsilon>0$. As a consequence, there is some constant $C>0$ depending only on $T$ such that, we have 
	\begin{equation}\label{13-4} 
		\sum_{\mod{k}\geq N}\mod{b_k}^2 dt \leq C\int_{0}^{T}\mod{U(t)}^2dt.
	\end{equation}
	
	On the other hand, using the parabolic Ingham inequality to $U^p(t)$, followed by a triangle inequality, hyperbolic Ingham inequality (to $U_h(t)$) and the result \eqref{13-4}, we obtain 
	\begin{align*}
		\sum_{k\geq N}\mod{a_k}^2e^{2\Re(\tilde{\lambda}_k)T}\leq C\int_{0}^{T}\mod{U^p(t)}^2dt&\leq C\left(\int_{0}^{T}\mod{U(t)}^2dt+\int_{0}^{T}\mod{U^h(t)}^2dt\right)\\
		&\leq C\left(\int_{0}^{T}\mod{U(t)}^2dt+\sum_{\mod{k}\geq N}\mod{b_k}^2dt\right)\\
		&\leq C\int_{0}^{T}\mod{U(t)}^2dt .
	\end{align*}

	Thus, eventually we have 
	\begin{equation}\label{13-5}
		\sum_{k\geq N}\mod{a_k}^2e^{2\Re(\tilde{\lambda}_k)T}+\sum_{\mod{k}\geq N}\mod{b_k}^2\leq C\int_{0}^{T}\mod{U(t)}^2dt.
	\end{equation}

	Recall that $\tilde \lambda_k = \lambda_k-\beta$, $\tilde \gamma_k= \gamma_k -\beta$, and that
	\begin{align}\label{13-6} 
		\int_{0}^{T}\mod{U(t)}^2dt&=\int_{0}^{T}\mod{\sum_{k\geq N}a_ke^{(\lambda_k-\beta)(T-t)}+\sum_{\mod{k}\geq N}b_ke^{(\gamma_k-\beta)(T-t)}}^2dt\\  \notag 
		&\leq C\int_{0}^{T}\mod{\sum_{k\geq N}a_ke^{\lambda_k(T-t)}+\sum_{\mod{k}\geq N}b_ke^{\gamma_k(T-t)}}^2dt.
	\end{align}
	Moreover, it is easy to see that $$e^{2\Re(\tilde{\lambda}_k)T}=e^{2\Re(\lambda_k)T-2\Re(\beta)T}\geq Ce^{2\Re(\lambda_k)T}$$
	for some $C>0$ and thus combining \eqref{13-5} and \eqref{13-6}, we obtain 
	\begin{equation*}
		\sum_{k\geq N}\mod{a_k}^2e^{2\Re(\lambda_k)T}+\sum_{\mod{k}\geq N}\mod{b_k}^2\leq C\int_{0}^{T}\mod{\sum_{k\geq N}a_ke^{\lambda_k(T-t)}+\sum_{\mod{k}\geq N}b_ke^{\gamma_k(T-t)}}^2dt.
	\end{equation*}

	Finally, adding the finitely many terms   in the above summation  using a similar idea as in \cite[Theorem 4.3, Chapter 4]{Micu04} (since $\{\gamma_k\}_{k\in \mathbb Z}$ and $\{\lambda_k\}_{k\in \mathbb N^*}$ are disjoint), we can conclude that
	\begin{equation}
		\sum_{k\in\mathbb{N}^*}\mod{a_k}^2e^{2\Re(\lambda_k)T}+\sum_{k\in\mathbb{Z}}\mod{b_k}^2\leq C\int_{0}^{T}\mod{\sum_{k\in\mathbb{N}^*}a_ke^{\lambda_k(T-t)}+\sum_{k\in\mathbb{Z}}b_ke^{\gamma_k(T-t)}}^2dt.
	\end{equation} 
	This completes the proof.
\end{proof}

\smallskip

\subsection{Observability inequality and a partial null-controllability result}

Recall that when a control is acting on the velocity part, we have from \Cref{observation_estimate-vel}, 
\begin{align*}
	|\B^*_u \Phi_{\lambda^p_k}| \geq Ck\pi, \ \  \forall k\geq \widehat k_0, \quad 	|\B^*_u \Phi_{\lambda^h_k}| \geq \frac{C}{\sqrt{|k\pi|}} \ \  \forall |k|\geq \widehat k_0.
\end{align*}
Recall the number $\widetilde k_0$ from \Cref{Remark-eigen-A} and consider 
$$K_0:= \max \{ \widehat k_0, \widetilde k_0\},$$ and define the space 
\begin{align}\label{Space-V}
\mathcal	H:= \text{closure} \Big(\Span \left\{ \widetilde \varPhi^p_k, \ k\geq K_0, \ \  \widetilde \varPhi^h_k, \ |k|\geq K_0  \right\}\Big)  \ \ \text{in } H^s_{per}(0,1)\times L^2(0,1),
\end{align}
where we recall that $\{\widetilde \varPhi^p_{k}\}_{k\geq \widetilde k_0}$ and $\{\widetilde \varPhi^h_{k}\}_{|k|\geq \widetilde k_0}$ denote the eigenfunctions associated to the parabolic and hyperbolic branches of eigenvalues of the operator $A$, see \Cref{Remark-eigen-A}. It is clear that the space $\mathcal H$ has finite codimension.

Using \cite[Theorems 5, 6; Chapter 1]{Young}, it can be checked that the dual of the space $\mathcal H$ is 
\begin{align}\label{Space-Vstar}
	\mathcal H^*:= \text{closure} \Big(\Span \left\{\Phi_{\lambda^p_k}, \ k\geq K_0, \ \  \Phi_{\lambda^h_k}, \ |k|\geq K_0  \right\}\Big)  \ \ \text{in } H^{-s}_{per}(0,1)\times L^2(0,1),
\end{align}
where we recall that $\{\Phi_{\lambda^p_k}\}_{k\geq k_0}$ and $\{\Phi_{\lambda^h_{k}}\}_{|k|\geq k_0}$ denote the eigenfunctions associated to the parabolic and hyperbolic branches of eigenvalues of the operator $A^*$, see \Cref{Prop-eigenfunctions}.

\medskip 

We are now ready to prove our second main result, i.e., Theorem \ref{thm_control_veloct} of our work.

\begin{proof}[\bf Proof of Theorem \ref{thm_control_veloct}]
Let us consider $(f,g)=(0,0)$ in the adjoint system \eqref{lcnse_adjoint} and  the  final data $(\sigma_T, v_T)$  of the following form:
\begin{align}\label{given-data}
	(\sigma_T,v_T)& =  \sum_{k\geq K_0} a_k \Phi_{\lambda^p_{k}} + \sum_{|k|\geq K_0} b_k \Phi_{\lambda^h_k} \\ \notag 
	&=\sum_{k\geq K_0}a_k(\xi_{\lambda^p_k}, \eta_{\lambda^p_k}) + \sum_{|k|\geq K_0} b_k (\xi_{\lambda^h_k}, \eta_{\lambda^h_k} ) ,
\end{align} 
where $\displaystyle \sum_{k\geq K_0} |a_k|^2+ \sum_{|k|\geq K_0} |b_k|^2  <+\infty$.

 Therefore, the solution to the adjoint system \eqref{lcnse_adjoint} looks like
\begin{align}\label{adjoint_in_terms_eigenfunc}
	(\sigma,v) = \sum_{k\geq K_0}a_k e^{\lambda^p_k (T-t)} (\xi_{\lambda^p_k}, \eta_{\lambda^p_k}) + \sum_{|k|\geq K_0} b_k e^{\lambda^h_k(T-t)} (\xi_{\lambda^h_k}, \eta_{\lambda^h_k} ) ,
\end{align}
and this yields 
\begin{align} \label{B_star_u}
	\B^*_u (\sigma, v) &= \sigma(t,1)+v_x(t,1)   \notag \\ 
	&=
	\sum_{k\geq K_0} a_k \, e^{\lambda^p_k (T-t)}  \left(\xi_{\lambda^p_k}(1)+\eta_{\lambda^p_k}^{\prime}(1)\right)
	+\sum_{|k|\geq K_0} b_k \, e^{\lambda^h_k (T-t)} \left(\xi_{\lambda^h_k}(1)+\eta_{\lambda^h_{k}}^{\prime}(1)\right).
\end{align}

Now	in one hand,  using the Ingham-type inequality \eqref{Ingham-ineq}, we have 
	\begin{align*}
		&\int_0^T |\sigma(t,1)+v_x(t,1)|^2 \, dt \\
		&
		\geq
		C_1\bigg(\sum_{k\geq K_0 }\mod{a_k(\xi_{\lambda^p_k}(1)+\eta_{\lambda^p_k}^{\prime}(1))}^2e^{2\Re(\lambda^p_k)T}+\sum_{|k|\geq K_0 }\mod{b_k(\xi_{\lambda^h_k}(1)+\eta_{\lambda^h_{k}}^{\prime}(1))}^2\bigg),
	\end{align*}
	for some $C_1>0$.
	
	On the other hand, the solution $(\sigma, v)$ to the adjoint system given by \eqref{adjoint_in_terms_eigenfunc} (with given data as in \eqref{given-data}) satisfies the following
	\begin{align}\label{Adjoint_at_zero}
		\|(\sigma(0), v(0))\|^2_{H^{-s}_{per} \times L^2}  
		&\leq C_2 \bigg(\sum_{k\geq K_0}|a_k|^2e^{2\Re(\lambda^p_k) T} \|(\xi_{\lambda^p_k}, \eta_{\lambda^p_k}) \|^2_{ H^{-s}_{per}\times L^2} \\  \notag
		&  \qquad  + \sum_{|k|\geq K_0} |b_k|^2 e^{2\Re (\lambda^h_k) T} \|(\xi_{\lambda^h_k}, \eta_{\lambda^h_k}) \|^2_{ H^{-s}_{per}\times L^2} 
\bigg),
	\end{align}
	for some $C_2>0$.
	
	We now recall the observation estimates  given by Lemma \ref{observation_estimate-vel} for large $|k|\geq \widehat k_0$, which yields
	\begin{align}\label{Use_obs_est}
		&\sum_{k\geq  K_0} \mod{a_k(\xi_{\lambda^p_k}(1)+\eta_{\lambda^p_k}^{\prime}(1))}^2e^{2\Re(\lambda^p_k)T}+\sum_{|k|\geq K_0 }\mod{b_k(\xi_{\lambda^h_k}(1)+\eta_{\lambda^h_{k}}^{\prime}(1))}^2\\
		& \ \quad \geq C_1 \left(\sum_{k\geq  K_0} |a_k|^2 e^{2 \Re (\lambda^p_k) T} k^{2} + \sum_{|k|\geq K_0} |b_k|^2 |k|^{-1}\right).  \notag 
	\end{align}
	Using the bounds of the eigenfunctions given by Lemma \ref{lemma_bound_eigenfunc}, it follows that, 
	%
	% % % %
	%
	\begin{align}\label{Adjoint_zero_estimate}
		&\sum_{k\geq K_0}|a_k|^2e^{2\Re(\lambda^p_k) T} \|(\xi_{\lambda^p_k}, \eta_{\lambda^p_k}) \|^2_{ H^{-s}_{per}\times L^2} + \sum_{|k|\geq K_0} |b_k|^2 e^{2\Re (\lambda^h_k) T} \|(\xi_{\lambda^h_k}, \eta_{\lambda^h_k}) \|^2_{ H^{-s}_{per}\times L^2} \\ \notag 
		&\quad 	\leq  C_2\left(\sum_{k\geq K_0} |a_k|^2 e^{2\Re (\lambda^p_k) T} \left(k^{-2s-2}+1  \right) + \sum_{|k|\geq K_0} |b_k|^2 \left(|k|^{-2s} + |k|^{-2} \right)\right).
	\end{align}
	Straightaway,  there exists some $\widehat K_0\geq K_0$ such that for all $|k|\geq \widehat K_0$, we have
	$C_2 k^2> C_1 \big(k^{-2s-2}+1\big)$ for any $s>0$ and
	$C_2|k|^{-1}>C_1 \big(|k|^{-2s} + |k|^{-2}\big)$  for any  $s>\frac{1}{2}$.  Using these and  combining \eqref{Use_obs_est}, \eqref{Adjoint_zero_estimate}, we  can write for any $s>\frac{1}{2}$, that
	\begin{align}\label{Towards_Obser}
		&\sum_{k\geq  \widehat K_0}\mod{a_k(\xi_{\lambda^p_k}(1)+\eta_{\lambda^p_k}^{\prime}(1))}^2e^{2\Re(\lambda^p_k)T}+\sum_{\mod{k}\geq \widehat K_0}\mod{b_k(\xi_{\lambda^h_k}(1)+\eta_{\lambda^h_{k}}^{\prime}(1))}^2\\  \notag 
		& \geq C\left(\sum_{k\geq \widehat K_0}|a_k|^2 e^{2\Re(\lambda^p_k) T} \|(\xi_{\lambda^p_k}, \eta_{\lambda^p_k})\|^2_{H^{-s}_{per}\times L^2} + \sum_{\mod{k}\geq \widehat K_0} |b_k|^2 e^{2\Re (\lambda^h_k) T} \|(\xi_{\lambda^h_k}, \eta_{\lambda^h_k})\|^2_{ H^{-s}_{per}\times L^2}\right),
	\end{align}
	for some $C>0$. 

	Adding finitely many terms from $K_0\leq |k| < \widehat K_0$ in the above summations  (since all the observation terms are non-zero from $K_0$-th mode), we get  the following observability inequality 
	\begin{align}\label{Obser-eigen}
		\int_0^T  |\sigma(t,1)+v_x(t,1)|^2 dt \geq C \|(\sigma(0), v(0))\|^2_{H^{-s}_{per} \times L^2} , \quad \text{for } \ s>\frac{1}{2},
	\end{align}
for given  data $(\sigma_T, v_T)$ as \eqref{given-data}. 

Then, the density argument gives the required observability inequality \eqref{Obser-eigen} with any given adjoint state $(\sigma_T, v_T)\in \mathcal H^*$ (defined by \eqref{Space-Vstar}).  
	This is a necessary and sufficient  for  the null-controllability of  system \eqref{lcnse4} with given initial data $(\rho_0, u_0)\in \mathcal H$, where $\mathcal H$ is given by \eqref{Space-V}.

	This completes the proof of Theorem \ref{thm_control_veloct}.
	\end{proof}

\begin{remark}
	Some remarks are in order.
	\begin{itemize} 
\item	Proving null-controllability of the system \eqref{lcnse4} using the moments method does not provide a better result (w.r.t. the regularity of initial states) compare to the result obtained by Ingham-type inequality.

\item  On the other hand, the Ingham-type inequality \eqref{Ingham-ineq} does not give any conclusion regarding the controllability of the first case that is when a control is acting on the density part. In that case, the moments method really fits to get the controllability in a better space, namely in $H^s_{per}(0,1)\times L^2(0,1)$ for $s>1/2$.
\end{itemize}
\end{remark}

\smallskip

\section{Detailed spectral analysis of the adjoint operator}\label{Spectral-Details}

\smallskip 

We recall the eigenvalue problem \eqref{eigenfunction} from Section \ref{Sec-Spectral-short} and  for our convenience, we rewrite here below,
\begin{equation}\label{eigenvalue_problem}
\begin{aligned}
\xi^{\prime}(x)+\eta^{\prime}(x)&=\lambda\xi(x),  \quad x\in (0,1),\\
\eta^{\prime\prime}(x)+\eta^{\prime}(x)+\xi^{\prime}(x)&=\lambda\eta(x),   \quad x\in (0,1),\\
\xi(0)&=\xi(1),\\
\eta(0)=0,\ \ \eta(1) &=0.
\end{aligned}
\end{equation}

\medskip 

We divide the analysis into several steps. Let us begin by the following result.

\medskip 

\paragraph{\bf All non-trivial eigenvalues have negative real parts} This is the proof of point (iii)--\Cref{Prop_spectral}. 

Multiplying the first equation of \eqref{eigenvalue_problem} by $\overline{\xi}$, the second one by  $\overline{\eta}$ and then integrating, we  obtain
\begin{align*}
\int_{0}^{1}\overline{\xi(x)}\xi^{\prime}(x)dx+ \int_{0}^{1}\overline{\xi(x)}\eta^{\prime}(x)dx&=\lambda\int_{0}^{1}|\xi(x)|^2 dx\\
\int_{0}^{1}\overline{\eta(x)}\eta^{\prime\prime}(x)dx+\int_{0}^{1}\overline{\eta(x)}\eta^{\prime}(x)dx+\int_{0}^{1}\overline{\eta(x)}\xi^{\prime}(x)dx&=\lambda\int_{0}^{1}|\eta(x)|^2dx.
\end{align*}
Adding these two equations we get
\begin{align}\label{eq-6-1}
\int_{0}^{1}\overline{\xi(x)}\xi^{\prime}(x)dx+\int_{0}^{1}\overline{\eta(x)}\eta^{\prime}(x)dx+\int_{0}^{1}\overline{\xi(x)}\eta^{\prime}(x)dx+\int_{0}^{1}\overline{\eta(x)}\xi^\prime(x)dx \notag \\
\quad+\int_{0}^{1}\overline{\eta(x)}\eta^{\prime\prime}(x)dx=\lambda\int_{0}^{1}|\xi(x)|^2dx+\lambda\int_{0}^{1}|\eta(x)|^2dx.
\end{align}
Here, one can observe that 
\begin{equation}\label{eq-6-2}
\int_0^1 \overline{\xi(x)} \xi^\prime(x)dx = \frac{1}{2}\int_0^1\frac{d}{dx} |\xi(x)|^2dx + i \int_0^1 \Im (\overline{\xi(x)} \xi^\prime(x)) dx = i \int_0^1 \Im (\overline{\xi(x)} \xi^\prime(x)) dx,
\end{equation}
thanks to the boundary condition $\xi(0)=\xi(1)$. 
	
Similarly, we can obtain 
\begin{equation}\label{eq-6-3}
\int_0^1 \overline{\eta(x)} \eta^\prime(x)dx =   i \int_0^1 \Im (\overline{\eta(x)} \eta^\prime(x)) dx.
\end{equation}

Using the relations  \eqref{eq-6-2}, \eqref{eq-6-3} in \eqref{eq-6-1} and performing an integration by parts we deduce that 
\begin{align*}
i\int_0^1 \left( \Im (\overline{\xi(x)} \xi^\prime(x)) + \Im (\overline{\eta(x)} \eta^\prime(x))\right)dx+\int_{0}^{1}{\xi^\prime(x)}\overline{\eta(x)}dx-\int_{0}^{1}\overline{\xi^\prime(x)}{\eta(x)}dx-\int_{0}^{1}|\eta^{\prime}(x)|^2 dx \\
=\lambda\int_{0}^{1}|\xi(x)|^2dx+\lambda\int_{0}^{1}|\eta(x)|^2dx,
\end{align*}
from which it is clear that 
\begin{align}\label{real_part} 
\Re(\lambda)=-\frac{\|\eta^\prime\|^2_{L^2}}{\|\xi\|^2_{L^2} + \|\eta\|^2_{L^2}} <0,
\end{align}
since $\eta^\prime =0$ is not possible. If yes, then from the boundary condition $\eta(0)=\eta(1)=0$, we have $\eta=0$ and this yields that $\xi=c$, for some constant $c$,  which is possible if and only if $\lambda=0$.  Therefore, when $\lambda\neq 0$, then one has the condition \eqref{real_part}. 

	\begin{remark}\label{Remark-integral-zero}
		It can be easily seen that the first component $\xi$ satisfies 
	$\int_0^1 \xi =0$ provided $\lambda\neq 0$. 
	\end{remark}
	
\medskip 

\paragraph{\bf All eigenvalues  are geometrically simple: proof of \Cref{Prop_spectral}--point (iv)}
On contrary, let us assume that for any eigenvalue $\lambda$, there are two distinct eigenfunctions $\Phi_1:=(\xi_1,\eta_1)$ and $\Phi_2:=(\xi_2,\eta_2)$ of $A^*$. We prove that $\Phi_1$ and $\Phi_2$ are linearly dependent. 

Let be  $\theta_1, \theta_2\in\mathbb{C}\setminus\{0\}$ and consider the linear combination $\Phi:=\theta_1\Phi_1+\theta_2\Phi_2$. Then $\Phi:=(\xi,\eta)$ also satisfies the eigenvalue  problem \eqref{eigenvalue_problem}. We now choose $\theta_1,\theta_2$ in such a way that $\xi(0)=0$ (one particular choice is $\theta_1=-\frac{\theta_2\xi_2(0)}{\xi_1(0)}$). Then, in the same spirit of  \Cref{Prop_Approximate_Controllability}, we can conclude that $\Phi=0$. 

This ensures the assumption that each eigenvalue of $A^*$ has geometric multiplicity $1$.

\subsection{Determining the eigenvalues for large modulus} \label{Section-eigenvalues}
We write the set of equations \eqref{eigenvalue_problem} satisfied by $\xi$ and $\eta$ into a single equation of $\eta$ as obtained in   \eqref{etaeq}--\eqref{etaboun}, given by
\begin{subequations}
\begin{align}
\label{etaeqn}	&\eta^{\prime\prime\prime}(x)-\lambda\eta^{\prime\prime}(x)-2\lambda\eta^{\prime}(x)+\lambda^2\eta(x)=0  , \ \ \forall x \in (0,1),\\
\label{bc1}	&\eta(0)=0,\ \ \eta(1)=0,\ \ \eta^{\prime\prime}(0)=\eta^{\prime\prime}(1).
\end{align}
\end{subequations}
Then, the auxiliary equation associated to \eqref{etaeqn} is 
\begin{equation}\label{auxiliary1}
m^3-\lambda m^2-2\lambda m+\lambda^2=0.
\end{equation}
Introduce  $\mu=-\lambda\in \mb C$  and rewrite the auxiliary equation \eqref{auxiliary1}
\begin{equation}\label{auxiliary2}
m^3+\mu m^2+2\mu m+\mu^2=0.
\end{equation}

We consider $a=\mu$, $b=2\mu$, $c=\mu^2$ so that
the roots of the cubic polynomial \eqref{auxiliary2} are given by
\begin{equation}\label{form_roots_charac}
\begin{aligned}
m_1 &=-\frac{1}{3}\left(a+C+\frac{D_0}{C}\right),\\
m_2&=-\frac{1}{3}\left(a+\frac{(-1+i\sqrt{3})}{2}\,C+\frac{(-1-i\sqrt{3})}{2}\, \frac{D_0}{C}\right),\\
m_3&=-\frac{1}{3}\left(a+\frac{(-1-i\sqrt{3})}{2}\,C+\frac{(-1+i\sqrt{3})}{2}\,\frac{D_0}{C}\right),
\end{aligned}
\end{equation}
with
\begin{equation*}
C=\left(\frac{D_1+\sqrt{D_1^2-4D_0^3}}{2}\right)^{1/3},
\end{equation*}
where
\begin{equation*}
D_0 = a^2-3b = \mu^2 - 6\mu, \ \   D_1 = 2a^3-9ab+27c = 2\mu^3+ 9\mu^2.
\end{equation*}
Using the binomial expansion and approximating for large  $\mod{\mu}$, we obtain
\begin{align*}
	\sqrt{D_1^2-4D_0^3}&=\left[(2\mu^3+9\mu^2)^2-4(\mu^2-6\mu)^3\right]^{1/2}\\
	&=\left[(4\mu^6+36\mu^5+81\mu^4)-4(\mu^6-18\mu^5+108\mu^4-216\mu^3)\right]^{1/2}\\
	&=\left[108\mu^5-351\mu^4+864\mu^3\right]^{1/2}\\
	&=6\sqrt{3}\mu^{5/2}\left[1-\left(\frac{13}{4\mu}-\frac{8}{\mu^2}\right)\right]^{1/2}\\
	&=6\sqrt{3}\mu^{5/2}\left[1-\frac{1}{2}\left(\frac{13}{4\mu}-\frac{8}{\mu^2}\right)-\frac{1}{8}\left(\frac{169}{16\mu^2}-\frac{52}{\mu^3}+O(\mu^{-4})\right)\right]\\
	&=6\sqrt{3}\mu^{5/2}\left[1-\frac{13}{8\mu}+\frac{343}{128\mu^2}+O(\mu^{-3})\right].
\end{align*}
Therefore,
\begin{align*}
	C
	&=\left[\mu^3+\frac{9}{2}\mu^2+3\sqrt{3}\mu^{5/2}-\frac{39\sqrt{3}}{8}\mu^{3/2}+\frac{1029\sqrt{3}}{128}\mu^{1/2}+O(\mu^{-1/2})\right]^{1/3} \\
	&=\mu\left[1+\frac{9}{2\mu}+3\sqrt{3}\mu^{-1/2}-\frac{39\sqrt{3}}{8}\mu^{-3/2}+O(\mu^{-5/2})\right]^{1/3}\\
	&=\mu\bigg[1+\frac{1}{3}\left(\frac{9}{2\mu}+3\sqrt{3}\mu^{-1/2}-\frac{39\sqrt{3}}{8}\mu^{-3/2}+O(\mu^{-5/2})\right) \\ & \qquad \qquad \qquad  -\frac{1}{9}\left(\frac{9}{2\mu}+3\sqrt{3}\mu^{-1/2}-\frac{39\sqrt{3}}{8}\mu^{-3/2}+O(\mu^{-5/2})\right)^2 \\
	& \qquad \qquad \qquad +\frac{5}{81}\left(\frac{9}{2\mu}+3\sqrt{3}\mu^{-1/2}-\frac{39\sqrt{3}}{8}\mu^{-3/2}+O(\mu^{-5/2})\right)^3 +O(\mu^{-3})\bigg]\\
	&=\mu\left[1+\frac{1}{3}\left(\frac{9}{2\mu}+3\sqrt{3}\mu^{-1/2}-\frac{39\sqrt{3}}{8}\mu^{-3/2}+O(\mu^{-5/2})\right)-\frac{1}{9}\left(\frac{27}{\mu}+27\sqrt{3}\mu^{-3/2}+O(\mu^{-2})\right)\right.\\
	&\left.\quad\quad\quad+\frac{5}{81}\left(81\sqrt{3}\mu^{-3/2}+O(\mu^{-2})\right) +O(\mu^{-3})\right]\\
	&=\mu\left[1+\frac{3}{2\mu}+\sqrt{3}\mu^{-1/2}-\frac{13\sqrt{3}}{8}\mu^{-3/2}+O(\mu^{-5/2})-\frac{3}{\mu}-3\sqrt{3}\mu^{-3/2}+O(\mu^{-2})+5\sqrt{3}\mu^{-3/2}+O(\mu^{-3})\right]\\
	&=\mu\left[1+\sqrt{3}\mu^{-1/2}-\frac{3}{2\mu}+\frac{3\sqrt{3}}{8}\mu^{-3/2}+O(\mu^{-5/2})\right]\\
	&=\mu+\sqrt{3}\mu^{1/2}-\frac{3}{2}+\frac{3\sqrt{3}}{8}\mu^{-1/2}+O(\mu^{-3/2}).
\end{align*}
Similarly we have,
\begin{align*}
	\frac{D_0}{C}&=\frac{\mu^2-6\mu}{\mu\left[1+\sqrt{3}\mu^{-1/2}-\frac{3}{2\mu}+\frac{3\sqrt{3}}{8}\mu^{-3/2}+O(\mu^{-5/2})\right]}\\
	&=(\mu-6)\left[1+\left(\sqrt{3}\mu^{-1/2}-\frac{3}{2\mu}+\frac{3\sqrt{3}}{8}\mu^{-3/2}+O(\mu^{-5/2})\right)\right]^{-1}\\
	&=(\mu-6)\left[1-\sqrt{3}\mu^{-1/2}+\frac{3}{2\mu}-\frac{3\sqrt{3}}{8}\mu^{-3/2}+O(\mu^{-5/2})+\left(\sqrt{3}\mu^{-1/2}-\frac{3}{2\mu}+\frac{3\sqrt{3}}{8}\mu^{-3/2}+O(\mu^{-5/2})\right)^2\right.\\
	&\left.\qquad\quad\quad\quad\qquad-\left(\sqrt{3}\mu^{-1/2}-\frac{3}{2\mu}+\frac{3\sqrt{3}}{8}\mu^{-3/2}+O(\mu^{-5/2})\right)^3+O(\mu^{-2})\right]\\
	&=(\mu-6)\bigg[1-\sqrt{3}\mu^{-1/2}+\frac{3}{2\mu}-\frac{3\sqrt{3}}{8}\mu^{-3/2}+O(\mu^{-5/2}) \\
	& \qquad \qquad \qquad  +\left(\frac{3}{\mu}-3\sqrt{3}\mu^{-3/2}+O(\mu^{-5/2})\right)-3\sqrt{3}\mu^{-3/2}+O(\mu^{-2})\bigg]\\
	&=(\mu-6)\left[1-\sqrt{3}\mu^{-1/2}+\frac{9}{2\mu}-\frac{51\sqrt{3}}{8}\mu^{-3/2}+O(\mu^{-5/2})\right]\\
	&=\mu-\sqrt{3}\mu^{1/2}+\frac{9}{2}-\frac{51\sqrt{3}}{8}\mu^{-1/2}-6+6\sqrt{3}\mu^{-1/2}+O(\mu^{-1})\\
	&=\mu-\sqrt{3}\mu^{1/2}-\frac{3}{2}-\frac{3\sqrt{3}}{8}\mu^{-1/2}+O(\mu^{-1}).
\end{align*}
So, the characteristic roots are (recall \eqref{form_roots_charac})
\begin{align*}
	m_1
	&=-\frac{1}{3}\bigg[\mu+\left(\mu+\sqrt{3}\mu^{1/2}-\frac{3}{2}+\frac{3\sqrt{3}}{8}\mu^{-1/2}+O(\mu^{-3/2})\right) \\
	&\qquad \qquad \qquad +\left(\mu-\sqrt{3}\mu^{1/2}-\frac{3}{2}-\frac{3\sqrt{3}}{8}\mu^{-1/2}+O(\mu^{-1})\right)\bigg]\\
	&=-\frac{1}{3}\left(3\mu-3+O(\mu^{-1})\right)\\
	&=-\mu+1+O(\mu^{-1}),
\end{align*}
\begin{align*}
	m_2
	&=-\frac{1}{3}\left[\mu+\frac{-1+i\sqrt{3}}{2}\left(\mu+\sqrt{3}\mu^{1/2}-\frac{3}{2}+O(\mu^{-1/2})\right)+\frac{-1-i\sqrt{3}}{2}\left(\mu-\sqrt{3}\mu^{1/2}-\frac{3}{2}+O(\mu^{-1/2})\right)\right]\\
	&=-\frac{1}{3}\left[\frac{3}{2}+3i\mu^{1/2}+O(\mu^{-1/2})\right]\\
	&=-\frac{1}{2}-i\mu^{1/2}+O(\mu^{-1/2}),
\end{align*}
\begin{align*}
	m_3%&=-\frac{1}{3}\left(a+\frac{(-1-i\sqrt{3})}{2}\,C+\frac{(-1+i\sqrt{3})}{2}\,\frac{D_0}{C}\right)\\
	&=-\frac{1}{3}\left[\mu+\frac{-1-i\sqrt{3}}{2}\left(\mu+\sqrt{3}\mu^{1/2}-\frac{3}{2}+O(\mu^{-1/2})\right)+\frac{-1+i\sqrt{3}}{2}\left(\mu-\sqrt{3}\mu^{1/2}-\frac{3}{2}+O(\mu^{-1/2})\right)\right]\\
	&=-\frac{1}{3}\left[\frac{3}{2}-3i\mu^{1/2}+O(\mu^{-1/2})\right]\\
	&=-\frac{1}{2}+i\mu^{1/2}+O(\mu^{-1/2}).
\end{align*}
Together, we write
\begin{align}\label{charac_roots}
\begin{cases}
m_1=-\mu+1+O(\mu^{-1}),\\ 
m_2=-\frac{1}{2}-i\mu^{1/2}+O(\mu^{-1/2}),\\ 
m_3=-\frac{1}{2}+i\mu^{1/2}+O(\mu^{-1/2}),
\end{cases}
\end{align}
with $\mu=-\lambda$ as mentioned earlier. Since $m_1, m_2$ and $m_3$  are distinct at least for large modulus of $\mu$, we can write the general solution to the equation \eqref{etaeqn} as
\begin{equation}\label{func-eta-1}
\eta(x)=C_1 e^{m_1 x} +  C_2 e^{m_2 x} + C_3 e^{m_3 x}, \ \ \ x \in (0,1),
\end{equation}
for some constants $C_1, C_2, C_3\in \mathbb C$.

Using the boundary conditions \eqref{bc1}, we get a system of linear equations in $C_1$, $C_2$ and $C_3$, given by
\begin{equation}\label{values_D}
\begin{aligned}
&C_1+C_2+C_3=0,\\
&C_1 e^{m_1}+C_2 e^{m_2}+C_3 e^{m_3}=0,\\
&C_1 m_1^2\left(1-e^{m_1}\right)+C_2 m_2^2\left(1-e^{m_2}\right)+C_3 m_3^2\left(1-e^{m_3}\right)=0.
\end{aligned}
\end{equation}
These system of equations \eqref{values_D} has a nontrivial solution if and only if
\begin{equation*}
\det
\begin{pmatrix}
1&1&1\\[4pt]e^{m_1}&e^{m_2}&e^{m_3}\\[4pt]m_1^2\left(1-e^{m_1}\right)&m_2^2\left(1-e^{m_2}\right) &m_3^2\left(1-e^{m_3}\right)
\end{pmatrix}
=0.
\end{equation*}
Expanding the determinant,  we obtain
\begin{align}\label{determinant_auxiliary}
m_1^2\left(1-e^{m_1}\right)\left(e^{m_3}-e^{m_2}\right)+m_2^2\left(1-e^{m_2}\right)\left(e^{m_1}-e^{m_3}\right)+m_3^2\left(1-e^{m_3}\right)\left(e^{m_2}-e^{m_1}\right)=0. 
\end{align}
We shall now compute the determinant term by term. 

$\bullet$ Plugging the values of $m_1$, $m_2$ and $m_3$ as given in \eqref{charac_roots}, we obtain
\begin{align}\label{subestimate-1}
	&m_1^2\left(1-e^{m_1}\right)\left(e^{m_3}-e^{m_2}\right)\\ \notag 
	&=\left(-\mu+1+O(\mu^{-1/2})\right)^2\left(1-e^{-\mu+1+O(\mu^{-1/2})}\right)\left(e^{-1/2+i\mu^{1/2}+O(\mu^{-1/2})}-e^{-1/2-i\mu^{1/2}+O(\mu^{-1/2})}\right)\\ \notag 
		&=\left(\mu^2-2\mu+O(\mu^{1/2})\right)\left(1-e^{-\mu+1+O(\mu^{-1})}\right)\left(e^{-1/2+O(\mu^{-1/2})}\left(\cos(\mu^{1/2})+i\sin(\mu^{1/2})\right)\right.\\\notag 
		&\left.\quad-e^{-1/2+O(\mu^{-1/2})}\left(\cos(\mu^{1/2})-i\sin(\mu^{1/2})\right)\right)\\ \notag 
	&=\left(\mu^2-2\mu+O(\mu^{1/2})\right)\left(1-e^{-\mu+1+O(\mu^{-1})}\right)\left[O(\mu^{-1/2}) e^{-1/2+O(\mu^{-1/2})}\cos(\mu^{1/2})\right.\\\notag 
	&\left.\qquad+i(2+O(\mu^{-\frac{1}{2}}))e^{-1/2+O(\mu^{-1/2})}\sin(\mu^{1/2})\right],
\end{align}
where we have used the facts that 
	\begin{equation*}
		e^{-1/2+O(\mu^{-1/2})}-e^{-1/2+O(\mu^{-1/2})}=e^{-1/2+O(\mu^{-1/2})}\Big(1-e^{O(\mu^{-\frac{1}{2}})}\Big)= e^{-1/2+O(\mu^{-1/2})} \times O(\mu^{-\frac{1}{2}}),
	\end{equation*}
	and 
	\begin{equation*}
		e^{-1/2+O(\mu^{-1/2})}+e^{-1/2+O(\mu^{-1/2})}=e^{-1/2+O(\mu^{-1/2})}\Big(1+e^{O(\mu^{-\frac{1}{2}})}\Big)=e^{-1/2+O(\mu^{-1/2})}\times(2+O(\mu^{-\frac{1}{2}})).
	\end{equation*}
$\bullet$ We also compute 
\begin{align*}
	&m_2^2\left(1-e^{m_2}\right)\left(e^{m_1}-e^{m_3}\right)\\
		&=\left(-\frac{1}{2}-i\mu^{1/2}+O(\mu^{-\frac{1}{2}})\right)^2\left(1-e^{-1/2-i\mu^{1/2}+O(\mu^{-1/2})}\right)\left(e^{-\mu+1+O(\mu^{-1})}-e^{-\frac{1}{2}+i\mu^{\frac{1}{2}}+O(\mu^{-\frac{1}{2}})}\right)\\
		&=\left(-\mu+i\mu^{\frac{1}{2}}+O(1)\right)\left(e^{-\mu+1+O(\mu^{-1})}+e^{-1+O(\mu^{-\frac{1}{2}})}-e^{-\mu+\frac{1}{2}-i\mu^{\frac{1}{2}}+O(\mu^{-\frac{1}{2}})}-e^{-\frac{1}{2}+i\mu^{\frac{1}{2}}+O(\mu^{-\frac{1}{2}})}\right)\\
	&=\left(-\mu+i\mu^{\frac{1}{2}}+O(1)\right)\left[e^{-\mu+1+O(\mu^{-1})}+e^{-1+O(\mu^{-\frac{1}{2}})}-e^{-\mu+\frac{1}{2}+O(\mu^{-\frac{1}{2}})}\left(\cos(\mu^{\frac{1}{2}})-i\sin(\mu^{\frac{1}{2}})\right)\right.\\
	&\left.\hspace{4cm}-e^{-\frac{1}{2}+O(\mu^{-\frac{1}{2}})}\left(\cos(\mu^{\frac{1}{2}})+i\sin(\mu^{\frac{1}{2}})\right)\right].
\end{align*}
$\bullet$ Finally, we have
\begin{align*}
	&m_3^2\left(1-e^{m_3}\right)\left(e^{m_2}-e^{m_1}\right)\\
		&=\left(-\frac{1}{2}+i\mu^{1/2}+O(\mu^{-\frac{1}{2}})\right)^2\left(1-e^{-\frac{1}{2}+i\mu^{\frac{1}{2}}+O(\mu^{-\frac{1}{2}})}\right)\times\left(e^{-\frac{1}{2}-i\mu^{\frac{1}{2}}+O(\mu^{-\frac{1}{2}})}-e^{-\mu+1+O(\mu^{-1})}\right)\\
	&=\left(-\mu-i\mu^{\frac{1}{2}}+O(1)\right)\left[-e^{-\mu+1+O(\mu^{-1})}-e^{-1+O(\mu^{-\frac{1}{2}})} +e^{-\mu+\frac{1}{2}+O(\mu^{-\frac{1}{2}})}\left(\cos(\mu^{\frac{1}{2}})+i\sin(\mu^{\frac{1}{2}})\right)\right.\\
	&\left.\hspace{4cm}+e^{-\frac{1}{2}+O(\mu^{-\frac{1}{2}})}\left(\cos(\mu^{\frac{1}{2}})-i\sin(\mu^{\frac{1}{2}})\right)\right].
\end{align*}

	$\bullet$ We add now the last two terms, in what follows
	\begin{align}\label{subestimate-2}
		&m_2^2\left(1-e^{m_2}\right)\left(e^{m_1}-e^{m_3}\right)+m_3^2\left(1-e^{m_3}\right)\left(e^{m_2}-e^{m_1}\right)\\ \notag 
		&=-\mu \left(e^{-\mu+1 + O(\mu^{-1})} + e^{-1+ O(\mu^{-\frac{1}{2}})} - e^{-\mu+1 + O(\mu^{-\frac{1}{2}})} - e^{-1+ O(\mu^{-\frac{1}{2}})} \right) 
		+ i \mu^{\frac{1}{2}} \big(2+ O(\mu^{-\frac{1}{2}})\big)  e^{-\mu+1 + O(\mu^{-1})} \\ \notag 
		& \ +i\mu^{\frac{1}{2}}(2+ O(\mu^{-\frac{1}{2}})) e^{-1+ O(\mu^{-\frac{1}{2}})} 
		 +  \cos \mu^{\frac{1}{2}} \bigg[  \big(-\mu - i\mu^{\frac{1}{2}} + O(1)\big)  \Big(e^{-\mu+\frac{1}{2} + O(\mu^{-\frac{1}{2}})} + e^{-\frac{1}{2}  + O(\mu^{-\frac{1}{2}})} \Big)  \\ \notag 
		&\qquad \qquad - \big(-\mu + i\mu^{\frac{1}{2}} +O(1)\big) \Big(e^{-\mu+\frac{1}{2} + O(\mu^{-\frac{1}{2}})} + e^{-\frac{1}{2}  + O(\mu^{-\frac{1}{2}})} \Big) \bigg] \\   \notag 
		&\qquad +i\sin \mu^{\frac{1}{2}} \bigg[ \big(-\mu- i\mu^{\frac{1}{2}} + O(1) \big) \Big(e^{-\mu+\frac{1}{2} + O(\mu^{-\frac{1}{2}})} - e^{-\frac{1}{2}  + O(\mu^{-\frac{1}{2}})} \Big)  \\ \notag 
		& \qquad \qquad +  \big(-\mu+ i\mu^{\frac{1}{2}}  +O(1) \big) \Big(e^{-\mu+\frac{1}{2} + O(\mu^{-\frac{1}{2}})}  -e^{-\frac{1}{2}  + O(\mu^{-\frac{1}{2}})}   \Big)   \bigg]\\ \notag 
		&= -\mu  O(\mu^{-\frac{1}{2}}) e^{-\mu+1 + O(\mu^{-1})} - \mu O(\mu^{-\frac{1}{2}}) e^{-1+O(\mu^{-\frac{1}{2}})}    
		+\big(2i\mu^{\frac{1}{2}} +O(1)\big) e^{-\mu+1 + O(\mu^{-1})} \\ \notag 
		& +\big(2i\mu^{\frac{1}{2}} +O(1)\big) e^{-1+O(\mu^{-\frac{1}{2}})}
		+\cos \mu^{\frac{1}{2}} \bigg[\big(\mu+O(1)\big)  O(\mu^{-\frac{1}{2}})    e^{-\mu+\frac{1}{2} + O(\mu^{-\frac{1}{2}})} \\ \notag  
		& \ \ +  \big(\mu+O(1)\big) O(\mu^{-\frac{1}{2}})  e^{-\frac{1}{2} + O(\mu^{-\frac{1}{2}})} 
		- i\mu^{\frac{1}{2}}( 2+O(\mu^{-\frac{1}{2}}) ) e^{-\mu+\frac{1}{2} +O(\mu^{-\frac{1}{2}}) } - i\mu^{\frac{1}{2}} ( 2+O(\mu^{-\frac{1}{2}}) ) e^{-\frac{1}{2} + O(\mu^{-\frac{1}{2}})}   \bigg] \\ \notag 
		&\quad +  i\sin \mu^{\frac{1}{2}} \bigg[\big(-2\mu +O(\mu^{\frac{1}{2}})\big)   e^{-\mu+\frac{1}{2} +O(\mu^{-\frac{1}{2}}) } + i\mu^{\frac{1}{2}} O(\mu^{-\frac{1}{2}}) e^{-\mu+\frac{1}{2} +O(\mu^{-\frac{1}{2}}) }  \\ \notag 
		&\quad\quad\quad\quad\quad\quad + \big(2\mu+ O(\mu^{\frac{1}{2}})\big) e^{-\frac{1}{2} + O(\mu^{-\frac{1}{2}})} + i\mu^{\frac{1}{2}} O(\mu^{-\frac{1}{2}}) e^{-\frac{1}{2} + O(\mu^{-\frac{1}{2}})} \bigg]   .
	\end{align}
We get after adding \eqref{subestimate-1} and \eqref{subestimate-2},
\begin{align*}
	&m_1^2\left(1-e^{m_1}\right)\left(e^{m_3}-e^{m_2}\right)+m_2^2\left(1-e^{m_2}\right)\left(e^{m_1}-e^{m_3}\right)+m_3^2\left(1-e^{m_3}\right)\left(e^{m_2}-e^{m_1}\right)\\
	=& \cos \mu^{\frac{1}{2}} (\mu^2-2\mu+O(\mu^{\frac{1}{2}})) O(\mu^{-\frac{1}{2}}) \left(e^{-\frac{1}{2}+O(\mu^{-\frac{1}{2}}) }- e^{-\mu + \frac{1}{2} + O(\mu^{-\frac{1}{2}})} \right) \\
	+& i\sin \mu^{\frac{1}{2}} (2+O(\mu^{-\frac{1}{2}})) (\mu^2-2\mu +O(\mu^{-\frac{1}{2}})) \left(e^{-\frac{1}{2}+O(\mu^{-\frac{1}{2}}) }- e^{-\mu + \frac{1}{2} + O(\mu^{-\frac{1}{2}})} \right) \\
	-&\mu  O(\mu^{-\frac{1}{2}}) e^{-\mu+1 + O(\mu^{-1})} - \mu O(\mu^{-\frac{1}{2}}) e^{-1+O(\mu^{-\frac{1}{2}})}   \\
	+&\big(2i\mu^{\frac{1}{2}} +O(1)\big) e^{-\mu+1 + O(\mu^{-1})} +\big(2i\mu^{\frac{1}{2}} +O(1)\big) e^{-1+O(\mu^{-\frac{1}{2}})}\\
	+&\cos \mu^{\frac{1}{2}} \bigg[\big(\mu+O(1)\big)  O(\mu^{-\frac{1}{2}})    e^{-\mu+\frac{1}{2} + O(\mu^{-\frac{1}{2}})} +  \big(\mu+O(1)\big) O(\mu^{-\frac{1}{2}})  e^{-\frac{1}{2} + O(\mu^{-\frac{1}{2}})} \\
	- & i\mu^{\frac{1}{2}}( 2+O(\mu^{-\frac{1}{2}}) ) e^{-\mu+\frac{1}{2} +O(\mu^{-\frac{1}{2}}) } - i\mu^{\frac{1}{2}} ( 2+O(\mu^{-\frac{1}{2}}) ) e^{-\frac{1}{2} + O(\mu^{-\frac{1}{2}})}   \bigg] \\
	& \ \ +  i\sin \mu^{\frac{1}{2}} \bigg[\big(-2\mu +O(\mu^{\frac{1}{2}})\big)   e^{-\mu+\frac{1}{2} +O(\mu^{-\frac{1}{2}}) } + i\mu^{\frac{1}{2}} O(\mu^{-\frac{1}{2}}) e^{-\mu+\frac{1}{2} +O(\mu^{-\frac{1}{2}}) }  \\ 
	& \ \ + \big(2\mu+ O(\mu^{\frac{1}{2}})\big) e^{-\frac{1}{2} + O(\mu^{-\frac{1}{2}})} + i\mu^{\frac{1}{2}} O(\mu^{-\frac{1}{2}}) e^{-\frac{1}{2} + O(\mu^{-\frac{1}{2}})} \bigg]  \\
	&=- \mu  O(\mu^{-\frac{1}{2}}) e^{-\mu+1 + O(\mu^{-1})} - \mu O(\mu^{-\frac{1}{2}}) e^{-1+O(\mu^{-\frac{1}{2}})} \\
	&\quad+\big(2i\mu^{\frac{1}{2}} +O(1)\big) e^{-\mu+1 + O(\mu^{-1})}  +\big(2i\mu^{\frac{1}{2}} +O(1)\big) e^{-1+O(\mu^{-\frac{1}{2}})} \\
	&\quad+ \cos \mu^{\frac{1}{2}} \bigg[ \Big(-\mu^2O(\mu^{-\frac{1}{2}})  +3\mu O(\mu^{-\frac{1}{2}})  - 2i \mu^{\frac{1}{2}}+ O(1)    \Big) e^{-\mu+\frac{1}{2} +O(\mu^{-\frac{1}{2}})} \\ 
	& \quad\quad\quad\quad\quad\quad + \Big(\mu^2 O(\mu^{-\frac{1}{2}})  -\mu  O(\mu^{-\frac{1}{2}}) - 2i \mu^{\frac{1}{2}}+ O(1) \Big) e^{-\frac{1}{2} +O(\mu^{-\frac{1}{2}})}     \bigg]\\
	&\quad+i\sin \mu^{\frac{1}{2}} \bigg[ \Big(-2\mu^2  +  O(\mu^{\frac{3}{2}})\Big) e^{-\mu+\frac{1}{2}+O(\mu^{-\frac{1}{2}})}+ \Big(2\mu^2 +O(\mu^{\frac{3}{2}}) \Big)e^{-\frac{1}{2} + O(\mu^{-\frac{1}{2}})} \bigg].
\end{align*}

Now, replacing the above quantity in the equation \eqref{determinant_auxiliary},  and then dividing it by $\mu^2$ (since $\mu\neq 0$),  we obtain the equation 
\begin{align}\label{main_eigen_equ}
F(\mu)=0,
\end{align}
where 
\begin{multline*}
F(\mu) = - 2\sin \mu^{\frac{1}{2}} \left( e^{-\mu+1} -1\right) + O(\mu^{-\frac{1}{2}}) \sin \mu^{\frac{1}{2}}\, e^{-\mu+1  + O(\mu^{-\frac{1}{2}})}  \\
+ O(\mu^{-\frac{1}{2}}) \sin \mu^{\frac{1}{2}}\, e^{ O(\mu^{-\frac{1}{2}})}+ \cos \mu^{\frac{1}{2}} \bigg[O(\mu^{-\frac{1}{2}}) e^{-\mu+1 + O(\mu^{-\frac{1}{2}}) } +O(\mu^{-\frac{1}{2}}) e^{ O(\mu^{-\frac{1}{2}}) }  \bigg]     \\ 
+ O(\mu^{-\frac{3}{2}}) e^{-\mu+\frac 32 + O(\mu^{-\frac{1}{2}})} + O(\mu^{-\frac{3}{2}}) e^{-\frac 12 +O(\mu^{-\frac{1}{2}})}.
\end{multline*}

\paragraph{\bf Application of Rouche's theorem}
Let $G$ be a function of $\mu$, defined as
\begin{equation*}
G(\mu)=-2\sin(\mu^{\frac{1}{2}})\left(e^{-\mu+1}-1\right).
\end{equation*}
Then
\begin{align*}
{F}(\mu)-G(\mu)=& \underbrace{\sin(\mu^{\frac{1}{2}})\left(O(\mu^{-\frac{1}{2}})e^{-\mu+1+O(\mu^{-\frac{1}{2}})}+O(\mu^{-\frac{1}{2}})e^{O(\mu^{-\frac{1}{2}})}\right)  }_{I_1}\\
& +  \underbrace{\cos(\mu^{\frac{1}{2}})\left(O(\mu^{-\frac{1}{2}})e^{-\mu+1+O(\mu^{-\frac{1}{2}})}+O(\mu^{-\frac{1}{2}})e^{O(\mu^{-\frac{1}{2}})}\right) }_{I_2}\\
& +\underbrace{O(\mu^{-\frac{3}{2}})e^{-\mu+\frac{3}{2}+O(\mu^{-\frac{1}{2}})}+O(\mu^{-\frac{3}{2}})e^{-\frac{1}{2}+O(\mu^{-\frac{1}{2}})}}_{I_3}\\
:=& I_1+I_2+I_3.
\end{align*}
Now since the function $G$ has two branches of zeros, we will calculate them separately and in each case, we use the Rouche's theorem to talk  about the zeros of the function $F$. 

\smallskip 

{\bf Case 1.} We first observe that  $\mu=k^2\pi^2$ is a zero of $G$ for each  $k\in\mathbb{N}^*$ and consider the following region in the complex plane
\begin{equation}
\mathcal{R}_k=\left\{z=x+iy\in\mathbb{C}\ : \ k\pi-\frac{\pi}{2}\leq x\leq k\pi+\frac{\pi}{2},\ \ -\frac{\pi}{2}\leq y\leq \frac{\pi}{2}\right\} , \quad \text{for } k\in \mb N^*. 
\end{equation}
Our goal is to prove that $\mod{{F}(\mu)-G(\mu)}<\mod{G(\mu)}$ on $\partial\mathcal{R}_k$. It is sufficient to prove that
\begin{equation}
\mod{\frac{{F}(\mu)-G(\mu)}{G(\mu)}}\to 0 \ \ \text{for }\mu\in\partial\mathcal{R}_k \text{ such that } \Re(\mu)\to+\infty.
\end{equation}
To avoid difficulties in notations, we denote $w=\mu^{\frac{1}{2}}$ and without loss of generality, we simply write $I_1$, $I_2$ and $I_3$  as the functions $w$.  Note that
\begin{align*}
\mod{\frac{I_1(w)}{G(w)}}&=\mod{\frac{O(w^{-1})e^{-w^2+1+O(w^{-1})}+O(w^{-1})e^{O(w^{-1})}}{e^{-w^2+1}-1}}\leq\frac{C}{\mod{w}}\frac{\mod{e^{-w^2+1}}+1}{\mod{e^{-w^2+1}-1}},
\end{align*}
and since $\frac{\mod{e^{-w^2+1}}+1}{\mod{e^{-w^2+1}-1}}$ is bounded when $\Re(w)\to+\infty$, therefore
\begin{equation*}
\mod{\frac{I_1(w)}{G(w)}}\to0,\ \ \text{as } \Re(w)\to+\infty.
\end{equation*}
We now compute
\begin{align*}
\mod{\frac{I_2(w)}{G(w)}}&=\mod{\frac{\cos(w)}{\sin(w)}}\frac{\mod{O(w^{-1})e^{-w^2+1+O(w^{-1})}+O(w^{-1})e^{O(w^{-1})}}}{\mod{e^{-w^2+1}-1}}\leq\frac{C}{\mod{w}}\mod{\frac{\cos(w)}{\sin(w)}}\frac{\mod{e^{-w^2+1}}+1}{\mod{e^{-w^2+1}-1}},
\end{align*}
which yields
\begin{equation*}
\mod{\frac{I_2(w)}{G(w)}}\to 0,\ \ \text{for }w\in\partial\mathcal{R}_k \text{ such that } \Re(w)\to+\infty,
\end{equation*}
because of the fact that $\mod{\frac{\cos(w)}{\sin(w)}}$ is bounded on $\partial\mathcal{R}_k$. We can say similarly for the third term that
\begin{equation*}
\mod{\frac{I_3(w)}{G(w)}}\to 0,\ \ \text{for }w\in\partial\mathcal{R}_k \text{ such that } \Re(w)\to+\infty,
\end{equation*}
as we have
\begin{equation*}
\mod{\frac{I_3(w)}{G(w)}}\leq\frac{C}{\mod{w}^3}\mod{\frac{1}{\sin(w)}}\frac{\mod{e^{-w^2+\frac{3}{2}}}+1}{\mod{e^{-w^2+1}-1}}.
\end{equation*}

\smallskip 

{\bf Case 2.} When $\sin(\mu^{\frac 12}) \neq 0$, $G(\mu)=0$  gives  $e^{-\mu+1}-1=0$, that is $\mu=1+2ik\pi$ for $k\in\mathbb{Z}$. In this case, we  consider the following region in the complex plane
\begin{equation}
\mathcal{S}_k=\left\{z=x+iy\in\mathbb{C}\ : \ 1-\frac{\pi}{2}\leq x\leq 1+\frac{\pi}{2},\ \ 2k\pi-\frac{\pi}{2}\leq y\leq 2k\pi+\frac{\pi}{2} \right\}.
\end{equation}
We need to  show that $\mod{{F}(\mu)-G(\mu)}<\mod{G(\mu)}$ on $\partial\mathcal{S}_k$. In particular, we prove that
\begin{equation*}
\mod{\frac{{F}(\mu)-G(\mu)}{G(\mu)}}\to 0 \ \ \text{for }\mu\in\partial\mathcal{S}_k \text{ such that } \Im(\mu)\to+\infty.
\end{equation*}
We compute
\begin{align*}
\mod{\frac{I_1(\mu)}{G(\mu)}}&=\frac{1}{\mod{\sin(\mu^{\frac{1}{2}})}}\mod{\frac{O(\mu^{-\frac{1}{2}})e^{-\mu+1+O(\mu^{-\frac{1}{2}})}+O(\mu^{-1})e^{O(\mu^{-\frac{1}{2}})}}{e^{-\mu+1}-1}}\leq\frac{C}{\mod{\mu}^{\frac{1}{2}}}\frac{1}{\mod{\sin(\mu^{\frac{1}{2}})}}\frac{\mod{e^{-\mu+1}}+1}{\mod{e^{-\mu+1}-1}},
\end{align*}
\begin{align*}
\mod{\frac{I_2(\mu)}{G(\mu)}}&=\mod{\frac{\cos(\mu^{\frac{1}{2}})}{\sin(\mu^{\frac{1}{2}})}}\frac{\mod{O(\mu^{-\frac{1}{2}})e^{-\mu+1+O(\mu^{-\frac{1}{2}})}+O(\mu^{-\frac{1}{2}})e^{O(\mu^{-\frac{1}{2}})}}}{\mod{e^{-\mu+1}-1}}\leq\frac{C}{\mod{\mu}^{\frac{1}{2}}}\mod{\frac{\cos(\mu^{\frac{1}{2}})}{\sin(\mu^{\frac{1}{2}})}}\frac{\mod{e^{-\mu+1}}+1}{\mod{e^{-\mu+1}-1}},  
\end{align*}
and 
\begin{equation*}
\mod{\frac{I_3(\mu)}{G(\mu)}}\leq\frac{C}{\mod{\mu}^{\frac{3}{2}}}\frac{1}{\mod{\sin(\mu^{\frac{1}{2}})}}\frac{\mod{e^{-\mu+\frac{3}{2}}}+1}{\mod{e^{-\mu+1}-1}}.
\end{equation*}
On $\partial\mathcal{S}_k$, $\mod{\cos(\mu^{\frac{1}{2}})}$ and $\mod{\sin(\mu^{\frac{1}{2}})}$ has both lower and upper bounds and $\frac{\mod{e^{-\mu+1}}+1}{\mod{e^{-\mu+1}-1}}, \frac{\mod{e^{-\mu+\frac{3}{2}}}+1}{\mod{e^{-\mu+1}-1}}$ are bounded. Therefore, for each $j=1,2,3$, we have 
\begin{equation*}
\mod{\frac{I_j(\mu)}{G(\mu)}}\to 0,\ \ \text{for }\mu\in\partial\mathcal{S}_k \text{ such that } \Im(\mu)\to+\infty.
\end{equation*}
Thus, combining the above two cases, we conclude that there exists some $k_0\in \mb N^*$ sufficiently large, such that 
\begin{equation}
\mod{{F}(\mu)-G(\mu)}<\mod{G(\mu)},\ \ \forall\mu\in\partial\mathcal{R}_k\cup\partial\mathcal{S}_k \text{ and for large }k. 
\end{equation}

\medskip 

Since any two regions $\mathcal{R}_k$ and $\mathcal{R}_l$ are disjoint for $k\neq l$ and in each region $\mathcal{R}_k$, there is exactly one root of $G$ (more precisely, the square-root of $\mu$), the same is true for the function $F$, thanks to the Rouche’s theorem. Similar phenomenon holds true in the region $\mathcal{S}_k$. To be more precise, we have the following. 

{\bf On the region $\mathcal{R}_k$: parabolic part.} For $k\geq k_0$, the function $F$ has a unique root in $\mathcal{R}_k$ of the form
\begin{equation*}
\mu^{\frac{1}{2}}_k=(k\pi+c_k)+id_k,
\end{equation*}
with $\mod{c_k},\ \mod{d_k}\leq\frac{\pi}{2}$. Therefore,  the first set of eigenvalues are given by
\begin{equation}\label{eigen-parabolic}
\lambda^p_k:=-\mu_k:=-k^2\pi^2-2c_kk\pi-2id_kk\pi-(c_k^2-d_k^2)-2ic_kd_k, \quad \forall k\geq k_0.
\end{equation}

{\bf On the region $\mathcal{S}_k$: hyperbolic part.} On the other hand, for $\mod{k}\geq k_0$, the function $F$ has a unique root in $\mathcal{S}_k$ of the form
\begin{equation*}
\tilde\mu_k=1+\alpha_{1,k}+i(2k\pi+\alpha_{2,k}),
\end{equation*}
with $\mod{\alpha_{1,k}},\ \mod{\alpha_{2,k}}\leq\frac{\pi}{2}$.

Therefore,  the second set of eigenvalues are given by
\begin{equation}\label{eigen-hyperbolic}
\lambda^h_k:=-\tilde \mu_k:=-1-\alpha_{1,k}-i(2k\pi+\alpha_{2,k}), \quad \forall |k| \geq k_0.
\end{equation}
This indeed proves the results \eqref{eigenvalues-lambda} and \eqref{eigenvalues-gamma} of our Lemma \ref{lemma_eigenvalue_eigenfunc}. 

\smallskip 

\subsection{Computing  the eigenfunctions for large frequencies}\label{subsection-eigenfunc}
From the set of equations \eqref{values_D}, one can obtain the following values of $C_1, C_2, C_3$
\begin{align}\label{values_D_auxi}
\begin{cases}
C_1= e^{m_2}-e^{m_3}, \\
C_2= e^{m_3}-e^{ m_1 }, \\
C_3= e^{m_1} - e^{m_2}.
\end{cases}
\end{align}
Note that $C_1,C_2$ and $C_3$ cannot be simultaneously zero. Once we have that, one can easily obtain the function $\eta(x)$, defined by \eqref{func-eta-1}, 
\begin{align}\label{func_eta}
\eta(x)= (e^{m_2}-e^{m_3}) e^{m_1 x} + (e^{m_3}-e^{ m_1 }) e^{m_2 x} + ( e^{m_1} - e^{m_2}) e^{m_3 x}, \ \ \forall x \in (0,1).
\end{align}
We now compute  the first and second derivatives of $\eta$ which will let us obtain the other component $\xi$ of the set of equations \eqref{eigenvalue_problem}. We see
\begin{align*}
\eta^\prime(x) = m_1(e^{m_2}-e^{m_3}) e^{m_1 x} + m_2 (e^{m_3}-e^{ m_1 }) e^{m_2 x}+ m_3 ( e^{m_1} - e^{m_2}) e^{m_3 x},
\end{align*}
\begin{align*}
\eta^{\prime \prime}(x) = m^2_1(e^{m_2}-e^{m_3 }) e^{m_1 x} + m^2_2 (e^{m_3}-e^{ m_1 }) e^{m_2 x}+ m_3^2 ( e^{m_1} - e^{m_2}) e^{m_3 x}.
\end{align*}
Now, from equation \eqref{eigenvalue_problem}, one can obtain
\begin{equation*}
\eta^{\prime\prime}(x)+\lambda\xi(x)=\lambda\eta(x),
\end{equation*}
and therefore, (writing  $\mu=-\lambda$) 
\begin{align}\label{expression_xi}
&\xi(x) \\
=& \frac{\eta^{\prime\prime} (x)+\mu\eta(x)}{\mu} \notag \\ \notag
=& \Big(\frac{m_1^2 +\mu}{\mu} \Big) (e^{m_2}-e^{m_3 }) e^{m_1 x} + \Big(\frac{m^2_2 + \mu}{\mu}\Big)  (e^{m_3}-e^{ m_1 }) e^{m_2 x} + \Big(\frac{m_3^2 + \mu}{\mu}\Big) ( e^{m_1} - e^{m_2}) e^{m_3 x}.
\end{align}

\paragraph{\bf Set of eigenfunctions associated with ${\lambda^p_k}$} 
For the set of eigenvalues $\{\lambda^p_k\}_{k\geq k_0}$ \eqref{eigen-parabolic}, we denote the eigenfunctions by $\Phi_{\lambda^p_k}$, $\forall k\geq k_0$, where we shall use the notation
\begin{align}\label{eigenfunc-1st}
\Phi_{\lambda^p_k}(x) = \begin{pmatrix}\xi_{\lambda^p_k}(x) \\ \eta_{\lambda^p_k}(x)\end{pmatrix}, \quad \forall k\geq k_0. 
\end{align}

\smallskip 

{\bf Computing ${\eta_{\lambda^p_k}}$.} 
Let us recall the values of $m_1$, $m_2$ and $m_3$ from \eqref{charac_roots} and observe that $O(\mu_k^{-1/2})=O(k^{-1})$. In what follows, we  deduce their explicit expressions (for large frequencies of $k$)  
\begin{align}\label{charac_roots_asymp}
\begin{cases} 
m_{1} = -k^2\pi^2 - 2c_k k\pi - 2id_k k\pi + O(1), \quad \forall k\geq k_0 \text{ large enough} , \\
m_{2}=-\frac{1}{2}+d_k  - i(k\pi +c_k) + O(k^{-1}), \quad \forall k\geq k_0 \text{ large enough} , \\
m_{3}= - \frac{1}{2}-d_k + i(k\pi +c_k)+O(k^{-1}) , \quad \forall k\geq k_0  \text{ large enough} . 
\end{cases}
\end{align}
where we have used the expression of $\mu=\mu_k$ from \eqref{eigen-parabolic}. 

Recall the values of $m_1$, $m_2$, $m_3$, given by \eqref{charac_roots_asymp} and from the expression \eqref{func_eta}, we get that 
\begin{align}\label{component_eta_lambda}
{\eta_{\lambda^p_k}(x)}&=\left(e^{-\frac{1}{2}+d_k - i(k\pi +c_k)+O(k^{-1})}- e^{-\frac{1}{2}-d_k  +i(k\pi +c_k)+O(k^{-1})} \right) e^{x\left(-k^2\pi^2 -2c_k k\pi -2id_k k\pi  + O(1) \right)}  \\\notag
&\quad+\left( e^{-\frac{1}{2}-d_k  +i(k\pi +c_k)+O(k^{-1})} -  e^{-k^2\pi^2 -2c_k k\pi -2id_k k\pi  + O(1)} \right) e^{x \left(-i(k\pi +c_k)-\frac{1}{2}+d_k +O(k^{-1}) \right)} \\\notag
&\quad+ \left( e^{-k^2\pi^2 -2c_k k\pi -2id_k k\pi  + O(1)}- e^{-\frac{1}{2}+d_k - i(k\pi +c_k)+O(k^{-1})}\right) e^{x\left(i(k\pi +c_k)-\frac{1}{2}-d_k +O(k^{-1}) \right)},\notag 
\end{align}
for all $x\in (0,1)$ and for all $k\geq k_0$ large enough. 

\smallskip 

{\bf  Computing ${\xi_{\lambda^p_k}}$.}   By using the values of $m_1, m_2, m_3$ from \eqref{charac_roots_asymp}, we have
\begin{align*}
	m_1^2 &= \left(-k^2\pi^2-2c_k k\pi -2 id_k k\pi +O(1) \right)^2 \\
	& = k^4\pi^4 + 4c_k k^3\pi^3 + 4i d_k k^3\pi^3 + O(k^2), \quad \forall k\geq k_0 \text{ large enough} ,\\
	m_2^2 &= \left(-\frac{1}{2}+d_k - i(k\pi +c_k) +O(k^{-1}) \right)^2 \\
	&= -k^2\pi^2 -2c_k k\pi +ik\pi -2id_k k\pi +O(1) , \quad \forall k\geq k_0 \text{ large enough} , \\
	m_3^2 &= \left(-\frac{1}{2}-d_k + i(k\pi +c_k) +O(k^{-1}) \right)^2 \\
	&= -k^2\pi^2 -2c_k k\pi- ik\pi -2id_k k\pi +O(1) , \quad \forall k\geq k_0 \text{ large enough} .
\end{align*}
Also recall, $\mu_k=-\lambda^p_k= k^2\pi^2 +2c_k k\pi + 2id_k k\pi +O(1)$, so that
\begin{align}\label{value_aux_xi-1}
	1+\frac{m_1^2}{\mu_k}&= 1+ \frac{k^4\pi^4 + 4c_k k^3\pi^3 + 4i d_k k^3\pi^3 + O(k^2)}{k^2\pi^2 +2c_k k\pi + 2id_k k\pi +O(1)}\\ \notag 
	&= 1+ k^2\pi^2 +  4id_k k\pi +O(k), \quad \forall k\geq k_0 \text{ large enough} , \\  \label{value_aux_xi-2}
	1+\frac{m_2^2}{\mu_k} &= 1+ \frac{\big(-k^2\pi^2 -2c_k k\pi  -2id_k k\pi\big) +\big(ik\pi +O(1)\big)}{k^2\pi^2+2c_k k\pi +2id_k k\pi +O(1)} \\ \notag 
	&=\frac{i}{k\pi} +O(k^{-2}), \qquad \qquad \forall k\geq k_0 \text{ large enough} ,  \\
	\label{value_aux_xi-3}
	1+\frac{m_3^2}{\mu_k}  &= -\frac{i}{k\pi} + O(k^{-2}), \qquad \qquad  \forall  k\geq k_0   \text{ large enough}.
\end{align}
Using the quantities \eqref{value_aux_xi-1}, \eqref{value_aux_xi-2} and \eqref{value_aux_xi-3} in  the expression \eqref{expression_xi}, we obtain 
\begin{multline}  \label{eigen-xi_lambda}
{\xi_{\lambda^p_k}(x)}= \left(1+ k^2\pi^2 +O(k)\right)\left(e^{ - i(k\pi +c_k)- \frac{1}{2}+d_k +O(k^{-1})}- e^{i(k\pi +c_k)-\frac{1}{2}-d_k+O(k^{-1})} \right) \\  
\quad\quad \times e^{x\left(-k^2\pi^2 -2c_k k\pi - 2 i d_k k\pi  + O(1) \right)}  \\ 
+\left(\frac{i}{k\pi} +O(\frac{1}{k^2})\right) \left( e^{i(k\pi +c_k)+O(k^{-1})-\frac{1}{2}-d_k} -  e^{-k^2\pi^2 -2c_kk\pi -2id_k k\pi + O(1)} \right) e^{x \left(-i(k\pi +c_k)-\frac{1}{2}+d_k +O(k^{-1}) \right)} \\ 
-\left(\frac{i}{k\pi} +O(\frac{1}{k^2})\right)  \left( e^{-k^2\pi^2 -2c_k k\pi-2id_k k\pi  + O(1)}- e^{ - i(k\pi +c_k)-\frac{1}{2}+d_k+O(k^{-1})}\right) e^{x\left(i(k\pi +c_k)-\frac{1}{2}-d_k +O(k^{-1}) \right)}.
\end{multline}
\begin{remark}\label{remark-asymptic-coeffi}
To the function $\xi_{\lambda^p_k}$, given by  \eqref{eigen-xi_lambda}, let us use the condition $\xi_{\lambda^p_k}(0)= \xi_{\lambda^p_k}(1)$, so that one can further deduce that the coefficient $\big(e^{ - i(k\pi +c_k)- \frac{1}{2}+d_k +O(k^{-1})}- e^{i(k\pi +c_k)-\frac{1}{2}-d_k+O(k^{-1})} \big)$ of \\ $e^{x\left( -k^2\pi^2-2c_k k\pi -2id_k k\pi +O(1) \right)}$ in the expressions of $\eta_{\lambda_k^p}$ and $\xi_{\lambda^p_k}$ satisfies the following asymptotic behavior: $$\left(e^{ - i(k\pi +c_k)- \frac{1}{2}+d_k +O(k^{-1})}- e^{i(k\pi +c_k)-\frac{1}{2}-d_k+O(k^{-1})} \right) = O\Big(\frac{1}{k^3}\Big), \quad \text{for large }k\geq k_0.$$

The above fact together with \eqref{eigen-xi_lambda} and \eqref{component_eta_lambda} gives the required expressions for the eigenfunctions as mentioned in \eqref{eigen-1-xi} and \eqref{eigen-1-eta}.
\end{remark}  

\medskip 

\paragraph{\bf Set of eigenfunctions associated with ${\lambda^h_k}$}
For the set of eigenvalues $\{\lambda^h_k\}_{|k|\geq k_0}$ \eqref{eigen-hyperbolic}, we denote the eigenfunctions by $\Phi_{\lambda^h_k}$, where we shall use the notation
\begin{align}\label{eigenfunc-2nd}
\Phi_{\lambda^h_k}(x) = \begin{pmatrix}\xi_{\lambda^h_k}(x) \\ \eta_{\lambda^h_k}(x)\end{pmatrix}, \quad \forall |k|\geq k_0. 
\end{align}

\smallskip 

{\bf Computing ${\eta_{\lambda^h_k}}$.} 
In this case, recall that  $\tilde\mu_k=-\lambda^h_k = 1+\alpha_{1,k} + i (2k \pi +\alpha_{2,k})$, for all $|k|\geq k_0$.
Let us compute $\tilde \mu_k^{1/2}$.  Assume  $\tilde \mu_k^{1/2}= p_k + iq_k$, $p_k , q_k \in \mathbb R$ and $\tilde \mu_k = a_k+ib_k,$  $a_k, b_k \in \mathbb R$, so that 
\begin{align*}
	(p_k + iq_k)^2 = (p^2_k-q^2_k) + i2p_kq_k=  a_k +ib_k,
\end{align*}
From the fact that $q_k= \frac{b_k}{2p_k}$, we have 
\begin{align*}
	p^2_k-q^2_k= a_k \Longrightarrow 4p_k^4 -4a_k p_k^2- b^2_k=0,
\end{align*}
and that yields 
\begin{align*}
	p_k =\left(\frac{\sqrt{a^2_k+b^2_k} + a_k}{2} \right)^{\frac{1}{2}} , \quad  q_k =   \left(\frac{\sqrt{a^2_k+b^2_k} - a_k}{2} \right)^{\frac{1}{2}}.
\end{align*}
Now, 
\begin{align*}
	\sqrt{a_k^2+b_k^2} = \left[(1+\alpha_{1,k})^2 + (2k\pi + \alpha_{2,k})^2\right]^{\frac{1}{2}} = \left[4k^2\pi^2 + O(k)\right]^{\frac{1}{2}} = 2|k\pi| + O(1), \ \ \forall |k|\geq k_0.
\end{align*}
Thus, it follows that 
\begin{align}
	p_k = \sqrt{|k\pi|} + O(|k|^{-\frac{1}{2}}) , \quad 	q_k = \pm \sqrt{|k\pi|} + O(|k|^{-\frac{1}{2}}), \quad \forall |k|\geq k_0.
\end{align}
We get 
\begin{align}\label{values_mu_1/2}
\tilde\mu^{1/2}_k = \sqrt{|k\pi|} + i \sgn(k) \sqrt{|k\pi|} + {O(|k|^{-\frac{1}{2}})}, \quad \forall |k| \geq k_0>0,
\end{align}
(the sign function $\sgn$ has been defined by \eqref{sign-func}). 

Then using the characteristics roots  $m_1, m_2,m_3$, given by \eqref{charac_roots}, we get that
\begin{align}\label{charac_roots-exp}
\begin{cases}
m_1 = - \alpha_{1,k}- i(2k\pi+\alpha_{2,k})+ O(|k|^{-1}), \quad \quad \qquad  \forall |k| \geq k_0 \text{ large enough}, \\
m_2 = -\frac{1}{2} + \sgn(k) \sqrt{|k\pi|} - i\sqrt{|k\pi|} + O(|k|^{-\frac{1}{2}}), \quad \forall |k| \geq k_0 \text{ large enough}, \\
m_3=- \frac{1}{2} - \sgn(k) \sqrt{|k\pi|} +i\sqrt{|k\pi|}+O(|k|^{-\frac{1}{2}}), \quad \forall |k| \geq k_0 \text{ large enough}.
\end{cases}
\end{align}

Using above information,  we now write the expression of $\eta_{\lambda^h_k}(x)$ (we take the formulation after dividing by $k\pi e^{\sqrt{|k\pi|}+\frac{1}{\sqrt{\mod{k}}}}$), given  by
\begin{multline}\label{eigen-eta-gamma}
{\eta_{\lambda^h_k}(x)}=\frac{1}{k\pi e^{ \sqrt{|k\pi|} + \frac{1}{\sqrt{|k|}} } } \left(e^{\sgn(k) \sqrt{|k\pi|} - \frac{1}{2} -i\sqrt{|k\pi|}+O(|k|^{-\frac{1}{2}})}  - e^{-\sgn(k)\sqrt{|k\pi|} - \frac{1}{2} +i\sqrt{|k\pi|} + O(|k|^{-\frac{1}{2}})} \right)\\
\times  e^{-x\left(\alpha_{1,k} + i(2k\pi +\alpha_{2,k}) +O(|k|^{-1})\right)} \\
+\frac{1}{k\pi e^{ \sqrt{|k\pi|} + \frac{1}{\sqrt{|k|}} } }\left(e^{-\sgn(k)\sqrt{|k\pi|} - \frac{1}{2} +i\sqrt{|k\pi|} + O(|k|^{-\frac{1}{2}})} - e^{-\alpha_{1,k} - i(2k\pi +\alpha_{2,k}) + O(|k|^{-1})}   \right)\\
\times e^{x \left(\sgn(k)\sqrt{|k\pi|} - \frac{1}{2} -i\sqrt{|k\pi|} + O(|k|^{-\frac{1}{2}}) \right)}  \\
+\frac{1}{k\pi e^{ \sqrt{|k\pi|} + \frac{1}{\sqrt{|k|}} } } \left(e^{-\alpha_{1,k} - i(2k\pi +\alpha_{2,k}) + O(|k|^{-1})}  - e^{\sgn(k)\sqrt{|k\pi|} - \frac{1}{2} -i\sqrt{|k\pi|} + O(|k|^{-\frac{1}{2}})}  \right)\\
\times  e^{x\left(-\sgn(k)\sqrt{|k\pi|} - \frac{1}{2} +i\sqrt{|k\pi|} + O(|k|^{-\frac{1}{2}})  \right)},
\end{multline}
for all $x\in (0,1)$ and for all $|k|\geq k_0$.

\smallskip 
 
{\bf Computing ${\xi_{\lambda^h_k}}$.} By using the values of $m_1, m_2, m_3$ from \eqref{charac_roots-exp}, we calculate the following quantities for all $|k|\geq k_0$ large enough
\begin{align*}
m_1^2&=(- \alpha_{1,k}- i(2k\pi+\alpha_{2,k})+ O(|k|^{-1}))^2\\
&= -4k^2\pi^2+4ik\pi\alpha_{1,k}+O(k), \\
m_2^2&=(-\frac{1}{2} + \sgn(k) \sqrt{|k\pi|} - i\sqrt{|k\pi|} + O(|k|^{-\frac{1}{2}}))^2\\
&= -\sgn(k) \sqrt{|k\pi|} - 2i\sgn(k) |k\pi| + i  \sqrt{|k\pi|} + O(1),\\
m_3^2&=(- \frac{1}{2} - \sgn(k) \sqrt{|k\pi|} +i\sqrt{|k\pi|}+O(|k|^{-\frac{1}{2}}))^2\\
&=\sgn(k) \sqrt{|k\pi|} -2i \sgn(k) |k\pi| - i \sqrt{|k\pi|} + O(1).
\end{align*}
Next, we compute the following: for all $|k|\geq k_0$ large enough, 
\begin{align}\label{value_aux_gamma-1}
	 1+\frac{m_1^2}{\tilde \mu_k} &= 1+ \frac{-4k^2\pi^2+4ik\pi\alpha_{1,k}+O(k)}{1+\alpha_{1,k} + i (2k\pi +\alpha_{2,k})}\\ \notag 	
		&=1+\frac{(-4k^2\pi^2+4ik\pi\alpha_{1,k}+O(k))(1+\alpha_{1,k}-i(2k\pi+\alpha_{2,k}))}{(1+\alpha_{1,k})^2+(2k\pi+\alpha_{2,k})^2}\\ \notag 
		&=1+\frac{-(1+\alpha_{1,k})4k^2\pi^2+8ik^3\pi^3+O(k^2)}{4k^2\pi^2+O(k)} \\ \notag 
	&=-\alpha_{1,k}+2ik\pi+O(1),
\end{align}
\begin{align}\label{value_aux_gamma-2} 
1+\frac{m_2^2}{\tilde \mu_k}&= 1+ \frac{-\sgn(k) \sqrt{|k\pi|} - 2i\sgn(k) |k\pi| + i  \sqrt{|k\pi|} + O(1)}{1+\alpha_{1,k} + i (2k\pi +\alpha_{2,k})} \\ \notag 
& = 1+ \frac{\Big(-\sgn(k) \sqrt{|k\pi|} - 2i\sgn(k) |k\pi| + i  \sqrt{|k\pi|} + O(1)\Big)\Big(1+\alpha_{1,k} - i (2k\pi+\alpha_{2,k})\Big)}{(1+\alpha_{1,k})^2+(2k\pi+\alpha_{2,k})^2}\\ \notag
&=1+ \frac{-4 k^2\pi^2+2(k\pi)^{3/2}+2i(k\pi)^{3/2}+ O(k)}{4k^2\pi^2 +O(k)}\\ \notag
&=\sgn(k)\frac{1}{2\sqrt{|k\pi|}} + \frac{i}{2\sqrt{|k\pi|}} + O\Big(\frac{1}{|k|}\Big),    
\end{align} 
\begin{align}\label{value_aux_gamma-3} 
1+\frac{m_3^2}{\tilde \mu_k} & =	 1+ \frac{\Big(\sgn(k) \sqrt{|k\pi|} -2i \sgn(k) |k\pi| - i \sqrt{|k\pi|} + O(1) \Big)\Big(1+\alpha_{1,k} - i (2k\pi +\alpha_{2,k})\Big)}{(1+\alpha_{1,k})^2+(2k\pi+\alpha_{2,k})^2}\\ \notag
&= 1+ \frac{-4k^2\pi^2-2(k\pi)^{3/2} - 2i(k\pi)^{3/2} +O(k) }{4k^2\pi^2+O(k)} \\ \notag
&=-\sgn(k)\frac{1}{2\sqrt{|k\pi|}} - \frac{i}{2\sqrt{|k\pi|}} + O\Big(\frac{1}{|k|}\Big) .
\end{align}
Using the quantities \eqref{value_aux_gamma-1}, \eqref{value_aux_gamma-2} and \eqref{value_aux_gamma-3} in  the expression \eqref{expression_xi}, we obtain  the component $\xi_{\lambda^h_k}(x)$, for all $|k|\geq k_0$ (upon  a division by $k\pi e^{\sqrt{|k\pi|}+\frac{1}{\sqrt{|k|}} }$), 
\begin{multline}\label{eigen-xi-gamma}
{\xi_{\lambda^h_k}(x)} =  \left(e^{\sgn(k) \sqrt{|k\pi|} - \frac{1}{2} -i\sqrt{|k\pi|} + O(|k|^{-\frac{1}{2}})}  - e^{-\sgn(k) \sqrt{|k\pi|} - \frac{1}{2} +i\sqrt{|k\pi|} + O(|k|^{-\frac{1}{2}})} \right) \\
\times \frac{(-\alpha_{1,k}+2ik\pi + O(1))}{k\pi e^{ \sqrt{|k\pi|} + \frac{1}{\sqrt{|k|}} } } \times e^{-x\left(\alpha_{1,k} + i(2k\pi +\alpha_{2,k}) +O(|k|^{-1})\right)} \\
+ \bigg(e^{-\sgn(k)\sqrt{|k\pi|} - \frac{1}{2} +i\sqrt{|k\pi|} + O(|k|^{-\frac{1}{2}})}  - e^{-\alpha_{1,k} - i(2k\pi +\alpha_{2,k}) + O(|k|^{-1})}   \bigg) \\
\times \frac{1}{k\pi e^{ \sqrt{|k\pi|} + \frac{1}{\sqrt{|k|}} } }\left(\sgn(k) \frac{1}{2\sqrt{|k\pi|}} + \frac{i}{2\sqrt{|k\pi|}} + O\Big(\frac{1}{|k|}\Big)\right)	\times e^{x \left(\sgn(k) \sqrt{|k\pi|} - \frac{1}{2} -i\sqrt{|k\pi|} + O(|k|^{-\frac{1}{2}})\right)} \\
+\bigg(e^{-\alpha_{1,k} -i(2k\pi +\alpha_{2,k}) + O(|k|^{-1})}  - e^{\sgn(k) \sqrt{|k\pi|} - \frac{1}{2} -i\sqrt{|k\pi|} + O(|k|^{-\frac{1}{2}})} \bigg) \\
\times \frac{1}{k\pi e^{ \sqrt{|k\pi|} + \frac{1}{\sqrt{|k|}} } } \left( -\sgn(k)\frac{1}{2\sqrt{|k\pi|}} - \frac{i}{2\sqrt{|k\pi|}} + O\Big(\frac{1}{|k|}\Big) \right) \times e^{x\left(-\sgn(k) \sqrt{|k\pi|} - \frac{1}{2} +i\sqrt{|k\pi|} + O(|k|^{-\frac{1}{2}}) \right)},
\end{multline}

\smallskip 

We can now prove last part of Lemma \ref{lemma_eigenvalue_eigenfunc}.

\subsection{Proof of Lemma \ref{lemma_eigenvalue_eigenfunc}} \label{Proof-lemma-3.2} 
We have already proved the existence of eigenvalues  $\{\lambda^p_k\}_{k\geq k_0}$ (parabolic part) and $\{\lambda^h_k\}_{|k|\geq k_0}$ (hyperbolic part) by \eqref{eigen-parabolic} and \eqref{eigen-hyperbolic} respectively, which is the first part of Lemma \ref{lemma_eigenvalue_eigenfunc}.

It lefts to show the asymptotic properties of the sequences $\{c_k\}_{k\geq k_0}$, $\{d_k\}_{k\geq k_0}$ and $\{\alpha_{1,k}\}_{|k|\geq k_0}$, $\{\alpha_{1,k}\}_{|k|\geq k_0}$. 

\smallskip 

\begin{itemize} 
\item Let us use the form of $\mu_k$ (i.e., of $-\lambda^p_k$) in the eigenvalue equation \eqref{main_eigen_equ}. Then, for large $k$, it is easy to observe that 
\begin{align*}
F(\mu_k)&=2\sin(k\pi+c_k+id_k)+O(k^{-1})\\
&=2(-1)^k\sin(c_k+id_k)+O(k^{-1}). 
\end{align*}
But $\mu_k$ is a root of $F$ and thus 
\begin{align}\label{prop_sin_c_k_d_k}
\sin(c_k+id_k) =  O(k^{-1}), \quad \text{for large $k\geq k_0$}.
\end{align}
Now, since $\mod{\sin(c_k+id_k)}^2=\sin^2(c_k)+\sinh^2(d_k)$, we can write
\begin{equation*}
\sin^2(c_k),\ \sinh^2(d_k)\leq\frac{C}{k^2},\ \ \forall k\geq k_0 \text{ large}.
\end{equation*}
Therefore, $\displaystyle \mod{c_k}^2,\ \mod{d_k}^2\leq\frac{C}{k^2},\ \ \forall k\geq k_0$, that is to say, 
\begin{equation*}   %\label{prop-c_k-d_k}
c_k,\ d_k = O(k^{-1}), \quad \text{for large $k\geq k_0$}.
\end{equation*}
\item For the hyperbolic part of the eigenvalues $\{\lambda^h_k\}_{|k|\geq k_0}$, we
 can obtain using the property $\xi_{\lambda^h_k}(0)=\xi_{\lambda^h_k}(1)$ ($\xi_{\lambda^h_k}$ is defined by \eqref{eigen-xi-gamma}) that
\begin{align*}
\left(1-e^{-\alpha_{1,k}-i2k\pi - i\alpha_{2,k}+O(\mod{k}^{-1})}\right)+O(\mod{k}^{-1})=0,
\end{align*}
that is,
\begin{align}\label{prop_alpha_k_beta_k}
e^{-\alpha_{1,k}-i\alpha_{2,k}} = 1+O(|k|^{-1}), \quad \text{for large $|k|\geq k_0$}.
\end{align}
that is, there exists a $C>0$ such that
\begin{equation*}
\mod{e^{-\alpha_{1,k}-i\alpha_{2,k}}}\leq\left(1+\frac{C}{\mod{k}}\right),\ \ \forall \mod{k}\geq k_0 \text{ large}. 
\end{equation*}
As a consequence,
\begin{equation*}
e^{-\alpha_{1,k}-i\alpha_{2,k}}\to1,\ \ \text{as }\mod{k}\to+\infty.
\end{equation*}
But both $\alpha_{1,k}$ and $\{\alpha_{2,k}\}$ therefore 
\begin{equation}
\alpha_{1,k},\ \alpha_{2,k}\to0,\ \ \text{as }\mod{k}\to\infty.
\end{equation}
Since $\mod{e^{-\alpha_{1,k}-i\alpha_{2,k}}}=e^{-\alpha_{1,k}}$, we have $\displaystyle \mod{\alpha_{1,k}}\leq\frac{C}{\mod{k}}$,  $\forall\mod{k}\geq k_0$ large and that is
\begin{equation*}
\alpha_{1,k} = O(k^{-1}), \quad \text{for large $|k|\geq k_0$}.
\end{equation*}
Using the above result, we get
\begin{equation*}
e^{-i\alpha_{2,k}}  = 1+O(k^{-1}), \quad \text{for large $|k|\geq k_0$}.
\end{equation*}
But, one has $\mod{e^{-i\alpha_{2,k}}-1}=2|\sin(\alpha_{2,k}/2)|$ and   therefore, 
\begin{equation*}
\mod{\alpha_{2,k}}\leq\frac{C}{\mod{k}}, \quad \text{for large $|k|\geq k_0$}.
\end{equation*}
that is,  $\alpha_{2,k} = O(|k|^{-1})$.

Thus, the proof of Lemma \ref{lemma_eigenvalue_eigenfunc} is complete.
\end{itemize}

\subsection{Proof of Lemma \ref{lemma_bound_eigenfunc}} \label{Proof-Lemma-3.4}
We shall give the sketch of the estimates for $\xi_{\lambda^p_k}$ for $k\geq k_0$ and $\xi_{\lambda^h_k}$ for $|k|\geq k_0$. The formula we use to find the $H^{-s}_{per}$ norm is given by \eqref{norm-H_(-s)}. We present the proof for $0<s<1$. In a similar way, one can prove the estimates for $s\geq 1$. 

\smallskip 

{\em Parabolic Part.} Recall the component $\xi_{\lambda^p_k}$, given by \eqref{eigen-xi_lambda}. In this case, the Fourier coefficients $c_m$, $m\in \mb Z$, are given by 
\begin{align*}
c_m&=\int_{0}^{1}\xi_{\lambda^p_k}(x)e^{2im\pi x}dx\\
&=\left(1+ k^2\pi^2+O(k)\right) O(k^{-3}) \times \frac{e^{\left(-k^2\pi^2 -2c_k k\pi - 2 i d_k k\pi +2im\pi + O(1) \right)}-1}{-k^2\pi^2 -2c_k k\pi - 2 i d_k k\pi+2im\pi  + O(1)} \\  
&+(\frac{i}{k\pi} +O(k^{-2})) \left( e^{i(k\pi +c_k)+O(k^{-1})-\frac{1}{2}-d_k} -  e^{-k^2\pi^2 -2c_kk\pi -2id_k k\pi + O(1)} \right) \\
&\qquad \times \frac{e^{ \left(-i(k\pi +c_k)+2im\pi-\frac{1}{2}+d_k +O(k^{-1}) \right)}-1}{-i(k\pi +c_k)+2im\pi-\frac{1}{2}+d_k +O(k^{-1})} \\
&	+ (-\frac{i}{k\pi} +O(k^{-2}))  \left( e^{-k^2\pi^2 -2c_kk\pi-2id_k k\pi  + O(1)}- e^{ - i(k\pi +c_k)-\frac{1}{2}+d_k+O(k^{-1})}\right) \\
&\qquad \times \frac{e^{\left(i(k\pi +c_k)+2im\pi-\frac{1}{2}-d_k +O(k^{-1}) \right)}-1}{i(k\pi +c_k)+2im\pi-\frac{1}{2}-d_k  +O(k^{-1})}.
\end{align*}
One can observe that  the above Fourier coefficients satisfies the following estimate:
\begin{align*}
\mod{c_m}^2&\leq C\left[\frac{1}{{k}^2}\frac{e^{-k^2\pi^2}+1}{k^4\pi^4+4m^2\pi^2}+\frac{1}{{k}^2}\frac{1+e^{-k^2\pi^2}}{(k\pi-2m\pi)^2 + \frac{1}{4}}	+\frac{1}{{k}^2}\frac{1+e^{-k^2\pi^2}}{(k\pi+2m\pi)^2+\frac{1}{4}}\right], \ \ \forall m\in \mb Z,
\end{align*}
thanks to the properties of the sequences $(c_k)_{k\geq k_0}$ and $(d_k)_{k\geq k_0}$. 

Now, we have from the definition of $H^{-s}_{per}$ norm (see \eqref{norm-H_(-s)}) that 
\begin{align}\label{sum-norm}
&	\norm{\xi_{\lambda^p_k}}^2_{ H^{-s}_{\text{per}}(0,1)}=\sum_{m\in\mathbb{Z}}(1+4\pi^2m^2)^{-s}\mod{c_m}^2 \\  \notag  
&\leq \frac{C}{k^2}\left(1+e^{-k^2\pi^2}\right) \sum_{m\in \mb Z} \frac{(1+4\pi^2m^2)^{-s}}{k^4\pi^4+4m^2\pi^2} \\ \notag 
&\quad +\frac{C}{k^2}\left(1+e^{-k^2\pi^2}\right)\left(\sum_{m\in\mathbb{Z}}\frac{(1+4\pi^2m^2)^{-s}}{(k\pi-2m\pi)^2+\frac{1}{4}}+ \sum_{m\in\mathbb{Z}}\frac{(1+4\pi^2m^2)^{-s}}{(k\pi+2m\pi)^2+\frac{1}{4}}\right),
\end{align}
for some constant $C>0$ that can be chosen largely but independent with respect to $k$.

Observe that, the order of $k$ in the first series of the right hand side of \eqref{sum-norm} is lower than the second and third ones. Thus, we just focus on computing the second sum  
appearing in the right hand side of \eqref{sum-norm}. The others can be estimated in the same way. We see
\begin{align*}
\sum_{m\in \mb Z}\frac{(1+4\pi^2m^2)^{-s}}{(k\pi-2m\pi)^2+\frac14}&\leq 	\sum_{-k<2m<k }\frac{(1+4\pi^2m^2)^{-s}}{(k\pi-2m\pi)^2+\frac14} + 	\sum_{|2m|\geq k}\frac{(4\pi^2m^2)^{-s}}{(k\pi-2m\pi)^2+\frac14}\\
&\leq C(k\pi)^{-2s}\sum_{-k<2m< k}\frac{(\frac{1+4\pi^2m^2}{k^2\pi^2})^{-s}}{(k\pi-2m\pi)^2+\frac14}+C(k\pi)^{-2s}\sum_{|2m|\geq k}\frac{1}{(k\pi-2m\pi)^2+\frac14} \\
&\leq C k^{-2s}.
\end{align*}
Using the above result in \eqref{sum-norm}, we get 
\begin{equation}
\norm{\xi_{\lambda^p_k}}_{ H^{-s}_{\text{per}}(0,1)}\leq C k^{-s-1}, \quad \text{for large } k\geq k_0.
\end{equation}

\smallskip 

{\em Hyperbolic Part.} By following similar approach as above, one can find that 
\begin{equation*}
\norm{\xi_{\lambda^h_k}}_{{H}^{-s}_{\text{per}}(0,1)}\leq C\mod{k}^{-s}, \quad \text{for large } |k|\geq k_0,
\end{equation*}
for some $C>0$, independent in $k$.

\smallskip

\section{Further remarks and conclusion}\label{Conclusion}
In the present work, we have proved the boundary null-controllability of our linearized 1D compressible Navier-Stokes system with a control acting only on density through a Dirichlet condition. The space for the initial conditions we can consider is $H^s_{per}(0,1)\times L^2(0,1)$ for any $s>1/2$ which is a larger space (less regularity assumption)  in the context of controllability of linearized compressible Navier-Stokes type systems. The  spectral analysis and the method of moments give the benefit to deal with this larger space. In fact, the sharp upper bounds of the eigenfunctions and  lower bounds of the observation terms play the lead role to obtain such a null-controllability result. 

On the other hand, when a Dirichlet control is acting on the velocity component (under the assumption that $\rho(t,0)=\rho(t,1)$ on the boundary), we just get the controllability in a subspace of finite codimension $\mathcal H \subset H^{s}_{per}(0,1)\times L^2(0,1)$ for $s>1/2$ using the Ingham-type inequality. This restriction is because it is hard to prove that $\B^*_u\Phi_\lambda \neq 0$ for the finitely many lower frequencies of eigenmodes.

\medskip 

Let us  make some final remarks related to our work. 
%We also have the following backward uniqueness property.

\begin{itemize}
	\item {\bf Backward uniqueness.} 
Let us consider the following system  without any control input,
 \begin{align}\label{backward}
	\begin{cases}
		\rho_t+\rho_x +u_{x} =0   &\text{in } (0,T)\times (0,1),\\
		u_{t} -u_{xx} +u_x +\rho_x =0 &\text{in } (0,T)\times (0,1),\\
		\rho(t,0)=\rho(t,1)  &\text{for } t\in (0,T) , \\
		u(t,0)=0, \ \ u(t,1)=0  &\text{for } t\in (0,T) ,\\
		\rho(0,x)=\rho_0(x),\ \ u(0,x)=u_0(x)  &\text{for } x\in(0,1),
	\end{cases}
\end{align}
where the density $\rho$ takes same values on the boundary points with respect to time and we have homogeneous Dirichlet boundary conditions for the velocity $u$. 

  Then,	if the solution $(\rho,u)$ to the system \eqref{backward}  satisfies 
  $$\rho(T,\cdot)=u(T,\cdot)=0 \quad \text{in} \ \ (0,1),$$ 
  then we will have 
  $$\rho_0=u_0=0, \ \ \text{in } (0,1), \quad \text{i.e.,} \ \ \rho(t,x)=u(t,x)=0 \ \ \text{in} \ \ (0,T)\times(0,1). $$
This can be proved from the fact that $(A,D(A))$ defines a strongly continuous semigroup in $L^2(0,1)\times L^2(0,1)$ and the set of eigenfunctions $\mathcal E(A)$ of $A$ forms a Riesz basis for the space $L^2(0,1)\times L^2(0,1)$ (see \Cref{Remark-eigen-A}).

\medskip 

The backward uniqueness property plays an important role in the context of unique continuation and controllability. There are several systems including linearized compressible Navier-Stokes system where the backward uniqueness property is helpful to deduce many important results on approximate controllability; see for instance \cite{Chowdhury15, Teresa-Zuazua, Zuazua-3}.

In this regard, we mention that the backward uniqueness is well-known for the cases when the associated operator forms a $\mathcal C^0$-group (hyperbolic case), for instance the system  
\begin{align*}
	\begin{dcases}
		\rho_{t}+\rho_{x}=0\quad\mbox{in}\quad (0,T)\times(0,1),\\
		\rho(t,0)=\rho(t,1),\quad t\in (0,T), \\
		\rho(0,x)=\rho_{0}(x),\quad x\in (0,1),
	\end{dcases}
\end{align*}
or an analytic semigroup (parabolic case), for instance the system 
\begin{align*}
	\begin{cases}
		u_{t}-u_{xx} =0\quad\mbox{in}\quad (0,T)\times(0,1),\\
		u(t,0)=u(t,1)=0,\quad\,t\in (0,T), \\
		u(0,x)=u_{0}(x),\quad x\in (0,1).
	\end{cases}
\end{align*} 

But in our case, we note that our system \eqref{backward} is of mixed nature (coupling between parabolic and hyperbolic
components) and this is why the backward uniqueness question is interesting from the mathematical point of view.  In fact, for coupled parabolic-hyperbolic system, the backward uniqueness property is a delicate issue; see for instance \cite{Las-Tri-Ren, Ava-Tri-1, Ava-Tri-2}.  This difficulty can be realized  in our case also, since the spectral analysis and showing the {\em Riesz Basis property} of the eigenfunctions are really involved parts of this article.

\medskip 

\item {\bf Growth bound of the semigroup and a stability result when $(\rho_0, u_0)\in \dot L^2(0,1)\times L^2(0,1)$.}  Denote 
\begin{align*}
	\dot L^2(0,1): = \Big\{ \phi \in L^2(0,1) : \int_0^1 \phi =0    \Big\}.
\end{align*}
We shall point out some stability result associated with the system \eqref{backward}  (that is, without any control) when the initial data $(\rho_0, u_0)\in \dot L^2(0,1)\times L^2(0,1)$.
 
In this case, the operator $A$ with its formal expression  \eqref{op_A} has the domain 
\begin{align}\label{domain-cal-D}
		\mathcal D(A)= \Big\{\Phi=(\xi, \eta) \in \dot {H}^1(0,1)\times H^2(0,1) : \xi(0)=\xi(1)  , \ \eta(0)=\eta(1)=0  \Big\},
\end{align}
where $\dot H^1(0,1)$ contains all the functions in  $H^1(0,1)$ with mean zero. 
Similarly, $A^*$ has its formal expression as \eqref{op_A*} with the same domain $\mathcal D(A^*) = \mathcal D(A)$ as of \eqref{domain-cal-D}. 

It is enough to obtain the growth bound of the semigroup $\{S^*(t)\}_{t\geq 0}$ generated by $(A^*, \mathcal D(A^*))$ in $L^2(0,1)\times L^2(0,1)$. Then, using the fact $\|S(t)\|=\|S^*(t)\|$ we can deduce the growth of the semigroup $\{S(t)\}_{t\geq 0}$ generated by $(A,\mathcal D(A))$ (in $L^2(0,1)\times L^2(0,1)$).

\smallskip 

We first ensure that $\lambda=0$ cannot be an eigenvalue of $A^*$ (or $A$) with the domain \eqref{domain-cal-D}. If yes, then the associated eigenfunction will be $(1,0)$, but this is not possible since $(1,0)\notin \mathcal D(A^*)$. Also,  observe that the first component of the eigenfunction of  $A^*$ (or $A$) corresponding to any  eigenvalue has mean zero (in the light of \Cref{Remark-integral-zero}). As a consequence, in this case we can prove that the set of eigenfunctions of $A^*$ (or $A$) with the domain given by \eqref{domain-cal-D} forms a Riesz basis for $\dot L^2(0,1)\times L^2(0,1)$ (using \Cref{Thm_Bao}).  So,  $(A^*, \mathcal D(A^*))$ (or $(A, \mathcal D(A))$) is indeed a Riesz-spectral operator since there is no accumulation point of the set of eigenvalues of $A^*$ (or $A$), see \cite[Chapter 3]{Curtain-Zwart}. 
 
Now in one hand,  since $\lambda \neq 0$,  all the eigenvalues of $A^*$ with domain \eqref{domain-cal-D} have negative real parts (see \eqref{real_part}), i.e.,  
$$ \Re(\lambda) < 0, \quad \forall \lambda \in \sigma(A^*).$$
On the other hand,
thanks to \Cref{lemma_eigenvalue_eigenfunc}, the set of parabolic and hyperbolic branches of the eigenvalues of $A^*$ with domain \eqref{domain-cal-D} have the following asymptotics properties:
\begin{align*}
	\lambda^p_k &= -k^2\pi^2 + O(1), \qquad \text{for large } k \geq k_0, \\
	\lambda^h_k &= -1 -i2k\pi + O(|k|^{-1}), \quad \text{for large } |k| \geq k_0.
\end{align*}
 Thus, there exists some $\omega_0 \in [-1,0)$ such that
\begin{align*}
\omega_0 = \sup \big\{ \Re(\lambda) : \lambda \in \sigma(A) \big\}<0. 
\end{align*}
 Now recall that $(A^*, \mathcal D(A^*))$ is a Riesz-spectral operator and so  the semigroup $\{S^*(t)\}_{t\geq 0}$ generated by $(A^*, \mathcal D(A^*))$ has the following growth 
\begin{align*}
	\|S^*(t)\|\leq C e^{\omega_0 t}, \quad \forall t\geq 0.
\end{align*}
But, $\|S(t)\|=\|S^*(t)\|$ and therefore 
\begin{align*}
	\|S(t)\|\leq C e^{\omega_0 t}, \quad \forall t\geq 0.
\end{align*}
with $-1\leq \omega_0<0$, 
which gives the exponential stability of the system \eqref{backward} with initial data $(\rho_0,u_0)\in \dot L^2(0,1)\times L^2(0,1)$.

\medskip 

\item {\bf A Dirichlet-Dirichlet system with control on velocity.}  Recall that,  when we   considered a Dirichlet boundary control on velocity, then we have the assumption $\rho(t,0)=\rho(t,1)$ for the density part. It would be really interesting to deal with the full Dirichlet case when a control $q$ acts on the velocity, that is the following system 
\begin{align}\label{full-dirichlet}
	\begin{dcases}
		\rho_t+\rho_x +u_{x} =0   &\text{in } (0,T)\times (0,1),\\
		u_{t} -u_{xx} +u_x +\rho_x =0 &\text{in } (0,T)\times (0,1),\\
		\rho(t,0)= 0    &\text{for } t\in (0,T) , \\
		u(t,0)=0, \  u(t,1)= q(t) &\text{for } t\in (0,T) ,\\
		\rho(0,x)=\rho_0(x),\  u(0,x)=u_0(x)  &\text{for } x\in(0,1).
	\end{dcases}
\end{align}
    This is really a challenging issue to handle. As per our observation, the  spectral analysis of the associated adjoint operator is beyond  comprehension. This can be studied as a future work.   

\end{itemize}

\appendix 
\section{Proof of the well-posedness results} \label{Appendix-well-posed}

\medskip 

This section is devoted to  prove the well-posedness of the solution to our control system \eqref{lcnse3}. More precisely, we shall prove  \Cref{lemma_wellposedness} and  \Cref{Thm-existnce-control-sol}.

\subsection{Existence of  semigroup: proof of Lemma \ref{lemma_wellposedness}} \label{Appendix-A1}
The proof is divided  into several parts. Recall the operator $(A, D(A))$ given  by \eqref{op_A}--\eqref{domain_A} and denote $\mathbf Z = L^2(0,1)\times L^2(0,1)$ over the field $\mb{C}$. 

\smallskip 

\textbf{Part 1.} {\em The operator $A$ is dissipative.}  We check that, all $\mathbf{U}=(\rho,u)\in {D}(A)$
\begin{align*}
&\Re\ip{A\mathbf{U}}{\mathbf{U}}_{\mathbf{Z}}=\Re\ip{\vector{-\rho_x-u_x}{-\rho_x+u_{xx}-u_x}}{\vector{\rho}{u}}_{\mathbf Z} \\
&=\Re\left(-\int_{0}^{1}\bar{\rho}\rho_xdx-\int_{0}^{1}\bar{\rho}u_xdx-\int_{0}^{1}\rho_x\bar{u}dx+\int_{0}^{1}\bar{u}u_{xx}dx-\int_{0}^{1}\bar{u}u_xdx\right)\\
&=-\frac{1}{2}\int_{0}^{1}\frac{d}{dx}(\mod{\rho}^2)dx-\int_{0}^{1}\bar{u}_xu_xdx-\frac{1}{2}\int_{0}^{1}\frac{d}{dx}(\mod{u}^2)dx\\
&=-\int_{0}^{1}\mod{u_x}^2dx\leq0,
\end{align*}
\textbf{Part 2.} {\em The operator $A$ is maximal.} This is equivalent to the following. For any $\lambda>0$ and any $\vector{f}{g}\in\mathbf{Z}$ we can find a $\vector{\rho}{u}\in {D}(A)$ such that
\begin{equation}
(\lambda I-A)\vector{\rho}{u}=\vector{f}{g}
\end{equation}
that is
\begin{align*}
\lambda\rho+\rho_x+u_x=f,\\
\lambda u+\rho_x-u_{xx}+u_x=g.
\end{align*}
Let $\epsilon>0$. Instead of solving the above problem, we will solve the following regularized problem
\begin{equation}\label{reg_prb}
\begin{aligned}
\lambda\rho+\rho_x+u_x-\epsilon\rho_{xx}=f,\\
\lambda u+\rho_x+u_x-u_{xx}=g,
\end{aligned}
\end{equation}
with the following boundary conditions
\begin{equation*}
\rho(0)=\rho(1),\ \ \rho_x(0)=\rho_x(1), \ \ u(0)=0,\ \ u(1)=0.
\end{equation*}
We now proceed through the following steps.

\textit{Step 1.} We consider the space $V$, given by
\begin{equation*}
V=\left\{(\rho,u)\in H^1(0,1)\times H^1(0,1) \ : \ \rho(0)=\rho(1),\ \ u(0)=0,\ \ u(1)=0\right\}.
\end{equation*}
Using Lax-Milgram theorem, we first prove that the system \eqref{reg_prb} has a unique solution in $V$. Define the operator $B:V\times V\to\mathbb{C}$ by
\begin{align*}
B\left(\vector{\rho}{u},\vector{\sigma}{v}\right)&=\epsilon\int_{0}^{1}\rho_x\bar{\sigma}_xdx+\int_{0}^{1}u_x\bar{\sigma}dx+\int_{0}^{1}\rho_x\bar{\sigma}dx+\lambda\int_{0}^{1}\rho\bar{\sigma}dx\\
&\quad+\int_{0}^{1}u_x\bar{v}_xdx+\int_{0}^{1}u_x\bar{v}dx+\int_{0}^{1}\rho_x\bar{v}dx+\lambda\int_{0}^{1}u\bar{v}dx,
\end{align*}
for all $\vector{\rho}{u},\vector{\sigma}{v}\in V$. Then, one can show that $B$ is continuous and coercive.
Thus, by Lax-Milgram theorem, for every $\epsilon>0$, there exists a unique solution $(\rho^{\epsilon},u^{\epsilon})\in V$ such that
\begin{equation*}
B\left(\vector{\rho^{\epsilon}}{u^{\epsilon}},\vector{\sigma}{v}\right)=F\left(\vector{\sigma}{v}\right), \quad \forall \vector{\sigma}{v}\in V,
\end{equation*}
where $F:V\to\mathbb{C}$ is the linear functional given by
\begin{equation*}
F\left(\vector{\sigma}{v}\right):=\int_{0}^{1}f\bar{\sigma}dx+\int_{0}^{1}g\bar{v}dx.
\end{equation*}

\textit{Step 2.}
Now, observe that 
\begin{equation*}
\Re\left(B\left(\vector{\rho^{\epsilon}}{u^{\epsilon}},\vector{\rho^{\epsilon}}{u^{\epsilon}}\right)\right)\leq\int_{0}^{1}\mod{f\overline{\rho^{\epsilon}}}+\int_{0}^{1}\mod{g\overline{u^{\epsilon}}}\leq\frac{1}{2}\int_{0}^{1}\left(\mod{f}^2+\mod{\overline{\rho^{\epsilon}}}^2\right)+\frac{1}{2}\int_{0}^{1}\left(\mod{g}^2+\mod{\overline{u^{\epsilon}}}^2\right),
\end{equation*}
which yields 
\begin{equation*}
\epsilon\int_{0}^{1}\mod{\rho^{\epsilon}_x}^2+\frac{\lambda}{2}\int_{0}^{1}\mod{\rho^{\epsilon}}^2+\int_{0}^{1}\mod{u^{\epsilon}_x}^2+\frac{\lambda}{2}\int_{0}^{1}\mod{u^{\epsilon}}^2\leq\frac{1}{2}\int_{0}^{1}\mod{f}^2+\frac{1}{2}\int_{0}^{1}\mod{g}^2
\end{equation*}
This shows that $(u^{\epsilon})_{\epsilon\geq0}$ is bounded  in $H^1(0,1)$, $(\rho^{\epsilon})_{\epsilon\geq0}$ is bounded in $L^2(0,1)$ and $(\sqrt{\epsilon}\rho^{\epsilon}_x)_{\epsilon\geq0}$ is bounded in $L^2(0,1)$. Since the spaces $H^1(0,1)$ and $L^2(0,1)$ are reflexive, there exist subsequences, still denoted by $(u^{\epsilon})_{\epsilon\geq0}$, $(\rho^{\epsilon})_{\epsilon\geq 0}$,
and functions $\rho\in L^2(0,1)$ and $u\in H^1(0,1)$, such that
\begin{equation*}
u^{\epsilon}\rightharpoonup u \text{ in } H^1(0,1),\text{ and } \rho^{\epsilon}\rightharpoonup\rho \text{ in }L^2(0,1).
\end{equation*}
Furthermore, we have
\begin{equation*}
\int_{0}^{1}\mod{\epsilon\rho^{\epsilon}_x}^2=\epsilon\int_{0}^{1}\mod{\sqrt{\epsilon}\rho^{\epsilon}}^2\to0,\text{ as }\epsilon\to0.
\end{equation*}
Now, since $B\left(\vector{\rho^{\epsilon}}{u^{\epsilon}},\vector{\sigma}{v}\right)=F\left(\vector{\sigma}{v}\right)$, for all $\vector{\sigma}{v}\in V$, we may take $\vector{\sigma}{0}\in V$, so that we obtain
\begin{equation}\label{identity1}
\epsilon\int_{0}^{1}\rho^{\epsilon}_x\bar{\sigma}_x+\int_{0}^{1}u^{\epsilon}_x\bar{\sigma}+\int_{0}^{1}\rho^{\epsilon}_x\bar{\sigma}+\lambda\int_{0}^{1}\rho^{\epsilon}\bar{\sigma}=\int_{0}^{1}f\bar{\sigma}.
\end{equation}
Similarly, by taking  $\vector{0}{v}\in V$, we get 
\begin{equation}\label{identity2}
\int_{0}^{1}u^{\epsilon}_x\bar{v}_x+\int_{0}^{1}u^{\epsilon}_x\bar{v}+\int_{0}^{1}\rho^{\epsilon}_x\bar{v}+\lambda\int_{0}^{1}u^{\epsilon}\bar{v}=\int_{0}^{1}g\bar{v}.
\end{equation}
Integrating by parts, we get from equation \eqref{identity1} that,
\begin{equation*}
\epsilon\int_{0}^{1}\rho^{\epsilon}_x\bar{\sigma}_x+\int_{0}^{1}u^{\epsilon}_x\bar{\sigma}-\int_{0}^{1}\rho^{\epsilon}\bar{\sigma}_x+\lambda\int_{0}^{1}\rho^{\epsilon}\bar{\sigma}=\int_{0}^{1}f\bar{\sigma}.
\end{equation*}
Then, passing to the limit $\epsilon\to0$, we obtain
\begin{equation*}
\int_{0}^{1}u_x\bar{\sigma}-\int_{0}^{1}\rho\bar{\sigma}_x+\lambda\int_{0}^{1}\rho\bar{\sigma}=\int_{0}^{1}f\bar{\sigma},
\end{equation*}
and the above relation is true  $\forall \sigma\in \mathcal C_c^{\infty}(0,1)$. As a consequence, 
\begin{equation}
u_x+\rho_x+\lambda\rho=f,
\end{equation}
in the sense of distribution and therefore $\rho_x=f-u_x-\lambda \rho\in L^2(0,1)$; in other words,  $\rho\in H^1(0,1)$.

\smallskip 

\textit{Step 3.} We now show $u(0)=u(1)=0$. Since the inclusion map $i:H^1(0,1)\to \mathcal C^0([0,1])$ is compact and $u^{\epsilon}\rightharpoonup u$ in $H^1(0,1)$, we obtain
\begin{equation*}
u^{\epsilon}\to u     \ \ \text{ in }  \mathcal C^0([0,1]).
\end{equation*}
Thus,  $(u^{\epsilon}(0),u^{\epsilon}(1)) \to (u(0),u(1))$. Since $u^{\epsilon}(0)=u^{\epsilon}(1)=0$ for all $\epsilon>0$, we have $$u(0)=u(1)=0.$$

Similarly from the identity \eqref{identity2}, one can deduce  that 
\begin{equation}\label{u_eqn}
-u_{xx}+u_x+\rho_x+\lambda u=g,
\end{equation}
in the sense of distribution and therefore $u_{xx}\in L^2(0,1)$, that is $u\in H^2(0,1)$. 

We now show $\rho(0)=\rho(1)$. Recall that, $u_x+\rho_x+\lambda \rho=f$ and therefore
\begin{equation*}
\int_{0}^{1}u_x\bar{\sigma}+\int_{0}^{1}\rho_x\bar{\sigma}+\lambda\int_{0}^{1}\rho\bar{\sigma}=\int_{0}^{1}f\bar{\sigma}.
\end{equation*}
Integrating by parts, we get
\begin{equation}\label{compare-1}
\int_{0}^{1}u_x\bar{\sigma}-\int_{0}^{1}\rho\bar{\sigma}_x+\rho\bar{\sigma}|_{0}^{1}+\lambda\int_{0}^{1}\rho\bar{\sigma}=\int_{0}^{1}f\bar{\sigma}.
\end{equation}
From \eqref{identity1}, we deduce
\begin{equation}
\epsilon\int_{0}^{1}\rho^{\epsilon}_x\bar{\sigma}_x+\int_{0}^{1}u^{\epsilon}_x\bar{\sigma}-\int_{0}^{1}\rho^{\epsilon}\bar{\sigma}_x+\lambda\int_{0}^{1}\rho^{\epsilon}\bar{\sigma}=\int_{0}^{1}f\bar{\sigma}.
\end{equation}
Taking $\epsilon\to0$, we get
\begin{equation}\label{compare-2}
\int_{0}^{1}u_x\bar{\sigma}-\int_{0}^{1}\rho\bar{\sigma}_x+\lambda\int_{0}^{1}\rho\bar{\sigma}=\int_{0}^{1}f\bar{\sigma}.
\end{equation}
Comparing \eqref{compare-1} and \eqref{compare-2}, one has $\rho(0)\bar{\sigma}(0)=\rho(1)\bar{\sigma}(1)$. But $\sigma(0)=\sigma(1)$, and thus $$\rho(0)=\rho(1).$$ So, we get $\vector{\rho}{u}\in {D}(A)$.  Hence, the operator $A$ is maximal.

\smallskip 
 
\subsection{Solution by transposition: proof of Theorem \ref{Thm-existnce-control-sol}}\label{Appendix_A2}
In this section, we are going to proof the existence of solution to our control problem \eqref{lcnse3}, more precisely Theorem \ref{Thm-existnce-control-sol}. We omit the proof for \Cref{Thm-existnce-control-sol-vel},  when a control acts on the velocity part.

\smallskip 

\paragraph{\bf Step 1}
We first consider system \eqref{lcnse3} with zero initial data and nonhomogeneous boundary conditions, that is, 
\begin{align}\label{new_bdry}
\begin{dcases}
\rho_t+\rho_x +u_{x} =0   &\text{in } (0,T)\times (0,1),\\
u_{t} -u_{xx} +u_x +\rho_x =0 &\text{in } (0,T)\times (0,1),\\
\rho(t,0)=\rho(t,1)+ p(t)      &\text{for } t\in (0,T) , \\
u(t,0)=0, \ \ u(t,1)=0  &\text{for } t\in (0,T) ,\\
\rho(0,x)= u(0,x)= 0  &\text{for } x\in(0,1), 
\end{dcases}
\end{align}
with $p\in L^2(0,T)$. 

We now prove the existence of solution to the new  system \eqref{new_bdry}. 
\begin{theorem}\label{Thm-step-1}
For a given $p\in L^2(0,T)$, the system \eqref{new_bdry} has a unique solution $(\tilde{\rho},\tilde{u})$ belonging to the space  $L^2(0,T;L^2(0,1))\times L^2(0,T;L^2(0,1))$ in the sense of transposition. Moreover, the operator: $$p\mapsto(\tilde{\rho},\tilde{u}),$$ is linear and continuous from $L^2(0,T)$ into $L^2(0,T;L^2(0,1))\times L^2(0,T;L^2(0,1))$. 
\end{theorem}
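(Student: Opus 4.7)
\medskip

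The plan is to construct $(\tilde\rho,\tilde u)$ via the standard transposition recipe based on Definition \ref{Defi-solu-1} with $U_0=(0,0)$: for every source $F=(f,g)\in L^2(0,T;L^2(0,1))\times L^2(0,T;L^2(0,1))$, we let $V=(\sigma,v)$ denote the unique weak solution to the adjoint system \eqref{adj_lcnse} with $V_T=(0,0)$ (provided by \Cref{Prop-Adjoint}) and consider the linear map
\begin{equation*}
\Lambda_p \,:\, F \,\longmapsto\, \int_0^T \sigma(t,1)\, p(t)\, dt.
\end{equation*}
If we can show that $\Lambda_p$ is continuous on $L^2(0,T;L^2(0,1))\times L^2(0,T;L^2(0,1))$ with norm controlled by $\|p\|_{L^2(0,T)}$, the Riesz representation theorem will produce a unique $(\tilde\rho,\tilde u)$ in the same space that realises the identity of \Cref{Defi-solu-1}, and the linear continuity of $p\mapsto(\tilde\rho,\tilde u)$ will be an immediate by-product. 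Uniqueness of $(\tilde\rho,\tilde u)$ is automatic: if two solutions exist, their difference annihilates every $F\in L^2\times L^2$ and therefore vanishes.

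The only non-trivial ingredient is a \emph{hidden regularity} estimate
\begin{equation*}
\|\sigma(\cdot,1)\|_{L^2(0,T)} \le C\bigl(\|f\|_{L^2(0,T;L^2(0,1))}+\|g\|_{L^2(0,T;L^2(0,1))}\bigr),
\end{equation*}
since neither $\sigma\in \mathcal C([0,T];L^2(0,1))$ nor $v\in L^2(0,T;H^1_0(0,1))$ gives a $L^2$-trace of $\sigma$ on $\{x=1\}$ for free. I would prove this by the multiplier method: take the real part of $\int_0^T\!\!\int_0^1 x\,\bar\sigma$ against the first equation of \eqref{lcnse_adjoint}. The term $-\Re\int x\bar\sigma\,\sigma_x\,dx$ integrates by parts to $\tfrac12|\sigma(t,1)|^2-\tfrac12\int_0^1|\sigma|^2\,dx$ (the periodic condition $\sigma(t,0)=\sigma(t,1)$ kills one boundary contribution while the weight $x$ kills the other), and the time-derivative contributes $\tfrac12\int_0^1 x|\sigma(0,x)|^2\,dx$ after using $\sigma(T)=0$. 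The remaining terms $\int x\bar\sigma\,v_x$ and $\int x\bar\sigma\,f$ are estimated by Cauchy--Schwarz against $\|v_x\|_{L^2(L^2)}$ and $\|f\|_{L^2(L^2)}$. Combining these with the continuity estimate of \Cref{Prop-Adjoint} (which controls $\|\sigma(0)\|_{L^2}$, $\|\sigma\|_{L^2(L^2)}$ and $\|v_x\|_{L^2(L^2)}$ by $\|F\|_{L^2\times L^2}$) yields the desired trace bound.

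Once the hidden regularity is in hand, Cauchy--Schwarz gives $|\Lambda_p(F)|\le\|\sigma(\cdot,1)\|_{L^2(0,T)}\|p\|_{L^2(0,T)}\le C\|p\|_{L^2(0,T)}\|F\|_{L^2\times L^2}$, so $\Lambda_p$ is a continuous linear functional on $L^2(0,T;L^2(0,1))\times L^2(0,T;L^2(0,1))$. Riesz representation then furnishes the unique pair $(\tilde\rho,\tilde u)$ satisfying
\begin{equation*}
\int_0^T\!\!\int_0^1 \tilde\rho\, f + \tilde u\, g \,dx\,dt \;=\; \int_0^T \sigma(t,1)\, p(t)\, dt,
\end{equation*}
which is precisely the transposition identity of \Cref{Defi-solu-1} with $U_0=(0,0)$, together with the estimate $\|\tilde\rho\|_{L^2(L^2)}+\|\tilde u\|_{L^2(L^2)}\le C\|p\|_{L^2(0,T)}$. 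The main obstacle is the trace estimate; everything else is functional-analytic boilerplate.
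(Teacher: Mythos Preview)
Your proposal is correct and follows essentially the same route as the paper: the paper defines the trace map $\Lambda_1(f,g)=\sigma(\cdot,1)$, invokes the hidden regularity estimate (Corollary~\ref{hidden_reg_adj}, proved exactly by your $x\bar\sigma$ multiplier), and then sets $(\tilde\rho,\tilde u)=\Lambda_1^*(p)$; your Riesz-representation formulation of the functional $\Lambda_p$ is just a repackaging of the same adjoint construction. One small slip in your sketch: it is the weight $x$ alone that kills the boundary term at $x=0$ in $\int_0^1 x\,\partial_x|\sigma|^2\,dx$, not the periodic condition on $\sigma$ (and your sign on the surviving $\tfrac12|\sigma(t,1)|^2$ term is flipped), but neither affects the argument.
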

\begin{proof}
\textit{Existence:} Let us define a map $\Lambda_1:L^2(0,T;L^2(0,1))\times L^2(0,T;L^2(0,1))\rightarrow L^2(0,T)$, 
\begin{equation}\label{map_lambda_1}
\Lambda_1(f,g)=\sigma(t,1),
\end{equation}
where $(\sigma,v)$ is the unique solution to the adjoint system \eqref{lcnse_adjoint} with given  source term $(f,g)$. The map $\Lambda_1$ is well-defined because of the hidden regularity as mentioned in Appendix \ref{Appendix-D}, Corollary \ref{hidden_reg_adj}.

Now, thanks to  \Cref{Prop-Adjoint}, the map
\begin{equation*}
(f,g)\mapsto(\sigma,v)
\end{equation*}
is linear and continuous from $L^2(0,T;L^2(0,1))\times L^2(0,T;L^2(0,1))$ to $L^2(0,T;L^2(0,1))\times L^2(0,T;H^1_{0}(0,1))$, which implies that the map $\Lambda_1$ given by \eqref{map_lambda_1} is  linear and continuous (Corollary \ref{hidden_reg_adj}). 
	
So, we can define the adjoint to $\Lambda_1$ as follows
\begin{equation}
\Lambda_1^*:L^2(0,T)\to L^2(0,T;L^2(0,1))\times L^2(0,T;L^2(0,1)),
\end{equation}
which is also linear and continuous. 

Let us denote $\Lambda_1^*(p) = (\tilde{\rho},\tilde{u})$. Then, for $(\tilde{\rho},\tilde{u})$,  we have	
\begin{equation*}
\begin{aligned}
\int_{0}^{T}\int_{0}^{1}\tilde \rho(t, x)f(t, x)dxdt+\int_{0}^{T}\int_{0}^{1}\tilde u(t,x) g(t,x) dxdt & =\ip{\Lambda_1^*p}{(f,g)}  \\
&=\ip{p}{\Lambda_1(f,g)}\\
&=\int_{0}^{T} p(t)\sigma(t,1)  dt,
\end{aligned}
\end{equation*}
for every $(f,g)$ in $L^2(0,T;L^2(0,1)) \times L^2(0,T; L^2(0,1))$. Hence for any $p\in L^2(0,T)$, $(\tilde{\rho},\tilde{u})$ is the solution to the system \eqref{new_bdry} in the sense of transposition and 
\begin{equation}\label{esti-bdry-nonhomo}
\begin{aligned}
\|(\tilde{\rho},\tilde{u})\|_{L^2(L^2)\times L^2(L^2)} &=\|\Lambda_1^*(p)\|_{L^2(L^2)\times L^2(L^2)}\\
&\leq \|\Lambda_1^*\| \ \|p\|_{L^2(0,T)}.
\end{aligned}
\end{equation}	

\smallskip 

\textit{Uniqueness:} If $p=0$ on $(0,T)$, we have
\begin{equation*}
\int_{0}^{T}\int_{0}^{1}\rho(t,x)f(t,x)dxdt+\int_{0}^{T}\int_{0}^{1}u(t,x)g(t,x) dxdt = 0,
\end{equation*}
for all $(f,g)\in L^2(0,T;L^2(0,1))\times L^2(0,T;L^2(0,1))$, which gives  $(\rho,u)=(0,0)$ and therefore the solution to the system \eqref{new_bdry} is unique.
\end{proof}

\smallskip 

\paragraph{\bf Step 2}
We now consider the system \eqref{lcnse3} with non-zero initial data and homogeneous boundary conditions and check the existence, uniqueness of solution. The system reads as 
\begin{align}\label{new_initial}
\begin{dcases}
\rho_t+\rho_x +u_{x} =0   &\text{in } (0,T)\times (0,1),\\
u_{t} -u_{xx} +u_x +\rho_x =0 &\text{in } (0,T)\times (0,1),\\
\rho(t,0)=\rho(t,1)     &\text{for } t\in (0,T) , \\
u(t,0)=0, \ \ u(t,1)=0  &\text{for } t\in (0,T) ,\\
\rho(0,x)= \rho_0(x), \  u(0,x)= u_0(x)  &\text{for } x\in(0,1), 
\end{dcases}
\end{align}
with $(\rho_0, u_0)\in L^2(0,1)\times L^2(0,1)$. 
\begin{theorem}\label{Thm-step-2}
For any $(\rho_0, u_0)\in L^2(0,1)\times L^2(0,1)$, the system \eqref{new_initial} has a unique solution $(\check{\rho},\check{u})$ belonging to the space  $L^2(0,T;L^2(0,1))\times L^2(0,T;L^2(0,1))$ in the sense of transposition. Moreover, the operator: $$(\rho_0, u_0) \mapsto(\check{\rho},\check{u}),$$ is linear and continuous from $L^2(0,1) \times L^2(0,1)$ into $L^2(0,T;L^2(0,1))\times L^2(0,T;L^2(0,1))$. 
\end{theorem}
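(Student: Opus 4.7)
The plan is to run the same transposition argument as in Step 1, but now the boundary source vanishes and the role of $\sigma(t,1)$ is taken by the initial trace $(\sigma(0,\cdot), v(0,\cdot))$ of the adjoint solution. Concretely, using the definition of solution by transposition (Definition \ref{Defi-solu-1}) with $p\equiv 0$, a pair $(\check\rho,\check u)\in L^2(0,T;L^2(0,1))^2$ should satisfy
\begin{equation*}
\int_0^T\!\!\int_0^1 \check\rho\, f + \check u\, g \; dx\, dt = \left\langle \begin{pmatrix}\rho_0\\u_0\end{pmatrix}, \begin{pmatrix}\sigma(0,\cdot)\\v(0,\cdot)\end{pmatrix}\right\rangle_{L^2\times L^2},
\end{equation*}
for every $(f,g)\in L^2(0,T;L^2(0,1))^2$, where $(\sigma,v)$ is the unique weak solution to the adjoint problem \eqref{adj_lcnse} with zero final data $V_T=(0,0)$ and source $(f,g)$.

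First, I would define the map
\begin{equation*}
\Lambda_2 : L^2(0,T;L^2(0,1))\times L^2(0,T;L^2(0,1)) \longrightarrow L^2(0,1)\times L^2(0,1), \quad (f,g)\longmapsto (\sigma(0,\cdot), v(0,\cdot)).
\end{equation*}
This map is well-defined and continuous: by Proposition \ref{Prop-Adjoint}, the assignment $(f,g)\mapsto (\sigma,v)$ is linear continuous into $\mathcal C([0,T];L^2(0,1))\times[\mathcal C([0,T];L^2(0,1))\cap L^2(0,T;H^1_0(0,1))]$, so evaluation at $t=0$ lands continuously in $L^2(0,1)\times L^2(0,1)$.

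Next, I would take the adjoint
\begin{equation*}
\Lambda_2^* : L^2(0,1)\times L^2(0,1) \longrightarrow L^2(0,T;L^2(0,1))\times L^2(0,T;L^2(0,1)),
\end{equation*}
which is again linear and continuous, and set $(\check\rho,\check u):=\Lambda_2^*(\rho_0,u_0)$. Unwinding the duality identity $\langle \Lambda_2^*(\rho_0,u_0),(f,g)\rangle=\langle(\rho_0,u_0),\Lambda_2(f,g)\rangle$ shows that $(\check\rho,\check u)$ satisfies the transposition identity above, hence is a solution to \eqref{new_initial} in the sense of Definition \ref{Defi-solu-1} (with $p=0$). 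Continuity of the data-to-solution map follows from $\|\Lambda_2^*\|=\|\Lambda_2\|$. Uniqueness is immediate: if $(\rho_0,u_0)=(0,0)$, then $\langle(\check\rho,\check u),(f,g)\rangle_{L^2(0,T;L^2)^2}=0$ for all $(f,g)$, hence $(\check\rho,\check u)=(0,0)$.

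The only genuinely non-routine ingredient is the continuous dependence of $(\sigma(0,\cdot),v(0,\cdot))$ on $(f,g)$ in $L^2(0,1)\times L^2(0,1)$, but this is precisely the $\mathcal C([0,T];L^2)$-regularity supplied by Proposition \ref{Prop-Adjoint} (which itself rests on Lemma \ref{lemma_wellposedness}). Once Theorems \ref{Thm-step-1} and \ref{Thm-step-2} are in hand, \Cref{Thm-existnce-control-sol} follows by superposition: $(\rho,u):=(\tilde\rho,\tilde u)+(\check\rho,\check u)$ solves \eqref{lcnse3}, and the announced estimate \eqref{continuity_estimate} is obtained by summing the bound from \eqref{esti-bdry-nonhomo} with the analogous bound for $\Lambda_2^*$; the extra regularity \eqref{regularity} then comes from identifying this weak solution with the mild/semigroup solution generated by $(A,D(A))$ in the interior, using Lemma \ref{lemma_wellposedness}.
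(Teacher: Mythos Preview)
Your proof is correct and follows essentially the same transposition argument as the paper: define $\Lambda_2(f,g)=(\sigma(0,\cdot),v(0,\cdot))$, use Proposition \ref{Prop-Adjoint} for its continuity, set $(\check\rho,\check u)=\Lambda_2^*(\rho_0,u_0)$, and read off existence, continuity, and uniqueness from the duality identity. The only cosmetic difference is that the paper phrases uniqueness by taking the difference of two solutions rather than setting the data to zero, which is equivalent by linearity.
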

\begin{proof}
\textit{Existence:} Let us define a map $\Lambda_2:L^2(0,T;L^2(0,1))\times L^2(0,T;L^2(0,1))\rightarrow L^2(0,1)\times L^2(0,1)$, 
\begin{equation}\label{map_lambda_2}
\Lambda_2(f,g)=(\sigma(0,\cdot),v(0,\cdot)),
\end{equation}
where $(\sigma,v)$ is the unique solution to the adjoint system \eqref{lcnse_adjoint} with given  source term $(f,g)$.
	
Now, thanks to Proposition \ref{Prop-Adjoint}, the map
\begin{equation*}
(f,g)\mapsto(\sigma,v)
\end{equation*}
is linear and continuous from $L^2(0,T;L^2(0,1))\times L^2(0,T;L^2(0,1))$ to $\mc{C}([0,T];L^2(0,1))\times [\mc{C}([0,T];L^2(0,1))\cap L^2(0,T;H^1_{0}(0,1))]$, which implies that the map $\Lambda_2$ given by \eqref{map_lambda_2} is  linear and continuous. 
	
So, we can define the adjoint to $\Lambda_2$ as follows
\begin{equation}
\Lambda_2^*:L^2(0,1)\times L^2(0,1)\to L^2(0,T;L^2(0,1))\times L^2(0,T;L^2(0,1)),
\end{equation}
which is also linear and continuous. 
	
Let us denote $\Lambda_2^*(\rho_0,u_0) = (\check{\rho},\check{u})$. Then, for $(\check{\rho},\check{u})$,  we have	
\begin{equation*}
\begin{aligned}
\int_{0}^{T}\int_{0}^{1}\check \rho(t, x)f(t, x)dxdt+\int_{0}^{T}\int_{0}^{1}\check u(t,x) g(t,x) dxdt & =\ip{\Lambda_2^*(\rho_0,u_0)}{(f,g)}  \\
&=\ip{(\rho_0,u_0)}{\Lambda_2(f,g)}\\
&=\ip{(\rho_0,u_0)}{(\sigma(0,\cdot),v(0,\cdot))},
\end{aligned}
\end{equation*}
for every $(f,g)$ in $L^2(0,T;L^2(0,1)) \times L^2(0,T; L^2(0,1))$. Hence for any $(\rho_0,u_0)\in L^2(0,1)\times L^2(0,1)$, $(\check{\rho},\check{u})$ is the solution to the system \eqref{new_bdry} and 
\begin{equation}\label{esti-bdry-nonhomo_2}
\begin{aligned}
\|(\check{\rho},\check{u})\|_{L^2(L^2)\times L^2(L^2)} &=\|\Lambda_2^*(\rho_0,u_0)\|_{L^2(L^2)\times L^2(L^2)}\\
&\leq \|\Lambda_2^*\| \ \|(\rho_0,u_0)\|_{L^2(0,1)\times L^2(0,1)}.
\end{aligned}
\end{equation}	
	
\smallskip 
	
\textit{Uniqueness:} Let the system \eqref{new_initial} has two solutions $(\rho_1,u_1)$ and $(\rho_2,u_2)$. Introduce $$(\rho,u)=(\rho_1,u_1)-(\rho_2,u_2).$$
Then one can show that the only possibility is $(\rho,u)=(0,0)$, using the initial and boundary conditions: $\rho(0,x)=u(0,x)=0$ for all $x\in (0,1)$ and $\rho(t,0)=\rho(t,1)$, $u(t,0)=u(t,1)=0$ for all $t\in (0,T)$.
\end{proof}

\begin{proof}[\bf Proof of Theorem \ref{Thm-existnce-control-sol}]
We now recall the system \eqref{lcnse3} with given boundary data $p\in L^2(0,T)$ and initial data $(\rho_0, u_0)\in L^2(0,1)\times L^2(0,1)$. Then, thanks to  Theorem \ref{Thm-step-1} \& \ref{Thm-step-2}, 
\begin{align*}
(\rho, u) : =(\tilde \rho, \tilde u) + (\check \rho, \check u),
\end{align*}  
is the unique solution to \eqref{lcnse3}. 

It remains to prove the continuity estimate of the solution $(\rho,u)$. Let $H:L^2(0,1)\times L^2(0,1)\times L^2(0,T)\to L^2(0,T;L^2(0,1)\times L^2(0,T;L^2(0,1)))$ be defined by
\begin{align}
H(\rho_0,u_0,p)=(\rho,u).
\end{align}
Then $H$ is linear.  Furthermore, using \eqref{esti-bdry-nonhomo} and \eqref{esti-bdry-nonhomo_2}, we get  
\begin{align*}
&\norm{H(\rho_0,u_0,p)}_{L^2(0,T;L^2(0,1))\times L^2(0,T;L^2(0,1))}=\norm{(\tilde \rho, \tilde u) +(\check{\rho},\check{u})}_{L^2(0,T;L^2(0,1))\times L^2(0,T;L^2(0,1))}\\
&\leq\norm{\Lambda^*_1}\norm{p}_{L^2(0,T)}+ \norm{\Lambda^*_2} \|(\rho_0 ,u_0)\|_{L^2(0,1)\times L^2(0,1)}
\\
&\leq C\left(\norm{p}_{L^2(0,T)}+\norm{\rho_0}_{L^2(0,1)}+\norm{u_0}_{L^2(0,1)}\right).
\end{align*}
Finally, the required regularity result \eqref{regularity}--\eqref{continuity_estimate} can be obtained by applying the usual regularity of parabolic equation (with homogeneous boundary data) and then using that, the regularity of transport part follows immediately. 

The proof is complete.
\end{proof}

\section{On the compactness of the resolvent to the adjoint operator}\label{Appendix-A3}

\medskip 

In this section, we are going to prove the part (i) of  Proposition \ref{Prop_spectral}. 

For any $\lambda  \notin \sigma(A^*)$,  denote the resolvent operator associated to $A^*$ by $R(\lambda, A^*): = (\lambda I-A^*)^{-1}$ (where $\sigma(A^*)$ is the spectrum of $A^*$ defined by \eqref{spectrum}). 

Let $\{Y_n\}_{n}=\{(f_n,g_n)\}_{n}$ be a bounded sequence in $\mathbf Z:= L^2(0,1)\times L^2(0,1)$. Our claim is to prove  that for any $\lambda>0$ the sequence $\big\{R(\lambda;A^*)Y_n\big\}_{n}$ contains a convergent subsequence. Let $X_n=(\sigma_n,u_n) = R(\lambda; A^*) Y_n \in {D}(A^*)$, that is
\begin{equation}\label{resolvant_equation}
(\lambda I-A^*)X_n=Y_n.
\end{equation}
More explicitly,
\begin{align}\label{explicit_resolvant_equation}
\begin{dcases}
\lambda\sigma_n-(\sigma_n)_x-(u_n)_x=f_n  \quad &\text{in } (0,1),\\
\lambda u_n-(\sigma_n)_x-(u_n)_x-(u_n)_{xx}=g_n \quad &\text{in } (0,1),\\
\sigma_n(0) = \sigma_n(1), \ \ u_n(0) =u_n(1) =0. 
\end{dcases}
\end{align}
Taking inner product with $X_n$ in the equation \eqref{resolvant_equation}, we get
\begin{equation*}
\lambda\ip{X_n}{X_n}_{\mathbf Z}-\ip{A^*X_n}{X_n}_{\mathbf Z}=\ip{X_n}{Y_n}_{\mathbf Z}.
\end{equation*}
Considering only the real parts, we see
\begin{equation*}
\lambda\norm{X_n}_{\mathbf Z}^2-\Re(\ip{A^*X_n}{X_n}_{\mathbf Z})=\Re(\ip{X_n}{Y_n}_{\mathbf Z}).
\end{equation*}
Now, recall that the operator $A^*$ is dissipative, i.e.,  $\Re(\ip{A^*X_n}{X_n}_{\mathbf Z})\leq0$; in what follows, we have
\begin{equation*}
\lambda\norm{X_n}_{\mathbf Z}^2 \leq \Re(\ip{X_n}{Y_n}_{\mathbf Z})\leq\mod{\ip{X_n}{Y_n}_{\mathbf Z}}\leq\frac{\lambda}{2}\norm{X_n}_{\mathbf Z}^2+\frac{1}{2\lambda}\norm{Y_n}_{\mathbf Z}^2.
\end{equation*}
In other words,
\begin{equation*}
\norm{X_n}_{\mathbf Z}^2\leq\frac{1}{\lambda^2}\norm{Y_n}_{\mathbf Z}^2.
\end{equation*}
Thus, the sequence $\{X_n\}_{n}$ is bounded in $\mathbf Z$. We now prove that $\{X_n\}_{n}$ is in fact bounded in $H^1(0,1)\times H^1_0(0,1)$. Multiplying the second equation of \eqref{explicit_resolvant_equation} by $u_n$, we get
\begin{equation*}
\lambda\int_{0}^{1}\mod{u_n}^2dx-\int_{0}^{1}(\sigma_n)_x\bar{u}_ndx-\int_{0}^{1}(u_n)_{xx}\bar{u}_ndx=\int_{0}^{1}g_n\bar{u}_ndx.
\end{equation*}
Performing an integration by parts, we obtain
\begin{equation*}
\lambda\int_{0}^{1}\mod{u_n}^2dx+\int_{0}^{1}\sigma_n(\bar{u}_n)_xdx+\int_{0}^{1}\mod{(u_n)_x}^2dx=\int_{0}^{1}g_n\bar{u}_n dx,
\end{equation*}
from which, it follows that
\begin{align*}
\lambda\int_{0}^{1}\mod{u_n}^2dx+\int_{0}^{1}\mod{(u_n)_x}^2dx&=\Re\left(\int_{0}^{1}g_n\bar{u}_ndx\right)-\Re\left(\int_{0}^{1}\sigma_n(\bar{u}_n)_xdx\right)\\
&\leq\mod{\int_{0}^{1}g_n\bar{u}_ndx}+\mod{\int_{0}^{1}\sigma_n(\bar{u}_n)_xdx}\\
&\leq\frac{1}{2\lambda}\int_{0}^{1}\mod{g_n}^2dx+\frac{\lambda}{2}\int_{0}^{1}\mod{u_n}^2dx+\frac{1}{2}\int_{0}^{1}\mod{\sigma_n}^2dx+\frac{1}{2}\int_{0}^{1}\mod{(u_n)_x}^2dx .
\end{align*}
After simplification, we have
\begin{equation*}
\frac{\lambda}{2}\int_{0}^{1}\mod{u_n}^2dx+\frac{1}{2}\int_{0}^{1}\mod{(u_n)_x}^2dx\leq\frac{1}{2\lambda}\int_{0}^{1}\mod{g_n}^2dx+\frac{1}{2}\int_{0}^{1}\mod{\sigma_n}^2dx ,
\end{equation*}
that is, the sequence $\{u_n\}_{n}$ is bounded in $H^1_0(0,1)$. Then, the first equation of \eqref{explicit_resolvant_equation}  gives
\begin{equation*}
(\sigma_n)_x=\lambda\sigma_n-(u_n)_x-f_n,
\end{equation*}
which shows that the sequence $\{(\sigma_n)_x\}_{n}$ is bounded in $L^2(0,1)$.  

So, we have proved that  $\{X_n\}_{n}$ is a bounded sequence in $H^1(0,1)\times H^1_0(0,1)$ (which is compactly embedded in $\mathbf Z$) and therefore, $\{X_n\}_{n}$ is relatively compact in $\mathbf Z$. 

This completes the proof.

\medskip 

\section{Solution to the moments problem of mixed  parabolic-hyperbolic type} \label{Appendix-Han}
In this section, we recall an important result by S. W. Hansen \cite{Hansen94} and adapting his idea we solve our joint moments problem \eqref{Moments-parabolic}--\eqref{Moments-hyperbolic} appearing in \Cref{Section-Moments}.  The author made the following assumptions in his work. 
\begin{hypo}\label{assump-han}
Let $\{\lambda_k\}_{k\in \mathbb N^*}$ and $\{\gamma_k\}_{k\in \mathbb Z}$ be two sequences in $\mathbb C$ with the following properties:
\begin{itemize}
\item[(i)] for all $k, j\in \mathbb Z$,  $\gamma_k\neq\gamma_j$ unless $j=k$,
\item[(ii)] $\gamma_k = \beta + c k\pi i + \nu_k$  for all   $k\in \mb Z$, 
\end{itemize}
where $\beta \in \mathbb C$, $c>0$ and $\{\nu_k\}_{k\in \mb Z}\in\ell^2$.  

Also, there exists positive constants $A_0, B_0 , \delta, \epsilon$ and $0\leq\theta<\pi/2$ for which $\{\lambda_k\}_{k\in \mathbb N^*}$ satisfies
\begin{itemize}
\item[(i)] $\mod{\arg(-\lambda_k)}\leq\theta$ for all $k\in\mb{N}^*$,
\item[(ii)] $\mod{\lambda_k- \lambda_j}\geq\delta\mod{k^2-j^2}$ for all $k\neq j$, $k,j\in \mb N^*$,
\item[(iii)] $\epsilon(A_0+ B_0 k^2) \leq \mod{\lambda_k}\leq A_0+ B_0 k^2$ for all  $k\in\mb{N}^*$. 
\end{itemize}
We also assume that	the families are disjoint, i.e., $$ \left\{\gamma_k, \ k\in \mathbb{Z}\right\} \cap \left\{\lambda_k, \ k\in \mathbb{N}^*\right\}=\emptyset. $$ 
\end{hypo}
Then, he introduced the following spaces: for any $0\leq a <b$,
\begin{align*}
W_{[a,b]} &= \text{closed span } \{e^{\gamma_k t}\}_{k\in \mb Z} \text{ in } L^2(a,b), \\
E_{[a,b]} &= \text{closed span } \{e^{-\lambda_k t}\}_{k\in \mb N^*} \text{ in } L^2(a,b).
\end{align*}
With these, the author has proved the following results.
\begin{theorem}\label{thm-han}
Assume that the Hypothesis \ref{assump-han} holds true. Then, for each $T>2/c$, where $c$ is defined as in Hypothesis \ref{assump-han}, the spaces $W_{[0,T]}$ and $E_{[0,T]}$ are uniformly separated. This does not hold for $T\leq 2/c$.
\end{theorem}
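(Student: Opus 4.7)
The claim is that the closed subspaces $W_{[0,T]}$ and $E_{[0,T]}$ of $L^2(0,T)$ form a positive angle precisely when $T>2/c$. Recall that uniform separation is equivalent to the existence of $\kappa>0$ with $\|u-v\|_{L^2(0,T)}^2 \geq \kappa\bigl(\|u\|^2+\|v\|^2\bigr)$ for every $u\in E_{[0,T]}$, $v\in W_{[0,T]}$, or equivalently to $W+E$ being closed with $W\cap E=\{0\}$. The threshold $2/c$ is the Ingham--Beurling gap associated with the imaginary spacing $c\pi$ in the hyperbolic spectrum $\{\gamma_k\}$: at shorter times the family of exponentials $\{e^{\gamma_k t}\}$ loses uniform linear independence.

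My plan, following Hansen \cite{Hansen94}, rests on three ingredients. First, using the perturbation structure $\gamma_k=\beta+ck\pi i+\nu_k$ with $\{\nu_k\}\in\ell^2$, I would invoke a Kadec-type theorem for complex exponentials with $\ell^2$ perturbations of the imaginary parts to conclude that, for $T>2/c$, $\{e^{\gamma_k t}\}_{k\in\mathbb Z}$ is a Riesz basis of $W_{[0,T]}$, giving two-sided bounds
\begin{equation*}
A\sum_k|b_k|^2 \leq \Bigl\|\sum_k b_k e^{\gamma_k t}\Bigr\|_{L^2(0,T)}^2 \leq B\sum_k|b_k|^2.
\end{equation*}
Second, the growth lower bound $|\lambda_k|\geq\epsilon(A_0+B_0 k^2)$ combined with the sector condition $|\arg(-\lambda_k)|\leq\theta<\pi/2$ yields the Müntz--Szász condition $\sum |\lambda_k|^{-1} <\infty$ in a half-plane, so that $\{e^{-\lambda_k t}\}_{k\in\mathbb N^*}$ is minimal in $L^2(0,T)$ for every $T>0$ and admits a biorthogonal family $\{\phi_k\}$ with $\|\phi_k\|_{L^2(0,T)}$ controlled polynomially in $|\lambda_k|$.

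The decisive and hardest step is the construction of a \emph{joint} biorthogonal system: for $T>2/c$ one exhibits $\Phi_k\in L^2(0,T)$ satisfying $\langle \Phi_k, e^{-\lambda_j t}\rangle=\delta_{kj}$ together with $\langle \Phi_k, e^{\gamma_j t}\rangle=0$ for every $j\in\mathbb Z$, and symmetric functions $\Psi_k$ biorthogonal to the hyperbolic family and orthogonal to the parabolic one. This is carried out by Paley--Wiener duality: one designs an entire function of exponential type at most $T$ vanishing on $\{-\lambda_j\}_{j\neq k}\cup\{\gamma_j\}_{j\in\mathbb Z}$, then inverts the Fourier--Laplace transform to obtain $\Phi_k$. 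The hyperbolic zeros are generated by a sine-type function of exponential type exactly $2/c$, so the residual budget $T-2/c>0$ is precisely what remains to interpolate the parabolic zeros while keeping the resulting function in $L^2$; this is the sharp reason behind the threshold. Once $\{\Phi_k,\Psi_k\}$ are built, pairing $u-v$ against them recovers the coefficients of $u=\sum a_k e^{-\lambda_k t}\in E$ and $v=\sum b_k e^{\gamma_k t}\in W$ and yields $\sum|a_k|^2+\sum|b_k|^2\lesssim \|u-v\|_{L^2(0,T)}^2$, which is uniform separation.

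For the negative assertion $T\leq 2/c$, I would apply the sharp form of Ingham's theorem: the hyperbolic family fails minimality in $L^2(0,T)$ when $T<2/c$ (and loses its lower Riesz bound at $T=2/c$). This produces unit sequences $w_n\in W_{[0,T]}$ converging weakly to zero; a density/approximation argument within $E_{[0,T]}$ then constructs matching $u_n\in E_{[0,T]}$ such that $\|u_n-w_n\|\to 0$ while $\|w_n\|=1$, contradicting uniform separation. The principal obstacle of the whole proof is the joint biorthogonal construction of Step~3, because it must simultaneously accommodate two interlaced spectra of very different densities — linear for $\gamma_k$ and quadratic for $\lambda_k$ — while saturating the Paley--Wiener budget exactly at $T$; this forces the use of Blaschke products and sine-type generating functions rather than naive product formulas, and is the source of all the delicate analysis.
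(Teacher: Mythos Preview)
The paper does not prove this theorem at all; it simply refers to \cite{Hansen94} and singles out Lemma~\ref{lem-han} (the Riesz basis / completeness property of $\{e^{\gamma_k t}\}$) as the main ingredient. So there is no in-paper proof to compare against, and your positive-direction sketch---Riesz basis for the hyperbolic family, Müntz-type minimality for the parabolic family, then a joint biorthogonal construction via Paley--Wiener---is a reasonable plan broadly in the spirit of Hansen's work.

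Your negative direction, however, has a genuine gap and a factual error. First, the error: Lemma~\ref{lem-han} asserts that $\{e^{\gamma_k t}\}$ \emph{is} a Riesz basis of $W_{[0,t_c]}=L^2(0,t_c)$ at the critical time $T=t_c=2/c$; the lower Riesz bound is not lost there. Second, the gap: even granting a weakly null unit sequence $w_n\in W_{[0,T]}$, nothing in the hypotheses lets you approximate it in norm by elements of $E_{[0,T]}$; that step is simply asserted. The correct argument is much simpler and uses Lemma~\ref{lem-han} directly. For $T=t_c$ one has $W_{[0,t_c]}=L^2(0,t_c)$, hence $E_{[0,t_c]}\subset W_{[0,t_c]}$ and the intersection is all of $E_{[0,t_c]}\neq\{0\}$, so the spaces are not uniformly separated. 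For $T<t_c$ the same conclusion $W_{[0,T]}=L^2(0,T)$ holds: if $f\in L^2(0,T)$ is orthogonal to every $e^{\gamma_k t}$ on $(0,T)$, its zero-extension to $(0,t_c)$ is orthogonal to the same family on $(0,t_c)$, hence vanishes by completeness. Again $E_{[0,T]}\subset W_{[0,T]}$, and uniform separation fails.
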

The proof relies upon the several results but the main one is the following. We denote $t_c=2/c$.
\begin{lemma}\label{lem-han}
For any $a \in \mb R$, $W_{[a,a+t_c]} = L^2(a, a + t_c)$. Furthermore, for $T \geq t_c$, $\{e^{\gamma_k t}\}_{k\in \mb Z}$ forms a Riesz basis for each of the spaces $W_{[a,a+T]}$.
\end{lemma}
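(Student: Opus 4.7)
The plan is to reduce the statement to a perturbation of the standard Fourier orthogonal basis. Via the translation $t \mapsto t-a$ and the $L^2$-isomorphism $f \mapsto e^{-\beta t} f$, I may assume $a=0$ and $\beta=0$, so that the exponents become $\tilde{\gamma}_k := ick\pi + \nu_k$ with $\{\nu_k\}\in\ell^2$. On $L^2(0, t_c)$ with $t_c = 2/c$, the unperturbed family $\{e^{ick\pi t}\}_{k\in\mb{Z}}$ is an orthogonal basis, since $\int_0^{t_c} e^{ic(k-j)\pi t}\,dt = t_c\,\delta_{kj}$; this is precisely the classical trigonometric system on an interval of length $2\pi/(c\pi) = 2/c$.

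Next, I would establish quadratic closeness of $\{e^{\tilde{\gamma}_k t}\}$ to this basis. Using the elementary bound $|e^{\nu_k t}-1|\le |\nu_k|\,t\,e^{|\nu_k|t}$ on $[0, t_c]$ (valid since $\{\nu_k\}$ is bounded),
\begin{equation*}
\sum_{k \in \mb{Z}} \| e^{\tilde{\gamma}_k t} - e^{ick\pi t} \|_{L^2(0, t_c)}^2 \;\le\; C \sum_{k \in \mb{Z}} |\nu_k|^2 \;<\; +\infty.
\end{equation*}
The classical Bari stability theorem then asserts that a family quadratically close to a Riesz basis of a Hilbert space $H$ is itself a Riesz basis of $H$ if and only if it is complete (equivalently, $\omega$-linearly independent). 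I would then prove completeness of $\{e^{\tilde{\gamma}_k t}\}_{k\in\mb{Z}}$ in $L^2(0, t_c)$ via Paley--Wiener: if $f \in L^2(0, t_c)$ annihilates every $e^{\tilde{\gamma}_k t}$, then $F(z) := \int_0^{t_c} f(t) e^{zt}\,dt$ is entire of exponential type $\le t_c$, belongs to $L^2$ on vertical lines, and vanishes on the set $\{\overline{\tilde{\gamma}_k}\} = \{-ick\pi + \overline{\nu_k}\}_{k \in \mb{Z}}$. This is an $\ell^2$-perturbation of the arithmetic progression $\{-ick\pi\}_{k\in\mb{Z}}$, whose density $1/(c\pi)$ matches the critical density $t_c/(2\pi)$ for completeness, and a Levinson/Kadec-type completeness theorem for exponentials (see e.g.\ R.~M.~Young, \emph{An Introduction to Nonharmonic Fourier Series}, Ch.~4) forces $F \equiv 0$ and hence $f = 0$. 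Combined with quadratic closeness, this yields the Riesz basis property on $L^2(0, t_c)$, which is exactly $W_{[a, a+t_c]} = L^2(a, a+t_c)$.

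Finally, for $T > t_c$ the family $\{e^{\gamma_k t}\}_{k\in\mb{Z}}$ spans $W_{[a, a+T]}$ by definition. The Bessel (upper) bound on $L^2(a, a+T)$ remains finite since $\Re(\gamma_k)$ is bounded and the interval compact. The lower frame bound on $[a, a+T]$ is at least that on $[a, a+t_c]$, because the Gram matrix $G^T_{jk} = \int_a^{a+T} e^{\gamma_k t}\overline{e^{\gamma_j t}}\,dt$ equals $G^{t_c}_{jk}$ plus the positive semi-definite contribution from $[a+t_c, a+T]$, preserving uniform invertibility on $\ell^2$. This gives the Riesz basis property on the closed span for every $T \ge t_c$.

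The main obstacle is the completeness step at the critical density $1/(c\pi) = t_c/(2\pi)$, which sits precisely on the borderline between completeness and incompleteness of exponential systems; this is exactly the reason the threshold $t_c = 2/c$ is sharp and why the conclusion breaks down for $T < t_c$, as the statement explicitly records.
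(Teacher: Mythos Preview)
The paper does not supply its own proof of this lemma; it simply refers the reader to Hansen~\cite{Hansen94}. Your outline follows the standard route and, as far as one can tell, essentially coincides with Hansen's argument: reduce via translation and multiplication by $e^{-\beta t}$, establish quadratic closeness to the orthonormal basis $\{e^{ick\pi t}\}$ on $(0,t_c)$, and invoke Bari's theorem.

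Two points deserve sharpening. First, once quadratic closeness is established, the synthesis operator for $\{e^{\tilde\gamma_k t}\}$ is a compact (even Hilbert--Schmidt) perturbation of a unitary, hence Fredholm of index~$0$; completeness and $\omega$-linear independence are therefore \emph{equivalent} here, and one of them must indeed be verified at the critical length $t_c$. Your Paley--Wiener sketch is the right direction, but the statement that a Levinson/Kadec-type theorem ``forces $F\equiv 0$'' at exactly the critical density is precisely the non-trivial input---Kadec's $1/4$-theorem does not cover $\ell^2$-perturbations, and one needs the specific results in Young's monograph (or Hansen's own treatment) for this borderline case. You correctly flag this as the main obstacle; it is not a gap so much as the place where the real work is outsourced.

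Second, for $T>t_c$ your justification of the upper (Bessel) bound is not sufficient: boundedness of $\Re(\gamma_k)$ together with compactness of the interval only bounds each individual norm $\|e^{\gamma_k t}\|_{L^2(a,a+T)}$, which does \emph{not} yield a Bessel inequality $\sum_k|\langle f,e^{\gamma_k t}\rangle|^2\le B\|f\|^2$. The clean fix is to cover $[a,a+T]$ by finitely many translates of intervals of length $t_c$; on each such interval the family is a Riesz basis by the first part (hence Bessel), and summing gives the Bessel bound on $[a,a+T]$. Your lower-bound argument via the positive semidefinite increment of the Gram matrix is correct as stated.
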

We refer \cite{Hansen94} for the proofs of Theorem \ref{thm-han} and Lemma \ref{lem-han}. 

\smallskip 

Let us write the following set of moments problem,
\begin{align}
\label{moment-para-han}	p_k & = \int_0^T  e^{\lambda_k t} u(t) dt , \qquad  k\in \mb N^*, \\
\label{moment-hyp-han} h_k & = \int_0^T  e^{\gamma_k t} u(t) dt , \qquad  k\in \mb Z.
\end{align}
The space of all sequences $\{p_k\}_{k\in \mb N^*} \cup \{h_k\}_{k\in \mb Z}$ for which there exists a $f\in L^2(0,T)$ that solves the set of equations \eqref{moment-para-han}--\eqref{moment-hyp-han} is called the moment space. 

Now, we recall the following results from the same paper which relate Theorem \ref{thm-han} to the  moments problem \eqref{moment-para-han}--\eqref{moment-hyp-han}. 
\begin{proposition}\label{Prop-sol-moment-hyper}
Let $\{h_k\}_{k\in \mb Z} \in \ell^2$. Then, for any $T\geq t_c$, there exists $f\in W_{[0,T]}$, which solves the moment problem \eqref{moment-hyp-han}. Moreover,  any $\widetilde f \in L^2(0,T)$ given by $\widetilde f= f + \widehat f$ with $\widehat f\in  W^{\perp}_{[0,T]}$ also solves \eqref{moment-hyp-han}.
\end{proposition}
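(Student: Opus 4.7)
The plan is to leverage the Riesz basis property established in \Cref{lem-han}. Since $T \geq t_c$, the family $\{e^{\gamma_k t}\}_{k \in \mathbb Z}$ is a Riesz basis of the closed subspace $W_{[0,T]} \subset L^2(0,T)$. By the standard theory of Riesz bases, there exists a unique biorthogonal family $\{q_k\}_{k \in \mathbb Z} \subset W_{[0,T]}$, itself a Riesz basis, characterized by
\[
\int_0^T e^{\gamma_j t}\, q_k(t)\, dt = \delta_{jk}, \qquad \forall \, j,k \in \mathbb Z.
\]
There is a mild technical point here: the standard dual family of a Riesz basis is biorthogonal with respect to the sesquilinear inner product, whereas our moment pairing is the bilinear $(u,v)\mapsto\int_0^T uv$. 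These two notions are reconciled by using that the hyperbolic spectrum is invariant under conjugation ($\overline{\gamma_k}=\gamma_{-k}$ up to relabeling, coming from the reality of the underlying differential operator $A^*$), so that complex conjugation is a bijection of $W_{[0,T]}$ onto itself and the bilinear biorthogonal family can be obtained from the sesquilinear one by conjugation.

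Given $\{h_k\}_{k\in\mathbb Z} \in \ell^2$, I would then set
\[
f(t) := \sum_{k\in\mathbb Z} h_k\, q_k(t),
\]
the series converging in $W_{[0,T]}$ by the Riesz basis property of $\{q_k\}$, with $\|f\|_{L^2(0,T)} \leq C \|(h_k)\|_{\ell^2}$. Interchanging sum and integral (justified by the $\ell^2$-convergence) and using the biorthogonality relation immediately gives $\int_0^T e^{\gamma_j t} f(t)\, dt = h_j$ for every $j\in\mathbb Z$, so $f$ lies in $W_{[0,T]}$ and solves \eqref{moment-hyp-han}. For the addendum, any $\widehat f \in W_{[0,T]}^\perp$ satisfies $\int_0^T \widehat f(t)\, \overline{g(t)}\, dt = 0$ for every $g \in W_{[0,T]}$; applying this with $g = e^{\overline{\gamma_k} t}$ (again belonging to $W_{[0,T]}$ by the conjugation symmetry) yields $\int_0^T e^{\gamma_k t} \widehat f(t)\, dt = 0$ for every $k$, so $\widetilde f = f + \widehat f$ produces the same moments as $f$.

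The main obstacle I foresee is the rigorous construction of the biorthogonal family $\{q_k\}$ together with a uniform Riesz bound, since this is where the detailed hypotheses enter: the separation condition (i) on the $\gamma_k$, the asymptotic expansion (ii) with $\{\nu_k\}\in\ell^2$, and above all the sharp threshold $T\geq t_c$ without which $\{e^{\gamma_k t}\}$ ceases to be a Riesz basis of $W_{[0,T]}$. Once this construction (which is essentially the content of \cite{Hansen94}) is in place, the rest of the argument is a direct consequence of biorthogonality and the isomorphism $\ell^2 \simeq W_{[0,T]}$ furnished by the Riesz basis.
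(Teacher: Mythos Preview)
Your approach is correct and matches the paper's: the paper's own proof is the single sentence ``The proof follows as a consequence of Lemma \ref{lem-han},'' and your explicit construction of $f$ via the biorthogonal family to the Riesz basis $\{e^{\gamma_k t}\}$ is exactly how one unpacks that. The bilinear/sesquilinear issue you flag is a genuine subtlety the paper glosses over; your resolution via conjugation symmetry of the $\gamma_k$ is valid for the concrete spectrum $\{\lambda^h_k\}$ of $A^*$ in this paper, though note that it is not part of the abstract Hypothesis \ref{assump-han} (where $\beta\in\mathbb C$ and no symmetry of $\{\nu_k\}$ is imposed), so in the purely abstract setting one would instead take $q_k=\overline{e_k^*}$ with $\{e_k^*\}$ the sesquilinear dual basis and obtain $f\in\overline{W_{[0,T]}}\subset L^2(0,T)$.
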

The proof follows as a consequence of Lemma \ref{lem-han}.
\begin{proposition}\label{Prop-sol-moment-para}
Assume that for any $r>0$, the sequence $\{p_k\}_{k\in \mb N^*}$ satisfies 
\begin{align}\label{assump-p_k}
|p_k|e^{r k} \to 0 \quad \text{as } k\to +\infty.
\end{align}
Then, for  given any $\tau>0$, there exists  $g\in  E_{[0,\tau]}$, which solves the moment problem \eqref{moment-para-han}. Moreover, any $\widetilde g \in L^2(0,\tau)$ given by $\widetilde g=g+\widehat g$ with $\widehat g \in E^{\perp}_{[0,\tau]}$ also solves  \eqref{moment-para-han}.
\end{proposition}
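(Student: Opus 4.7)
The plan is to solve the moment problem by the classical duality/biorthogonal method adapted to the parabolic setting: construct a biorthogonal family to $\{e^{\lambda_k t}\}_{k\in\mb{N}^*}$ inside $E_{[0,\tau]}$, then synthesize $g$ as a weighted series of these biorthogonals using the data $\{p_k\}$.

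First, I would build a biorthogonal family $\{q_k\}_{k\in\mb{N}^*}\subset L^2(0,\tau)$ satisfying
\begin{equation*}
\int_0^\tau e^{\lambda_j t}\, q_k(t)\,dt = \delta_{jk},\qquad j,k\in\mb{N}^*,
\end{equation*}
together with the sub-exponential estimate
\begin{equation*}
\|q_k\|_{L^2(0,\tau)} \leq C(\epsilon,\tau)\,e^{\epsilon\sqrt{|\lambda_k|}},\qquad \forall\,\epsilon>0.
\end{equation*}
This is a classical construction under Hypothesis \ref{assump-han}: it proceeds via the Hadamard product $W(z)=\prod_k(1-z/\lambda_k)$, whose growth is tamed by the parabolic gap $|\lambda_k-\lambda_j|\geq\delta|k^2-j^2|$, the equivalence $|\lambda_k|\asymp k^2$, and the cone condition $|\arg(-\lambda_k)|\leq\theta<\pi/2$; a Paley--Wiener interpolation formula then yields both biorthogonality and the bound (in the same spirit as the constructions invoked in \cite{Hansen94}). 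The reflection $s=\tau-t$, together with the fact that $E_{[0,\tau]}$ is the closed span of $\{e^{-\lambda_k t}\}$, allows one to arrange $q_k\in E_{[0,\tau]}$.

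Second, I would exploit the hypothesis on $\{p_k\}$. Since $\sqrt{|\lambda_k|}\leq\sqrt{A_0}+\sqrt{B_0}\,k$ by Hypothesis \ref{assump-han}, the bound above becomes $\|q_k\|_{L^2(0,\tau)}\leq C'(\epsilon')\,e^{\epsilon' k}$ for any $\epsilon'>0$. With $|p_k|e^{rk}\to 0$ for every $r>0$, picking any $r>\epsilon'$ ensures $\sum_k|p_k|\|q_k\|_{L^2(0,\tau)}<+\infty$, so
\begin{equation*}
g(t) := \sum_{k=1}^{\infty} p_k\, q_k(t)
\end{equation*}
converges absolutely in $L^2(0,\tau)$. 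Since each $q_k\in E_{[0,\tau]}$ and this subspace is closed, one gets $g\in E_{[0,\tau]}$; biorthogonality then yields $\int_0^\tau e^{\lambda_j t} g(t)\,dt = p_j$ for every $j\in\mb{N}^*$, which is exactly \eqref{moment-para-han}. The final assertion, that $\widetilde g = g+\widehat g$ with $\widehat g\in E_{[0,\tau]}^\perp$ also solves the problem, follows because the functionals $h\mapsto \int_0^\tau e^{\lambda_k t} h(t)\,dt$ annihilate $E_{[0,\tau]}^\perp$, as can be verified on the dense family $\{e^{-\lambda_k t}\}$ spanning $E_{[0,\tau]}$.

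The main obstacle will be Step 1: producing the biorthogonal family with the sub-exponential bound $e^{\epsilon\sqrt{|\lambda_k|}}$ uniformly in $\epsilon>0$, where the exponent scales like $\sqrt{|\lambda_k|}\asymp k$ rather than $|\lambda_k|\asymp k^2$. This sharper exponent is precisely what couples with the hypothesis $|p_k|e^{rk}\to 0$ to deliver absolute summability. The construction is a delicate complex-analytic argument, but under Hypothesis \ref{assump-han} it reduces to a standard Paley--Wiener / Beurling--Malliavin analysis of $W(z)$; once this is in hand, the rest of the proof is a routine summation and biorthogonality check.
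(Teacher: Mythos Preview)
Your approach is exactly what the paper indicates: the paper does not prove this proposition but declares it standard and says it relies on a biorthogonal family in $E_{[0,\tau]}$ to $\{e^{\lambda_k t}\}_{k\in\mb N^*}$, citing \cite{Hansen91}; you flesh this out correctly, with the sub-exponential bound $\|q_k\|\leq C(\epsilon)e^{\epsilon\sqrt{|\lambda_k|}}$ being precisely what pairs with the decay hypothesis on $\{p_k\}$ to give absolute convergence of $\sum p_k q_k$. One small slip in your last sentence: to show that the functional $h\mapsto\int_0^\tau e^{\lambda_k t}h(t)\,dt$ annihilates $E_{[0,\tau]}^\perp$ you should argue that its Riesz representer lies in $E_{[0,\tau]}$, not ``verify on a dense family of $E_{[0,\tau]}$'' (which would be testing the functional on $E$, not on $E^\perp$).
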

The proof of the above proposition is standard. It relies on the existence of bi-orthogonal family in the space $E_{[0,\tau]}$ to the family of exponentials $\{e^{\lambda_k t}\}_{k\in \mb N^*}$; see \cite{Hansen91} for a proof. 

\smallskip

Let us now present the main theorem that tells the solvability of the mixed moment problems \eqref{moment-para-han}--\eqref{moment-hyp-han}. 
\begin{theorem}\label{Thm-main_Han}
Let any $T> t_c$ be given. Then, under  Hypothesis \ref{assump-han}, given any sequence $\{p_k\}_{k\in \mb N^*}$ satisfying \eqref{assump-p_k} and any $\{h_k\}_{k\in \mb Z}\in \ell^2$, there exists a function $f\in L^2(0,T)$ that  simultaneously solves the set of moments problem \eqref{moment-para-han}--\eqref{moment-hyp-han}. This does not hold for $T\leq t_c$.
\end{theorem}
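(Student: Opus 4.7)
The plan is to solve the parabolic and hyperbolic moment problems separately using the one-sided existence results \Cref{Prop-sol-moment-para} and \Cref{Prop-sol-moment-hyper}, and then to assemble the two partial solutions into a single element of $L^2(0,T)$ by exploiting the uniform separation of $E_{[0,T]}$ and $W_{[0,T]}$ provided by \Cref{thm-han}. This is exactly the abstract version of the construction already carried out, in a more concrete setting, at the end of the proof of \Cref{Thm-control-dens}.

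First I would invoke \Cref{Prop-sol-moment-para} on the sequence $\{p_k\}_{k\in \mathbb N^*}$, whose exponential decay is guaranteed by \eqref{assump-p_k}, to obtain some $u_1 \in E_{[0,T]}$ solving \eqref{moment-para-han}; symmetrically \Cref{Prop-sol-moment-hyper} applied to $\{h_k\}_{k\in \mathbb Z}\in \ell^2$, together with the assumption $T>t_c$, produces some $u_2 \in W_{[0,T]}$ solving \eqref{moment-hyp-han}. Crucially, the second halves of the same two propositions assert that $u_1$ may be modified by any element of $E^\perp_{[0,T]}$ without spoiling the parabolic moments, and likewise $u_2$ by any element of $W^\perp_{[0,T]}$. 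It therefore suffices to exhibit a function $f \in L^2(0,T)$ whose ``$E$-component'' agrees with $u_1$ modulo $E^\perp_{[0,T]}$ and whose ``$W$-component'' agrees with $u_2$ modulo $W^\perp_{[0,T]}$, where these components will be made precise through appropriate orthogonal projections inside the closed sum space.

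For this gluing step I would introduce $\mathcal V := E_{[0,T]} + W_{[0,T]}$. By \Cref{thm-han}, uniform separation in the regime $T>t_c$ ensures that $\mathcal V$ is closed in $L^2(0,T)$, hence itself a Hilbert space, and splits as an algebraic direct sum $\mathcal V = E_{[0,T]} \oplus W_{[0,T]}$ with continuous oblique projections. Writing $P_E, P_W$ for the $\mathcal V$-orthogonal projections onto $E_{[0,T]}$ and $W_{[0,T]}$, a standard argument then shows that the restricted maps
\[
P_E\big|_{\mathcal V \ominus W_{[0,T]}} \colon \mathcal V \ominus W_{[0,T]} \longrightarrow E_{[0,T]}, \qquad P_W\big|_{\mathcal V \ominus E_{[0,T]}} \colon \mathcal V \ominus E_{[0,T]} \longrightarrow W_{[0,T]}
\]
are isomorphisms: injectivity reduces to $\mathcal V \cap E^\perp_{[0,T]} \cap W^\perp_{[0,T]} = \{0\}$, which itself follows from $\mathcal V = E_{[0,T]} + W_{[0,T]}$, while surjectivity is an elementary consequence of orthogonal decomposition inside $\mathcal V$. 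I would then define
\[
f := \bigl(P_E\big|_{\mathcal V \ominus W_{[0,T]}}\bigr)^{-1} u_1 \; + \; \bigl(P_W\big|_{\mathcal V \ominus E_{[0,T]}}\bigr)^{-1} u_2 \in \mathcal V \subset L^2(0,T).
\]

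The verification that this $f$ solves both moment problems is then short: the first summand lies in $\mathcal V \ominus W_{[0,T]} \subset W^\perp_{[0,T]}$, so by \Cref{Prop-sol-moment-hyper} it contributes nothing to the hyperbolic moments, while by construction its $\mathcal V$-orthogonal projection onto $E_{[0,T]}$ is $u_1$, which reproduces $\{p_k\}$; the analysis of the second summand is symmetric. The sharpness clause ``this does not hold for $T \leq t_c$'' is inherited directly from \Cref{thm-han}, since in that regime $E_{[0,T]}$ and $W_{[0,T]}$ are not uniformly separated, $\mathcal V$ need no longer be closed, and the whole construction collapses. The main technical obstacle I anticipate is the careful verification of the isomorphism property of the restricted projections $P_E|_{\mathcal V \ominus W_{[0,T]}}$ and $P_W|_{\mathcal V \ominus E_{[0,T]}}$; once this is granted, the assembly of $f$ and the checking of the two moment identities are essentially a one-line computation.
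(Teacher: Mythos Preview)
Your proposal is correct and follows essentially the same route as the paper's proof: solve the two moment problems separately via \Cref{Prop-sol-moment-para} and \Cref{Prop-sol-moment-hyper}, then glue the solutions inside the closed sum $\mathcal V = E_{[0,T]} + W_{[0,T]}$ using the inverses of the restricted orthogonal projections. The only minor difference is that the paper justifies the isomorphism of $P_E|_{W^\perp}$ and $P_W|_{E^\perp}$ by citing a result of Kato (uniform separation of $E,W$ passes to their orthogonal complements in $\mathcal V$, yielding $\mathcal V = E^\perp \oplus W^\perp$), which is precisely the step you flag as the main technical obstacle.
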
 
The proof of above theorem can be found in  \cite[Theorem 4.11]{Hansen94}. For the sake of completeness, we give the proof below. 
\begin{proof}
For $T\leq t_c$, the set of moments problem \eqref{moment-para-han}--\eqref{moment-hyp-han} does not necessarily have a solution. Thus, we start with $T>t_c$. By \Cref{thm-han}, we have $E:=E_{[0,T]}$ and $W:=W_{[0,T]}$ are uniformly separated. Thus the space $V:=E+W$ is closed in $L^2(0,T)$ with its norm $\|\cdot\|_{V}:=\|\cdot\|_{L^2(0,T)}$  and so $V:=E\oplus W$. Moreover, the orthogonal complements $E^\perp$ and $W^\perp$ of $E$ and $W$ (resp.) in $V$ are also uniformly separated using a result by T. Kato \cite[Chap. 4, \S 4]{Kato} and therefore, $V=E^\perp \oplus W^\perp$. From this, one can show that the restrictions $P_{E}|_{W^\perp}$ and $P_W|_{E^\perp}$ are isomorphisms, where $P_E$ and $P_W$ are the orthogonal projections respectively onto $E$ and $W$ in $V$. By Propositions \ref{Prop-sol-moment-para} and \ref{Prop-sol-moment-hyper}, there exists functions $f_1\in E$ and $f_2\in W$ which solve the equations \eqref{moment-para-han} and \eqref{moment-hyp-han} respectively. Set, 
\begin{align*}
f= (P_E|_{W^\perp})^{-1} f_1 + (P_W|_{E^\perp})^{-1}f_2,
\end{align*} 
which simultaneously solves the equations \eqref{moment-para-han}--\eqref{moment-hyp-han} and moreover $f\in L^2(0,T)$. 
\end{proof}

\smallskip

\section{A hidden regularity result}\label{Appendix-D}

\smallskip

Consider the following system
\begin{align}\label{trans_para}
\begin{dcases}
\rho_t+\rho_x +u_{x} =0   &\text{in } (0,T)\times (0,1),\\
u_{t} -u_{xx} +u_x +\rho_x =0 &\text{in } (0,T)\times (0,1),\\
\rho(t,0)=\rho(t,1)+ p(t)      &\text{for } t\in (0,T) , \\
u(t,0)=0,  \ u(t,1)=0  &\text{for } t\in (0,T) ,\\
\rho(0,x)= \rho_0(x), \ u(0,x)= u_0(x)  &\text{for } x\in(0,1), 
\end{dcases}
\end{align}
where $(\rho_0, u_0)\in L^2(0,1)\times L^2(0,1)$ and $p\in  L^2(0,T)$ are given data. Then, one has the following result.

\begin{lemma}\label{lemma-hidden}
For any $(\rho_0, u_0)\in L^2(0,1)\times L^2(0,1)$ and $p\in  L^2(0,T)$, the density component $\rho$ to the system \eqref{trans_para} satisfies $\rho(t,1)\in L^2(0,T)$.
\end{lemma}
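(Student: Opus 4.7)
\medskip

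\noindent\textbf{Proof proposal.} The plan is a straightforward multiplier argument with the multiplier $x\bar\rho$, combined with a density argument to reduce to the case of smooth solutions. The key point is that the multiplier $x$ vanishes at $x=0$, which kills precisely the boundary term involving the (non-local) condition $\rho(t,0)=\rho(t,1)+p(t)$ and isolates the trace at $x=1$ that we want to control.

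\smallskip

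First, I would regularize: pick sequences $\rho_0^\varepsilon,u_0^\varepsilon\in C^\infty([0,1])$ and $p^\varepsilon\in C^\infty_c(0,T)$, mean-zero, approximating $\rho_0,u_0,p$ in $L^2(0,1)$ resp.\ $\dot L^2(0,T)$, and satisfying the compatibility conditions needed for the corresponding solution $(\rho^\varepsilon,u^\varepsilon)$ to \eqref{trans_para} to be a strong solution (so that in particular $\rho^\varepsilon\in C^1([0,T]\times[0,1])$ and the transport equation holds pointwise). By \Cref{Thm-existnce-control-sol}, $(\rho^\varepsilon,u^\varepsilon)\to(\rho,u)$ in $C^0([0,T];L^2)\times[C^0([0,T];L^2)\cap L^2(0,T;H^1_0)]$, with a uniform bound in terms of the data.

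\smallskip

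Next, for the smooth approximations I multiply the density equation $\rho^\varepsilon_t+\rho^\varepsilon_x+u^\varepsilon_x=0$ by $x\,\overline{\rho^\varepsilon}$, integrate over $(0,T)\times(0,1)$, and take the real part. An integration by parts in $x$ gives
\begin{equation*}
\Re\int_0^1 x\,\overline{\rho^\varepsilon}\,\rho^\varepsilon_x\,dx=\tfrac12|\rho^\varepsilon(t,1)|^2-\tfrac12\int_0^1|\rho^\varepsilon(t,x)|^2\,dx,
\end{equation*}
where the boundary contribution at $x=0$ vanishes thanks to the multiplier. Integrating in time and rearranging, I obtain the identity
\begin{equation*}
\tfrac12\int_0^T|\rho^\varepsilon(t,1)|^2\,dt=\tfrac12\int_0^1 x\bigl(|\rho_0^\varepsilon|^2-|\rho^\varepsilon(T)|^2\bigr)dx+\tfrac12\int_0^T\!\!\int_0^1|\rho^\varepsilon|^2\,dxdt-\Re\int_0^T\!\!\int_0^1 x\,\overline{\rho^\varepsilon}\,u^\varepsilon_x\,dxdt.
\end{equation*}
Bounding the right-hand side termwise (Cauchy--Schwarz on the last one) and applying the continuity estimate \eqref{continuity_estimate} to $(\rho^\varepsilon,u^\varepsilon)$ yields
\begin{equation*}
\|\rho^\varepsilon(\cdot,1)\|_{L^2(0,T)}^2\le C\bigl(\|\rho_0^\varepsilon\|_{L^2}^2+\|u_0^\varepsilon\|_{L^2}^2+\|p^\varepsilon\|_{L^2(0,T)}^2\bigr),
\end{equation*}
with $C$ depending only on $T$.

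\smallskip

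Finally, applying the same identity to the difference of two regularizations shows that $\{\rho^\varepsilon(\cdot,1)\}$ is Cauchy in $L^2(0,T)$, so it converges to some $\gamma\in L^2(0,T)$; since $\rho^\varepsilon\to\rho$ in $C^0([0,T];L^2)$ and $\rho^\varepsilon(\cdot,1)$ can be identified with the trace of $\rho^\varepsilon$ in the sense of transposition (using the adjoint system with suitably chosen data concentrating near $x=1$), one identifies $\gamma=\rho(\cdot,1)$ and obtains the uniform bound for the original solution.

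\smallskip

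The main obstacle is the regularization step: producing smooth enough $(\rho^\varepsilon,u^\varepsilon)$ to legitimately carry out the integration by parts. One has to verify the compatibility conditions at $t=0$, $x=0,1$ so that the approximating data lie in $D(A)$ (or a higher power of it), thus giving classical solutions via the semigroup generated by $A$ in \Cref{lemma_wellposedness}; once this is arranged, the multiplier computation and passage to the limit are routine.
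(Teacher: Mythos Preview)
Your proposal is correct and follows essentially the same approach as the paper: the multiplier $x\rho$ applied to the transport equation to isolate the trace $\rho(t,1)$ (the factor $x$ killing the contribution at $x=0$), followed by the continuity estimate \eqref{continuity_estimate} and a density argument. The paper carries out Step~1 with data in $H^1_{\{0\}}(0,1)\times H^1_0(0,1)\times \dot H^1_{\{0\}}(0,T)$ (citing \cite{Chowdhury13} for the required regularity rather than $C^\infty$ approximations), but otherwise the computation and limiting argument are the same as yours.
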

\begin{proof} The proof is split into two steps. First, recall \Cref{Thm-existnce-control-sol} so that one has
\begin{align*}
(\rho, u) \in \mathcal C^0 ([0,T]; L^2(0,1)) \times  [ \mathcal C^0([0,T];L^2(0,1)) \cap L^2(0,T;H^1_0(0,1)) ].
\end{align*}  
	
\smallskip 
	
{\bf Step 1.} Let us take the initial state $\rho_0\in H^1_{\{0\}}(0,1)$ (i.e., $\rho_0\in H^1(0,1)$ with $\rho_0(0)=0$), $u_0\in H^1_0(0,1)$ and the boundary data $p\in  H^1_{\{0\}}(0,T)$. Then one can prove that the solution $(\rho,u)$ to system \eqref{trans_para} lies in the space $[H^1(0,T;L^2(0,1))\cap L^2(0,T;H^1(0,1))]\times [L^2(0,T;H^2(0,1)\cap H^1_0(0,1))\cap H^1(0,T;L^2(0,1))]$, see for instance \cite{Chowdhury13}. Therefore, $u_x\in L^2(0,T;H^1(0,1))$ and so the integration by parts are justified. Multiplying the first equation of \eqref{trans_para} by $x\rho$, we get
\begin{equation*}
\int_{0}^{T}\int_{0}^{1}x\rho\rho_tdxdt+\int_{0}^{T}\int_{0}^{1}x\rho\rho_xdxdt+\int_{0}^{T}\int_{0}^{1}x\rho u_xdxdt=0.
\end{equation*}
Integrating by parts and using the boundary conditions, we obtain
\begin{equation}\label{xrho}
\frac{1}{2}\int_{0}^{1}x(\rho^2(T,x)-\rho_0^2(x))dx+\frac{1}{2}\int_{0}^{T}\rho^2(t,1)dt-\frac{1}{2}\int_{0}^{T}\int_{0}^{1}\rho^2dxdt+\int_{0}^{T}\int_{0}^{1}x\rho u_xdxdt=0.
\end{equation}
Therefore
\begin{align*}
\int_{0}^{T}\rho^2(t,1)dt&=-\int_{0}^{1}x(\rho^2(T,x)-\rho_0^2(x))dx+\int_{0}^{T}\int_{0}^{1}\rho^2dxdt-2\int_{0}^{T}\int_{0}^{1}x\rho u_xdxdt\\
&\leq 2\int_{0}^{T}\int_{0}^{1}\rho^2dxdt+\int_{0}^{T}\int_{0}^{1}u_x^2dxdt+\int_{0}^{1}\rho_0^2(x)dx.
\end{align*}
Using the continuity estimate \eqref{continuity_estimate}, we obtain
\begin{equation}\label{esti-Step-1} 
\int_{0}^{T}\rho^2(t,1)dt\leq C\left(\int_{0}^{1}\rho_0^2(x)dx+\int_{0}^{1}u_0^2(x)dx+\int_{0}^{T}p^2(t)dt\right).
\end{equation}	

\smallskip 

{\bf Step 2.} Let $(\rho_0,u_0)\in L^2(0,1)\times L^2(0,1)$ and $p\in  L^2(0,T)$. By density, there exists sequences $\rho_0^n\in H^1_{\{0\}}(0,1)$, $u_0^n\in H^1_0(0,1)$ and $p^n \in  H^1_{\{0\}}(0,T)$ such that $\rho_0^n \to \rho$, $u_0^n\to u_0$ in $L^2(0,1)$ and $p^n\to p$ in $L^2(0,T)$. Let $(\rho^n,u^n)$ be the solution to \eqref{trans_para} corresponding to the initial state $(\rho_0^n,u_0^n)$ and boundary data $p^n$. Using \eqref{esti-Step-1} from Step 1, we have 
\begin{equation*}
\int_{0}^{T}(\rho^n)^2(t,1)dt\leq C\left(\int_{0}^{1}(\rho_0^n)^2(x)dx+\int_{0}^{1}(u_0^n)^2(x)dx+\int_{0}^{T}(p^n)^2(t)dt\right).
\end{equation*}
We  first observe that 
\begin{align*}
	\int_{0}^{1}(\rho_0^n)^2(x)dx+\int_{0}^{1}(u_0^n)^2(x)dx+\int_{0}^{T}(p^n)^2(t)dt \to \int_{0}^{1}\rho_0^2(x)dx+\int_{0}^{1}u_0^2(x)dx+\int_{0}^{T}p^2(t)dt,
\end{align*}
as $n\to +\infty$. Therefore, the sequence $\displaystyle \Big(\int_0^T (\rho^n)^2(t,1)dt\Big)_{n}$ is indeed a Cauchy sequence and hence convergent. 
 Then, by the uniqueness of solution to \eqref{trans_para}, we have  $\displaystyle \lim_{n \to +\infty}\int_{0}^{T}(\rho^n)^2(t,1)dt = \int_0^T \rho^2(t,1) dt$,  which yields 
\begin{equation*}
\int_{0}^{T}\rho^2(t,1)dt\leq C\left(\int_{0}^{1}\rho_0^2(x)dx+\int_{0}^{1}u_0^2(x)dx+\int_{0}^{T}p^2(t)dt\right).
\end{equation*}
This concludes the  proof of the lemma.  
\end{proof}

Let us now consider the following system
\begin{equation}\label{adj_sys}
\begin{cases}
-\sigma_t - \sigma_x - v_{x}=f   &\text{in } (0,T)\times (0,1),\\
-v_{t} - v_{xx} - v_x - \sigma_x = g &\text{in } (0,T)\times (0,1),\\
\sigma(t,0)=\sigma(t,1)     &\text{for } t\in(0,T), \\
v(t,0)=v(t,1)=0            &\text{for } t\in(0,T), \\
\sigma(T,x)=0,\ \ v(T,x)=0  &\text{for } x\in(0,1),
\end{cases}
\end{equation}
with $f,g\in L^2(0,T;L^2(0,1))$. We can similarly conclude the following result.
\begin{corollary}\label{hidden_reg_adj}
For any $f,g\in L^2(0,T;L^2(0,1))$, the solution component $\sigma$ to the adjoint system \eqref{adj_sys} satisfies the following estimate.
\begin{equation}
\norm{\sigma(t,1)}_{L^2(0,T)}\leq C\left(\norm{f}_{L^2(0,T;L^2(0,1))}+\norm{g}_{L^2(0,T;L^2(0,1))}\right).
\end{equation}
\end{corollary}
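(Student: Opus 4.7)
The plan is to follow closely the multiplier argument used in the proof of \Cref{lemma-hidden}, adapted to the adjoint system \eqref{adj_sys}. By \Cref{Prop-Adjoint}, we already have the energy bound
\begin{equation*}
\|\sigma\|_{\mathcal{C}([0,T];L^2)} + \|v\|_{\mathcal{C}([0,T];L^2)\cap L^2(0,T;H^1_0)} \leq C\bigl(\|f\|_{L^2(L^2)}+\|g\|_{L^2(L^2)}\bigr),
\end{equation*}
so the only new content is extracting the boundary trace $\sigma(t,1)$.

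First, I would work at the smooth level, assuming $f,g \in \mathcal{C}^1([0,T];L^2(0,1))$ so that the backward parabolic-hyperbolic system admits a strong solution with $\sigma \in H^1(0,T;L^2)\cap L^2(0,T;H^1)$ and $v \in H^1(0,T;L^2)\cap L^2(0,T;H^2\cap H^1_0)$; this regularity makes all upcoming integrations by parts legitimate. Then I would multiply the first equation of \eqref{adj_sys} by $x\,\sigma$ and integrate on $(0,T)\times(0,1)$. The time term produces $\tfrac{1}{2}\int_0^1 x\sigma^2(0,x)\,dx$ thanks to the final condition $\sigma(T,\cdot)=0$; the space term $-\int_0^T\!\!\int_0^1 x\sigma\sigma_x\,dx\,dt$ produces, after integration by parts and the identity $\sigma(t,0)=\sigma(t,1)$, the key contribution $-\tfrac{1}{2}\int_0^T \sigma^2(t,1)\,dt + \tfrac{1}{2}\int_0^T\!\!\int_0^1 \sigma^2\,dx\,dt$; the remaining cross term with $v_x$ and the forcing term with $f$ are controlled by Cauchy--Schwarz using $\|v_x\|_{L^2(L^2)}$, $\|\sigma\|_{L^2(L^2)}$ and $\|f\|_{L^2(L^2)}$. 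Rearranging gives
\begin{equation*}
\int_0^T \sigma^2(t,1)\,dt \leq C\Bigl(\|\sigma(0,\cdot)\|_{L^2(0,1)}^2 + \|\sigma\|_{L^2(L^2)}^2 + \|v_x\|_{L^2(L^2)}^2 + \|f\|_{L^2(L^2)}^2\Bigr),
\end{equation*}
and an application of the a priori bound quoted above turns the right-hand side into $C\bigl(\|f\|_{L^2(L^2)}^2 + \|g\|_{L^2(L^2)}^2\bigr)$.

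Finally, I would remove the regularity assumption by approximation: choose $(f_n,g_n) \in \mathcal{C}^1([0,T];L^2)^2$ converging to $(f,g)$ in $L^2(0,T;L^2)^2$, denote by $(\sigma_n,v_n)$ the corresponding regular solutions, apply the inequality above to each $\sigma_n(t,1)$, and pass to the limit using continuous dependence of the transposition solution on the data together with the linearity of the trace. This mirrors Step 2 of the proof of \Cref{lemma-hidden} verbatim. The only delicate point, and the one worth checking carefully, is guaranteeing that the regularization produces strong solutions (so the multiplier calculation is actually justified and not merely formal); this can be handled either by picking $(f_n,g_n)$ with $f_n(T,\cdot)=g_n(T,\cdot)=0$ so the compatibility conditions for the backward problem are satisfied, or by adding a small parabolic regularization to the transport equation as in the proof of \Cref{lemma_wellposedness} and removing it at the end. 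Once the strong solvability is secured, the rest is a routine energy computation.
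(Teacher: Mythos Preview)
Your proposal is correct and follows essentially the same approach as the paper, which simply states ``We can similarly conclude the following result'' after \Cref{lemma-hidden} without spelling out the details. Your multiplier computation with weight $x\sigma$, followed by the density argument, is exactly the adaptation the paper has in mind; one small remark is that the periodic condition $\sigma(t,0)=\sigma(t,1)$ is not actually needed in the space integration by parts, since the weight $x$ already kills the boundary contribution at $x=0$.
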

\smallskip 

\section*{Acknowledgements}
The work of the first author is partially supported by the French Government research program ``Investissements d’Avenir" through the IDEX-ISITE initiative 16-IDEX-0001 (CAP 20-25). He also thanks to the Department of Mathematics \& Statistics, IISER Kolkata for the hospitality and support during his visit in Spring semester, 2021.   S. Chowdhury and R. Dutta acknowledge the DST-INSPIRE grants  (ref. no. DST/INSPIRE/04/2014/002199 \& DST/INSPIRE/04/2015/002388 resp.) for the financial supports. The work of J. Kumbhakar is supported by the Prime Minister's Research Fellowship (ref. no. 41-1/2018-TS-1/PMRF), Government of India.

\bigskip 

\bibliographystyle{plain} 
\bibliography{references}
\end{document}